\documentclass[11pt]{amsart}

\usepackage[pdftex,pdfpagelabels,bookmarks,hyperindex,hyperfigures]{hyperref}
  \hypersetup{
    colorlinks,
    linkcolor={red!55!black},
    urlcolor={blue!55!black},
    citecolor={green!55!black}
  }

\usepackage{amsmath}
\usepackage{amsthm}
\usepackage{amsfonts}
\usepackage{amssymb}
\usepackage{amsthm}
\usepackage{bbm}
\usepackage{enumitem}
\usepackage[mathscr]{euscript}
\usepackage{float}
\usepackage{ifthen}
\usepackage{listings}
\usepackage{mathdots}
\usepackage{mleftright}
\usepackage[new]{old-arrows}
\usepackage{rotating}
\usepackage{sansmath}
\usepackage{stmaryrd}
\usepackage{tabularx}
\usepackage{tikz}
  \usetikzlibrary{positioning}
\usepackage{tikz-cd}
  \tikzcdset{
    cells={font=\everymath\expandafter{\the\everymath\displaystyle}},
  }
  \usetikzlibrary{patterns}
\usepackage{todonotes}
\usepackage{xcolor}
\usepackage{xspace}

\usepackage[capitalize]{cleveref}

\definecolor{codegreen}{rgb}{0,0.6,0}
\definecolor{codegray}{rgb}{0.5,0.5,0.5}
\definecolor{codepurple}{rgb}{0.58,0,0.82}
\definecolor{backcolour}{rgb}{0.95,0.95,0.92}
\lstdefinestyle{mystyle}{
    xleftmargin={2.5em},
    backgroundcolor=\color{backcolour},
    commentstyle=\color{codegreen},
    keywordstyle=\color{magenta},
    numberstyle=\tiny\color{codegray},
    stringstyle=\color{codepurple},
    basicstyle=\ttfamily\footnotesize,
    breakatwhitespace=false,
    breaklines=true,
    captionpos=b,
    keepspaces=true,
    numbers=left,
    numbersep=5pt,
    showspaces=false,
    showstringspaces=false,
    showtabs=false,
    tabsize=2
}
\usepackage{subfiles}

\newtheorem{thm}{Theorem}
\numberwithin{thm}{subsection}

\newtheorem{prop}[thm]{Proposition}

\newtheorem{cor}[thm]{Corollary}

\theoremstyle{definition}

\newtheorem{conv}[thm]{Convention}
\newtheorem{exm}[thm]{Example}
\newtheorem{defn}[thm]{Definition}
\newtheorem{notn}[thm]{Notation}
\newtheorem{rmk}[thm]{Remark}
\newtheorem{warn}[thm]{Warning}
\newtheorem{qsn}[thm]{Question}

\newcommand{\bb}{\mathbb}

\newcommand{\f}{\mathfrak}
\newcommand{\s}{\mathscr}

\newcommand{\mc}{\mathcal}
\newcommand{\mrm}{\mathrm}

\newcommand{\mcA}{\mc{A}}

\newcommand{\mcM}{\mc{M}}

\newcommand{\mcP}{\mc{P}}
\newcommand{\mcQ}{\mc{Q}}

\newcommand{\mcX}{\mc{X}}
\newcommand{\mcY}{\mc{Y}}

\newcommand{\fD}{\f{D}}

\newcommand{\rmR}{\mrm{R}}

\newcommand{\sA}{\s{A}}
\newcommand{\sB}{\s{B}}
\newcommand{\sC}{\s{C}}
\newcommand{\sD}{\s{D}}

\newcommand{\sH}{\s{H}}

\newcommand{\sM}{\s{M}}

\newcommand{\bC}{\bb{C}}

\newcommand{\bF}{\bb{F}}

\newcommand{\bN}{\bb{N}}

\newcommand{\bR}{\bb{R}}

\newcommand{\bZ}{\bb{Z}}

\newcommand{\br}[1]{\mleft( #1 \mright)}
\newcommand{\sbr}[1]{\mleft[ #1 \mright]}
\newcommand{\dbr}[1]{\llbracket #1 \rrbracket}

\newcommand{\set}[2][]{
  \ifthenelse{\equal{#1}{}}{
    \mleft\{ #2 \mright\}
  }{
    \mleft\{ #1\ :\ #2 \mright\}
  }
}

\renewcommand{\to}[1][]{
  \ifthenelse{\equal{#1}{}}{
    \longrightarrow
  }{
    \stackrel{#1}{\longrightarrow}
  }
}

\newcommand{\To}[1][]{
  \ifthenelse{\equal{#1}{}}{
    \Longrightarrow
  }{
    \stackrel{#1}{\Longrightarrow}
  }
}

\renewcommand{\mapsto}[1][]{
    \ifthenelse{\equal{#1}{}}{
      \longmapsto
    }{
      \stackrel{#1}{\longmapsto}
    }
}

\newcommand{\ot}[1][]{
  \ifthenelse{\equal{#1}{}}{
  \longleftarrow
  }{
    \stackrel{#1}{\longleftarrow}
  }
}

\newcommand{\ol}{\overline}
\newcommand{\ul}{\underline}

\renewcommand{\ker}[1]{\text{ker}\br{#1}}

\newcommand{\Gl}{\mathrm{Gl}}

\newcommand{\tr}{\text{tr}\xspace}
\newcommand{\End}{\mathrm{End}}

\newcommand{\hto}[1][]{\stackrel{#1}{\longhookrightarrow}}

\newcommand{\id}{\mathrm{id}}

\newcommand{\prt}{\partial}
\newcommand{\oprt}{\ol{\prt}}

\renewcommand{\Ref}[2][]{\ifthenelse{\equal{#1}{}}{\ref{#2}}
                      {\hyperref[#2]{\ref*{#1}(\ref*{#2})}}}
\newcommand{\Aref}[2][]{\ifthenelse{\equal{#1}{}}{\autoref{#2}}
                      {\hyperref[#2]{\autoref*{#1}\ref*{#2}}}}
\newcommand{\Sref}[1]{\hyperref[#1]{\S \ref*{#1}}}

\newcommand{\Set}{\mathrm{Set}}

\newcommand{\Grpd}{\mathrm{Grpd}}
\newcommand{\op}{\mathrm{op}}

\newcommand{\Sh}{\mathrm{Sh}}

\newcommand{\Hom}{\mathrm{Hom}}
\newcommand{\Ext}{\mathrm{Ext}}

\newcommand{\HHom}{\mathcal{H}\mathrm{om}}
\newcommand{\CHom}[2]{[\nabla_{#1}, \nabla_{#2}]}
\newcommand{\CCHom}[2]{\llbracket \nabla_{#1}, \nabla_{#2} \rrbracket}
\newcommand{\Map}{\mathrm{Map}}

\newcommand{\Sc}[1][]{\ifthenelse{\equal{#1}{}}{\mathrm{Sch}}{\mathrm{Sch}_{/#1}}}

\newcommand{\Cat}{\s{C}\mathrm{at}}

\newcommand{\Vect}{\mathrm{Vect}}

\newcommand{\Conn}{\mathrm{Conn}}
\newcommand{\RConn}{\mathrm{RConn}}

\newcommand{\Dol}{\mathrm{Dol}}
\newcommand{\Ch}{\mathrm{Ch}}
\newcommand{\dg}{\mathrm{dg}}
\newcommand{\diff}{\mathrm{diff}}
\newcommand{\Mod}{\mathrm{Mod}}

\newcommand{\Ob}[1]{\mathrm{Ob}\br{#1}}

\newcommand{\pt}{\mathrm{pt}}
\newcommand{\RGl}{\mathrm{R}\Gamma}

\newcommand{\Cinf}{C^\infty}
\newcommand{\Diff}{\mathrm{Diff}}
\newcommand{\Mfd}{\mathrm{Mfd}}

\setenumerate{itemsep=1pt, label=(\roman*)}
\setitemize{itemsep=1pt, leftmargin=1.5em}

\usepackage[style=alphabetic]{biblatex}

\AtNextBibliography{\small}

\usepackage[margin=1.05in]{geometry}

\title[A diffeological perspective on non-Abelian Hodge theory]
      {A diffeological perspective on non-Abelian Hodge theory}

\author{Mahmud Azam}
\email{mahmud.azam@usask.ca}

\author{Steven Rayan}
\email{rayan@math.usask.ca}

\address{Centre for Quantum Topology and Its Applications (quanTA) and
Department of Mathematics and Statistics, University of Saskatchewan, SK,
Canada~ S7N 5E6}

\begin{document}

\maketitle

\begin{abstract}
We construct diffeological moduli stacks $\sM_{Dol}(X)$ and $\sM_{dR}(X)$ parametrizing smooth
families of Higgs bundles and those of flat bundles, respectively, on a compact K\"ahler manifold
$X$. We then establish an equivalence of stacks between diffeological substacks
$\sM^\sH_{Dol} \subset \sM_{Dol}(X)$ and $\sM^\sH_{dR}(X) \subset \sM_{dR}(X)$, whose fibres over
the point are the categories of semistable Higgs bundles, with the usual condition on Chern classes,
and of flat bundles on $X$, respectively. $\sM^\sH_{Dol}(X)$ contains families of semistable Higgs
bundles to points of which the classical correspondence of coarse moduli spaces does not extend
continuously. This shows that the equivalence we provide is, in a sense, a common extension, in the
context of diffeological moduli stacks, of both the homeomorphism of coarse moduli spaces, and of the
equivalence of categories between semistable Higgs bundles and arbitrary flat bundles.
\end{abstract}

\tableofcontents


\section{Introduction}
\label{sec:introduction}

\subsection{Motivation}

The non-Abelian Hodge correspondence over a compact K\"ahler manifold has two main forms:
\begin{itemize}
\item a homeomorphism of the coarse moduli spaces
of polystable Higgs bundles, with a certain condition on Chern classes, and of semisimple flat
bundles, and
\item a quasiequivalence between the $\dg$--categories of semistable Higgs bundles, with similar
conditions on Chern classes, and of flat bundles, which truncates to an equivalence of the
underlying categories.
\end{itemize}
There are also coarse moduli spaces of semistable Higgs bundles and of flat bundles, but the
homeomorphism does not extend from the polystable locus to the semistable locus
\cite[Counterexample on p. 39]{ModRepFunGrpII}. However, it is not known that this same
counterexample obstructs an equivalence of some suitable moduli stacks of the respective objects.
There are well known Artin stacks of Higgs bundles \cite{CW17} and of flat
connections --- see, for example, \cite[Theorem 5.22]{AR25} --- and whether there is an equivalence
between some loci of these stacks is an open problem. One can also consider topological moduli
stacks of these objects, and there is an equivalence of their Borel-Moore
homologies when the base is a smooth projective curve of genus zero or one \cite{Dav23}.
Whether there is an equivalence of these topological stacks is also an open problem.
There is, however, a stacky non-Abelian Hodge correspondence in the $p$--adic setting \cite{HX26}.

In addition to these investigations, it is also natural to ask:
is there some correspondence between smooth families of Higgs bundles and
those of flat bundles on a compact K\"ahler manifold? More precisely, can we define stacks over
the site of smooth manifolds whose $U$--points, for a smooth manifold $U$, are smooth families
of Higgs bundles or of flat bundles parametrized by $U$?
Then, is there any equivalence between any loci within these
stacks that extends the original equivalences in some sense?
Are these loci geometric stacks in any sense? In this paper, we provide positive answers to these
questions. Our methods are an adaptation of Carlos Simpson's argument for the quasi-equivalence
of the $\dg$--categories of semistable Higgs bundles, with the usual condition on Chern classes, and of
flat bundles given in his celebrated paper \cite{HiggsLocSys}.

\subsection{Overview and Main Results}

Let $X$ be a compact K\"ahler manifold and $U$, a smooth manifold. Let $\pi_X : U \times X \to X$ be
the projection. One sensible definition of a family of Higgs bundles
on $X$ parametrized by a smooth manifold $U$ is a complex vector bundle $E \to U \times X$, that is
relatively holomorphic --- in the sense that its transition functions are holomorphic in the $X$
coordinates --- and equipped with vector bundle maps
$\theta_u : E|_{\set{u} \times X} \to E|_{\set{u} \times X} \otimes T^{0, 1, *}X$,
whose matrices, in a common trivialization of $E$ and $T^{0, 1, *}X$,
are again holomorphic in the $X$ coordinates and smooth in the
$U$ coordinates, and which satisfy the integrability condition $\theta_u \wedge \theta_u = 0$
for all $u \in U$.
We say that such a family is stable, polystable or semistable if each member
$(E|_{\set{u} \times X}, \theta_u)$ is.
One can define a morphism of families
$(E, \set{\theta_u}_{u \in U}) \to (F, \set{\phi_u}_{u \in U})$ to be a vector bundle map $E \to F$
commuting with $\theta_u$ and $\phi_u$ in the usual way ``slicewise''.
One can then informally define a prestack on the site of smooth manifolds by the formula:
\[
\sM_{Dol}(X)(U) := \text{groupoid of families of Higgs bundles on $X$ parametrized by $U$}
\]
One can also define a family of flat bundles in a similar manner: $E|_{\set{u} \times X}$ is now
asked to be equipped with a flat connection $\nabla_{E, u}$ such that its connection matrices
are smooth in $u$. We define simple and semisimple families of flat bundles analogously to stable,
polystable and semistable families of Higgs bundles.
We also have a similar definition of morphism, and can, thus, define another prestack:
\[
\sM_{dR}(X)(U) := \text{groupoid of families of flat bundles on $X$ parametrized by $U$}
\]
In \cref{sec:mod-st-conn}, we give precise, slightly cleaner, definitions of the prestacks described
above --- see \cref{defn:Dol-prest} and \cref{defn:dR-prest} --- as Hom-wise truncations
of the more general differential graded prestacks of these objects --- see
\cref{defn:mod-prest-Higgs} and \cref{defn:mod-prest-conn}.
We prove that these prestacks are, in fact, diffeological stacks --- stacks over the site of smooth
manifolds, presented by diffeological groupoids \cite[Definition 2.9]{RV18} --- see
\cref{thm:mod-st-Dol-dR} and \cref{thm:mod-st-Dol-dR-diff}.
An important point to note is that these stacks are likely only weakly presented by Fr\'echet Lie
groupoids --- see \cref{rmk:Frechet-Lie-groupoid-weak-presentation} --- and hence it is most
convenient to treat them as diffeological stacks.

This section and the rest of the paper makes heavy use of
the language of abstract differentials and connections with respect to them as described in
\cite{GR15}. In \cref{sec:diff-conn}, we recall and extend this language to be able to describe
pullbacks and differential graded categories of abstract connections.
At the same time, this language allows us to give uniform definitions
of smooth families of Higgs bundles and of flat bundles as, simply, connections with respect to
what we call partial Dolbeault operators and partial exterior differentials, respectively ---
see \cref{subsec:exm-diff} and \cref{subsec:exm-conn} --- that are similar to
the relative Dolbeault operators of \cite[\S II.4]{KS60}. \cref{sec:harmonic} develops the notion
of smooth families of harmonic bundles and the theory needed to use these to mediate between
families of Higgs bundles and of flat bundles. In particular, we define a differential graded
prestack of harmonic bundles.

In \cref{sec:NAH}, we show that this differential graded prestack admits forgetful maps
to the differential graded prestacks of Higgs bundles and of flat bundles respectively, just like
the differential graded category of harmonic bundles admits such maps to those of Higgs bundles and
of flat bundles in the original theory.
We further show that morphisms of differential graded prestacks
behave well under taking extension completions of differential graded categories in the sense of
\cite[\S 3]{HiggsLocSys}, which, in turn, yields an equivalence of certain diffeological substacks
of $\sM_{Dol}(X)$ and $\sM_{dR}(X)$ containing, as their fibres over the point, the categories of
semistable Higgs bundles, with the condition on Chern classes, and of flat bundles respectively
--- see \cref{thm:NAH}.
A caveat is that while these substacks contain all
individual semistable Higgs bundles satisfying the Chern class condition on one side and all
individual flat bundles on another side, it is not clear that every family of such objects is
accounted for.
However, at least one family of semistable bundles is contained in the relevant substack of
$\sM_{Dol}(X)$ --- \cref{exm:sst-family} --- to a point of which the homeomorphism of coarse moduli
spaces does not extend continuously.
Therefore, \cref{thm:NAH} is, in fact, a common stack theoretic extension,
in the setting of diffeological moduli stacks,
of the two original forms of the non-Abelian Hodge correspondence. On the one hand, it is a geometrization of the categorical equivalence, and on the other,
it is an extension of the homeomorphism of coarse moduli spaces to the semistable locus. However,
a bit of caution is necessary for the last claim: maps into the coarse moduli spaces do not correspond
to families, and the exact relation between these maps and maps into our moduli stacks, which do correspond to
certain families, is still unclear.\\

\noindent\textbf{Acknowledgements.}
We thank Kuntal Banerjee, Eric Boulter, Robert Cornea, Dat Minh Ha, Matthew Koban, Evan Sundbo,
Rapha\"el Belliard, and Qixiang Wang for helpful conversations. The first-named author was funded
by a Natural Sciences and Engineering Research Council of Canada (NSERC) Canada Graduate Scholarship (Doctoral) and a Canadian Department of National Defence (DND) / NSERC Supplemental Funding Award during this work.  The second-named author was partially supported by an NSERC Discovery Grant. The second-named author is grateful to Tony Pantev for very useful discussions during a visit to the University of Pennsylvania in December 2025.

\section{Differentials and Connections on Sheaves}
\label{sec:diff-conn}

It is convenient to have a uniform language for dealing with Higgs bundles, connections and harmonic
bundles. This was developed in \cite{GR15} to a large extent using the concept of abstract
differentials on sheaves and connections with respect to such differentials. In this section, we
extend the theory of differentials and connections to be able to handle families of Higgs bundles,
connections and harmonic bundles, and their pullbacks. This language will be instrumental in giving
clean definitions of the moduli stacks of interest and reasoning about them.
The theory of differentials and connections can be developed for arbitrary ringed spaces and locally
free sheaves on them, and this is what we will do, and only specialize to manifolds when necessary.
In \cref{subsec:frames}, we will recall some elementary facts about frames for locally free sheaves
that will be useful in several computations.
In \cref{subsec:diff-F-sp}, we recall the theory of spaces equipped with abstract
differentials --- differential spaces, as we call them ---
and extend them to include a notion of morphism of differential spaces, which will be necessary to
define pullbacks of connections with respect to these abstract differentials.
Importantly, this captures the notion of relative or partial
differentials: given smooth manifolds $U, X$, it makes sense to speak of the
exterior differential of $U \times X$ in the direction of $X$. If $X$ is a complex manifold, then it
makes sense to also speak of relative or partial Dolbeault operators on $U \times X$, that detect
functions on $U \times X$ that are holomorphic on each slice $\set{u} \times X \subset U \times X$.
We discuss such examples of differentials in some detail in \cref{subsec:exm-diff}.
We recall the theory of connections with respect to abstract differentials in
\cref{subsec:lambda-d-conn}, including gluing of connections, extension of a given connection to
higher forms, curvature, and local formulas for these.
In \cref{subsec:cons-conn}, we provide several standard constructions for
connections with respect to abstract differentials, along with local formulas. These include
duals, tensor products, sums and scalar products. These will all be
necessary to make precise the notions of differential graded categories of these abstract
connections, differential graded pullback functors between them and differential graded natural
transformations, which will, in turn, be needed for our main results.
In \cref{subsec:pullback-conn}, we discuss in some detail the pullback of abstract connections,
including preservation of curvature, tensor products and duals under pullback.
In \cref{subsec:exm-conn}, we discuss several examples of these abstract connections, including
how families of holomorphic bundles, Higgs bundles and flat connections are all captured as abstract
connections. As a preview, the reader might consider connections with respect to the partial
exterior differentials discussed earlier: these are families of connections on $X$ parametrized by
$U$. Connections with respect to the partial Dolbeault operators are families of holomorphic
bundles. Similarly, families of Higgs bundles on $X$ parametrized by $U$ are connections with
respect to a certain extension of a partial Dolbeault opertor.

\subsection{Frames for Locally Free Sheaves of Modules}
\label{subsec:frames}

\begin{defn}[$\bF$--space]
For any ring $\bF$, an $\bF$--space is a pair $(Y, R)$ where $Y$ is a topological space
and $R$ is a sheaf on $Y$ of $\bF$--algebras. In particular, if $\bF = \bZ$, an $\bF$--space
is just a ringed space. Given two $\bF$--spaces $(Y, R), (Z, S)$, a morphism of
$\bF$--spaces is a pair $(f, f^\sharp)$, where $f : Y \to Z$ is a continuous function
and $f^\sharp : S \to f_*R$ is a morphism of sheaves of $\bF$--algebras.
\end{defn}

\begin{exm}
Our main example of interest will be a smooth manifold $Y$ equipped with its sheaf $\sA^0_Y$
of complex valued functions.
\end{exm}

\begin{defn}[Local Trivializations and Frames]
Let $(Y, R)$ be an $\bF$--space with an $R$--module $E$. A local trivialization of $E$ is a pair
$(V, \phi^V)$ where $V$ is an open subset of $Y$ and $\phi^V : R|_V^{\oplus n} \to E|_V$ is an
isomorphism of $R|_V$--modules. For any open $W \subset V$, let $b^W_i$ denote the element of
$R(W)^{\oplus n}$ with $1$ in the $i$--th entry and $0$ everywhere else --- that is, the $i$--th
standard basis vector of $R(W)^{\oplus n}$ --- and let $e^W_i := \phi^V(b^W_i)$. Then,
$\set{e^W_i}_{i = 1}^n$ is a basis for $E(W)$. We will abuse notation slightly and conflate
$e^W_i$ with $e^V_i$. We will also use $\set{e^V_i}_{i = 1}^n$ to denote the collection of bases
$\set[\set{e^W_i}_{i = 1}^n]{W \subset V \text{ is open}}$, and called it a frame of $E$ over
$V$.
\end{defn}

\begin{rmk}
Any local trivialization $(V, \phi^V)$ yields a local trivialization $(W, \phi^V|_W)$ of $E|_W$ for
any open $W \subset V$.
\end{rmk}

\begin{rmk}[Change of Frames]\label{rmk:frame-change}
Given two local trivializations $(V, \phi^V)$, $(W, \phi^W)$, we will denote denote the transition
functions as $g^{VW} := \phi^V|_{V \cap W}\phi^W|_{V \cap W}^{-1}$ so that $g^{VW}(e^W_i) = e^V_i$
for all $i = 1, \dots, n$. $g^{WV}$ is defined similarly.
There exist sections $g^{VW}_{ij} \in R(V), i, j \in \set{1, \dots, n}$ such that
for all $j \in \set{1, \dots, n}$, we have:
\[
e^V_j = \sum_{i = 1}^n g^{VW}_{ij} e^W_i
\]
That is, $[g^{VW}_{ij}]$ is the matrix of $g^{VW}$ in the basis $\set{e^W_i}_{i = 1}^n$ (of course,
for restrictions of the basis as well). For any open $U \subset V \cap W$ and any sections
$s^V_i, s^W_i \in R(U), i \in \set{1, \dots, n}$ with
$s = \sum_{i = 1}^n s^V_i e^V_i =  \sum_{i = 1}^n s^W_i e^W_i \in E(U)$, we have:
\[
g^{VW}(s)
= g^{VW}\br{\sum_{i = 1}^n s^W_i e^W_i}
= \sum_{i = 1}^n \br{\sum_{j = 1}^n g^{VW}_{ij} s^W_j} e^W_i
\]
At the same time, we have:
\[
s = \sum_{j = 1}^n s^V_j e^V_j
= \sum_{j = 1}^n s^V_j \br{\sum_{i = 1}^n g^{VW}_{ij} e^W_i}
= \sum_{i = 1}^n \br{\sum_{j = 1}^n g^{VW}_{ij} s^V_j} e^W_i
\]
so that:
\[
s^W_i = \sum_{j = 1}^n g^{VW}_{ij} s^V_j
\]
\end{rmk}

\begin{rmk}[Pullback of Frames]\label{rmk:pullback-frame}
Let $(f, f^\sharp) : (Y, R) \to (Z, S)$ be a morphism of $\bF$--spaces. Consider a locally free
$S$--module $E$ of finite rank $n$ on $Z$ with a local trivialization $(V, \phi_V)$.
Let $\set{e^V_i}_{i = 1}^r$ be the corresponding frame of $E|_V$.
Now, consider the open $f^{-1}V \subset Y$. This gives a frame of $f^*F|_{f^{-1}V}$ as follows.
Let $b_i \in S(V)^{\oplus n}$ denote the tuple of sections of $S(V)$ with $1$ in the $i$--th
entry and $0$ everywhere else. Then, letting $f^\sharp b_i := f^{\sharp, \oplus n}(b_i)$,
for every open $W \subset f^{-1}V$, $\set{f^\sharp b_i|_W}_{i = 1}^n$ is an $R(W)$--basis of
$R(W)^{\oplus n}$. Then, letting $f^\sharp e^V_i := f^*(\phi_V)(f^\sharp b_i)$,
$\set{f^\sharp e^V_i|_W}_{i = 1}^n$ is an $R(W)$--basis for $f^*E(W)$ so that
$\set{f^\sharp e^V_i}$ is frame for $f^*E|_{f^{-1}V}$. We will call this the pulled back frame
along $(f, f^\sharp)$.
\end{rmk}


\begin{cor}\label{cor:pullback-frame-change}
Let $(f, f^\sharp) : (Y, R) \to (Z, S)$ be a morphism of $\bF$--spaces. Consider a locally free
$S$--module $E$ of finite rank $n$ on $Z$ with local trivializations $(V, \phi_V), (W, \phi_W)$.
Let $\set{e^V_i}_{i = 1}^n, \set{e^W_i}_{i = 1}^n$ be the corresponding frames
$\set{f^\sharp e^V_i}_{i = 1}^n, \set{f^\sharp e^W_i}_{i = 1}^n$ be the pulled back frames along
$(f, f^\sharp)$. Then, the matrix of the transition function
$g^{f^{-1}V, f^{-1}W} : f^*E|_{f^{-1}V \cap f^{-1}W} \to f^*E|_{f^{-1}V \cap f^{-1}W}$ is
$[f^\sharp g^{VW}_{ij}]$.
\end{cor}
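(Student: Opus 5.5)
The plan is to apply $f^*$ to the defining relation $e^V_j = \sum_{i=1}^n g^{VW}_{ij} e^W_i$ of \cref{rmk:frame-change}, which holds in $E(V \cap W)$, and to identify the result as the corresponding relation for the pulled back frames. By \cref{rmk:frame-change} applied to $f^*E$ with the trivializations $f^*(\phi_V)$ and $f^*(\phi_W)$ over $f^{-1}V$ and $f^{-1}W$, the matrix of $g^{f^{-1}V, f^{-1}W}$ is the unique matrix $[h_{ij}]$ of sections of $R$ over $f^{-1}V \cap f^{-1}W = f^{-1}(V \cap W)$ with $f^\sharp e^V_j = \sum_i h_{ij} f^\sharp e^W_i$. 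Uniqueness holds because $\set{f^\sharp e^W_i}$ is a basis, as noted in \cref{rmk:pullback-frame}. So it suffices to show $f^\sharp e^V_j = \sum_i (f^\sharp g^{VW}_{ij}) f^\sharp e^W_i$.

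The key steps, in order, are as follows.

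First, rewrite the transition in terms of free modules. The composite $\phi_V^{-1}\phi_W$ restricted to $V\cap W$ is an automorphism of $S|_{V\cap W}^{\oplus n}$. The relation $e^V_j = \sum_i g^{VW}_{ij} e^W_i$ says
\[
\phi_V(b_j) = \phi_W\Big(\sum_i g^{VW}_{ij} b_i\Big),
\]
so $\phi_W^{-1}\phi_V$ is given by the matrix $[g^{VW}_{ij}]$ acting on $S^{\oplus n}$.

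Second, use functoriality of $f^*$. Pulling back gives
\[
f^*(\phi_W)^{-1} f^*(\phi_V) = f^*\big(\phi_W^{-1}\phi_V\big),
\]
and the pullback of the $S$-linear map with matrix $[g_{ij}]$ on $S^{\oplus n}$ is the $R$-linear map on $f^*S^{\oplus n} \cong R^{\oplus n}$ with matrix $[f^\sharp g_{ij}]$. Concretely, $f^*M = f^{-1}M \otimes_{f^{-1}S} R$, and the identification $f^*S \cong R$ sends $f^{-1}s \otimes r$ to $f^\sharp(s)\, r$. Under this identification $f^\sharp b_i$ corresponds to $b_i \otimes 1$, and the map $f^*(\text{mult by } g)$ sends $b_i \otimes 1$ to $g b_i \otimes 1 = f^\sharp(g) b_i$.

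Third, apply $f^*(\phi_W)$. This yields $f^\sharp e^V_j = \sum_i f^\sharp(g^{VW}_{ij})\, f^\sharp e^W_i$, which is the required relation.

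The main obstacle is bookkeeping rather than anything substantive. The notation $f^\sharp g^{VW}_{ij}$ must be read as the image of $g^{VW}_{ij} \in S(V\cap W)$ under $f^\sharp$, landing in $R(f^{-1}(V\cap W))$. Beyond that, the only step needing care is checking that the identification $f^*S^{\oplus n}\cong R^{\oplus n}$ is compatible with sections and restrictions. I would verify this on stalks or on the presheaf level before sheafification, where it reduces to the tensor identity above, and then note that both sides are sections agreeing locally.
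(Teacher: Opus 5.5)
Your proposal is correct and matches the paper's approach: the paper states this corollary without proof, as an immediate consequence of \cref{rmk:frame-change} and \cref{rmk:pullback-frame}. Pulling back the relation $e^V_j = \sum_i g^{VW}_{ij} e^W_i$ by functoriality of $f^*$, together with the identification $f^*S \cong R$ (under which $f^\sharp b_i$ becomes the standard basis), is exactly the unwinding the paper leaves implicit, and your reading of $f^\sharp g^{VW}_{ij}$ as an element of $R(f^{-1}(V \cap W))$, with $g^{VW}_{ij} \in S(V \cap W)$, is the right interpretation of the notation.
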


\subsection{Differential $\bF$--Spaces}
\label{subsec:diff-F-sp}

\begin{defn}[Differential on a Sheaf]\label{defn:diff-on-sh}
Let $(Y, R)$ be a $\bF$--space for some ring $\bF$ with an $R$--module $K$. A differential on $K$ is
a collection $d = \set[d^k : K^{\wedge k} \to K^{\wedge (k + 1)}]{k \in \bN}$
of maps of sheaves of $\bF$--modules satisfying:
\begin{enumerate}
\item Flatness: for all $k \in \bN$
\[
d^{k + 1} \circ d^k = 0
\]
\item Graded Leibniz rule: for all $\omega \in K^{\wedge k}, \omega' \in K^{\wedge k'}$,
\[
d^{k + k'}(\omega \wedge \omega')
    = d^k(\omega) \wedge \omega' + (-1)^{k} \omega \wedge d^{k'}(\omega')
\]
\end{enumerate}
The tuple $(Y, R, K, d)$, in this case, is called a differential ($\bF$--)space.
Given this context, a section of $K$ will be called an abstract differential form and the sheaf $K$
will be called a sheaf of abstract differential forms.
A morphism of differential ($\bF$--)spaces of the form
$\br{Y, R, K, d} \to \br{Z, S, L, d'}$  is a pair $(f, f^\sharp)$ where:
\begin{enumerate}
\item $f : Y \to Z$ is a continuous function,
\item $f^\sharp = \set{f^{\sharp, k} : L^{\wedge k} \to f_*(K^{\wedge (k + 1)})}_{k = 0}^\infty$
is a collections of morphisms of sheaves of $R$--modules such that:
    \begin{enumerate}[label=(\alph*)]
    \item $(f, f^{\sharp, 0})$ is a morphism of $\bF$--spaces,
    \item for all $k = 0, 1, 2, \dots$, the following diagram commutes:
    \[\begin{tikzcd}
    L^{\wedge k} \ar[r, "(d')^k"] \ar[d, "f^{\sharp, k}" left] &
    L^{\wedge (k + 1)} \ar[d, "f^{\sharp, k + 1}"] \\
    f_*(K^{\wedge k}) \ar[r, "f_*d^k" below] & f_*(K^{\wedge (k + 1)})
    \end{tikzcd}\]
    \item for all $k = 0, 1, 2, \dots$, the following diagram commutes:
    \[\begin{tikzcd}
    L^{\wedge p} \oplus L^{\wedge q}
        \ar[r, "- \wedge -"] \ar[d, "f^{\sharp, p} \oplus f^{\sharp, q}" left] &
    L^{\wedge (p + q)} \ar[d, "f^{\sharp, p + q}"] \\
    f_*(K^{\wedge p} \oplus K^{\wedge q}) \cong f_*(K^{\wedge p}) \oplus f_*(K^{\wedge q})
        \ar[r, "f_*(- \wedge -)" below] & f_*(K^{\wedge (p + q)})
    \end{tikzcd}\]
    \end{enumerate}
\end{enumerate}
\end{defn}

\begin{notn}
We will not mention the base ring $\bF$ unless necessary and omit the superscripts from $d^k$ and
$f^{\sharp, k}$, when convenient.
\end{notn}

\begin{cor}\label{cor:diff-rest}
Let $(Y, R, K, d)$ be a differential space and $Y'$ be an open subset of $Y$. Then,
$(Y', R|_{Y'}, K|_{Y'}, d|_{Y'})$ is a differential space.
\end{cor}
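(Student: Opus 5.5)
The plan is to check directly that restriction to the open set $Y'$ preserves every ingredient of \cref{defn:diff-on-sh}. This works because all the data are sheaves and morphisms of sheaves, and restriction to an open subset is an exact functor compatible with the relevant constructions.

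\textbf{The restricted data form an $\bF$--space with a module.} The pair $(Y', R|_{Y'})$ is an $\bF$--space, since $R|_{Y'}(W) = R(W)$ for every open $W \subset Y'$ and the $\bF$--algebra structures are inherited. Likewise $K|_{Y'}$ is an $R|_{Y'}$--module.

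\textbf{Exterior powers commute with restriction.} The one point needing a remark is that there are canonical isomorphisms
\[
(K|_{Y'})^{\wedge k} \cong (K^{\wedge k})|_{Y'}
\]
compatible with the wedge product maps. I would justify this by noting that $K^{\wedge k}$ is the sheafification of the presheaf $W \mapsto \bigwedge^k_{R(W)} K(W)$, and that sheafification commutes with restriction to open subsets. Indeed, the restriction of this presheaf to $Y'$ is literally the presheaf defining $(K|_{Y'})^{\wedge k}$, and stalks at points of $Y'$ agree. Compatibility with $- \wedge -$ holds because both wedge maps are induced by the same presheaf-level maps.

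\textbf{Restricting the differential.} Under these identifications, define $d|_{Y'}^k := d^k|_{Y'}$. This is a map of sheaves of $\bF$--modules on $Y'$.

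\textbf{The axioms hold.} Flatness $d^{k+1}|_{Y'} \circ d^k|_{Y'} = (d^{k+1}\circ d^k)|_{Y'} = 0$ follows since restriction is a functor. The graded Leibniz rule is an identity of sections over each open $W \subset Y'$. Such a $W$ is also open in $Y$, so the identity holds there by hypothesis, using the compatibility of the wedge product from the previous step.

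I expect no real obstacle. The only step requiring any care is the identification of exterior powers with restriction, and that reduces to the standard fact that sheafification commutes with restriction to opens.
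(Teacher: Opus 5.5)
Your verification is correct and is the routine check the paper has in mind. The paper states this as a corollary with no written proof, treating it as immediate from \cref{defn:diff-on-sh}: restriction to an open subset commutes with exterior powers and preserves flatness and the graded Leibniz rule sectionwise, exactly as you argue.
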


\begin{defn}[Restricted Differentials]\label{defn:rest-diff}
Let $(Y, R, K, d)$ be a differential space with $R$--modules $P, Q \subset K$ such that
$K = P \oplus Q$. Observe that
$K^{\wedge k} = \bigoplus_{p + q = k}^k P^{\wedge p} \otimes Q^{\wedge q}$
Write $K^{p, q} := P^{\wedge p} \otimes Q^{\wedge q}$ for convenience.
We then get projections and inclusions
$\iota^{p, q} : K^{p, q} \to K^{\wedge k}$ and $pr^{p, q} : K^{\wedge k} \to K^{p, q}$ respectively.
Define $d^{p, q}_{r, s} := pr^{r, s} \circ d \circ \iota^{p, q}$.
Write $d^{p, q} := d^{p, q}_{p + 1, q}$ and $\ol{d}^{p, q} := d^{p, q}_{p, q + 1}$ for
convenience. Then, we call $d^{\bullet, 0}$ and $\ol{d}^{0, \bullet}$ the
restriction of $d$ to $P$ and $Q$ respectively.
\end{defn}

\begin{rmk}
Restrictions of differentials do not lead to differential spaces in general. For example, we can
recall that the Dolbeault operators $\prt, \oprt$ of an almost complex manifold
are differentials if and only if the almost complex structure is integrable or, equivalently,
$d = \prt + \oprt$.
\end{rmk}

\begin{prop}\label{prop:rest-diff}
In the context of \cref{defn:rest-diff}, if $d^{p + 1, 0} \circ d^{p, 0} = 0$ for all
$p = 0, 1, 2, \dots$, then $(Y, R, P, d^{\bullet, 0})$ is a differential space.
\end{prop}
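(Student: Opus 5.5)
Flatness, $d^{p+1,0}\circ d^{p,0}=0$, is exactly the hypothesis, so the only thing left to prove is the graded Leibniz rule for the family $d^{\bullet,0}$ on $P$. I will derive it from the Leibniz rule for $d$ on $K$ by inserting the inclusions and projections of \cref{defn:rest-diff}. I will use the identification $K^{p,0}=P^{\wedge p}$, so that $d^{p,0}=pr^{p+1,0}\circ d\circ\iota^{p,0}$ maps $P^{\wedge p}$ to $P^{\wedge(p+1)}$.

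The first step is to record how wedge products interact with the bigrading. Because $K=P\oplus Q$ and the decomposition $K^{\wedge k}=\bigoplus_{p+q=k}K^{p,q}$ is the standard one for exterior powers of a direct sum, the wedge product sends $K^{p,q}\otimes K^{p',q'}$ into $K^{p+p',q+q'}$. Two consequences follow.
\begin{enumerate}
\item For $\omega\in P^{\wedge p}$ and $\eta\in K^{\wedge k}$, we have $pr^{p+k,0}(\omega\wedge\eta)=\omega\wedge pr^{k,0}(\eta)$. The reason is that $\omega\wedge(\text{component of type }(r,s))$ has type $(p+r,s)$, and this is of type $(p+k,0)$ only when $s=0$.
\item Symmetrically, $pr^{k+p',0}(\eta\wedge\omega')=pr^{k,0}(\eta)\wedge\omega'$ for $\omega'\in P^{\wedge p'}$.
\end{enumerate}
All of this holds locally, and hence on sheaves.

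The second step applies the Leibniz rule for $d$ to $\omega\in P^{\wedge p}$ and $\omega'\in P^{\wedge p'}$, viewed inside $K$:
\[
d(\omega\wedge\omega')=d\omega\wedge\omega'+(-1)^p\,\omega\wedge d\omega'.
\]
Applying $pr^{p+p'+1,0}$ to both sides and using the two facts above gives
\[
d^{p+p',0}(\omega\wedge\omega')=d^{p,0}\omega\wedge\omega'+(-1)^p\,\omega\wedge d^{p',0}\omega'.
\]
This is the graded Leibniz rule. The maps $d^{p,0}$ are morphisms of sheaves of $\bF$-modules because they are composites of such morphisms. This completes the argument.

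The main obstacle, though a mild one, is justifying rigorously that the projections are compatible with the wedge product, i.e. that $\wedge$ respects the bigrading. I would verify this in a local frame adapted to the splitting $K=P\oplus Q$; this requires $P$ and $Q$ to be locally free, or else I would argue directly from the functorial decomposition $\Lambda(P\oplus Q)\cong\Lambda P\otimes\Lambda Q$ of graded-commutative algebras. The sign conventions in that isomorphism do not affect components of type $(\bullet,0)$, so no sign issues arise.
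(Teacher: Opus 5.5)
Your proof is correct and follows the paper's argument. Both apply the Leibniz rule for $d$ to $\omega\wedge\omega'$ with $\omega,\omega'$ sections of $P^{\wedge\bullet}$, project onto the $(\bullet,0)$ component, and use that this projection commutes with wedging against sections of $P^{\wedge p}$. The paper justifies that last step only by noting that the wedge product on $P^{\wedge\bullet}$ is the restriction of the one on $K^{\wedge\bullet}$; your bigrading observation is the more careful version of the same point.
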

\begin{proof}
Let $\omega \in P^{\wedge p}(U), \tau \in P^{\wedge q}(U)$ for an open
$U \subset Y$ and observe that:
\[
d^{p, 0}(\omega \wedge \tau)
= pr^{p + 1, 0}\br{d(\omega) \wedge \tau + (-1)^p \omega \wedge d(\tau)}
= pr^{p + 1, 0}\br{d(\omega) \wedge \tau} + (-1)^{p} pr^{p + 1, 0}(\omega \wedge d(\tau))
\]
The conclusion now follows from the definition of $d^{\bullet, 0}$ and the fact that the exterior
product of $P^{\wedge \bullet}$ is the restriction of the exterior product of $K^{\wedge \bullet}$.
\end{proof}

\begin{prop}\label{prop:rest-diff-ext}
In the context of \cref{defn:rest-diff}, we have inclusions
$\iota^{k + 1} : \bigoplus_{p + q = k} K^{p + 1, q} \to K^{\wedge (k + 1)}$
and $\ol{\iota}^{k + 1} : \bigoplus_{p + q = k} K^{p, q + 1} \to K^{\wedge (k + 1)}$.
Denote $D^k := \iota^k \circ \bigoplus_{p + q = k} d^{p, q} : K^{\wedge k}
\to K^{\wedge (k + 1)}$
and $\ol{D}^k := \ol{\iota}^k \circ \bigoplus_{p + q = k} \ol{d}^{p, q} :
K^{\wedge k} \to K^{\wedge (k + 1)}$.
If $d^k = D^k + \ol{D}^k$ for all $k = 0, 1, 2, \dots$, then the following are differential
spaces:
\begin{enumerate}
\item $(Y, R, K, D^\bullet)$
\item $(Y, R, K, \ol{D}^{\bullet})$
\item $(Y, R, P, d^{\bullet, 0})$
\item $(Y, R, Q, \ol{d}^{0, \bullet})$
\end{enumerate}
\end{prop}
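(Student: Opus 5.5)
Throughout, $K^{\wedge k}$ is bigraded by the $K^{p,q}$. The hypothesis $d = D + \ol{D}$ says that $d$ maps $K^{p,q}$ into $K^{p+1,q}\oplus K^{p,q+1}$, with components $d^{p,q}$ and $\ol{d}^{p,q}$. Both (i) and (ii) then reduce to a bidegree argument, and (iii) and (iv) follow from them.

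\textbf{Flatness in (i) and (ii).} Since $d\circ d = 0$ and $d = D + \ol{D}$, we get
\[
0 = D\circ D + (D\circ\ol{D} + \ol{D}\circ D) + \ol{D}\circ\ol{D}.
\]
Restrict this to $K^{p,q}$. The three summands land in the distinct direct summands $K^{p+2,q}$, $K^{p+1,q+1}$ and $K^{p,q+2}$ of $K^{\wedge(p+q+2)}$. Applying $pr^{p+2,q}$ therefore gives $D\circ D = 0$ on $K^{p,q}$, and applying $pr^{p,q+2}$ gives $\ol{D}\circ\ol{D} = 0$. Summing over the decomposition $K^{\wedge k} = \bigoplus_{p+q=k} K^{p,q}$, both operators square to zero.

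\textbf{Leibniz rule in (i) and (ii).} Take $\omega \in K^{p,q}$ and $\tau \in K^{p',q'}$, homogeneous, and extend by additivity. The wedge product respects bidegree, so $\omega\wedge\tau \in K^{p+p',q+q'}$. The Leibniz rule for $d$ gives
\[
D(\omega\wedge\tau) + \ol{D}(\omega\wedge\tau) = (D\omega + \ol{D}\omega)\wedge\tau + (-1)^{p+q}\,\omega\wedge(D\tau+\ol{D}\tau).
\]
Projecting onto bidegree $(p+p'+1,q+q')$ isolates the Leibniz rule for $D$, with sign $(-1)^{k}$ where $k = p+q$ is the total degree, as required. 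Projecting onto bidegree $(p+p',q+q'+1)$ gives the Leibniz rule for $\ol{D}$. This proves (i) and (ii).

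\textbf{Deducing (iii) and (iv).} For (iii), restrict $D\circ D = 0$ to $K^{p,0} = P^{\wedge p}$. There $D$ acts as $d^{p,0}$ and lands in $K^{p+1,0}$, so $d^{p+1,0}\circ d^{p,0} = 0$ for all $p$. \cref{prop:rest-diff} then applies directly. For (iv), we need the version of \cref{prop:rest-diff} with the roles of $P$ and $Q$ interchanged; this holds by symmetry of \cref{defn:rest-diff}. Its flatness hypothesis, $\ol{d}^{0,q+1}\circ\ol{d}^{0,q} = 0$, follows in the same way from $\ol{D}\circ\ol{D} = 0$ restricted to $K^{0,q}$.

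\textbf{Expected obstacle.} The only delicate point is checking that the wedge product is compatible with the bigrading and that the sign $(-1)^k$ uses the total degree. This rests on the identification $K^{\wedge k} \cong \bigoplus_{p+q=k} P^{\wedge p}\otimes Q^{\wedge q}$ in \cref{defn:rest-diff}, which is compatible with the wedge product up to the usual Koszul signs. Those signs do not affect which bidegree each term lands in, so the projection arguments above go through.
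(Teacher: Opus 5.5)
Your proposal is correct and follows essentially the same bidegree-splitting argument as the paper. Flatness and the Leibniz rule for $D$ and $\ol{D}$ come from projecting onto the distinct summands $K^{a,b}$, and (iii) and (iv) come from restricting $D\circ D = 0$ and $\ol{D}\circ\ol{D} = 0$ to $K^{p,0}$ and $K^{0,q}$ and then invoking \cref{prop:rest-diff}.

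Your write-up is in fact slightly more careful than the paper's. Working with homogeneous $\omega \in K^{p,q}$, and keeping the mixed term $D\circ\ol{D} + \ol{D}\circ D$ in $K^{p+1,q+1}$, is what makes the ``different direct summands'' step literally valid. The paper's displayed expansion of $d^{k+1}\circ d^k$ drops those cross terms and applies the separation to non-homogeneous $\omega$.
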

\begin{proof}
An element $\omega \in K^{\wedge k}(U)$ is of the form
$\sum_{p + q = k} \omega^{p, q}$ where
$\omega^{p, q} \in K^{p, q} = P^{\wedge p} \otimes Q^{\wedge q}$.
Then, by assumption:
\[
0 = d^{k + 1}(d^k(\omega))
= \sum_{p + q = k} D^{k + 1}(D^k(\omega^{p, q}))
   + \sum_{p + q = k} \ol{D}^{k + 1}(\ol{D}^k(\omega^{p, q}))
\]
By the definition of $D$ and $\ol{D}$,
the left and right summands are in different direct summands of $K^{\wedge (k + 1)}$
and hence, they are individually zero. That is,
$D^{k + 1} \circ D^k = 0 = \ol{D}^{k + 1} \circ \ol{D}^k$.
Now, suppose $\tau \in K^{\wedge l}$. Then,
\begin{align*}
 & D(\omega \wedge \tau) + \ol{D}(\omega \wedge \tau) \\
=& d(\omega \wedge \tau) \\
=& (D + \ol{D})(\omega) \wedge \tau + (-1)^k \omega \wedge (D + \ol{D})(\tau) \\
=& (D(\omega) \wedge \tau + (-1)^k \omega \wedge D(\tau))
   + (\ol{D}(\omega) \wedge \tau + (-1)^k \omega \wedge \ol{D}(\tau))
\end{align*}
By the definition of $D$, $\ol{D}$ and the wedge product, the left summands
on the first line and the last line are in the same direct summand. The same holds for the
right summands. The left summands are in different direct summands as the right summands.
This shows that $D$ and $\ol{D}$ satisfy the Leibniz rule individually. That is,
(i) and (ii) are differential spaces.

For (iii), observe that:
\[
0 = D^{k + 1}(D^k(\omega))
= \sum_{p + q = 0} d^{p + 1, q}(d^{p, q}(\omega^{p, q}))
\]
where the summands are all in different direct summands of $K^{\wedge k}$, so that each
of the summands are individually zero. In particular, $d^{p + 1, 0} \circ d^{p, 0} = 0$
so that we may apply \cref{prop:rest-diff}. The argument for (iv) is identical.
\end{proof}

\begin{prop}\label{prop:rest-diff-morphism}
Consider a morphism $(f, f^\sharp) : (Y, R, K, d) \to (Z, S, L, d')$ of differential spaces and
direct summands $P \subset K, Q \subset L$. Consider the inclusions
$\iota^k_A : A^{\wedge k} \to B^{\wedge k}$ and projections
$pr^k_{A} : B^{\wedge k} \to A^{\wedge k}$ for $(A, B) = (P, K), (Q, L)$ respectively.
Suppose the condition of \cref{prop:rest-diff} are satisfied and let
$d_P$ and $d'_Q$ be the differentials restricted to $P$ and $Q$ respectively.
Suppose there exists a collection $f^\sharp_{P, Q}$ of maps $f^{\sharp, k}_{P, Q}$ of $S$--modules
which make the following diagrams commute:
\[\begin{tikzcd}
Q^{\wedge k} \ar[r, "\iota^k_Q"] \ar[d, "f^{\sharp, k}_{P, Q}" left] &
L^{\wedge k} \ar[d, "f^\sharp"] \\
f_*(P^{\wedge k}) \ar[r, "f_*\iota^k_Q" below] &
f_*{K^{\wedge k}}
\end{tikzcd}\hspace{1.5em}\begin{tikzcd}
Q^{\wedge k} \ar[from=r, "pr^k_Q" above] \ar[d, "f^{\sharp, k}_{P, Q}" left] &
L^{\wedge k} \ar[d, "f^\sharp"] \\
f_*(P^{\wedge k}) \ar[from=r, "f_*pr^k_Q" below] &
f_*{K^{\wedge k}}
\end{tikzcd}\]
Then, $(f, f^\sharp_{P, Q}) : (Y, R, P, d_P) \to (Z, S, Q, d_Q)$ is a morphism of differential
spaces.
\end{prop}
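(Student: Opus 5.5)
We check the three defining conditions of a morphism of differential spaces from \cref{defn:diff-on-sh} for $(f, f^\sharp_{P, Q})$. The differentials $d_P$ and $d'_Q$ are the restricted differentials $d^{\bullet,0}$ and $(d')^{\bullet,0}$ of \cref{defn:rest-diff}. By \cref{prop:rest-diff}, $(Y, R, P, d_P)$ and $(Z, S, Q, d'_Q)$ are differential spaces, so the statement makes sense. Concretely, $d_P^k = pr^{k+1}_P \circ d^k \circ \iota^k_P$, and similarly for $d'_Q$. The proof rests on the two commuting squares in the hypothesis. The first reads $f^\sharp \circ \iota_Q = f_*\iota_P \circ f^\sharp_{P,Q}$. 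The second reads $f^\sharp_{P,Q} \circ pr_Q = f_*pr_P \circ f^\sharp$. Throughout I interpret the bottom arrows as $f_*\iota_P$ and $f_*pr_P$, correcting what appears to be a typo.

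\textbf{Condition (a).} In degree $0$ we have $P^{\wedge 0} = R$ and $Q^{\wedge 0} = S$, and $\iota^0, pr^0$ are identities. The squares then give $f^{\sharp,0}_{P,Q} = f^{\sharp,0}$, so $(f, f^{\sharp,0}_{P,Q})$ is a morphism of $\bF$--spaces because $(f, f^{\sharp,0})$ is.

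\textbf{Condition (b), compatibility with differentials.} For a section $\omega$ of $Q^{\wedge k}$, we compute:
\[
f^{\sharp}_{P,Q}\big(d'_Q\omega\big) = f^\sharp_{P,Q}\, pr_Q\, d'\, \iota_Q \omega = f_*pr_P\, f^\sharp\, d'\, \iota_Q\omega = f_*pr_P\, f_*d\, f^\sharp\, \iota_Q \omega = f_*pr_P\, f_*d\, f_*\iota_P\, f^\sharp_{P,Q}\omega .
\]
The right-hand side is exactly $f_*(d_P)\, f^\sharp_{P,Q}\omega$. The steps use, in order, the second square, condition (b) for $(f, f^\sharp)$, and the first square.

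\textbf{Condition (c), compatibility with wedge products.} Let $\omega \in Q^{\wedge p}$ and $\tau \in Q^{\wedge q}$. The wedge on $Q^{\wedge\bullet}$ is the restriction of the wedge on $L^{\wedge\bullet}$, so $\iota_Q(\omega\wedge\tau) = \iota_Q\omega \wedge \iota_Q\tau$. Hence
\[
f_*\iota_P\, f^\sharp_{P,Q}(\omega\wedge\tau) = f^\sharp(\iota_Q\omega\wedge\iota_Q\tau) = f^\sharp\iota_Q\omega \wedge f^\sharp\iota_Q\tau = f_*\iota_P\big(f^\sharp_{P,Q}\omega \wedge f^\sharp_{P,Q}\tau\big),
\]
where the last step uses that $\iota_P$ is also multiplicative. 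Since $f_*\iota_P$ is injective (it has the left inverse $f_*pr_P$), we can cancel it to obtain the desired identity. Alternatively, one can apply $f_*pr_P$ to both sides directly.

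\textbf{Main obstacle.} The only delicate point is bookkeeping. One must confirm that $\iota^k_P$ and $pr^k_P$ in the hypothesis are the degree-$k$ inclusion and projection for the splitting $K^{\wedge k} = \bigoplus K^{p,q}$ used in \cref{defn:rest-diff}. One must also confirm that $\iota_P$ respects wedge products, while $pr_P$ need not. Provided these are read this way, the calculations above are routine diagram chases.
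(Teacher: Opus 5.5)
Your proof is correct and is essentially the paper's argument. The paper phrases (b) and (c) as cube diagrams: the top and bottom faces express $d'_Q$ and $d_P$ (resp.\ the wedge products on $Q^{\wedge\bullet}$ and $P^{\wedge\bullet}$) as composites through $L$ and $K$, the front face is the morphism property of $(f,f^\sharp)$, and the side faces are the two hypothesis squares; your equational chase is exactly the deduction of the back face. The only cosmetic difference is in (c), where you use just the inclusion square together with $pr_P\circ\iota_P=\id$ rather than both squares, and your reading of the bottom arrows as $f_*\iota^k_P$ and $f_*pr^k_P$ is the intended one.
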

\begin{proof}
We observe that $f^{\sharp, 0}_{P, Q} = f^{\sharp, 0}$ so that $(f, f^{\sharp, 0}_{P, Q})$ is a
morphism of $\bF$--spaces. Consider the following diagram:
\[\begin{tikzcd}
Q^{\wedge k}
    \arrow[dd, "{f^{\sharp, k}_{P, Q}}" description]
    \arrow[rr, "d'_Q" description]
    \arrow[rd, "\iota^k_Q" description] & &
Q^{\wedge (k + 1)}
    \arrow[dd, "{f^{\sharp, k + 1}_{P, Q}}" description, near start] & \\ &
L^{\wedge k}
    \arrow[rr, crossing over, "d'" description, near start] & &
L^{\wedge (k + 1)}
    \arrow[lu, "pr^{k + 1}_Q" description]
    \arrow[dd, "{f^{\sharp}}" description] \\
f_*(P^{\wedge k})
    \arrow[rr, "f_*d_P" description, near end] & &
f_*(P^{\wedge (k + 1)}) & \\ &
f_*(K^{\wedge k})
    \arrow[rr, "f_*d" description]
    \arrow[from=uu, crossing over, "{f^{\sharp}}" description, near start]
    \arrow[from=lu, "f_*\iota^{k}_P" description] & &
f_*(K^{\wedge (k + 1)}) \arrow[lu, "f_*pr^{k + 1}_P" description]
\end{tikzcd}\]
The top and bottom faces of this diagram commute by definition of restricted differentials.
The front face commutes because $(f, f^\sharp)$ is a morphism of differential spaces.
The left and right faces commute by hypothesis.
Therefore, the back face commutes. By a similar argument,
the back face of the following diagram commutes:
\[\begin{tikzcd}
Q^{\wedge p} \oplus Q^{\wedge q}
    \arrow[dd, "f^{\sharp, p}_{P, Q} \oplus f^{\sharp, q}_{P, Q}" description]
    \arrow[rr, "- \wedge -"]
    \arrow[rd, "\iota^p_Q \oplus \iota^q_Q" description] & &
Q^{\wedge (p + q)}
    \arrow[dd, "f^{\sharp, p + q}_{P, Q}" description, near start] & \\ &
L^{\wedge p} \oplus L^{\wedge q}
    \arrow[rr, crossing over, "- \wedge -" near start] & &
L^{\wedge (p + q)}
    \arrow[lu, "pr^{p + q}_Q" description]
    \arrow[dd, "f^{\sharp}" description] \\
f_*(P^{\wedge p}) \oplus f_*(P^{\wedge q})
    \arrow[rr, "f_*(- \wedge -)" description, near end] & &
f_*(P^{\wedge (k + 1)}) & \\ &
f_*(K^{\wedge p}) \oplus f_*(K^{\wedge q})
    \arrow[rr, "f_*(- \wedge -)" description]
    \arrow[from=uu, crossing over, "f^{\sharp} \oplus f^\sharp" description, near start]
    \arrow[from=lu, "f_*\iota^{p}_P \oplus f_*\iota^q_P" description] & &
f_*(K^{\wedge (k + 1)}) \arrow[lu, "f_*pr^{p + q}_P" description]
\end{tikzcd}\]
\end{proof}

\begin{cor}[Sums of Differentials]
Let $(Y, R)$ be a $\bF$--space with a finite rank $R$--module $K$ equipped with two differentials
$d_1, d_2$. Then, $d_1 + d_2$ is easily verified to be a differential again.
\end{cor}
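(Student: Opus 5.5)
The plan is to check the two axioms of \cref{defn:diff-on-sh} for $d := d_1 + d_2$ degree by degree, with $d^k := d_1^k + d_2^k$. First, each $d^k : K^{\wedge k} \to K^{\wedge (k+1)}$ is a sum of morphisms of sheaves of $\bF$--modules, so it is again such a morphism.

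The graded Leibniz rule is immediate because it is linear in the operator. For $\omega \in K^{\wedge k}(U)$ and $\omega' \in K^{\wedge k'}(U)$, apply the Leibniz rule separately for $d_1$ and $d_2$ and add:
\[
(d_1 + d_2)(\omega \wedge \omega') = (d_1 + d_2)(\omega) \wedge \omega' + (-1)^k \omega \wedge (d_1 + d_2)(\omega').
\]
This uses only bilinearity of $\wedge$ and the fact that the sign $(-1)^k$ is the same for both differentials.

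Flatness is the only nontrivial point. Expanding and using $d_1 \circ d_1 = 0 = d_2 \circ d_2$ gives
\[
d^{k+1} \circ d^k = d_1^{k+1} d_2^k + d_2^{k+1} d_1^k =: [d_1, d_2]^k,
\]
so everything reduces to the vanishing of the graded commutator. To organize this, apply the Leibniz rules of $d_1$ and $d_2$ twice each. The mixed terms cancel in pairs because of the sign $(-1)^{k+1}$ versus $(-1)^k$, and one obtains
\[
[d_1, d_2](\omega \wedge \omega') = [d_1, d_2](\omega) \wedge \omega' + \omega \wedge [d_1, d_2](\omega').
\]
Thus $[d_1, d_2]$ is an even derivation of $K^{\wedge \bullet}$. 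Since $K$ has finite rank, $K^{\wedge \bullet}$ is locally generated as an algebra by $K^{\wedge 0} = R$ and $K$. Therefore $[d_1, d_2]$ vanishes identically as soon as it vanishes on $R$ and on $K$. This can be checked locally in a frame $\set{e^V_i}$ as in \cref{subsec:frames}, where it amounts to finitely many identities for the matrices of $d_1$ and $d_2$.

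The main obstacle is this last verification. The vanishing of $[d_1, d_2]$ in degrees $0$ and $1$ is exactly what the phrase ``easily verified'' relies on, and it is not automatic from the two flatness conditions separately. In the intended uses, such as a partial Dolbeault operator together with a Higgs field $\theta$, it corresponds to the compatibility conditions $\oprt\theta = 0$ and $\theta \wedge \theta = 0$. I would check these degree-$0$ and degree-$1$ identities directly in the setting where the corollary is invoked. Once they hold, the derivation argument above propagates the vanishing to all degrees, which gives flatness and completes the proof that $d_1 + d_2$ is a differential.
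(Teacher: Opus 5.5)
Your reduction is sound. The Leibniz rule is linear in the operator, $d_1d_1$ and $d_2d_2$ vanish, and the remaining term $d_1^{k+1}d_2^k + d_2^{k+1}d_1^k$ is an even derivation of $K^{\wedge \bullet}$, so it is determined by its values on $R$ and $K$.

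The gap is the step you postpone. The vanishing of this anticommutator is not merely unverified; it is false in general. So the corollary as stated does not hold, and no argument can close the gap. Here is a counterexample:
\begin{itemize}
\item Take $Y = \bR^2$ with coordinates $x, y$, $R = \sA^0_{\bR^2}$, $K = \sA^1_{\bR^2}$ and $d_1 = d$.
\item Let $d_2$ be the $R$--linear operator with $d_2^0 = 0$, $d_2^1(g\,dx + h\,dy) = g\,dx \wedge dy$, and $d_2^k = 0$ for $k \geq 2$.
\item Then $d_2$ is a differential. Flatness $d_2^{k+1} \circ d_2^k = 0$ is trivial for all $k$. In total degree $1$ the Leibniz rule is just $R$--linearity together with $d_2^0 = 0$. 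In total degree $\geq 2$ the left side is $0$ by definition, while the right side consists of forms of degree $\geq 3$ on $\bR^2$, which vanish.
\item Yet $(d_1 + d_2)^1((d_1 + d_2)^0(x)) = d(dx) + d_2^1(dx) = dx \wedge dy \neq 0$.
\end{itemize}

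What is true is the statement with the added hypothesis $d_1 d_2 + d_2 d_1 = 0$. Your derivation argument shows it suffices to impose this on $R$ and on $K$, i.e.\ $d_1^1 d_2^0 + d_2^1 d_1^0 = 0$ and $d_1^2 d_2^1 + d_2^2 d_1^1 = 0$. With that hypothesis your proof is complete.

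This does not damage the paper, because wherever a sum of differentials is actually used, it is already known to be a differential. Two examples:
\begin{itemize}
\item $\prt_{X/U} = d_{X/U} - \oprt_{X/U}$ in \cref{prop:harmonic-op-conn}(ii), where $\prt_{X/U}\oprt_{X/U} + \oprt_{X/U}\prt_{X/U} = 0$ follows from $d_{X/U}^2 = 0$ by comparing bidegrees.
\item The zero differential $\lambda D^\bullet + (-\lambda) D^\bullet$ in \cref{prop:rest-conn-ext-prt}(iii).
\end{itemize}

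Your proposed illustration is misplaced, however. A Higgs field $\theta$ is a connection--type operator on a bundle $E$, not a differential on $K$. The conditions you cite, $\oprt\theta = 0$ and $\theta \wedge \theta = 0$, are the cross--term and square conditions for flatness of the sum of connections $\oprt_E + \theta$ (compare \cref{prop:conn-sum}(iv) and \cref{exm:Higgs-bun}). They are not a condition on a sum of two differentials on $K$.
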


\begin{cor}[Scalar Products of Differentials]
Let $(Y, R)$ be an $\bF$--space with a finite rank $R$--module $K$ equipped with a differential
$d$. Let $\lambda \in R(Y)$ be a global section of $R$. Then, $\lambda \cdot d$ is easily verified
to be a differential again.
\end{cor}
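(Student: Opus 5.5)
The plan is to check the two axioms of \cref{defn:diff-on-sh} for the family $\lambda \cdot d = \set{\lambda \cdot d^k}_{k \in \bN}$, where $(\lambda \cdot d^k)(\omega) := \lambda\, d^k(\omega)$ for local sections $\omega$ of $K^{\wedge k}$.

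First, each $\lambda \cdot d^k$ is a morphism of sheaves of $\bF$--modules $K^{\wedge k} \to K^{\wedge (k + 1)}$. This holds because $d^k$ is one and multiplication by the global section $\lambda$ is $R$--linear, hence $\bF$--linear, and commutes with restriction.

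Second, the graded Leibniz rule. Take $\omega \in K^{\wedge k}(U)$ and $\tau \in K^{\wedge k'}(U)$. Since $\lambda$ has degree $0$, the wedge product is $R$--bilinear, and $\lambda$ commutes past forms without a sign. Hence
\[
\lambda\, d(\omega \wedge \tau) = (\lambda\, d\omega) \wedge \tau + (-1)^k \omega \wedge (\lambda\, d\tau),
\]
which is exactly the Leibniz rule for $\lambda \cdot d$.

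Third, flatness, which I expect to be the main obstacle. I will expand using the Leibniz rule for $d$ in degree $(0, k+1)$:
\[
(\lambda \cdot d)(\lambda \cdot d)(\omega) = \lambda\, d(\lambda\, d\omega) = \lambda\, d^0(\lambda) \wedge d\omega + \lambda^2\, d(d\omega) = \lambda\, d^0(\lambda) \wedge d\omega.
\]
So everything reduces to showing that the cross term $\lambda\, d^0(\lambda) \wedge d\omega$ vanishes.

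My approach to the cross term is the following.
\begin{itemize}
\item For $\lambda$ coming from $\bF$, so $\lambda = c \cdot 1$, we have $d^0(1) = d^0(1 \cdot 1) = 2\, d^0(1)$, hence $d^0(1) = 0$. By $\bF$--linearity, $d^0(\lambda) = 0$.
\item The same vanishing holds for any $d^0$--closed or locally constant $\lambda$. This is the situation in which the scalar products are used later in the paper, for instance for the $\lambda$--connections of \cite{GR15}.
\end{itemize}

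For a completely general $\lambda \in R(Y)$, I would look for a reason that $d^0(\lambda) \wedge d\omega = 0$. A natural candidate is that $K$ has rank one, so that $K^{\wedge 2} = 0$ and the cross term lies in a zero sheaf whenever $k \geq 1$. Failing that, I would check whether the corollary must be read with the closedness hypothesis $d^0\lambda = 0$ implicit.

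I expect this is where the argument is genuinely delicate. The identity $\lambda \cdot d \circ \lambda \cdot d = \lambda\, d^0\lambda \wedge d$ shows that flatness holds precisely when $d^0\lambda \wedge d(-)$ vanishes on $\lambda K^{\wedge \bullet}$. The proof will therefore record this computation and conclude flatness in exactly that case.
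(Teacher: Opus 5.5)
\textbf{The flatness step cannot be closed: the corollary is false as stated.} Your checks of $\bF$--linearity and of the graded Leibniz rule are correct. Your flatness computation
\[
(\lambda\cdot d)^{k+1}\circ(\lambda\cdot d)^{k}(\omega) = \lambda\, d^0(\lambda)\wedge d^k(\omega)
\]
is also correct. But no argument can finish the job for an arbitrary global section $\lambda$, because the statement fails.

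Take $Y=\bR^2$ with coordinates $x,y$, $R=\sA^0_Y$, $K=\sA^1_Y$ (locally free of rank $2$), $d$ the exterior derivative, and $\lambda = x$. Then
\[
(\lambda\cdot d)^1\bigl((\lambda\cdot d)^0(y)\bigr) = x\, d(x\,dy) = x\, dx\wedge dy \neq 0,
\]
so $\lambda\cdot d$ violates the flatness axiom. The paper gives no argument beyond ``easily verified'', and none exists without an extra hypothesis.

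\textbf{What your last step should do instead.} Don't hunt for a reason the cross term vanishes. The rank-one escape is unavailable, since $K$ has arbitrary finite rank; moreover, in rank one $K^{\wedge 2}=0$ makes flatness automatic for every $d$, so that cannot be the content of the claim. Instead, record the counterexample and state the corrected result: $\lambda\cdot d$ is a differential if and only if $\lambda\, d^0(\lambda)\wedge d^k(\omega)=0$ for all local sections $\omega$. In particular this holds whenever $d^0(\lambda)=0$, which covers your case of $\lambda$ coming from $\bF$.

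With the hypothesis $\lambda\in\ker{d}$ added, your three steps form a complete proof. This is also the hypothesis the paper actually relies on. The Convention at the start of Section~\ref{sec:mod-st-conn} assumes $\lambda\in\ker{d}$ throughout. Likewise, \cref{prop:conn-scale}(iv) imposes $r\in\ker{d}$ to get flatness of $r\cdot\nabla$, to kill exactly the cross term you found.
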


\begin{defn}[Module of Differentials]
Let $(Y, R)$ be a $\bF$--space with a finite rank $R$--module $K$. Then, the set
\[
D(Y, R, K) = \set[d^\bullet : K^{\wedge \bullet} \to K^{\wedge (\bullet + 1)}]
                 {d \text{ is a differential on } K}
\]
is an $R(Y)$--module.
\end{defn}

\subsection{Examples of Differential Spaces and their Morphisms}
\label{subsec:exm-diff}

\begin{notn}
For a smooth manifold $X$ and $p : E \to X$, an arbitrary smooth vector bundle, we will use the
following notation:
\begin{figure}[H]
\begin{tabularx}{\textwidth}{l c l}
$E^{sec}$ & : & sheaf of smooth sections of $E$ \\
$E_\bC$ & : & complexification of $E$
\end{tabularx}
\end{figure}
We will also use the following notation for the specific bundles and sheaves on $X$ that will be
used throughout the paper:
\begin{figure}[H]
\begin{tabularx}{\textwidth}{l c l}
$C^\infty_X$ & : & $(X \times \bR)^{sec}$, sheaf of smooth real valued functions on $X$ \\
$TX$ & : & tangent bundle of $X$ \\
$T_X$ & : & $(TX)^{sec}$, sheaf of real vector fields on $X$ \\
$\Theta_{X}$ & : & $((TX)_\bC)^{sec}$, sheaf of complex vector fields on $X$ \\
$T^*X$ & : & cotangent bundle of $X$ \\
$T^{*, k}_X$ & : & $((T^*X)^{sec})^{\wedge k}$, sheaf of real $k$--forms on $X$ \\
$\sA^k_X$ & : & $((T^*X_\bC)^{sec})^{\wedge k}$, sheaf of complex $k$--forms on $X$ \\
$\sA^{p, q}_X$ & : & sheaf of complex $(p, q)$--forms on $X$, under an almost complex structure \\
$\Omega^k_X$ & : & sheaf of holomorphic $k$--forms on $X$, under a complex structure \\
$\Omega^{p, q}_X$ & : & sheaf of holomorphic $(p, q)$--forms on $X$, under a complex structure
\end{tabularx}
\end{figure}
\end{notn}

We now provide the main examples of differentials that we will use.

\begin{exm}[Exterior Differentials]\label{exm:ext-diff}
The real and complex exterior differentials:
\[\begin{array}{ccccc}
d_\bR^k &:& T^{*, k}_X &\to& T^{*, k + 1}_X \\
d_\bC^k &:& \sA^{k}_X &\to& \sA^{k + 1}_X
\end{array}\]
are differentials on the sheaves $T^*_X$ and $\sA^1_X$ respectively. We will denote them both
by $d$, unless necessary to distinguish between them.
\end{exm}

\begin{exm}[Dolbeault Operators]\label{exm:Dol-op}
Given an almost complex structure $J : TX_\bC \to TX_\bC$, we have the splitting
$TX_\bC = TX^{0, 1} \oplus TX^{1, 0}$, inducing a splitting $\sA^1 = \sA^{0, 1} \oplus \sA^{0, 1}$
giving the Dolbeault operators $\prt^{p, q} := pr^{p + 1, q} d_\bC \iota^{p, q}$,
$\oprt^{p, q} := pr^{p, q + 1} d_\bC \iota^{p, q}$.
When the complex structure is integrable, we have $d_\bC = \prt + \oprt$ so that
by \cref{prop:rest-diff-ext}, taking $d^{p, q} = \prt^{p, q}, \ol{d}^{p, q} = \oprt^{p, q}$
and denoting the corresponding $D^k$ and $\ol{D}^k$ by simply $\prt^k$ and $\oprt^k$ respectively,
we get differential spaces:
\begin{enumerate}
\item $(X, \sA_X^0, \sA_X^\bullet, \prt^\bullet)$
\item $(X, \sA_X^0, \sA_X^\bullet, \oprt^\bullet)$
\item $(X, \sA_X^0, \sA_X^{\bullet, 0}, \prt^{\bullet, 0})$
\item $(X, \sA_X^0, \sA_X^{0, \bullet}, \oprt^{0, \bullet})$
\end{enumerate}
When there is no confusion, we will simply write $\prt$ and $\oprt$ for all of these operators.
\end{exm}

\begin{exm}[Partial Exterior Differential]\label{exm:prt-ext-diff}
Consider the projections of a product of smooth manifolds $\pi_W : U \times X \to W, W = U, X$.
We then have a canonical splitting $T^*(U \times X) = \pi_U^*T^*U \oplus \pi_X^*T^*X$ inducing a
splitting $\sA^{k}_{U \times X} = \bigoplus_{p + q = k} \pi_U^*\sA^p_U \otimes \pi_X^*\sA^q_X$.
Let $\sA^p_{U/X} = \pi_U^*\sA^p_U$ and $\sA^q_{X/U} = \pi_X^*\sA^q_X$.
Let $pr^{p, q}, \iota^{p, q}$ be as in \cref{defn:rest-diff} and $\iota^k, \ol{\iota}^k$,
as in \cref{prop:rest-diff-ext}. Denote:
\begin{enumerate}
\item $d_{U/X}^p := pr^{p + 1, q}d\iota^{p, q}$
\item $d_{X/U}^q := pr^{p, q + 1}d\iota^{p, q}$
\item $D_{U/X}^k = \iota^k \circ \bigoplus_{p + q = k} d_{U/X}^{p, q}$
\item $D_{X/U}^k = \ol{\iota}^k \circ \bigoplus_{p + q = k} d_{X/U}^{p, q}$
\end{enumerate}
At any $(u, x) \in U \times X$, we can consider coordinate neighbourhoods
$u \in U' \subset U, x \in X' \subset X$ so that $U' \times X'$ is a coordinate neighbourhood of
$(u, x)$. Let the coordinates be $u_i : U' \to \bR, x_j : X' \to \bR$ so that
$\set{du_i} \cup \set{dx_j}$ is a frame for $\sA^1_{U \times X}$. In this frame, we have:
\[
d\br{\sum_{I, J} f_{I, J} du_Idx_J}
= \sum_{I, J} \sum_{i, j} \br{
    \frac{\prt f_{I, J}}{\prt u_i} du_i \wedge du_Idx_J
    + \frac{\prt f_{I, J}}{\prt x_j} dx_j \wedge du_Idx_J
}
\]
where $I, J$ are multi-indices with $|I| + |J| = k$.
The above local formula
shows that:
\[
d = D_{U/X}^k + D_{X/U}^k
\]
Thus, we can apply \cref{prop:rest-diff-ext} to obtain differential spaces:
\begin{enumerate}
\item $(U \times X, \sA_{U \times X}^0, \sA_{U \times X}^{\bullet}, D_{U/X}^\bullet)$
\item $(U \times X, \sA_{U \times X}^0, \sA_{U \times X}^{\bullet}, D_{X/U}^\bullet)$
\item $(U \times X, \sA_{U \times X}^0, \sA_{U/X}^{\bullet}, d_{U/X}^{\bullet})$
\item $(U \times X, \sA_{U \times X}^0, \sA_{X/U}^{\bullet}, d_{X/U}^\bullet)$
\end{enumerate}
We will mainly use (iii) and (iv) above. The associated operators will be called the partial
exterior differentials of $U \times X$ along $X$.
\end{exm}

\begin{exm}[Partial Dolbeault Operators]\label{exm:prt-Dol-op}
Let $X$ be a complex manifold and $U$, a smooth manifold. Consider the splitting
$\sA^k_{U \times X} = \bigoplus_{l + m = k} \pi_U^*\sA^l_U \otimes \pi_X^*\sA^m_X$ as in
\cref{exm:prt-ext-diff}.
The complex structure on $X$ yields a further splitting:
\[
\sA^k_{U \times X} = \bigoplus_{l + m = k} \bigoplus_{p + q = m}
\pi_U^*\sA^l_U \otimes \pi_X^*\sA^{p, q}_X
\]
Denote by $\sA_{X/U}^{p, q}$ the sheaf $\pi_X^*\sA_{X}^{p, q}$.
Let $pr^{p, q}$ and $\iota^{p, q}$ now denote the projection
$\sA_{U \times X}^k \to \sA_{X/U}^{p, q}$ and the inclusion
$\sA_{X/U}^{p, q} \to \sA_{U \times X}^k$ respectively, where $k = p + q$.
Take $\iota^m$ and $\ol{\iota}^m$ to be the inclusions
$\bigoplus_{p + q = m} \sA^{p + 1, q}_{X/U} \to \sA^{m}_{X/U}$
and $\bigoplus_{p + q = m} \sA^{p, q + 1} \to \sA^m_{X/U}$.
Denote:
\begin{enumerate}
\item $\prt_{X/U}^{p, q} := pr^{p + 1, q}d_{X/U}\iota^{p, q}$
\item $\oprt_{X/U}^{p, q} := pr^{p, q + 1}d_{X/U}\iota^{p, q}$
\item $\prt_{X/U}^{k} := \iota^m \bigoplus_{p + q = m} \prt^{p, q}_{X/U}$
\item $\oprt_{X/U}^{k} := \ol{\iota}^m \bigoplus_{p + q = m} \prt^{p, q}_{X/U}$
\end{enumerate}
For any $(u, x) \in U \times X$, we have a holomorphic coordinate
neighbourhood $x \in X' \subset X$ with holomorphic coordinates $z_j : X' \to \bC$ and
anti-holomorphic coordinates $\ol{z}_j : X' \to \bC$. This gives real coordinates:
$x_j = \frac{1}{2}(z_j + \ol{z}_j), y_j = \frac{1}{2i}(z_j - \ol{z}_j) : X' \to \bR$.
We also have a real coordinate neighbourhood
$u \in U' \subset U$ with real coordinates $u_j : U' \to \bR$. Then,
$\set{u_i} \cup \set{x_j} \cup \set{y_j}$ are coordinates for $U' \times X'$.
From the local formula for the differential $d$ along with the definition of $d_{X/U}$ from
\cref{exm:prt-ext-diff} and the standard change between holomorphic and real coordinates on $X'$, we
have the following formula for $d_{X/U}$:
\[
d_{X/U}\br{\sum_{P, Q} f_{P, Q} dz_Pd\ol{z}_Q}
= \sum_{P, Q} \sum_{i, j} \br{
    \frac{\prt f_{P, Q}}{\prt z_i} dz_i \wedge dz_Pd\ol{z}_Q
    + \frac{\prt f_{P, Q}}{\prt \ol{z}_j} d\ol{z}_j \wedge dz_Pd\ol{z}_Q
}
\]
where $P, Q$ are multi-indices with $|P| = p, |Q| = q, p + q = k$.
From this formula, we see that:
\[
d_{X/U}^m = \prt_{X/U}^m + \oprt_{X/U}^m
\]
We can then apply \cref{prop:rest-diff-ext} to obtain differential spaces:
\begin{enumerate}
\item $(U \times X, \sA_{U \times X}^0, \sA_{X/U}^{\bullet}, \prt_{X/U}^\bullet)$
\item $(U \times X, \sA_{U \times X}^0, \sA_{X/U}^{\bullet}, \oprt_{X/U}^\bullet)$
\item $(U \times X, \sA_{U \times X}^0, \sA_{X/U}^{\bullet, 0}, \prt_{X/U}^{\bullet, 0})$
\item $(U \times X, \sA_{U \times X}^0, \sA_{X/U}^{0, \bullet}, \oprt_{X/U}^{0, \bullet})$
\end{enumerate}
We will make use of all of these differential spaces and the operators will be called
the partial Dolbeault operators on $U \times X$ along $X$.
\end{exm}

\begin{rmk}
The partial Dolbeault operators are special cases of the relative Dolbeault operators defined
in \cite[\S II.4]{KS60}.
\end{rmk}

We then discuss examples of morphisms of differential spaces that we will use.

\begin{exm}[Pullback of Differential Forms]\label{exm:ext-diff-morphism}
For any smooth map $f : Y \to Z$ of smooth manifolds, if we let $f^\sharp$ denote the usual
pullback of complex differential forms, we have a morphism of differential spaces:
\[
(f, f^\sharp) : (Y, \sA_Y^0, \sA_Y^{\bullet}, d)
\to (Z, \sA_Z^0, \sA_Z^{\bullet}, d)
\]
\end{exm}

\begin{exm}[Pullback of Partial Differential Forms]\label{exm:prt-ext-diff-morphism}
Let $f \times g : V \times Y \to U \times X$ be a smooth map of smooth manifolds.
Consider the pullback of differential forms along $f \times g : V \times Y \to U \times X$ and
denote it
$(f \times g)^{\sharp, k} : \sA^k_{U \times X} \to (f \times g)_*\sA^k_{V \times Y}$.
Take $(W, Z) = (U, X), (V, Y)$ and consider the splittings
$\sA^k_{W \times Z} \cong \bigoplus_{p + q = k} \sA^p_{W/Z} \oplus \sA^q_{Z/W}$ along with the
projections $pr^k_W : \sA_{W \times Z}^k \to \sA_{Z/W}^k$
and the inclusions $\iota^k_W : \sA_{Z/W}^k \to \sA_{W \times Z}^k$.
By a slight abuse of notation, use the same to denote the projections and inclusions
of the corresponding sub-bundles of the complexified tangent bundles.
Denote $h^{\sharp, k} := pr^k_V \circ (f \times g)^{\sharp, k} \circ \iota^k_U$.
We can then verify that the following diagrams commute:
\[\begin{small}\begin{tikzcd}[column sep=huge]
\sA_{X/U}^k \ar[r, "\iota^k_U"] \ar[d, "h^{\sharp, k}" left] &
\sA_{U \times X}^k \ar[d, "(f \times g)^{\sharp, k}"] \\
(f \times g)_*\sA_{Y/V}^k \ar[r, "(f \times g)_*\iota^k_V" below] &
(f \times g)_*\sA_{V \times Y}^k
\end{tikzcd}
\hspace{1.5em}
\begin{tikzcd}[column sep=huge]
\sA_{X/U}^k \ar[from=r, "pr^k_U" above] \ar[d, "h^{\sharp, k}" left] &
\sA_{U \times X}^k \ar[d, "(f \times g)^{\sharp, k}"] \\
(f \times g)_*\sA_{Y/V}^k \ar[from=r, "(f \times g)_*pr^k_V" below] &
(f \times g)_*\sA_{V \times Y}^k
\end{tikzcd}\end{small}\]
To see this, we first observe that the complexified tangent map
$T(f \times g) \otimes_\bR \bC : T(V \times Y)_\bC \to T(U \times X)_\bC$ is given on fibres over
$(v, y) \in V \times Y, (f(v), g(y)) \in U \times X$ by
$T_vf \otimes \bC \oplus T_yg \otimes \bC$. Then, for any $\omega \in \sA_{X/U}^k$,
we have:
\begin{align*}
((f \times g)^\sharp(\iota^k_U(\omega)))_{(v, y)}
=& \iota^k_U(\omega)_{(f(v), g(y))} \circ (T_vf \oplus T_yg)^{\oplus k} \\
=& \omega_{(f(v), g(y))} \circ pr^k_U \circ (T_vf \oplus T_yg)^{\oplus k} \\
=& \omega_{(f(v), g(y))} \circ T_yg^{\oplus k} \\
\end{align*}
Again, we observe that:
\begin{align*}
((f \times g)_*\iota^k_V(h^\sharp(\omega)))_{(v, y)}
=& (\iota^k_V \circ pr^k_V \circ (f \times g)^\sharp \circ \iota^k_U)(\omega)_{(v, y)} \\
=& ((f \times g)^\sharp \circ \iota^k_U)(\omega)_{(v, y)} \circ pr^k_V \circ \iota^k_V \\
=& ((f \times g)^\sharp \circ \iota^k_U)(\omega)_{(v, y)} \\
=& \iota^k_U(\omega)_{(f(v), g(y))} \circ (T_vf \oplus T_yg)^{\oplus k} \\
=& \omega_{(f(v), g(y))} \circ pr^k_U \circ (T_vf \oplus T_yg)^{\oplus k} \\
=& \omega_{(f(v), g(y))} \circ T_yg^{\oplus k}
\end{align*}
This shows that the first square commutes.
Next, for any $\tau \in \sA_{U \times X}^k$, we observe that:
\begin{align*}
   ((f \times g)_*pr_V^k \circ (f \times g)^\sharp)(\tau)_{(v, y)}
=& (f \times g)^\sharp(\tau)_{(v, x)} \circ \iota_V^k \\
=& \tau_{(f(v), g(y))} \circ (T_vf \oplus T_yg)^{\oplus k} \circ \iota_V^k \\
=& \tau_{(f(v), g(y))} \circ (0 \oplus T_yg)^{\oplus k}
\end{align*}
and:
\begin{align*}
(f^\sharp \circ pr_U^k)(\tau)_{(v, y)}
=& (pr_V^k \circ (f \times g)^\sharp \circ \iota_U^k \circ pr_U^k)(\tau)_{(v, y)} \\
=& ((f \times g)^\sharp \circ \iota_U^k \circ pr_U^k)(\tau)_{(v, y)} \circ \iota_V^k \\
=& (\iota_U^k \circ pr_U^k)(\tau)_{(f(v), g(y))} \circ (T_vf \oplus T_yg)^{\oplus k}
   \circ \iota_V^k \\
=& \tau_{(f(v), g(y))} \circ pr_U^k \circ \iota_U^k \circ (T_vf \oplus T_yg)^{\oplus k}
   \circ \iota_V^k \\
=& \tau_{(f(v), g(y))} \circ (T_vf \oplus T_yg)^{\oplus k} \circ \iota_V^k \\
=& \tau_{(f(v), g(y))} \circ (0 \oplus T_yg)^{\oplus k}
\end{align*}
showing the second square commutes. Thus, by \cref{prop:rest-diff-morphism}, we have a morphism
of differential spaces:
\[
(f \times  g, h^\sharp) : (V \times Y, \sA_{V \times Y}^0, \sA_{Y/V}^{\bullet}, d_{Y/V})
\to (U \times X, \sA_{U \times X}^0, \sA_{X/U}^{\bullet}, d_{X/U})
\]
We may take $U = V = \pt$ to recover \cref{exm:ext-diff-morphism}.
Furthermore, we may take $Y = X, g = \id_X$, which is the case we will use mostly use in this
paper.
\end{exm}

\begin{exm}[Pullback of Partial $(p, q)$--Forms]\label{exm:prt-Dol-morphism}
In the context of \cref{exm:prt-ext-diff-morphism}, let $X, Y$ be complex manifolds and
$g : Y \to X$, holomorphic. Then, for any $(v, y) \in V \times Y$, $T_{(v, y)}(f \times g)$ has a
further splitting, by the holomorphicity of $g$:
\[
T_{(v, y)}(f \times g) \otimes \bC = T_vf \otimes \bC \oplus T_yg \otimes \bC
= T_vf \otimes \bC \oplus T_yg^{0, 1} \oplus T_yg^{1, 0}
\]
where $T_yg^{0, 1}, T_g^{1, 0}$ are the restrictions of $T_yg \otimes \bC$ to $T_yY^{0, 1}$
and $T_yY^{1, 0}$ respectively.
Let $\eta_W^{p, q}, \mu_W^{p, q}$, now denote the
inclusions $\sA_{Z/V}^{p, q} \to \sA_{Z/W}^k$ and
projections $\sA_{Z/W}^k \to \sA_{Z/W}^{p, q}$
respectively.
Let $\xi^{\sharp, p, q}$ now denote
$\mu_V^{p, q} \circ h^{\sharp, k} \circ \eta_U^{p, q}$.
A very similar computation as in \cref{exm:prt-ext-diff-morphism}
shows that the following diagrams commute:
\[\begin{small}\begin{tikzcd}[column sep=huge]
\sA_{X/U}^{p, q} \ar[r, "\eta^{p, q}_U"] \ar[d, "\xi^{\sharp, p, q}" left] &
\sA_{X/U}^k \ar[d, "h^{\sharp, k}"] \\
(f \times g)_*\sA_{Y/V}^{p, q} \ar[r, "(f \times g)_*\eta^{p, q}_V" below] &
(f \times g)_*\sA_{V \times Y}^k
\end{tikzcd}
\hspace{1.5em}
\begin{tikzcd}[column sep=huge]
\sA_{X/U}^{p, q} \ar[from=r, "\mu^{p, q}_U" above] \ar[d, "\xi^{\sharp, p, q}" left] &
\sA_{X/U}^k \ar[d, "h^{\sharp, k}"] \\
(f \times g)_*\sA_{Y/V}^{p, q} \ar[from=r, "(f \times g)_*\mu^{p, q}_V" below] &
(f \times g)_*\sA_{V \times Y}^k
\end{tikzcd}\end{small}\]
Furthermore, the commutativity of the above squares also shows that:
\[
h^{\sharp, k} = \bigoplus_{p + q = k} \xi^{\sharp, p, q}
\]
by the fact the direct sums are products and coproducts of modules at the same time.
Thus, we may apply $\cref{prop:rest-diff-morphism}$ to obtain morphisms of differential
spaces:
\begin{enumerate}
\item $(f \times  g, \xi^{\sharp, \bullet, 0})
: (V \times Y, \sA_{V \times Y}^0, \sA_{Y/V}^{\bullet, 0}, \prt_{Y/V}^{\bullet, 0})
\to (U \times X, \sA_{U \times X}^0, \sA_{X/U}^{\bullet, 0}, \prt_{X/U}^{\bullet, 0})$

\item $(f \times  g, \xi^{\sharp, 0, \bullet})
: (V \times Y, \sA_{V \times Y}^0, \sA_{Y/V}^{0, \bullet}, \oprt_{Y/V}^{0, \bullet})
\to (U \times X, \sA_{U \times X}^0, \sA_{X/U}^{0, \bullet}, \oprt_{X/U}^{0, \bullet})$

\item $(f \times  g, h^{\sharp, \bullet})
: (V \times Y, \sA_{V \times Y}^0, \sA_{Y/V}^{\bullet}, \prt_{Y/V}^{\bullet})
\to (U \times X, \sA_{U \times X}^0, \sA_{X/U}^{\bullet}, \prt_{X/U}^{\bullet})$

\item $(f \times  g, h^{\sharp, \bullet})
: (V \times Y, \sA_{V \times Y}^0, \sA_{Y/V}^{\bullet}, \oprt_{Y/V}^{\bullet})
\to (U \times X, \sA_{U \times X}^0, \sA_{X/U}^{\bullet}, \oprt_{X/U}^{\bullet})$
\end{enumerate}
where $\prt_{Y/V}, \prt_{X/U}, \oprt_{Y/V}, \oprt_{X/U}$ are the partial Dolbeault operators of
\cref{exm:prt-Dol-op}.
\end{exm}

\subsection{$\lambda$--$d$--Connections}
\label{subsec:lambda-d-conn}

\begin{defn}[$\lambda$--$d$--Connections]\label{defn:conn}
Let $\br{Y, R, K, d}$ be a differential space and $\lambda \in R(Y)$, a global section.
A $\lambda$--$d$--connection on a finite rank locally free sheaf
$E$ on $(Y, R)$ is a morphism of sheaves of $\bF$--modules $\nabla : E \to E \otimes K$ satisfying
the Leibniz rule: that is,
for all open $Y' \subset Y$, $s \in E(Y'), r \in R(Y')$,
\[
\nabla_{Y'}(r \cdot s) = \lambda \cdot s \otimes d(r) + r \cdot \nabla_{Y'}(s)
\]
For two $R$--modules $E, F$ with $\lambda$--$d$--connections $\nabla, \nabla'$, respectively,
a morphism of connections $(E, \nabla) \to (F, \nabla')$ is a morphism of $R$--modules $f : E \to F$
making the following diagram commute:
\[\begin{tikzcd}
E \ar[r, "f"] \ar[d, "\nabla" left] & F \ar[d, "\nabla'"] \\
E \otimes K \ar[r, "f \otimes \id_K" below] & F \otimes K
\end{tikzcd}\]
\end{defn}

\begin{rmk}\label{rmk:0-d-conn}
In the context of \cref{defn:conn}, if $\lambda = 0$ or $d = 0$, then $\nabla$ is just an
$R$--linear map and is a $\lambda'$--$d'$--connection for any other $\lambda' \in R(Y)$
and differential $d'$ on $K$.
\end{rmk}

\begin{cor}\label{cor:conn-rest}
Let $(Y, R, K, d)$ be a differential space and $E$ a finite rank locally free $R$--module
with a $\lambda$--$d$--connection $\nabla$. Then, for any open subset $Y'$ of $Y$,
$\nabla|_{Y'}$ is a $\lambda$--$d|_{Y'}$--connection on $E|_{Y'}$.
\end{cor}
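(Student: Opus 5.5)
The plan is to check the two ingredients of \cref{defn:conn} directly on the restricted data: we need a legitimate morphism of sheaves of $\bF$--modules of the right shape, and we need the Leibniz rule on every open subset. Before that, \cref{cor:diff-rest} tells us that $(Y', R|_{Y'}, K|_{Y'}, d|_{Y'})$ is a differential space. We also observe that $\lambda|_{Y'} \in R|_{Y'}(Y') = R(Y')$ is a global section of $R|_{Y'}$, and that $E|_{Y'}$ is a finite rank locally free $R|_{Y'}$--module. For the last point, any local trivialization $(V, \phi^V)$ of $E$ restricts to the local trivialization $(V \cap Y', \phi^V|_{V \cap Y'})$, as in the remark following the definition of local trivializations. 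So the statement is meaningful, with the scalar understood to be $\lambda|_{Y'}$.

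Next, we identify the target of $\nabla|_{Y'}$. Restriction of sheaves to an open subset commutes with tensor products, since sheafification commutes with restriction to opens. This gives a canonical identification $(E \otimes_R K)|_{Y'} \cong E|_{Y'} \otimes_{R|_{Y'}} K|_{Y'}$. Under it, $\nabla|_{Y'}$, whose components are $\nabla_{W}$ for opens $W \subset Y'$, is a morphism of sheaves of $\bF$--modules $E|_{Y'} \to E|_{Y'} \otimes K|_{Y'}$.

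For the Leibniz rule, note that the open subsets of $Y'$ are exactly the open subsets of $Y$ contained in $Y'$. Moreover, sections of $E|_{Y'}$, $R|_{Y'}$ and $K|_{Y'}$ over such a $W$ are just sections of $E$, $R$ and $K$ over $W$, and $(d|_{Y'})_W = d_W$. Hence for $s \in E(W)$ and $r \in R(W)$ the required identity
\[
(\nabla|_{Y'})_W(r \cdot s) = \lambda|_{W} \cdot s \otimes d(r) + r \cdot (\nabla|_{Y'})_W(s)
\]
is literally the Leibniz rule for $\nabla$ applied to the open set $W \subset Y$.

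The only point needing any care is the compatibility of restriction with $\otimes$. Even there no real obstacle is expected. If one prefers to avoid it, $E$ is locally free of finite rank, so $E \otimes K$ is locally a finite direct sum of copies of $K$. The identification can then be checked on a trivializing cover intersected with $Y'$, where both sides are $K|_{V \cap Y'}^{\oplus n}$.
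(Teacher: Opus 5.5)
Your proposal is correct and matches the paper's approach: the paper states this corollary without proof because it follows directly from \cref{defn:conn} and \cref{cor:diff-rest}, and your argument is that direct verification. You restrict $\lambda$, note that restriction to an open subset commutes with $\otimes$, and observe that the Leibniz rule on an open $W \subset Y'$ is literally the Leibniz rule for $\nabla$ on $W \subset Y$.
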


\begin{cor}\label{cor:conn-coll}
Let $(Y, R, K, d)$ be a differential space and $E$, a finite rank locally free $R$--module, such
that there exists an open cover $Y = \bigcup_{\alpha \in I} Y_\alpha$ and
$\lambda|_{Y_\alpha}$--$d|_{Y_\alpha}$--connections
$\nabla^\alpha : E|_{Y_\alpha} \to (E \otimes K)|_{Y_\alpha}$ satisfying that
$\nabla^\alpha|_{Y_\alpha \cap Y_\beta} = \nabla^\beta|_{Y_\alpha \cap Y_\beta}$.
Then, the unique sheaf map $\nabla : E \to E \otimes K$ given by gluing
\cite[\href{https://stacks.math.columbia.edu/tag/04TN}{Tag 04TN}]{stacks-project}
is a $\lambda$--$d$--connection.
\end{cor}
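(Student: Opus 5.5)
The statement is Corollary \ref{cor:conn-coll}, and I would argue it directly from the sheaf property, checking the Leibniz rule locally. Since $E \otimes K$ is a sheaf, the compatible family $\set{\nabla^\alpha}_{\alpha \in I}$ glues to a unique morphism of sheaves of $\bF$--modules $\nabla : E \to E \otimes K$ with $\nabla|_{Y_\alpha} = \nabla^\alpha$. This uses the gluing of sheaf morphisms along an open cover cited in the statement, applied in the category of sheaves of $\bF$--modules. Strictly, the statement requires $E \otimes K$ to be a sheaf, so one takes the sheafified tensor product, as is implicit throughout the paper. Additivity and $\bF$--linearity of $\nabla$ are then automatic.

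It remains to verify the Leibniz rule. Fix an open $Y' \subset Y$, a section $s \in E(Y')$ and $r \in R(Y')$, and set
\[
\delta := \nabla_{Y'}(r \cdot s) - \lambda \cdot s \otimes d(r) - r \cdot \nabla_{Y'}(s) \in (E \otimes K)(Y').
\]
I would show $\delta = 0$ using the separation (locality) axiom for the cover $\set{Y' \cap Y_\alpha}_{\alpha \in I}$ of $Y'$, so it suffices to show $\delta|_{Y' \cap Y_\alpha} = 0$ for every $\alpha$.

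For this, restriction to $Y' \cap Y_\alpha$ commutes with every term in $\delta$. It commutes with $\nabla$ because $\nabla$ is a sheaf morphism and $\nabla|_{Y_\alpha} = \nabla^\alpha$. It commutes with $d$ because $d$ is a morphism of sheaves, and with the module actions and the tensor product because these are compatible with restriction. Hence
\[
\delta|_{Y' \cap Y_\alpha} = \nabla^\alpha_{Y' \cap Y_\alpha}(r|\cdot s|) - \lambda| \cdot s| \otimes d(r|) - r| \cdot \nabla^\alpha_{Y' \cap Y_\alpha}(s|),
\]
where $|$ denotes restriction to $Y' \cap Y_\alpha$. This vanishes because $\nabla^\alpha$ is a $\lambda|_{Y_\alpha}$--$d|_{Y_\alpha}$--connection, using Corollary \ref{cor:diff-rest} to regard $d|_{Y_\alpha}$ as a differential on $Y_\alpha$.

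No step is a real obstacle. The only point needing care is bookkeeping: confirming that the restriction maps of $E \otimes K$ are compatible with the $R$--module structure and with $s \otimes d(r)$, and that the gluing is carried out for sheaves with values in $E \otimes K$ rather than the presheaf tensor product.
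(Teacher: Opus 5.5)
Your proposal is correct and takes the same route as the paper: restrict to $W \cap Y_\alpha$, apply the Leibniz rule for $\nabla^\alpha$ there, and conclude by the separation axiom for sections of $E \otimes K$. Your version is slightly more careful than the paper's: it keeps the factor $\lambda$ in the Leibniz term, which the paper's displayed computation drops, and it notes that $\bF$--linearity comes for free from gluing in the category of sheaves of $\bF$--modules.
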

\begin{proof}
Let $W \subset Y$ be any open subset and consider $s \in E(W), r \in R(W)$. Consider the
open subsets $W_\alpha := W \cap Y_\alpha$ and the restrictions
$s^\alpha := s|_{W_\alpha}, r^\alpha := r|_{W_\alpha}$. We then have:
\[
\nabla(r \cdot s)|_{W_\alpha}
= \nabla^\alpha(r^\alpha \cdot s^\alpha)
= s^\alpha \otimes d(r^\alpha) + r^\alpha \cdot \nabla^\alpha(s^\alpha)
= \br{s \otimes d(r) + r \cdot \nabla(s)}|_{W_\alpha}
\]
for each $\alpha \in I$. If two sections are equal when restricted to all members of an open cover,
then they are equal. This shows that $\nabla$ satisfies the Leibniz rule over $W$, which was an
arbitrary open subset of $Y$. $\bF$--linearity is similar.
\end{proof}

\begin{defn}[Connection Form]\label{defn:conn-form}
Let $\nabla$ be a $\lambda$--$d$--connection on a locally free $R$--module $E$ of finite rank $n$
over a differential space $(Y, R, K, d)$. Let $(V, \phi_V)$ be a local trivialization of $E$
with a frame $\set{e^V_i}_{i = 1}^n$.
Then, we have a sheaf map $d^V : E|_{V} \to (E \otimes K)|_{V}$ defined by the formula:
\[
d^V\br{\sum_{i = 1}^n s^V_i e^V_i} := \sum_{i = 1}^n d(s^V_i) \otimes e^\alpha_i
\]
where $s^V_i \in R(V)$.
Let $\nabla^V$ denote the sheaf map
$\nabla|_{V} : E|_{Y_\alpha} \to (E \otimes K)|_{Y_\alpha}$.
We will call the sheaf map $A^V : E|_{V} \to (E \otimes K)_{V}$ defined by
$A^V := \nabla^V - \lambda d^V$ the connection form of $\nabla$ over $V$.
\end{defn}

\begin{prop}
In the context of \cref{defn:conn-form}, $A^V$ is $R|_{V}$--linear.
\end{prop}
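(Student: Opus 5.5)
The plan is to show first that $d^V$ is itself a $1$--$d$--connection on $E|_V$, that is, it satisfies the Leibniz rule with $\lambda = 1$. Then $A^V = \nabla^V - \lambda d^V$ is a difference of two $\lambda$--$d$--connections, so the derivative terms cancel. Additivity ($\bF$--linearity) of $A^V$ is immediate, since $\nabla^V$ is a map of sheaves of $\bF$--modules and $d^V$ is additive because $d$ is. Thus only compatibility with multiplication by sections of $R|_V$ needs checking.

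For the Leibniz rule of $d^V$, fix an open $W \subset V$, a section $r \in R(W)$, and $s = \sum_i s^V_i e^V_i \in E(W)$ with $s^V_i \in R(W)$. Then $r \cdot s = \sum_i (r s^V_i) e^V_i$. I would apply the degree-zero case of the graded Leibniz rule from \cref{defn:diff-on-sh}, namely $d(r s^V_i) = d(r) s^V_i + r\, d(s^V_i)$; in degree $0$ the wedge with a function is just module multiplication. This gives
\[
d^V(r \cdot s) = \sum_i d(r s^V_i) \otimes e^V_i = s \otimes d(r) + r \cdot d^V(s),
\]
where I use the identification $E \otimes K \cong K \otimes E$ implicit in the definition, and the fact that $\sum_i s^V_i\, d(r) \otimes e^V_i$ corresponds to $s \otimes d(r)$.

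With this in hand, the Leibniz rule for $\nabla$ from \cref{defn:conn} gives
\[
A^V(r s) = \lambda\, s \otimes d(r) + r \nabla^V(s) - \lambda\big(s \otimes d(r) + r\, d^V(s)\big) = r\, A^V(s).
\]
So $A^V$ is $R|_V$--linear on every open $W \subset V$, and compatibility with restrictions is inherited from $\nabla$ and $d$.

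The only genuinely delicate step is bookkeeping rather than substance. One has to be careful about the order of tensor factors, since the paper writes both $s \otimes d(r)$ and $d(s_i) \otimes e_i$. One also has to check that $d^V$ is well defined as a sheaf map: coefficients in the frame are unique and restrict compatibly, because the $e^W_i$ are restrictions of the $e^V_i$. I expect no real obstacle beyond these points.
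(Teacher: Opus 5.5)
Your proposal is correct and takes essentially the same approach as the paper. Both compute $A^V(r\cdot s)$ from the twisted Leibniz rule for $\nabla$ and the degree-zero Leibniz rule for $d$, and cancel the $\lambda\, s\otimes d(r)$ terms. The only difference is that the paper checks this on frame elements $r e^V_i$, so it needs the separate observation $d(1)=0$ to get $A^V(e^V_i)=\nabla^V(e^V_i)$ and then extends by additivity, whereas your Leibniz rule for $d^V$ on arbitrary sections makes that step unnecessary.
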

\begin{proof}
We observe that for any local section $r$ of $R|_{V}$:
\[
A^V(re^V_i) = \nabla^V(re^V_i) - \lambda d^V(re^V_i)
= \lambda d(r) \otimes e^V_i + r\nabla^V(e^V_i) - \lambda d(r) \otimes e^V_i
= r \nabla^V(e^V_i)
\]
We then observe that:
\[
A^V(e^V_i) = \nabla^V(e^V_i) - \lambda d(1) \otimes e^V_i
= \nabla^V(e^V_i)
\]
where $d(1) = 0$ because $d(1) = d(1 \cdot 1) = d(1) \cdot 1 + 1 \cdot d(1) = d(1) + d(1)$.
Therefore,
\[
A^V(re^V_i) = r A^V(e^V_i)
\]
Of course, additivity follows from the fact that a pointwise difference of additive maps is again
additive.
\end{proof}

\begin{defn}[Connection Matrix]\label{defn:conn-matrix}
In the context of \cref{defn:conn-form}, there exist sections $A^V_{ij} \in K(V)$ such that
\[
A^V_{e_{j}} = \sum_{i} e^V_i \otimes A^V_{ij}
\]
so that for any general sections $s = \sum_{i} s^V_ie^V_i \in E(V)$, we have:
\[
A^V\br{\sum_{i} s^V_ie^V_i} = \sum_{i} e^V_i \otimes \br{\sum_{j} A^V_{ij} s^V_j}
\]
Then, $[A^V_{ij}]$ is called the connection matrix of $\nabla$ in the $\set{e^V_i}_i$ frame.
\end{defn}

\begin{prop}\label{prop:diff-frame-change}
In the context of \cref{defn:conn-form}, consider another local trivialization $(W, \phi^W)$
with a corresponding frame $\set{e^W_i}$, and transition functions
$g^{VW}$ and $g^{WV}$ as in \cref{rmk:frame-change}. Let $[g^{WV}_{ij}]$ be the matrix of
$g^{WV}$ in the $\set{e^V_i}_i$ frame. Consider the $R|_{V \cap W}$--linear map
$d(g^{WV}) : E|_{V \cap W} \to (E \otimes K)|_{V \cap W}$
defined over any open $W' \subset V \cap W$ by the formula:
\[
d(g^{WV})\br{\sum_{j = 1}^n s^V_i e^V_j}
= \sum_{i = 1}^n e^V_i \otimes \br{\sum_{j = 1}^n d\br{g^{WV}_{ij}} \wedge s^V_j}
\]
Then, the following identity holds:
\[
d^V = (g^{VW} \otimes \id_K)d(g^{WV}) + d^W
\]
\end{prop}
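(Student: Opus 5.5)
The identity compares two $R|_{V \cap W}$--linear-up-to-derivation operators, so I will prove it on $V \cap W$ by evaluating both sides on an arbitrary local section. Both $d^V$ and $d^W$ are additive and satisfy the same Leibniz rule $d^V(rs) = d(r) \otimes s + r\, d^V(s)$ (and likewise for $d^W$). The operator $d(g^{WV})$ and the map $g^{VW} \otimes \id_K$ are $R$--linear. Hence the difference $d^V - d^W$ is $R|_{V \cap W}$--linear. It therefore suffices to check the identity on the frame elements $e^V_j$ over an arbitrary open $W' \subset V \cap W$. At the end I will also observe that it holds for a general section $s = \sum_j s^V_j e^V_j$ by linearity.

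\emph{Step 1: compute $d^V - d^W$ on the frame.} By definition, $d^V(e^V_j) = d(1) \otimes e^V_j = 0$, using $d(1) = 0$ as shown in the proof that $A^V$ is linear. For $d^W$, I expand $e^V_j$ in the $W$--frame using \cref{rmk:frame-change}: $e^V_j = \sum_i g^{VW}_{ij} e^W_i$. This gives $d^W(e^V_j) = \sum_i d(g^{VW}_{ij}) \otimes e^W_i$. So the claim reduces to showing
\[
(g^{VW} \otimes \id_K)\, d(g^{WV})(e^V_j) = -\sum_i d(g^{VW}_{ij}) \otimes e^W_i .
\]

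\emph{Step 2: rewrite the left side.} By the defining formula, $d(g^{WV})(e^V_j) = \sum_i e^V_i \otimes d(g^{WV}_{ij})$. Applying $g^{VW}$, which sends $e^W_i \mapsto e^V_i$, requires first expressing $e^V_i$ back in the $W$--frame. This produces a sum of the form $\sum_{i,k} e^W_k \otimes (\text{matrix of } g^{VW})_{k\cdot}\, d(g^{WV})_{\cdot j}$, i.e. a matrix product $G\, dH$, where $G$ and $H$ are the matrices of $g^{VW}$ and $g^{WV}$ in the relevant frames.

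\emph{Step 3: use invertibility.} Since $g^{VW} g^{WV} = \id$, the matrices satisfy $GH = I$. Applying $d$ entrywise with the degree-$0$ Leibniz rule and $d(1) = d(0) = 0$ gives $dG\, H + G\, dH = 0$, hence $G\, dH = -dG\, H$. Substituting this into Step 2 and contracting $H$ against the frame change turns the left side into $-\sum_i d(g^{VW}_{ij}) \otimes e^W_i$, as required.

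The main obstacle is the index and frame bookkeeping. The paper's conventions mix matrices of $g^{VW}$ and $g^{WV}$ written in different frames, and the tensor factors appear in either order ($e \otimes \omega$ versus $\omega \otimes e$). I will first fix once and for all which frame each matrix is written in, using \cref{rmk:frame-change}, and then verify that the transition-matrix identity in Step 3 lands the indices in the correct positions. The analytic content of the proof is only the Leibniz rule in degree $0$ together with $d(1) = 0$.
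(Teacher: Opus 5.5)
Your reduction in Step~1 is sound ($d^V - d^W$ is indeed $R$--linear). Step~3, however, cannot be completed, and the obstruction is not bookkeeping. With $d(g^{WV})$ defined exactly as in the statement---the entrywise differential of $[g^{WV}_{ij}]$ acting on the $V$--coordinates $s^V_j$ and landing on the $e^V_i$---the identity fails in general once $n \geq 2$. Write $G = [g^{VW}_{ij}]$ and $H = [g^{WV}_{ij}] = G^{-1}$. Since $g^{VW}(e^V_i) = \sum_k g^{VW}_{ki} e^V_k$, the left side of your reduced claim is $\sum_k e^V_k \otimes (G\,dH)_{kj}$; the frame vectors here are $e^V_k$, not the $e^W_k$ of your Step~2. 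Moreover $G\,dH = -dG\,H$. The right side, rewritten via $e^W_i = \sum_k g^{WV}_{ki} e^V_k$, is $-\sum_k e^V_k \otimes (H\,dG)_{kj}$. So you would need $dG\,H = H\,dG$, i.e.\ that $G$ commutes with $dG$, which holds in rank one but not in general. Concretely, on $\bR$ with $K$ the $1$--forms, take $G = \bigl[\begin{smallmatrix} e^x & e^x \\ 0 & 1 \end{smallmatrix}\bigr]$, so $H = \bigl[\begin{smallmatrix} e^{-x} & -1 \\ 0 & 1 \end{smallmatrix}\bigr]$. The second column of $H$ is constant, so $d(g^{WV})(e^V_2) = 0$, whereas $d^W(e^V_2) = d^W(e^x e^W_1 + e^W_2) = e^W_1 \otimes e^x\,dx \neq 0 = d^V(e^V_2)$. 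The statement as literally written is therefore false, and no proof of it can succeed.

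The identity that does hold has $d(g^{WV})$ acting on $W$--coordinates: $\sum_j s^W_j e^W_j \mapsto \sum_i e^W_i \otimes \sum_j d(g^{WV}_{ij})\, s^W_j$. This is also the reading under which \cref{prop:conn-form-compat} yields the standard transformation law $A^\beta = \lambda\,G\,dH + G A^\alpha H$ recorded entrywise in \cref{rmk:conn-descent-data-matrix} (with $V = \alpha$, $W = \beta$). The paper's proof tacitly uses this reading. It works on a general section, expands $s^V_i = \sum_j g^{WV}_{ij} s^W_j$, applies the Leibniz rule, and collects $\sum_i g^{WV}_{ij} e^V_i = e^W_j$ into $d^W(s)$. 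In its last line it then identifies $\sum_{i,j} d(g^{WV}_{ij})\, s^W_j \otimes e^W_i$ with $d(g^{WV})(s)$. Under that reading your frame-evaluation strategy works, and it is simplest if you test on $e^W_j$ rather than $e^V_j$. Then $d^W(e^W_j) = 0$ and $d^V(e^W_j) = \sum_i e^V_i \otimes d(g^{WV}_{ij}) = (g^{VW} \otimes \id_K)\, d(g^{WV})(e^W_j)$, with no invertibility needed. If you test on $e^V_j$ instead, the left side becomes $\sum_k e^V_k \otimes (dH\,G)_{kj}$, and differentiating $HG = I$ gives exactly the required $dH\,G = -H\,dG$.
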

\begin{proof}
Let $s \in E(W')$. There are sections $s^V_i, s^W_i \in R(W), i = 1, \dots, n$
such that $s = \sum_{i = 1}^n s^V_i e^V_i = \sum_{i = 1}^n s^W_ie^W_i$.
We then have the following by \cref{rmk:frame-change}:
\[
s^V_i = \sum_{j = 1}^n g^{WV}_{ij} s^W_j
\]
This yields:
\begin{align*}
 & d^V(s) \\
=& \sum_{i = 1}^n d(s^V_i) \otimes e^V_i \\
=& \sum_{i = 1}^n d\br{\sum_{j = 1}^n g^{WV}_{ij}s^W_j} \otimes e^V_i \\
=& \sum_{i = 1}^n \sum_{j = 1}^n \br{d(g^{WV}_{ij}) \cdot s^W_j
                      + g^{VW}_{ij} \cdot d(s^W_j)} \otimes e^V_i \\
=& \sum_{i = 1}^n \sum_{j = 1}^n d(g^{WV}_{ij}) \cdot s^W_j \otimes e^V_i
   + \sum_{i = 1}^n \sum_{j = 1}^n d(s^W_j) \otimes g^{VW}_{ij} \cdot e^V_i \\
=& \sum_{i = 1}^n \sum_{j = 1}^n d(g^{WV}_{ij}) \cdot s^W_j
                                 \otimes g^{VW}(e^W_i)
   + \sum_{j = 1}^n d(s^W_j) \otimes
                    \br{\sum_{i = 1}^n g^{VW}_{ij} \cdot e^V_i} \\
=& (\id_K \otimes g^{VW})\br{
    \sum_{i = 1}^n \sum_{j = 1}^n d(g^{WV}_{ij}) \cdot s^W_j \otimes e^W_i}
   + \sum_{j = 1}^n d(s^W_j) \otimes e^W_j \\
=& ((\id_K \otimes g^{VW})d(g^{WV}) + d^W)(s)
\end{align*}
\end{proof}

\begin{prop}\label{prop:conn-form-compat}
In the context of \cref{defn:conn-form}, if $(W, \phi_W)$ is another local trivialization with $A^W$
the connection form of $\nabla$ over $W$, then the following identity holds:
\[
A^W = \lambda \cdot (g^{VW} \otimes \id_K) \circ d(g^{WV}) + A^V
\]
\end{prop}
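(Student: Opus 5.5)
The plan is to work on an arbitrary open $W' \subset V \cap W$, where both connection forms are defined. From \cref{defn:conn-form} we have $A^V = \nabla - \lambda d^V$ and $A^W = \nabla - \lambda d^W$ there. Since $\nabla|_{V \cap W}$ is one and the same sheaf map in both cases, the $\nabla$ terms cancel when the two forms are subtracted:
\[
A^W - A^V = \lambda\br{d^V - d^W}.
\]

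The second step is to invoke \cref{prop:diff-frame-change}, which gives $d^V - d^W = (g^{VW} \otimes \id_K)\, d(g^{WV})$ on $V \cap W$. Multiplying by the global section $\lambda$ and rearranging yields exactly
\[
A^W = \lambda \cdot (g^{VW} \otimes \id_K) \circ d(g^{WV}) + A^V,
\]
which is the claimed identity. Both sides are sheaf maps on $V \cap W$, so agreement on every $W'$ suffices. Multiplication by $\lambda$ commutes with $g^{VW} \otimes \id_K$ because the latter is $R$--linear; this justifies moving $\lambda$ to the front.

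The only point requiring care is bookkeeping of conventions rather than mathematics. \cref{prop:diff-frame-change} writes $\id_K \otimes g^{VW}$ in its proof but $g^{VW} \otimes \id_K$ in its statement, and the ordering of $E \otimes K$ versus $K \otimes E$ is used interchangeably. I would fix the identification $E \otimes K \cong K \otimes E$ once and read the proposition's output in the same convention as $A^V$ and $A^W$. I would also note that $d^V$ and $d^W$ are only defined on $V$ and $W$ respectively, so every identity is asserted on $V \cap W$. No further computation with matrices is needed; if desired, one could additionally record the matrix form $[A^W_{ij}]$ by applying both sides to the frame $e^W_j$, but that is not required for the statement.
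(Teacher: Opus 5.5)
Your proposal is correct and is essentially the paper's proof: both write $\nabla = \lambda d^V + A^V = \lambda d^W + A^W$ on $V \cap W$, rearrange to $A^W = \lambda(d^V - d^W) + A^V$, and then substitute \cref{prop:diff-frame-change}. The $E \otimes K$ versus $K \otimes E$ ordering you flag is harmless. The one convention that genuinely matters is that $d(g^{WV})$ in \cref{prop:diff-frame-change} be read with $W$--frame coefficients, $\sum_j s^W_j e^W_j \mapsto \sum_i e^W_i \otimes \sum_j d(g^{WV}_{ij})\, s^W_j$, which is what that proposition's proof actually uses; with its displayed $V$--frame formula, the identity fails for non-commuting transition matrices.
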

\begin{proof}
Using \cref{prop:diff-frame-change}, we observe that:
\begin{align*}
     & (\lambda \cdot d^V + A^V)(s) = (\lambda \cdot d^W + A^W)(s) = \nabla(s) \\
\iff & A^W(s) = (\lambda \cdot (d^V - d^W)  + A^V)(s) \\
\iff & A^W(s) = (\lambda \cdot (g^{VW} \otimes \id_K)d(g^{WV}) + A^V)(s)
\end{align*}
\end{proof}

\begin{defn}[Descent Data for Connections]\label{defn:conn-descent-data}
Let $(Y, R, K, d)$ be a differential space and $E$, a locally free $R$--module of finite rank $n$.
Let $Y = \bigcup_{\alpha \in I} Y_\alpha$ with local trivializations $(Y^\alpha, \phi^\alpha)$
of $E$. Let be the corresonding frames $\set{e^\alpha_i}_{i = 1}^n$ for $E|_{Y_\alpha}$.
Denote the transition functions $g^{Y_\alpha, Y_\beta}$ as $g^{\alpha\beta}$ and the $(i, j)$--entry
of the matrix of such a transition function in the $\set{e^\beta_i}_{i = 1}^n$ frame as
$g^{\alpha\beta}_{ij}$.
Consider sections $A^\alpha_{ij} \in K(Y_\alpha)$ and let
$A^\alpha : E|_{Y_\alpha} \to (E \otimes K)|_{Y_\alpha}$ be $R|_{Y_\alpha}$--linear maps
defined by the formula:
\[
A^\alpha\br{\sum_{i = 1}^n s^\alpha_i e^\alpha_i} :=
    \sum_{i = 1}^n e^\alpha_i \otimes \br{\sum_{j = 1}^n A^\alpha_{ij} \wedge s^\alpha_j}
\]
That is, $[A^\alpha_{ij}]$ is the matrix of
$A^\alpha$ in the $\set{e^\alpha_i}_{i = 1}^n$ frame.
Suppose, for all $\alpha, \beta \in I$, $i, j = 1, \dots, n$,
the following identity holds over $Y_\alpha \cap Y_\beta$:
\[
A^\beta = \lambda \cdot (g^{\alpha\beta} \otimes \id_K) \circ d(g^{\beta\alpha}) + A^\alpha
\]
Then, we call the collection $\set[(\phi^\alpha, A^\alpha_{ij})]{\alpha \in I, i, j = 1, \dots, n}$
a descent datum for a $\lambda$--$d$--connection on $E$.
\end{defn}

\begin{rmk}\label{rmk:conn-descent-data-matrix}
In the context of \cref{defn:conn-descent-data},
considering the $(i, j)$--entries of the matrix of
$(g^{\alpha\beta} \otimes \id_K) \circ (\lambda \cdot d(g^{\beta\alpha})
+ A^\alpha \circ g^{\beta\alpha})$ in the $\set{e^\beta_k}_{k = 1}^n$ frame, the identity involving
$A^\alpha, A^\beta$ is precisely the entry-wise identity of sections:
\[
A^\beta_{ij}
= \lambda \cdot \sum_{k = 1}^n g^{\alpha\beta}_{ik} \wedge d(g^{\beta\alpha}_{kj})
  + \sum_{k = 1}^n \sum_{l = 1}^n
        g^{\alpha\beta}_{ik} \wedge A^\alpha_{kl} \wedge g^{\beta\alpha}_{lj}
\]
\end{rmk}

\begin{prop}\label{prop:conn-form-coll}
In the context of \cref{defn:conn-descent-data}, there exists a unique $\lambda$--$d$--connection
$\nabla : E \to E \otimes K$ whose connection form over $Y_\alpha$ is $A^\alpha$ for all
$\alpha \in I$.
\end{prop}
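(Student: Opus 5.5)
The plan is to build $\nabla$ locally from the descent datum, glue the local pieces with \cref{cor:conn-coll}, and get uniqueness from the sheaf property.

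\textbf{Local connections.} On each $Y_\alpha$, set
\[
\nabla^\alpha := \lambda \cdot d^{Y_\alpha} + A^\alpha : E|_{Y_\alpha} \to (E \otimes K)|_{Y_\alpha},
\]
where $d^{Y_\alpha}$ is the frame-wise differential of \cref{defn:conn-form} for the frame $\set{e^\alpha_i}_{i = 1}^n$. Additivity is clear. For the Leibniz rule, take a local section $r$ of $R$ and $s = \sum_i s^\alpha_i e^\alpha_i$. Then $d^{Y_\alpha}(rs) = \sum_i d(r s^\alpha_i) \otimes e^\alpha_i$. Expanding with the Leibniz rule for $d$ gives $s \otimes d(r) + r\, d^{Y_\alpha}(s)$, up to the tensor-factor conventions used in the paper. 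Since $A^\alpha$ is $R$--linear by construction, $\nabla^\alpha$ is a $\lambda|_{Y_\alpha}$--$d|_{Y_\alpha}$--connection. Its connection form over $Y_\alpha$ is $\nabla^\alpha - \lambda d^{Y_\alpha} = A^\alpha$ by definition.

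\textbf{Agreement on overlaps.} On $Y_\alpha \cap Y_\beta$, \cref{prop:diff-frame-change} gives
\[
d^{Y_\alpha} = (g^{\alpha\beta} \otimes \id_K) d(g^{\beta\alpha}) + d^{Y_\beta},
\]
possibly after matching the indices and the order of tensor factors. The descent condition says
\[
A^\beta = \lambda (g^{\alpha\beta} \otimes \id_K) \circ d(g^{\beta\alpha}) + A^\alpha.
\]
Substituting both into $\nabla^\alpha = \lambda d^{Y_\alpha} + A^\alpha$, the correction terms cancel, leaving $\lambda d^{Y_\beta} + A^\beta = \nabla^\beta$. This is the computation of \cref{prop:conn-form-compat} read in reverse.

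This is the main obstacle. The indexing and sign conventions for $g^{\alpha\beta}$ versus $g^{\beta\alpha}$ in \cref{prop:diff-frame-change} must line up exactly with those in the descent condition of \cref{defn:conn-descent-data}. I would check this at the level of matrix entries using \cref{rmk:conn-descent-data-matrix} and \cref{rmk:frame-change}, evaluating both sides on a frame element $e^\beta_j$.

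\textbf{Gluing and uniqueness.} Since $\nabla^\alpha$ and $\nabla^\beta$ agree on overlaps, \cref{cor:conn-coll} glues them to a $\lambda$--$d$--connection $\nabla$ on $E$ with $\nabla|_{Y_\alpha} = \nabla^\alpha$. Its connection form over $Y_\alpha$ is therefore $A^\alpha$.

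For uniqueness, let $\nabla'$ be another $\lambda$--$d$--connection with connection forms $A^\alpha$. Then on each $Y_\alpha$,
\[
\nabla'|_{Y_\alpha} = \lambda d^{Y_\alpha} + A^\alpha = \nabla|_{Y_\alpha}.
\]
Maps of sheaves that agree on every member of an open cover are equal, so $\nabla' = \nabla$.
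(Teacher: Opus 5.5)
Your proposal is correct and is essentially the paper's own proof. You define $\nabla^\alpha = \lambda d^\alpha + A^\alpha$, check the Leibniz rule, combine \cref{prop:diff-frame-change} with the descent identity to get $\nabla^\alpha = \nabla^\beta$ on overlaps, glue via \cref{cor:conn-coll}, and obtain uniqueness from the sheaf property, all as the paper does. The convention issue you flag as the main obstacle does not arise: taking $V = Y_\alpha$, $W = Y_\beta$ in \cref{prop:diff-frame-change} yields exactly the term $(g^{\alpha\beta} \otimes \id_K) d(g^{\beta\alpha})$ that appears in the descent condition.
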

\begin{proof}
We will show that the maps
$\nabla^\alpha := \lambda d^\alpha + A^\alpha : E|_{Y_\alpha} \to (E \otimes K)|_{Y_\alpha}$
are $\lambda|_{Y_\alpha}$--$d|_{Y_\alpha}$--connections which agree on the intersections
$Y_{\alpha\beta} := Y_\alpha \cap Y_\beta$, and then apply \cref{cor:conn-coll}, which guarantees
both existence and uniqueness. The $\bF$--linearity follows from those of $\lambda \cdot d^\alpha$
and $A^\alpha$ separately. For the Leibniz rule, we observe that, for all
open $U \subset Y_\alpha$, $r, s^\alpha_i \in R(U), i = 1, \dots, n$ with
$s = \sum_{i = 1}^n s^\alpha_i e^\alpha_i \in E(U)$, we have:
\begin{align*}
 & \nabla^\alpha(r \cdot s) \\
=& (\lambda \cdot d^\alpha + A^\alpha)(s) \\
=& \lambda \cdot \sum_{i = 1}^n e^\alpha_i \otimes d(r \cdot s^\alpha_i)
    + r \cdot A^\alpha(s) \\
=& \lambda \cdot \sum_{i = 1}^n e^\alpha_i \otimes (d(r) \cdot s^\alpha_i + r \cdot d(s^\alpha_i))
    + r \cdot A^\alpha(s) \\
=& \lambda \cdot \sum_{i = 1}^n s^\alpha_i e^\alpha_i \otimes d(r)
    + r \cdot \lambda \cdot \sum_{i = 1}^n e^\alpha_i \otimes d(s^\alpha_i)
    + r \cdot A^\alpha(s) \\
=& \lambda \cdot s \otimes d(r)
    + r \cdot \br{\lambda \cdot d^\alpha(s)
    + A^\alpha(s)} \\
=& \lambda \cdot s \otimes d(r)
    + r \cdot \nabla^\alpha(s)
\end{align*}
We then need to show that for all $\alpha, \beta \in I$, all open
$W \subset Y_{\alpha\beta}$, and all $s \in E(W)$,
\begin{align*}
     & (\lambda d^\alpha + A^\alpha)(s) = (\lambda d^\beta + A^\beta)(s) \\
\iff & A^\beta(s) = (\lambda (d^\alpha - d^\beta) + A^\alpha)(s) \\
\iff & A^\beta(s) = (\lambda \cdot (g^{\alpha\beta} \otimes \id_K)d(g^{\beta\alpha}) + A^\alpha)(s)
\end{align*}
which is precisely the hypothesis that the matrix entries of $A^\alpha, A^\beta$ in the
$\set{e^\alpha_i}_i, \set{e^\beta_i}_i$ frames respectively are part of a descent datum.
\end{proof}

\begin{defn}[Connection on Higher Forms]\label{defn:conn-higher-forms}
Let $(Y, R, K, d)$ be a differential space with a locally free sheaf $E$ equipped with a
$\lambda$--$d$--connection $\nabla$. Define a map:
\[
\nabla^p_0 : E \times K^{\wedge p} \to E \otimes K^{\wedge (p + 1)}
           : (s, \omega) \mapsto \lambda \cdot s \otimes d(\omega) + \nabla(s) \wedge \omega
\]
where $\nabla(s) \wedge \omega$ is the image of
$\nabla(s) \otimes \omega \in (E \otimes K \otimes K^{\wedge p})(Y')$ for some open $Y' \subset Y$
under the map $\id_E \otimes (- \wedge -)$. We can check that the map $\nabla^p_0$ is additive
and $R$--balanced, providing a unique map $\nabla^p$ of sheaves of $\bF$--modules making the
following diagram commute, by the universal property of the tensor product:
\[\begin{tikzcd}
E \times K^{\wedge p} \ar[r, "\nabla^p_0"] \ar[d, "\otimes" left] & E \otimes K^{\wedge (p + 1)} \\
E \otimes K^{\wedge p} \ar[ru, "\nabla^p" below right]
\end{tikzcd}\]
This yields a diagram of sheaves of $\bF$--modules:
\[
0 \to E \to[\nabla^0] E \otimes K \to[\nabla^1] E \otimes K^{\wedge 2} \to[\nabla^2] \cdots
\]
We call the $\nabla^p$, the connections on higher forms defined by $\nabla$.
\end{defn}

\begin{prop}\label{prop:conn-morphism-to-diagram-morphism}
Let $(Y, R, K, d)$ be a differential space with finite rank locally free $R$--modules $E, F$
equipped with $\lambda$--$d$--connections $\nabla_E, \nabla_F$. Then, any morphism of connections
$f : \nabla_E \to \nabla_F$ extends to a morphism of the diagrams of sheaves of $\bF$--modules
of \cref{defn:conn-higher-forms}:
\[
f^\bullet = (f \otimes \id_{K^{\wedge \bullet}}) : (E \otimes K^{\wedge \bullet}, \nabla_E^\bullet)
\to (F \otimes K^{\wedge \bullet}, \nabla_F^\bullet)
\]
\end{prop}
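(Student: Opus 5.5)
The plan is to verify directly that for every $p \geq 0$ the square
\[
\nabla_F^p \circ (f \otimes \id_{K^{\wedge p}}) = (f \otimes \id_{K^{\wedge (p + 1)}}) \circ \nabla_E^p
\]
commutes as maps of sheaves of $\bF$--modules $E \otimes K^{\wedge p} \to F \otimes K^{\wedge (p + 1)}$. Both sides are $\bF$--linear sheaf maps, so it suffices to check equality on sections over each open $Y' \subset Y$. Moreover, by the universal property used in \cref{defn:conn-higher-forms}, it suffices to check equality after precomposing with $\otimes : E \times K^{\wedge p} \to E \otimes K^{\wedge p}$. Equivalently, we check it on elementary tensors $s \otimes \omega$, with $s \in E(Y')$ and $\omega \in K^{\wedge p}(Y')$, and possibly after sheafification; the latter is harmless since both sides are maps of sheaves that agree on a generating presheaf.

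For an elementary tensor, the left side gives
\[
\nabla_F^p(f(s) \otimes \omega) = \lambda \cdot f(s) \otimes d(\omega) + \nabla_F(f(s)) \wedge \omega .
\]
Using the hypothesis $\nabla_F \circ f = (f \otimes \id_K) \circ \nabla_E$, this becomes
\[
\lambda \cdot f(s) \otimes d(\omega) + ((f \otimes \id_K)\nabla_E(s)) \wedge \omega .
\]
The right side is
\[
(f \otimes \id)(\lambda \cdot s \otimes d(\omega) + \nabla_E(s) \wedge \omega) .
\]
The first terms agree by $R$--linearity of $f$. For the second terms, we need the identity
\[
((f \otimes \id_K)\eta) \wedge \omega = (f \otimes \id_{K^{\wedge (p + 1)}})(\eta \wedge \omega)
\]
for $\eta \in (E \otimes K)(Y')$. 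This is naturality of $\id \otimes (- \wedge -)$ in the first tensor factor: both maps equal $f \otimes (- \wedge -)$ on $E \otimes K \otimes K^{\wedge p}$. One checks it on elementary tensors $t \otimes \kappa \otimes \omega$, where both sides give $f(t) \otimes (\kappa \wedge \omega)$.

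The main obstacle is only bookkeeping. The first point is that $\nabla(s) \wedge \omega$ is defined via $\id_E \otimes (-\wedge-)$ on a section of the tensor product sheaf, which need not be a finite sum of elementary tensors globally. I will handle this by arguing locally, or by noting that the identity holds on the presheaf tensor product and hence on its sheafification. The second point is that the ordering convention ($s \otimes d(\omega)$ versus $d(r) \otimes s$ in the paper) is immaterial via the symmetry isomorphism, which commutes with $f \otimes \id$. Finally, $f^0 = f$ makes the degree-zero square exactly the defining square of a morphism of connections, so $f^\bullet$ is a morphism of the diagrams.
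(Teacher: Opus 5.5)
Your proposal is correct and follows the paper's route: verify the square on pairs $(s, \omega)$, use $\nabla_F \circ f = (f \otimes \id_K) \circ \nabla_E$ to reduce to the identity $((f \otimes \id_K)\eta) \wedge \omega = (f \otimes \id_{K^{\wedge (p+1)}})(\eta \wedge \omega)$, and conclude by the universal property of the tensor product. The only difference is in proving that identity: you get it from bifunctoriality of $\otimes$ (both sides equal $f \otimes (- \wedge -)$ applied to $\eta \otimes \omega$), while the paper expands $\nabla_E(s)$ in a local frame using the connection matrix; your argument is cleaner and does not need the connection form at all.
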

\begin{proof}
We first show that the following diagram commutes:
\[\begin{tikzcd}
E \times K^{\wedge k} \ar[r, "\nabla^k_{E, 0}"] \ar[d, "f \times \id_{K^{\wedge k}}" left] &
E \otimes K^{\wedge (k + 1)} \ar[d, "f \otimes \id_{K^{\wedge (k + 1)}}"] \\
F \times K^{\wedge k} \ar[r, "\nabla^k_{F, 0}" below] & F \otimes K^{\wedge (k + 1)}
\end{tikzcd}\]
On a local section $(s, \omega)$, we have:
\begin{align*}
(f \otimes \id_{K^{\wedge (k + 1)}})(\nabla^k_{E, 0}(s, \omega))
=& (f \otimes \id_{K^{\wedge (k + 1)}})(\lambda s \otimes d(\omega) + \nabla_E(s) \wedge \omega) \\
=& \lambda f(s) \otimes d(\omega)
   + (f \otimes \id_{K^{\wedge (k + 1)}})(\nabla_E(s) \wedge \omega)
\end{align*}
On the other hand:
\begin{align*}
\nabla^k_{F, 0}((f \times \id)(s, \omega))
=& \nabla^k_{F, 0}(f(s), \omega) \\
=& \lambda f(s) \otimes d(\omega) + \nabla_F(f(s)) \wedge \omega \\
=& \lambda f(s) \otimes d(\omega) + (f \otimes \id_K)(\nabla_E(s)) \wedge \omega
\end{align*}
We can then show that:
\[
(f \otimes \id_K)(\nabla_E(s)) \wedge \omega
= (f \otimes \id_{K^{\wedge (k + 1)}})(\nabla_E(s) \wedge \omega)
\]
To see this, consider the connection matrix $[A_{ij}]$ of $\nabla_E$ in a frame $\set{e_i}_i$ over
an open $W \subset Y$. Let $s|_W = \sum_i s_i e_i$ for $s_i \in R(W)$. Then,
\begin{align*}
 & ((f \otimes \id_K)(\nabla_E(s)) \wedge \omega)|_W \\
=& (f \otimes \id_K)\br{\sum_i e_i \otimes d(s_i) + \sum_{i} e_i \otimes \br{\sum_j A_{ij}s_j}}
   \wedge \omega \\
=& \sum_i f(e_i) \otimes d(s_i) \wedge \omega
    + \sum_{i} f(e_i) \otimes \br{\sum_j A_{ij}s_j \wedge \omega} \\
=& (f \otimes \id_{K^{\wedge (k + 1)}})\br{\sum_i f(e_i) \otimes d(s_i) \wedge \omega
    + \sum_{i} f(e_i) \otimes \br{\sum_j A_{ij}s_j \wedge \omega}} \\
=& (f \otimes \id_{K^{\wedge (k + 1)}})\br{\br{\sum_i f(e_i) \otimes d(s_i)
    + \sum_{i} f(e_i) \otimes \br{\sum_j A_{ij}s_j}} \wedge \omega} \\
=& (f \otimes \id_{K^{\wedge (k + 1)}})\br{\nabla_E(s) \wedge \omega}|_W
\end{align*}
Since the two sides agree in all local frames, they must agree completely.
The universal property of tensor products now shows that the following diagram commutes:
\[\begin{tikzcd}
E \otimes K^{\wedge k} \ar[r, "\nabla^k_{E}"] \ar[d, "f \otimes \id_{K^{\wedge k}}" left] &
E \otimes K^{\wedge (k + 1)} \ar[d, "f \otimes \id_{K^{\wedge (k + 1)}}"] \\
F \otimes K^{\wedge k} \ar[r, "\nabla^k_{F}" below] & F \otimes K^{\wedge (k + 1)}
\end{tikzcd}\]
and we are done.
\end{proof}

\begin{defn}[Curvature]\label{defn:curvature}
In the context of \cref{defn:conn-higher-forms}, the sheaf morphisms
$\nabla^{p + 1} \circ \nabla^{p}$ are called the curvatures of
$\nabla$. We say that $\nabla$ is flat its curvatures are all zero. When $\nabla$ is flat, we have a
complex of sheaves of $\bF$--modules:
\[
0 \to E \to[\nabla^0] E \otimes K \to[\nabla^1] E \otimes K^{\wedge 2} \to[\nabla^2] \cdots
\]
called the connection complex of $\nabla$.
\end{defn}

\begin{rmk}\label{rmk:0-d-conn-curvature}
In the contexts of \cref{defn:conn-higher-forms} and \cref{defn:curvature}, if $\lambda = 0$
or $d = 0$, then $\nabla^k_0(s, \omega) = \nabla(s) \wedge \omega$ regardless of the choice of
$\lambda$ or $d$, which implies that $\nabla^k$ is also independent of these choices.
Then, the curvatures of $\nabla$ are also indpendent of these choices.
\end{rmk}

\begin{rmk}\label{rmk:conn-higher-forms-loc}
In the context of \cref{defn:conn-higher-forms}, if $(V, \phi_V)$ is a local trivialization of
$E \otimes K^{\wedge p}$ with a frame $\set{e^V_i \otimes \omega^V_j}_{i, j}$ and
$[A^V_{ij}]$ is the matrix of the connection form $A^V$ of $\nabla$ in this frame,
then, for any local section $s \in (E \otimes K^{\wedge p})(V)$, there are sections
$r^V_{ij} \in R(V)$ such that $s = \sum_{i, j} s^V_{ij} e^V_i \otimes \omega^V_j$.
Let $\tau^V_i := \sum_{j} r^V_{ij}\omega^V_j$ for brevity.
By unwrapping the definitions, we see that:
\begin{align*}
 & \nabla^p\br{\sum_i e^V_i \otimes \tau^V_i} \\
=& \lambda \cdot \sum_i e^V_i \otimes
     d(\tau^V_i)
     + \sum_i \nabla(e^V_i) \wedge \tau^V_i \\
=& \lambda \cdot \sum_i e^V_i \otimes
     d(\tau^V_i)
     + \sum_i \br{\sum_{k} A^V_{ki} \otimes e^V_k} \wedge \tau^V_i \\
=& \sum_i e^V_i \otimes \br{
     \lambda \cdot d(\tau^V_i)
     + \br{\sum_j A^V_{ij} \wedge \tau^V_j}
   }
\end{align*}
where the second-last step is obtained by collecting terms in the previous step.
Write $\rho^V_i := \lambda \cdot d(\tau^V_i) + \sum_{j} A^V_{ij} \wedge \tau^V_j$ for brevity again.
We can then compute curvatures:
\begin{align*}
 & \nabla^{p + 1}\br{\nabla^p\br{\sum_{i} e^V_i \otimes \tau^V_i}} \\
=& \nabla^{p + 1}\br{
     \sum_i e^V_i \otimes \rho^V_i
   } \\
=& \sum_i e^V_i \otimes \br{
     \lambda \cdot d(\rho^V_i)
     + \sum_j A^V_{ij} \wedge \rho^V_j
   }
\end{align*}
We further compute:
\begin{align*}
 & \lambda \cdot d(\rho^V_i) \\
=& \lambda \cdot d\br{\lambda \cdot d(\tau^V_i) + \sum_{j} A^V_{ij} \wedge \tau^V_j} \\
=& \lambda \cdot d(\lambda) \wedge d(\tau^V_i) + \lambda^2 \cdot d(d(\tau^V_i))
   + \lambda \cdot \sum_{j} \br{d(A^V_{ij}) \wedge \tau^V_j - A^V_{ij} \wedge d(\tau^V_j)} \\
=& \lambda \cdot d(\lambda) \wedge d(\tau^V_i)
   + \lambda \cdot \sum_{j} d(A^V_{ij}) \wedge \tau^V_j
   - \lambda \cdot \sum_{j} A^V_{ij} \wedge d(\tau^V_j)
\end{align*}
and:
\begin{align*}
 & \sum_j A^V_{ij} \wedge \rho^V_j \\
=& \sum_j A^V_{ij} \wedge \br{\lambda \cdot d(\tau^V_j) + \sum_{k} A^V_{jk} \wedge \tau^V_k} \\
=& \lambda \cdot \sum_j A^V_{ij} \wedge d(\tau^V_j)
   + \sum_j \sum_{k} A^V_{ij} \wedge A^V_{jk} \wedge \tau^V_k
\end{align*}
Putting these together, we get:
\begin{align*}
 & \nabla^{p + 1}\br{\nabla^p\br{\sum_{i} e^V_i \otimes \tau^V_i}} \\
=& \sum_i e^V_i \otimes \br{
     \lambda \cdot d(\lambda) \wedge d(\tau^V_i)
   + \lambda \cdot \sum_{j} d(A^V_{ij}) \wedge \tau^V_j
   + \sum_j \sum_{k} A^V_{ij} \wedge A^V_{jk} \wedge \tau^V_k
   } \\
=& \sum_i e^V_i \otimes \br{
   \lambda \cdot  d(\lambda) \wedge d(\tau^V_i)
   + \sum_j \br{\lambda \cdot d(A^V_{ij}) + \sum_{k} A^V_{ik} \wedge A^V_{kj}}
     \wedge \tau^V_j
   }
\end{align*}
where the last is obtained by collecting like terms in the previous step again.
\end{rmk}

\begin{prop}\label{prop:curvature-is-linear}
In the context of \cref{defn:conn-higher-forms}, if $\lambda \in \ker{d}$, then the curvature maps
$\nabla^{p + 1} \circ \nabla^p$, for each $p = 0, 1, 2, \dots$, are:
\begin{enumerate}
\item $R$--linear, and
\item zero when $\nabla^1 \circ \nabla^0$ is.
\end{enumerate}
In particular, $\nabla$ is flat if and only if point (ii) above holds.
\end{prop}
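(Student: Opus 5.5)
The plan is to work entirely with the local formula of \cref{rmk:conn-higher-forms-loc}. Both claims are local, since two sheaf maps agree if they agree on an open cover and $R$--linearity can also be checked locally. So I fix a local trivialization $(V, \phi_V)$ of $E$ with frame $\set{e^V_i}_i$ and connection matrix $[A^V_{ij}]$. I write a local section of $E \otimes K^{\wedge p}$ as $\sum_i e^V_i \otimes \tau^V_i$ with $\tau^V_i \in K^{\wedge p}(V)$. (The remark phrases this using a frame of $E \otimes K^{\wedge p}$; when $K$ is not free I will just use the same computation with arbitrary forms $\tau^V_i$, which is all it actually uses.) The remark then gives
\[
\nabla^{p+1}\nabla^p\br{\sum_i e^V_i \otimes \tau^V_i}
= \sum_i e^V_i \otimes \br{\lambda \, d(\lambda) \wedge d(\tau^V_i) + \sum_j F^V_{ij} \wedge \tau^V_j},
\qquad
F^V_{ij} := \lambda \, d(A^V_{ij}) + \sum_k A^V_{ik} \wedge A^V_{kj}.
\]

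For (i), the hypothesis $\lambda \in \ker{d}$ means $d(\lambda) = 0$, so the first term vanishes. What remains is $\sum_i e^V_i \otimes \sum_j F^V_{ij} \wedge \tau^V_j$, which is visibly $R$--linear in the coefficients $\tau^V_j$: replacing $\tau^V_j$ by $r \tau^V_j$ pulls $r$ out, since $R$ is commutative and acts centrally on the exterior algebra. Hence $\nabla^{p+1} \circ \nabla^p$ is $R|_V$--linear on each $V$, and therefore $R$--linear globally.

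For (ii), I first take $p = 0$ and $\tau^V_j = \delta_{jl}$. This shows that $\nabla^1 \nabla^0 (e^V_l) = \sum_i e^V_i \otimes F^V_{il}$. So $\nabla^1 \circ \nabla^0 = 0$ forces $F^V_{il} = 0$ for all $i, l$ on every trivializing open $V$. Plugging this into the general formula gives $\nabla^{p+1} \circ \nabla^p = 0$ on each $V$, hence globally. The final assertion follows immediately: flatness means all curvatures vanish, which implies in particular that $\nabla^1 \circ \nabla^0 = 0$, and (ii) gives the converse.

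The main obstacle, though a mild one, is making sure the local formula of \cref{rmk:conn-higher-forms-loc} is applied consistently. That remark uses the connection matrix of $\nabla$ acting on the $E$--frame, and I need $F^V$ to be the same matrix that appears for every $p$. I would check this directly from \cref{defn:conn-higher-forms}. The identity $\nabla^p(e_i \otimes \tau) = \lambda e_i \otimes d\tau + \nabla(e_i) \wedge \tau$ depends only on $\nabla$ on $E$, not on $p$. Together with the graded Leibniz rule and $d(\lambda) = 0$, this makes the computation uniform in $p$.
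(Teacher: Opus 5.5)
Your proposal is correct and is essentially the paper's proof. Like the paper, you apply the local formula of \cref{rmk:conn-higher-forms-loc}, drop the $\lambda\, d(\lambda) \wedge d(\tau^V_i)$ term using $d(\lambda) = 0$ to get $R$--linearity, then use the $p = 0$ case on the frame elements to force $\lambda\, d(A^V_{ij}) + \sum_k A^V_{ik} \wedge A^V_{kj} = 0$ and hence vanishing for all $p$. Your side observation is also valid: only freeness of $E|_V$ is needed, not a frame of $E \otimes K^{\wedge p}$, which slightly sharpens how that remark is stated.
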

\begin{proof}
When $\lambda \in \ker{d}$, the last equation of \cref{rmk:conn-higher-forms-loc} becomes:
\begin{align*}
 & \nabla^{p + 1}\br{\nabla^p\br{\sum_{i} e^V_i \otimes \tau^V_i}} \\
=& \sum_i e^V_i \otimes \br{
     \sum_j \br{\lambda \cdot d(A^V_{ij}) + \sum_{k} A^V_{ik} \wedge A^V_{kj}}
     \wedge \tau^V_j
   }
\end{align*}
and it is clear that this expression is $R$--linear. When $p = 0$, we have
$\tau^V_j \in R(V)$ and taking $\tau^V_i = \delta_{ij}$ for $j \in \set{1, \dots, n}$ along with the
assumption that $\nabla^1 \circ \nabla^0 = 0$ yields:
\[
\lambda \cdot d(A^V_{ij}) + \sum_{k} A^V_{ik} \wedge A^V_{kj} = 0
\]
for all $i, j$. Hence, for $p > 0$ as well, $\nabla^{p + 1} \circ \nabla^p$ is zero in local frames
and hence globally.
\end{proof}

\subsection{Constructions Involving Connections}
\label{subsec:cons-conn}

\begin{prop}[Sum of Connections]\label{prop:conn-sum}
Let $(Y, R)$ be an $\bF$--space with finite rank locally free $R$--modules $E, K$.
Let $d_1, d_2$ be two differentials on $K$ and $\nabla_1, \nabla_2$ be
$\lambda_1$--$d_1$-- and $\lambda_2$--$d_2$--connections on $E$ respectively.
Then, the following hold:
\begin{enumerate}
\item $\nabla_1 + \nabla_2$ is a $1$--$(\lambda_1d_1 + \lambda_2d_2)$--connection on $E$.

\item $(\nabla_1 + \nabla_2)^k = \nabla_1^k + \nabla_2^k$, where the superscripts indicate
extensions to $E \otimes K^{\wedge k}$.

\item $(\nabla_1 + \nabla_2)^1 \circ (\nabla_1 + \nabla_2)^0
= \nabla_1^1 \circ \nabla_1^0 + \nabla_1^1 \circ \nabla_2^0 + \nabla_2^1 \circ \nabla_1^0
    + \nabla_2^1 \circ \nabla_2^0$

\item If $\nabla_1$ and $\nabla_2$ are flat, then $\nabla_1 + \nabla_2$ is flat if and only
if $\nabla_1^1 \circ \nabla_1^0 + \nabla_0^1 \circ \nabla_2^0 = 0$.
\end{enumerate}
\end{prop}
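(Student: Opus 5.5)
The plan is to verify each item directly from the definitions in \cref{subsec:lambda-d-conn}, using only the Leibniz rule and the universal property of the tensor product. I read the cross term $\nabla_0^1 \circ \nabla_2^0$ in (iv) as the evident typo for $\nabla_1^1 \circ \nabla_2^0 + \nabla_2^1 \circ \nabla_1^0$, i.e.\ the two mixed terms of (iii).

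For (i), I take a local section $s$ and a function $r$. I expand $(\nabla_1 + \nabla_2)(r s)$ using the Leibniz rules for $\nabla_1$ and $\nabla_2$ separately, and collect terms:
\[
(\nabla_1+\nabla_2)(rs) = 1\cdot s \otimes (\lambda_1 d_1 + \lambda_2 d_2)(r) + r(\nabla_1+\nabla_2)(s).
\]
$\bF$--linearity is immediate. That $D := \lambda_1 d_1 + \lambda_2 d_2$ is a differential I take from the corollaries on sums and scalar products of differentials. I will note, however, that the argument for (i)--(iii) only uses additivity and the graded Leibniz rule of $D$, which hold term by term. This caveat matters because $D \circ D = 0$ can fail when the $\lambda_i$ are not $d_j$--closed.

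For (ii), I compare the bilinear maps $(\nabla_1+\nabla_2)^p_0$ and $\nabla^p_{1,0} + \nabla^p_{2,0}$ on $E \times K^{\wedge p}$, both built as in \cref{defn:conn-higher-forms}. The first is taken with scalar $1$ and differential $D$; the two summands of the second use $\lambda_i$ and $d_i$. On $(s,\omega)$ the first gives
\[
s\otimes D\omega + (\nabla_1+\nabla_2)(s)\wedge\omega,
\]
and this splits as $(\lambda_1 s\otimes d_1\omega + \nabla_1 s\wedge\omega) + (\lambda_2 s \otimes d_2\omega + \nabla_2 s\wedge \omega)$, which is exactly the second. Both induced maps on $E \otimes K^{\wedge p}$ factor the same balanced map, so uniqueness in the universal property of the tensor product gives (ii). Item (iii) then follows by composing the $p=0$ and $p=1$ cases of (ii) and expanding, since composition of additive sheaf maps is biadditive. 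The same expansion works for every $p$:
\[
(\nabla_1+\nabla_2)^{p+1}(\nabla_1+\nabla_2)^p = \sum_{a,b \in \set{1,2}} \nabla_a^{p+1}\nabla_b^p.
\]

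For (iv), if $\nabla_1$ and $\nabla_2$ are flat, the diagonal terms vanish at every degree. So $\nabla_1+\nabla_2$ is flat iff the mixed terms $\nabla_1^{p+1}\nabla_2^p + \nabla_2^{p+1}\nabla_1^p$ vanish for all $p$. To reduce to $p=0$, I apply \cref{prop:curvature-is-linear} to $\nabla_1+\nabla_2$ as a $1$--$D$--connection. Its hypothesis holds because $D(1)=0$ by the argument in the proof of $R$--linearity of connection forms. That proposition says flatness is equivalent to the vanishing of $(\nabla_1+\nabla_2)^1 \circ (\nabla_1+\nabla_2)^0$, which by (iii) and flatness of the $\nabla_i$ is the stated cross-term condition.

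I expect the main obstacle to be this last step. \cref{prop:curvature-is-linear} relies on the local formula of \cref{rmk:conn-higher-forms-loc}, which uses $D\circ D = 0$, so it needs $D$ to be a genuine differential. I would first check, using the local formula, whether $D^2$ contributes only terms like $\lambda_i\, d_j(\lambda_k) \wedge d(\cdot)$. These vanish when the $\lambda_i$ are $d_j$--closed, for instance constants, which is the case in the applications (Higgs field plus partial Dolbeault operator). If needed, I would add that hypothesis explicitly or state (iv) for all $p$.
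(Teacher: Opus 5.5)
Your treatment of (i)--(iii) matches the paper's proof: the Leibniz rule term by term, uniqueness in the universal property of the tensor product for the balanced maps defining $(\nabla_1+\nabla_2)^k$, and biadditive expansion. Your reading of the typo in (iv) as the two mixed terms is also right. The gap is in (iv), at exactly the step you flagged, but you have misdiagnosed it.

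Reducing to degree $0$ via \cref{prop:curvature-is-linear} needs $D\circ D=0$ for $D=\lambda_1 d_1+\lambda_2 d_2$. The Leibniz rule gives
\[
D(D\omega)=\sum_{i,j\in\{1,2\}}\lambda_i\, d_i(\lambda_j)\wedge d_j\omega+\lambda_1\lambda_2\,(d_1d_2+d_2d_1)\omega .
\]
So even for constant $\lambda_i$ the last term survives unless $d_1$ and $d_2$ anticommute. Closedness of the $\lambda_i$ therefore does not rescue the argument. In fact (iv) as stated is false, and so is the corollary on sums of differentials that you cite for (i) and that the paper's one-line proof silently uses.

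Here is a counterexample. Take $Y$ a point, $R=\bF=\bR$, $K=\bR^3$. Then $d^0=0$, so a differential is the Chevalley--Eilenberg differential of a bracket on the dual of $K$, and $d\circ d=0$ is the Jacobi identity. Consider two brackets:
\begin{itemize}
\item the cross-product bracket, $[e_1,e_2]=e_3$ and cyclically;
\item the bracket whose only nonzero value is $[e_1,e_2]=e_1$.
\end{itemize}
Both satisfy Jacobi, but their sum has Jacobiator $e_2$ on $(e_1,e_2,e_3)$, so $(d_1+d_2)^2\neq 0$. Now take $E=R$, $\nabla_1=\nabla_2=0$ and $\lambda_1=\lambda_2=1$. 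Both connections are flat and the degree-$0$ cross term vanishes trivially. Yet $(\nabla_1+\nabla_2)^2\circ(\nabla_1+\nabla_2)^1(1\otimes\omega)=1\otimes(d_1+d_2)^2\omega\neq 0$ for suitable $\omega\in K$.

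Some of (iv) survives without extra hypotheses. Your all-$p$ expansion from (ii) shows that, for flat $\nabla_i$, the sum $\nabla_1+\nabla_2$ is flat iff $\nabla_1^{p+1}\circ\nabla_2^p+\nabla_2^{p+1}\circ\nabla_1^p=0$ for every $p$. Expanding on elementary tensors gives
\[
\bigl(\nabla_1^{p+1}\circ\nabla_2^p+\nabla_2^{p+1}\circ\nabla_1^p\bigr)(s\otimes\omega)
=\bigl(\nabla_1^1\circ\nabla_2^0+\nabla_2^1\circ\nabla_1^0\bigr)(s)\wedge\omega+s\otimes M\omega ,
\]
where $M=\lambda_1 d_1(\lambda_2)\wedge d_2+\lambda_2 d_2(\lambda_1)\wedge d_1+\lambda_1\lambda_2(d_1d_2+d_2d_1)$ is the mixed part of $D\circ D$.

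So the degree-$0$ criterion needs the extra hypothesis $M=0$. Alternatively, it suffices that $D$ be a genuine differential, in which case your appeal to \cref{prop:curvature-is-linear} goes through. Closedness of the $\lambda_i$ alone is not enough.

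This hypothesis holds in every place the paper uses (iv):
\begin{itemize}
\item in \cref{prop:rest-conn-ext-prt}, the two differentials coincide;
\item when a Higgs field is treated as a $0$--connection, $\lambda_1\lambda_2=0$ with constant $\lambda_i$;
\item in the harmonic-operator identities $D=D'+D''$ and $D'_H=D-D''$, both $\lambda_i=\pm1$. What saves these is that $\prt_{X/U}$ and $\oprt_{X/U}$ (and likewise $d_{X/U}$ and $\oprt_{X/U}$) anticommute, because $d_{X/U}=\prt_{X/U}+\oprt_{X/U}$ and all three square to zero. It is not the constancy of the $\lambda_i$.
\end{itemize}
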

\begin{proof}
(i) is a straightforward verification of the defining properties of a connection.

(ii) We observe that:
\begin{align*}
 & s \otimes (\lambda_1d_1 + \lambda_2d_2)(\omega) + (\nabla_1 + \nabla_2)(s) \wedge \omega \\
=& (\lambda_1 s \otimes d_1(\omega) + \nabla_1(s) \wedge \omega)
  + (\lambda_2 s \otimes d_2(\omega) + \nabla_2(s) \wedge \omega)
\end{align*}
Now, by the definition of extensions of connections to $E \otimes K^{\wedge \bullet}
($\cref{defn:conn-higher-forms}) and the universal property of tensor products, we have:
\[
(\nabla_1 + \nabla_2)^k = \nabla_1^k + \nabla_2^k
\]

(iii), (iv) These are straightforward verifications, given (ii).
\end{proof}

\begin{prop}[Scaling Connections]\label{prop:conn-scale}
Let $(Y, R)$ be an $\bF$--space with finite rank locally free $R$--modules $E, K$.
Let $d$ be a differential on $K$ and $\nabla$, $\lambda$--$d$--connection on $E$.
Let $r \in R(Y)$ be a global section of $R$. Then the following hold:
\begin{enumerate}
\item $r \cdot \nabla$ is an $r\lambda$--$d$--connection.
In particular, any $\lambda$--$d$--connection is also a $1$--$\lambda \cdot d$--connection
and vice versa.

\item $(r \cdot \nabla)^k = r \cdot \nabla^k$

\item
$(r \cdot \nabla)^1 \circ (r \cdot \nabla)^0 = r\lambda \cdot \id_E \otimes d(r)
                                               + r^2 \cdot \nabla^1 \circ \nabla^0$

\item If $r \in \ker{d}$ and $\nabla$ is flat, then so is $r \cdot \nabla$.
\end{enumerate}
\end{prop}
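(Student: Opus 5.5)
We argue directly from \cref{defn:conn} and \cref{defn:conn-higher-forms}, treating the four items in order; (ii) is used to reduce (iii) to a local computation.

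For (i), take a local section $s \in E(Y')$ and $t \in R(Y')$. Since $r\cdot\nabla$ is $\bF$--linear, the Leibniz rule for $\nabla$ gives
\[
(r\cdot\nabla)(t s) = r\lambda \cdot s \otimes d(t) + t\cdot (r\cdot\nabla)(s),
\]
which is exactly the Leibniz rule for an $r\lambda$--$d$--connection. For the ``in particular'', rewrite $\lambda\, s\otimes d(t) = 1\cdot s \otimes (\lambda d)(t)$. By the corollary on scalar products of differentials, $\lambda d$ is again a differential. Hence the $\lambda$--$d$-- and $1$--$\lambda d$--Leibniz rules are literally the same condition.

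For (ii), we compare the bilinear maps used in \cref{defn:conn-higher-forms}. On $(s,\omega)$, the map defining $(r\cdot\nabla)^p$ is $r\lambda\, s\otimes d\omega + r\nabla(s)\wedge\omega$, which equals $r$ times $\nabla^p_0(s,\omega)$. By the uniqueness in the universal property of the tensor product, $(r\cdot\nabla)^p = r\cdot\nabla^p$.

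For (iii), the step needing care is that $\nabla^1$ is not $R$--linear, so $r$ cannot simply be pulled through it. We use $(r\cdot\nabla)^1\circ(r\cdot\nabla)^0 = r\nabla^1(r\nabla^0(s))$. Working in a frame $\set{e^V_i}$, write $\nabla^0(s)=\sum_i e^V_i\otimes\rho_i$ as in \cref{rmk:conn-higher-forms-loc}. Then
\[
\nabla^1(r\nabla^0 s)=\sum_i e^V_i\otimes\br{\lambda\, d(r)\wedge\rho_i + r\lambda\, d\rho_i + r\textstyle\sum_j A_{ij}\wedge\rho_j}.
\]
Equivalently, the defining formula of $\nabla^1$ gives $\nabla^1(r\,\eta)=\lambda\,(\id_E\otimes d(r))\wedge\eta + r\nabla^1(\eta)$, with the sign of the wedge fixed by the convention $\lambda\, s\otimes d(\omega)$. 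Multiplying by $r$ yields $r^2\nabla^1\nabla^0$ plus a term $r\lambda\, d(r)\wedge\nabla^0(s)$. We then interpret this correction term as the map written $r\lambda\cdot\id_E\otimes d(r)$ in the statement, meaning wedging with $d(r)$ applied after $\nabla^0$. We expect this bookkeeping, namely making the paper's shorthand match and tracking the sign from graded commutativity, to be the main obstacle. If needed, we will state explicitly how the term is read.

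For (iv), if $d(r)=0$ the correction term vanishes, so $(r\cdot\nabla)^1\circ(r\cdot\nabla)^0 = r^2\,\nabla^1\circ\nabla^0 = 0$. To get flatness in all degrees, we apply (ii) and the local formula of \cref{rmk:conn-higher-forms-loc} with $\lambda$ replaced by $r\lambda$, $d$ unchanged, and $A$ replaced by $rA$. This gives curvature coefficient $r\lambda\,d(rA)+r^2A\wedge A = r^2(\lambda\,dA + A\wedge A)$, together with the term $r\lambda\,d(r\lambda)\wedge d\tau$. Here $d(r\lambda)=r\,d\lambda$, which vanishes when $\lambda\in\ker{d}$; in that case we invoke \cref{prop:curvature-is-linear}(ii). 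In general, both terms equal $r^2$ times the corresponding terms for $\nabla$, which vanish since $\nabla$ is flat.
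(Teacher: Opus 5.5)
Your proof is correct and follows essentially the paper's route: a direct Leibniz check for (i), the universal property of the tensor product for (ii), and (iii)--(iv) as verifications built on (ii), which you carry out explicitly where the paper only calls them straightforward. Your reading of the correction term in (iii) as $r\lambda\,(\id_E\otimes(d(r)\wedge -))\circ\nabla^0$ is the right one, since the term as printed lands in $E\otimes K$ rather than $E\otimes K^{\wedge 2}$. (One caveat: the ``in particular'' of (i), like the paper's corollary on scalar products of differentials that you cite, tacitly needs something like $\lambda\in\ker{d}$ for $\lambda d$ to be a differential, because $(\lambda d)^2\omega=\lambda\, d(\lambda)\wedge d(\omega)$.)
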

\begin{proof}
(i) is a straightforward verification of the defining properties of a connection.

(ii) We observe that:
\[
r\lambda s \otimes d(\omega) + r\nabla(s) \wedge \omega
= r(\lambda s \otimes d(\omega) + \nabla(s) \wedge \omega)
\]
By the definition of $(r \cdot \nabla)^k, \nabla^k$ (\cref{defn:conn-higher-forms}) and the
universal property of tensor products, we have $(r \cdot \nabla)^k = r \cdot \nabla^k$.

(iii), (iv) These are straightforward verifications, given (ii).
\end{proof}

\begin{cor}[Module of Connections]
Let $(Y, R, K, d)$ be an differential space with a finite rank locally free $R$--module $E$.
Then, the set:
\[
C(E) = \set[(\lambda, \nabla)]{
    \lambda \in R(Y),
    \nabla \text{ is a } \lambda \text{--} d \text{--connection on } E
}
\]
is an $R(Y)$--module. In particular, it is an $\bF$--module.
\end{cor}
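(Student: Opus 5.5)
The plan is to equip $C(E)$ with the operations $(\lambda_1, \nabla_1) + (\lambda_2, \nabla_2) := (\lambda_1 + \lambda_2, \nabla_1 + \nabla_2)$ and $r \cdot (\lambda, \nabla) := (r\lambda, r \cdot \nabla)$ for $r \in R(Y)$, and then check two things. First, these operations land in $C(E)$. Second, the module axioms hold.

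For closure under addition, I will specialize \cref{prop:conn-sum}(i) to $d_1 = d_2 = d$. It says $\nabla_1 + \nabla_2$ is a $1$--$(\lambda_1 d + \lambda_2 d)$--connection. Since $\lambda_1 d + \lambda_2 d = (\lambda_1 + \lambda_2) d$, the second sentence of \cref{prop:conn-scale}(i) (a $1$--$\mu d$--connection is a $\mu$--$d$--connection) shows it is a $(\lambda_1 + \lambda_2)$--$d$--connection. I will confirm this directly from the Leibniz rule in \cref{defn:conn}:
\[
(\nabla_1 + \nabla_2)(rs) = (\lambda_1 + \lambda_2)\, s \otimes d(r) + r (\nabla_1 + \nabla_2)(s).
\]
For closure under scaling, \cref{prop:conn-scale}(i) gives directly that $r \cdot \nabla$ is an $r\lambda$--$d$--connection.

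For the axioms, I will note that $C(E)$ sits inside the product $R(Y) \times \Hom_{\bF}(E, E \otimes K)$. The second factor is an $R(Y)$--module via postcomposition with multiplication by global sections on $E \otimes K$, and the operations above are the restrictions of the componentwise operations on this product. So associativity, distributivity and the unit law are inherited. The zero element is $(0, 0)$, since the zero map is a $0$--$d$--connection by \cref{rmk:0-d-conn}. Additive inverses are $(-\lambda, -\nabla) = (-1) \cdot (\lambda, \nabla)$, which lie in $C(E)$ by closure under scaling. Thus $C(E)$ is an $R(Y)$--submodule, and restricting scalars along $\bF \to R(Y)$ makes it an $\bF$--module.

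The main obstacle is the reconciliation in the addition step. \cref{prop:conn-sum} is phrased with the normalized parameter $1$ and the combined differential $\lambda_1 d_1 + \lambda_2 d_2$, so one must check that "$1$--$(\mu d)$" and "$\mu$--$d$" describe the same Leibniz rule. This is exactly the content of \cref{prop:conn-scale}(i), and I would prefer the one-line direct check above if any doubt arises. Everything else is bookkeeping.
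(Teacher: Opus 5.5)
Your proposal is correct and is essentially the paper's argument. The paper records this as an immediate corollary of \cref{prop:conn-sum}(i) and \cref{prop:conn-scale}(i) with no further proof, and your closure checks, together with the observation that $C(E)$ sits inside the $R(Y)$--module $R(Y) \times \Hom_{\bF}(E, E \otimes K)$, fill in what is left implicit. Your direct Leibniz-rule check for sums is the cleaner of your two routes, because it never needs $(\lambda_1 + \lambda_2) d$ to be a genuine differential in the sense of \cref{defn:diff-on-sh}: its square is $(\lambda_1+\lambda_2)\, d(\lambda_1+\lambda_2) \wedge d(-)$, which need not vanish.
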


\begin{defn}[Dual Connection]\label{defn:dual-conn}
Let $(Y, R, K, d)$ be a differential space with a locally free $R$--module $E$ equipped with a
$\lambda$--$d$--connection $\nabla$. We define the dual of $\nabla$ to be the operator
$\nabla^\vee : E^\vee \to E^\vee \otimes K \cong \HHom(E, K)$ defined, for any opens
$Y'' \subset Y' \subset Y$, $\eta \in E^\vee(Y')$, $s \in E(Y'')$, by:
\[
\nabla^\vee(\eta) : E|_{Y''} \to K|_{Y''}
    : s \mapsto \lambda d(\eta(s)) - (\eta \otimes \id_K)(\nabla(s))
\]
\end{defn}

\begin{cor}\label{cor:dual-conn}
$\nabla^\vee$ is a $\lambda$--$d$--connection on $E^\vee$.
\end{cor}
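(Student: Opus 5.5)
We verify directly that $\nabla^\vee$ is well defined and satisfies \cref{defn:conn}. There are three steps: check that $\nabla^\vee(\eta)$ is a morphism of $R$--modules $E|_{Y'} \to K|_{Y'}$, so that it really defines a section of $\HHom(E, K) \cong E^\vee \otimes K$; check that $\eta \mapsto \nabla^\vee(\eta)$ is a morphism of sheaves of $\bF$--modules; and check the Leibniz rule $\nabla^\vee(r\eta) = \lambda\, \eta \otimes d(r) + r\nabla^\vee(\eta)$.

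For the first step, additivity in $s$ is immediate from additivity of $d$, $\eta$ and $\nabla$. For $R$--linearity, take a local section $r$ of $R$ and compute, using the Leibniz rules of $d$ (on $R = K^{\wedge 0}$) and of $\nabla$:
\begin{align*}
\nabla^\vee(\eta)(rs) &= \lambda d(r\eta(s)) - (\eta \otimes \id_K)(\lambda s \otimes d(r) + r\nabla(s)) \\
&= \lambda \eta(s) d(r) + \lambda r d(\eta(s)) - \lambda \eta(s) d(r) - r(\eta\otimes\id_K)(\nabla(s)) \\
&= r\,\nabla^\vee(\eta)(s).
\end{align*}
The two $d(r)$ terms cancel, and this cancellation is the whole point of the minus sign in \cref{defn:dual-conn}. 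Compatibility with restrictions to $Y''' \subset Y''$ follows because $d$, $\eta$ and $\nabla$ are sheaf maps. Hence $\nabla^\vee(\eta) \in \HHom(E,K)(Y')$, and $\nabla^\vee$ is compatible with restriction in $\eta$, so it is a sheaf map. Its $\bF$--linearity in $\eta$ is clear, since the formula is $\bF$--linear in $\eta$.

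For the Leibniz rule, take $r \in R(Y')$ and evaluate on $s$:
\[
\nabla^\vee(r\eta)(s) = \lambda d(r\eta(s)) - r(\eta\otimes\id_K)(\nabla(s)) = \lambda\eta(s)d(r) + r\nabla^\vee(\eta)(s).
\]
Under the identification $E^\vee \otimes K \cong \HHom(E,K)$, which sends $\eta \otimes \omega$ to $s \mapsto \eta(s)\omega$, the term $\lambda\eta(s)d(r)$ is exactly the image of $\lambda\, \eta \otimes d(r)$. This gives the Leibniz rule.

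The only step needing care is the identification $E^\vee\otimes K\cong\HHom(E,K)$. It holds because $E$ is locally free of finite rank. One can check it, together with the compatibility of the two sides of the Leibniz rule, in a local frame $\{e_i\}$ with dual frame $\{e_i^\vee\}$, where both sides reduce to the same explicit sums. I expect no genuine obstacle beyond this bookkeeping.
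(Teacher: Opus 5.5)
Your proof is correct and follows the paper's argument: $\bF$--linearity, then a direct computation of $\nabla^\vee(r\eta)(s)$ using the degree-$0$ Leibniz rule of $d$, with $\lambda\eta(s)d(r)$ identified as $\lambda\,\eta \otimes d(r)$. You also check that $\nabla^\vee(\eta)$ is $R$--linear in $s$ (the cancellation of the $d(r)$ terms), so that it really is a section of $\HHom(E,K)$; the paper leaves this implicit, and it is a worthwhile addition.
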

\begin{proof}
$\bF$--linearity is immediate it is a difference of $\bF$--linear maps.
For the Leibniz rule, we observe that for any open $Y' \subset Y, \eta \in E^\vee(Y'),
s \in E(Y'), r \in R(Y')$:
\begin{align*}
\nabla^\vee(r\eta)(s)
=& \lambda d(r\eta(s)) - (r\eta \otimes \id_K)(\nabla(s)) \\
=& \lambda d(r) \cdot \eta(s) + \lambda r d(\eta(s)) - r(\eta \otimes \id_K)(\nabla(s)) \\
=& \lambda \eta(s)d(r) + r \nabla^\vee(s) \\
=& \lambda (\eta \otimes d(r))(s) + r \nabla^\vee(s) \\
=& (\lambda \eta \otimes d(r) + r \nabla^\vee)(s)
\end{align*}
\end{proof}

\begin{rmk}\label{rmk:dual-conn-matrix}
In the context of \cref{defn:dual-conn}, let $\set{e_i}_i$ be a frame for $E$ over some open subset
with dual frame $\set{\epsilon_i}_i$. Let $[A_{ij}]$ be the connection matrix of $\nabla$ in the
$\set{e_i}_i$ frame. Then, we observe that:
\[
\nabla^\vee(\epsilon_j)(e_i)
= \lambda d(\epsilon_j(e_i)) - \sum_{k} A_{ki} \otimes \epsilon_j(e_k)
= \lambda d(\delta_{ij}) - A_{ji}
= -A_{ji}
\]
This shows that the connection matrix of $\nabla^\vee$ in the $\set{\epsilon_i}_i$ frame
is the negative of the transpose of that of $\nabla$ in the $\set{e_i}_i$ frame.
\end{rmk}

\begin{prop}\label{prop:dual-conn-flat}
When $\lambda \in \ker{d}$, $\nabla^\vee$ is flat if $\nabla$ is.
\end{prop}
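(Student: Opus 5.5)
By \cref{prop:curvature-is-linear}, it suffices to show that $(\nabla^\vee)^1 \circ (\nabla^\vee)^0 = 0$. Since $\lambda \in \ker{d}$, that proposition identifies this composite with the $R$--linear map whose matrix in a local frame is $\lambda \cdot d(B_{ij}) + \sum_k B_{ik} \wedge B_{kj}$, where $[B_{ij}]$ is the connection matrix of $\nabla^\vee$. Being zero is local, so I will work over an open $V$ on which $E$ has a frame $\set{e_i}_i$, with dual frame $\set{\epsilon_i}_i$. Let $[A_{ij}]$ be the connection matrix of $\nabla$ in the $\set{e_i}_i$ frame. Flatness of $\nabla$ and the computation in the proof of \cref{prop:curvature-is-linear} give
\[
\lambda \cdot d(A_{ij}) + \sum_k A_{ik} \wedge A_{kj} = 0 \quad \text{for all } i, j.
\]

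Next I will substitute $B_{ij} = -A_{ji}$, which is \cref{rmk:dual-conn-matrix}. The curvature entries of $\nabla^\vee$ then become
\[
-\lambda \cdot d(A_{ji}) + \sum_k A_{ki} \wedge A_{jk}.
\]
The key step is to compare this with the $(j,i)$ entry of the curvature of $\nabla$. Since $A_{ki}, A_{jk}$ are sections of $K = K^{\wedge 1}$, graded commutativity of the wedge product gives $A_{ki} \wedge A_{jk} = -A_{jk} \wedge A_{ki}$. Hence the expression above equals
\[
-\br{\lambda \cdot d(A_{ji}) + \sum_k A_{jk} \wedge A_{ki}},
\]
which vanishes by the flatness identity of $\nabla$. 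Therefore $(\nabla^\vee)^1 \circ (\nabla^\vee)^0$ is zero on every local frame, hence zero globally. Flatness in all degrees then follows from \cref{prop:curvature-is-linear}(ii).

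The main point requiring care is the sign and index bookkeeping. The formula in \cref{rmk:conn-higher-forms-loc} is written with the form factors to the right of the frame vectors, so its convention for $\sum_k A_{ik} \wedge A_{kj}$ must be matched exactly with the transpose from \cref{rmk:dual-conn-matrix}. The argument also uses that the exterior algebra $K^{\wedge \bullet}$ is graded commutative on degree-one elements, so that $\omega \wedge \omega' = -\omega' \wedge \omega$ for $\omega, \omega' \in K$; this holds because $K^{\wedge \bullet}$ is an exterior power of a module over the commutative sheaf $R$. Once these conventions are fixed, the remaining computation is routine.
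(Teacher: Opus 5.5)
Your proposal is correct and follows essentially the same route as the paper: substitute the dual connection matrix entries $B_{ij} = -A_{ji}$ from \cref{rmk:dual-conn-matrix} into the local curvature formula of \cref{rmk:conn-higher-forms-loc}, identify the result with minus the $(j,i)$--entry of the curvature of $\nabla$, and invoke \cref{prop:curvature-is-linear} for the higher degrees. The only difference is that you state the step $A_{ki} \wedge A_{jk} = -A_{jk} \wedge A_{ki}$ (anticommutativity of degree-one forms) explicitly, whereas the paper uses it without comment.
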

\begin{proof}
By \cref{rmk:dual-conn-matrix} and \cref{rmk:conn-higher-forms-loc} (dropping the $V$--superscripts
from the differential forms for convenience), we have:
\begin{align*}
 & (\nabla^{\vee, 1} \circ \nabla^{\vee, 0})\br{\sum_{i} s_i \epsilon_i } \\
=& \sum_{i} \epsilon_i \otimes
    \br{\sum_{j} \br{\lambda d(-A_{ji}) + \sum_k (-A_{ki}) \wedge (-A_{jk})}s_j} \\
=& - \sum_{i} \epsilon_i \otimes
    \br{\sum_{j} \br{\lambda d(A_{ji}) + \sum_k A_{jk} \wedge A_{ki}}s_j}
\end{align*}
The flatness of $\nabla$ shows that:
\[
\lambda d(A_{ji}) + \sum_k A_{jk} \wedge A_{ki} = 0
\]
similar to the proof of \cref{prop:curvature-is-linear},
since it is the $(j, i)$--entry of the matrix of $\nabla^1 \circ \nabla$ in the $\set{e_i}_i$ frame.
We can then apply \cref{prop:curvature-is-linear} to deduce that $\nabla^{p + 1} \circ \nabla^p = 0$
for $p > 0$.
\end{proof}

\begin{prop}\label{prop:dual-conn-sum}
In the context of \cref{defn:dual-conn}, consider two $\lambda_i$--$d_i$--connections
$\nabla_i, i = 1, 2$ on $E$. Then, the following identity of
$1$--$(\lambda_1d_1 + \lambda_2d_2)$--connections holds:
\[
(\nabla_1 + \nabla_2)^\vee = \nabla_1^\vee + \nabla_2^\vee
\]
\end{prop}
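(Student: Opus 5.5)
The plan is to compare the two sides directly on local sections, using \cref{defn:dual-conn} together with the description of sums from \cref{prop:conn-sum}. By \cref{prop:conn-sum}(i), $\nabla_1 + \nabla_2$ is a $1$--$(\lambda_1 d_1 + \lambda_2 d_2)$--connection. Hence its dual, by \cref{defn:dual-conn} applied with $\lambda = 1$ and differential $d := \lambda_1 d_1 + \lambda_2 d_2$, is given on $\eta \in E^\vee(Y')$ and $s \in E(Y'')$ by
\[
(\nabla_1 + \nabla_2)^\vee(\eta)(s) = (\lambda_1 d_1 + \lambda_2 d_2)(\eta(s)) - (\eta \otimes \id_K)\br{(\nabla_1 + \nabla_2)(s)} .
\]

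Next, I would expand this expression. Using that $\eta \otimes \id_K$ is additive, the right-hand side splits as
\[
\br{\lambda_1 d_1(\eta(s)) - (\eta \otimes \id_K)(\nabla_1(s))} + \br{\lambda_2 d_2(\eta(s)) - (\eta \otimes \id_K)(\nabla_2(s))} .
\]
The two bracketed terms are, by \cref{defn:dual-conn} applied to each $\nabla_i$ as a $\lambda_i$--$d_i$--connection, exactly $\nabla_1^\vee(\eta)(s)$ and $\nabla_2^\vee(\eta)(s)$. Since this holds for all opens and all local sections $\eta, s$, we get $(\nabla_1 + \nabla_2)^\vee = \nabla_1^\vee + \nabla_2^\vee$ as maps $E^\vee \to \HHom(E, K)$, via the identification $E^\vee \otimes K \cong \HHom(E, K)$. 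By \cref{cor:dual-conn} and \cref{prop:conn-sum}(i), both sides are $1$--$(\lambda_1 d_1 + \lambda_2 d_2)$--connections on $E^\vee$.

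The only subtle point is bookkeeping. The dual of $\nabla_i$ must be taken with respect to its own pair $(\lambda_i, d_i)$, whereas the dual of the sum uses the pair $(1, \lambda_1 d_1 + \lambda_2 d_2)$. \cref{prop:conn-scale}(i) confirms that these viewpoints are consistent, since $\lambda_i d_i$ regarded as a $1$--connection term yields the same operator. As a sanity check, the same identity can be read off in a local frame: by \cref{rmk:dual-conn-matrix}, the connection matrices are negative transposes, and transposition is additive.
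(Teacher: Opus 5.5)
Your proposal is correct and matches the paper's proof. Both expand $(\nabla_1 + \nabla_2)^\vee(\eta)(s)$ via \cref{defn:dual-conn} with the pair $(1, \lambda_1 d_1 + \lambda_2 d_2)$, split the result using additivity of $\eta \otimes \id_K$, and regroup it as $\nabla_1^\vee(\eta)(s) + \nabla_2^\vee(\eta)(s)$; your remarks on \cref{prop:conn-scale} and the local-frame check are harmless but not needed.
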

\begin{proof}
By definition:
\begin{align*}
 & (\nabla_1 + \nabla_2)^\vee(\eta)(s) \\
=& \lambda_1d_1(\eta(s)) + \lambda_2d_2(s) - (\eta \otimes \id_K)((\nabla_1 + \nabla_2)(s)) \\
=& \lambda_1d_1(\eta(s)) + \lambda_2d_2(s) - (\eta \otimes \id_K)(\nabla_1(s))
   - (\eta \otimes \id_K)((\nabla_2)(s)) \\
=& \lambda_1d_1(\eta(s)) - (\eta \otimes \id_K)(\nabla_1(s))
   + \lambda_2d_2(s) - (\eta \otimes \id_K)((\nabla_2)(s)) \\
=& \nabla_1^\vee(\eta)(s) + \nabla_2^\vee(\eta)(s) \\
=& (\nabla_1^\vee + \nabla_2^\vee)(\eta)(s)
\end{align*}
\end{proof}

\begin{defn}[Tensor Product of Connections]\label{defn:tensor-conn}
Let $\nabla$ and $\nabla'$ be $\lambda$--$d$--connections on $R$--modules $E$ and $F$ of finite
ranks $n, m$ respectively over a differential space $(Y, R, K, d)$.
By passing to a refinement if necessary, let $Y = \bigcup_{\alpha \in I} Y_\alpha$ be an open cover
over which both $E$ and $F$ trivialize.
Let $A^\alpha$ and $B^\alpha$ be the connection forms of $E$ and $F$ respectively over $Y_\alpha$.
Consider the maps
$C^\alpha := A^\alpha \otimes \id_F + \id_E \otimes B^\alpha : (E \otimes F)|_{Y_\alpha} \to
(E \otimes F \otimes K)|_{Y_\alpha}$.
Now, let $g^{\alpha\beta}$ and $h^{\alpha\beta}$ be the
transition functions of $E$ and $F$ respectively over $Y_\alpha \cap Y_\beta$
so that $\gamma^{\alpha\beta} := g^{\alpha\beta} \otimes h^{\alpha\beta}$ are the transition
functions of $E \otimes F$. It is immediate from the definitions that:
\[
d(\gamma^{\beta\alpha})
= d(g^{\beta\alpha}) \otimes h^{\beta\alpha} + g^{\beta\alpha} \otimes d(h^{\beta\alpha})
\]
so that:
\[
(\gamma^{\alpha\beta} \otimes \id_K)d(\gamma^{\beta\alpha})
= d(g^{\beta\alpha}) \otimes \id_F + \id_E \otimes d(h^{\beta\alpha})
\]
Using this, we can check that
$C^\beta = \lambda \cdot (\gamma^{\alpha\beta} \otimes \id_K)d(\gamma^{\beta\alpha}) + C^\alpha$
so that by \cref{prop:conn-form-coll}, the $C^\alpha$ yield a unique
$\lambda$--$d$--connection $\nabla \otimes \nabla' : E \otimes F \to E \otimes F \otimes K$
whose connection forms are the $C^\alpha$. We will call this the tensor product connection of
$\nabla$ and $\nabla'$.
\end{defn}

\begin{rmk}\label{rmk:tensor-conn-matrix}
Consider local frames $\set{e_i}_i, \set{f_j}_j$ for $E, F$ respectively such that
$\nabla, \nabla'$ have connection forms $A, B$ with connection matrices $[A_{ij}], [B_{kl}]$ in
these frames respectively. Let $C$ be the connection form of $\nabla \otimes \nabla'$ in the
$\set{e_i \otimes f_k}_{i, k}$ frame.
We observe that:
\[
C(e_j \otimes f_l) = A(e_j) \otimes f_l + e_j \otimes B(f_l)
= \sum_{i} e_i \otimes f_l \otimes A_{ij} + \sum_{k} e_j \otimes f_k \otimes B_{kl}
= \sum_{i, k} e_i \otimes f_k \otimes (\delta_{kl}A_{ij} + \delta_{ij}B_{kl})
\]
Let $C_{ijkl} := (\delta_{kl}A_{ij} + \delta_{ij}B_{kl})$.
By \cref{rmk:conn-higher-forms-loc}, we have:
\begin{align*}
 & (\nabla \otimes \nabla')^1((\nabla \otimes \nabla')^0(e_j \otimes f_l)) \\
=& \sum_{i, k} e_i \otimes f_k \otimes
     \sum_{j, l} \br{\lambda d(C_{ijkl}) + \sum_{p, q} C_{ipkq} \wedge C_{pjql}} \\
=& \sum_{i, k} e_i \otimes f_k \otimes
     \sum_{j, l} \br{\lambda \br{\delta_{kl}d(A_{ij}) + \delta_{ij}d(B_{kl})}
     + \sum_{p, q} (\delta_{kq} A_{ip} + \delta_{ip}B_{kq})
                  \wedge (\delta_{ql}A_{pj} + \delta_{pj} B_{ql})}
\end{align*}
By distributing the wedge products and sums, and taking only those summands
for which $\delta_{ab} = 1$ for various choices of $a, b$, this simplifies to:
\begin{align*}
 & (\nabla \otimes \nabla')^1((\nabla \otimes \nabla')^0(e_j \otimes f_l)) \\
=& \sum_{i, k} e_i \otimes f_k \otimes
     \sum_{j, l} \br{\delta_{kl}\br{\lambda d(A_{ij}) + \sum_{p} A_{ip} \wedge A_{pi}}
                 + \delta_{ij}\br{\lambda d(B_{kl}) + \sum_{q} B_{kq} \wedge B_{ql}}} \\
=& \sum_{i} \br{\br{\lambda d(A_{ij}) + \sum_{p} A_{ip} \wedge A_{pi}} \otimes e_i} \otimes f_k
   + \sum_{k} e_j \otimes
              \br{\br{\lambda d(B_{kl}) + \sum_{q} B_{kq} \wedge B_{ql}} \otimes f_{k}} \\
=& \nabla^1(\nabla^0(e_j)) \otimes f_k
   + e_j \otimes (\nabla')^1((\nabla')^0(f_{k}))
\end{align*}
That is, we have the following equality of $R$--linear maps:
\[
(\nabla \otimes \nabla)^1 \circ (\nabla \otimes \nabla')^0
= (\nabla^1 \circ \nabla^0) \otimes \id_F + \id_E \otimes ((\nabla')^1 \circ (\nabla')^0)
\]
\end{rmk}

\begin{cor}\label{cor:tensor-conn-flat}
$\nabla \otimes \nabla'$ is flat if both $\nabla$ and $\nabla'$ are.
\end{cor}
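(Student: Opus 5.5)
The plan is to read the statement off directly from the identity established at the end of \cref{rmk:tensor-conn-matrix}, namely the equality of $R$--linear maps
\[
(\nabla \otimes \nabla')^1 \circ (\nabla \otimes \nabla')^0
= (\nabla^1 \circ \nabla^0) \otimes \id_F + \id_E \otimes ((\nabla')^1 \circ (\nabla')^0).
\]
If $\nabla$ and $\nabla'$ are flat, both summands on the right vanish. Hence the degree--zero curvature $(\nabla \otimes \nabla')^1 \circ (\nabla \otimes \nabla')^0$ of the tensor product connection is zero.

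The remaining task is to pass from vanishing of the first curvature to vanishing of all the curvatures $(\nabla \otimes \nabla')^{p+1} \circ (\nabla \otimes \nabla')^p$, since flatness in \cref{defn:curvature} requires all of them to vanish. For this I will invoke \cref{prop:curvature-is-linear}(ii), as in the proof of \cref{prop:dual-conn-flat}. That result needs the hypothesis $\lambda \in \ker{d}$, so I will work under it, as the neighbouring results (\cref{prop:dual-conn-flat}, \cref{prop:curvature-is-linear}) do.

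The main obstacle is precisely this hypothesis. Without $\lambda \in \ker{d}$, the local formula of \cref{rmk:conn-higher-forms-loc} contains the extra term $\lambda\, d(\lambda) \wedge d(\tau_i)$ in the higher curvatures. That term is the same for $\nabla$, $\nabla'$ and $\nabla \otimes \nabla'$. If the corollary is meant without that hypothesis, I would argue directly in local frames instead.

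Take the frame $\set{e_i \otimes f_k}$. The curvature matrix $\lambda\, d(C_{ijkl}) + \sum C \wedge C$ of $\nabla \otimes \nabla'$ splits as computed in \cref{rmk:tensor-conn-matrix}. Flatness of $\nabla$ and $\nabla'$ at degree $p=0$ forces their curvature matrices to vanish, which kills the matrix part of the formula in every degree $p$. The residual term $\lambda\, d(\lambda) \wedge d(\tau)$ is then handled as follows. Flatness of $\nabla$ in degree $p$, applied to the sections $e_i \otimes \tau$, shows that this residual term vanishes on the relevant forms $\tau$. The same term appears verbatim in the tensor product computation, so it vanishes there too.

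Since both sides of each identity are checked on a cover by local frames, gluing finishes the argument.
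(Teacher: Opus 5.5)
Your main argument is the paper's proof: the identity at the end of \cref{rmk:tensor-conn-matrix} kills the degree-zero curvature, and \cref{prop:curvature-is-linear} propagates this to all degrees. That proposition needs $d(\lambda)=0$, which the paper's proof also assumes tacitly. Your frame-wise fallback for general $\lambda$ is also sound: once the curvature matrix of $\nabla$ vanishes, flatness of $\nabla$ in degree $p$ on $e_i \otimes \tau$ forces $\lambda\, d(\lambda) \wedge d(\tau) = 0$, and this is exactly the residual term in the tensor product's curvature.
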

\begin{proof}
Apply \cref{rmk:tensor-conn-matrix} along with \cref{prop:curvature-is-linear}.
\end{proof}

\begin{prop}\label{prop:tensor-conn-elem}
In the context of \cref{defn:tensor-conn}, if $e \otimes f \in (E \otimes F)(U)$ is an elementary
tensor over an open subset $U \subset Y$, then
$(\nabla \otimes \nabla')(e \otimes f) = \nabla(e) \otimes f + e \otimes \nabla'(f)$.
\end{prop}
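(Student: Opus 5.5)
The identity is local, so I will check it after restricting to a member $Y_\alpha$ of the trivializing cover from \cref{defn:tensor-conn}. Both sides are sections of a sheaf, and two sections agreeing on every member of an open cover agree. So I may assume $U \subset Y_\alpha$ and fix frames $\set{e_i}_i$ of $E$ and $\set{f_k}_k$ of $F$ over $U$, with connection forms $A, B$ and matrices $[A_{ij}], [B_{kl}]$. I write $e = \sum_i a_i e_i$ and $f = \sum_k b_k f_k$ with $a_i, b_k \in R(U)$. Then $e \otimes f = \sum_{i,k} a_i b_k\, e_i \otimes f_k$ in the product frame $\set{e_i \otimes f_k}$.

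By construction, over $Y_\alpha$ we have $\nabla \otimes \nabla' = \lambda d^{\alpha} + C^\alpha$, where $d^\alpha$ is the frame differential of \cref{defn:conn-form} for the product frame and $C^\alpha = A \otimes \id_F + \id_E \otimes B$. I apply this to $e \otimes f$.

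\textbf{Frame-differential term.} By the Leibniz rule for $d$,
\[
d(a_i b_k) = b_k\, d(a_i) + a_i\, d(b_k),
\]
so
\[
\lambda d^\alpha(e \otimes f) = \sum_{i,k} e_i \otimes f_k \otimes \lambda\br{b_k\, d(a_i) + a_i\, d(b_k)}.
\]
Regrouping using $R$-bilinearity of $\otimes$ and the symmetry isomorphism $E \otimes K \otimes F \cong E \otimes F \otimes K$ (implicit in the statement), this equals
\[
\lambda d^E(e) \otimes f + e \otimes \lambda d^F(f),
\]
where $d^E$ and $d^F$ are the frame differentials of $E$ and $F$ over $U$.

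\textbf{Connection-form term.} Since $C^\alpha$ is $R$-linear and $A$, $B$ are $R$-linear,
\[
C^\alpha(e \otimes f) = A(e) \otimes f + e \otimes B(f).
\]

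\textbf{Conclusion.} Adding the two terms and using $\nabla = \lambda d^E + A$ and $\nabla' = \lambda d^F + B$ on $U$ gives $\nabla(e) \otimes f + e \otimes \nabla'(f)$.

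The only delicate point is bookkeeping with the identification $E \otimes K \otimes F \cong E \otimes F \otimes K$ used to make sense of $\nabla(e) \otimes f$. I will state it explicitly as the symmetry isomorphism and note that it is $R$-linear and compatible with restriction, so the local computations glue. I do not expect any genuine obstacle beyond this.
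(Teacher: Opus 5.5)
Your proposal is correct and follows essentially the same route as the paper. Both restrict to a cover on which $E$ and $F$ are framed, expand $e \otimes f$ in the product frame, apply $\lambda d^\alpha + C^\alpha$, split the frame-differential term by the Leibniz rule and the connection-form term by $R$-linearity, and then glue. Your version is slightly more careful on two points: you keep track of the factor $\lambda$, which the paper's displayed computation silently drops, and you make the symmetry isomorphism $E \otimes K \otimes F \cong E \otimes F \otimes K$ explicit.
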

\begin{proof}
Consider an open cover $U = \bigcup_{\alpha} U_\alpha$ with local frames
$\set{e^\alpha_i}_i, \set{f^\alpha_j}_j$ such that
$e^\alpha := e|_{U_\alpha} = \sum_i s^\alpha_i e^\alpha_i,
f^\alpha := f|_{U_\alpha} = \sum_j t^\alpha_j f^\alpha_j$.
We observe that:
\begin{align*}
 & (\nabla \otimes \nabla')(e \otimes f)|_{U_\alpha} \\
=& (\nabla \otimes \nabla')((e \otimes f)|_{U_\alpha}) \\
=& d^\alpha\br{\sum_{i, j} s^\alpha_i t^\alpha_j e^\alpha_i \otimes f^\alpha_j}
   + C^\alpha(e^\alpha \otimes f^\alpha) \\
=& \sum_{i, j} d(s^\alpha_i t^\alpha_j) e^\alpha_i \otimes f^\alpha_j
   + C^\alpha(e^\alpha \otimes f^\alpha) \\
=& d^\alpha(e^\alpha) \otimes f^\alpha + e^\alpha \otimes d^\alpha(f^\alpha)
   +  A^\alpha(e^\alpha) \otimes f^\alpha + e^\alpha \otimes B^\alpha(f^\alpha) \\
=& (d^\alpha + A^\alpha)(e^\alpha) \otimes f^\alpha
   +  e^\alpha \otimes (d^\alpha + B^\alpha)(f^\alpha) \\
=& \nabla(e^\alpha) \otimes f^\alpha
   +  e^\alpha \otimes \nabla'(f^\alpha) \\
=& (\nabla(e) \otimes f + e \otimes \nabla'(f))|_{U_\alpha}
\end{align*}
This shows the two sides agree locally and hence, they also agree globally by the gluing
property of sections.
\end{proof}

\begin{prop}\label{prop:tensor-conn-sum}
Let $(Y, R)$ be an $\bF$--space with finite rank locally free $R$--modules $E_1, E_2, K$.
Consider two differentials $d_1, d_2$ on $K$, and for $i, j \in \set{1, 2}$,
$\lambda_j$--$d_j$--connections $\nabla_{i, j}$ on $E_i$.
Then, the following identity of $1$--$(\lambda_1d_1 + \lambda_2d_2)$--connections on
$E \otimes F$ holds:
\[
(\nabla_{1, 1} + \nabla_{1, 2}) \otimes (\nabla_{2, 1} + \nabla_{2, 2})
= \nabla_{1, 1} \otimes \nabla_{2, 1} + \nabla_{1, 2} \otimes \nabla_{2, 2}
\]
\end{prop}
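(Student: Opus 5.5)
The plan is to reduce the identity to a check on elementary tensors, using \cref{prop:tensor-conn-elem} and the fact that a connection is determined by its values on a local frame. First, I would confirm that both sides are $1$--$(\lambda_1 d_1 + \lambda_2 d_2)$--connections on $E_1 \otimes E_2$, so the comparison makes sense.

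For the left side: by \cref{prop:conn-sum}(i), each $\nabla_{i,1} + \nabla_{i,2}$ is a $1$--$(\lambda_1 d_1 + \lambda_2 d_2)$--connection on $E_i$. Note that $\lambda_1 d_1 + \lambda_2 d_2$ is a differential on $K$ by the corollaries on sums and scalar products of differentials. \cref{defn:tensor-conn} therefore applies with $\lambda = 1$ and $d = \lambda_1 d_1 + \lambda_2 d_2$.

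For the right side: $\nabla_{1,j} \otimes \nabla_{2,j}$ is a $\lambda_j$--$d_j$--connection by \cref{defn:tensor-conn}. The sum of the two is then a $1$--$(\lambda_1 d_1 + \lambda_2 d_2)$--connection, again by \cref{prop:conn-sum}(i).

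Next, I would evaluate both sides on an elementary tensor $e \otimes f$ over an open $U$, applying \cref{prop:tensor-conn-elem} three times: once on the left with $(\lambda,d) = (1, \lambda_1 d_1 + \lambda_2 d_2)$, and once for each $j$ on the right with $(\lambda,d) = (\lambda_j, d_j)$. The left side gives
\[
(\nabla_{1,1} + \nabla_{1,2})(e) \otimes f + e \otimes (\nabla_{2,1} + \nabla_{2,2})(f).
\]
Expanding by bilinearity and regrouping, this equals
\[
\br{\nabla_{1,1}(e) \otimes f + e \otimes \nabla_{2,1}(f)} + \br{\nabla_{1,2}(e) \otimes f + e \otimes \nabla_{2,2}(f)},
\]
which is exactly the right side evaluated on $e \otimes f$. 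Throughout, $E_1 \otimes E_2 \otimes K$ is identified with $E_1 \otimes K \otimes E_2$ in the usual way, as in \cref{defn:tensor-conn}.

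Finally, I would pass from elementary tensors to all sections. The difference of the two sides is a difference of two $1$--$(\lambda_1 d_1 + \lambda_2 d_2)$--connections, so the Leibniz terms cancel and it is $R$--linear, just as for connection forms. On a cover trivializing both $E_1$ and $E_2$, the sections $e_i \otimes f_k$ form local frames of elementary tensors. The difference vanishes on each of these, hence vanishes on every open of the cover by $R$--linearity, and therefore vanishes globally by the sheaf property.

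The main point requiring care is that \cref{prop:tensor-conn-elem} is stated for two connections sharing the same $\lambda$--$d$. Each of its three uses above satisfies this, so no mixed-type tensor product is ever formed. Beyond that bookkeeping, the argument is routine.
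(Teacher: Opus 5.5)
Your proposal is correct and follows the paper's proof. Both arguments check that each side is a $1$--$(\lambda_1 d_1 + \lambda_2 d_2)$--connection, compare the two sides on elementary tensors via \cref{prop:tensor-conn-elem}, and then pass to all sections. The paper phrases that last step as equality of connection forms plus \cref{prop:conn-form-coll}, while you use $R$--linearity of the difference on a local frame of elementary tensors $e_i \otimes f_k$, which amounts to the same thing.

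One small caution: you never need $\lambda_1 d_1 + \lambda_2 d_2$ to be flat, and in general it is not, since $(d_1 + d_2)\circ(d_1 + d_2) = d_1 \circ d_2 + d_2 \circ d_1$. Only its Leibniz rule on $R$ enters \cref{defn:tensor-conn} and \cref{prop:conn-sum}(i), and that does hold.
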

\begin{proof}
We need to check that the types of connections on both sides of the identity align but this is
straightforward.
On elementary tensors $e \otimes f \in (E \otimes F)(U)$ over an open subset $U \subset Y$, by
\cref{prop:tensor-conn-elem}, we have:
\begin{align*}
 & ((\nabla_{1, 1} + \nabla_{1, 2}) \otimes (\nabla_{2, 1} + \nabla_{2, 2}))(e \otimes f) \\
=& (\nabla_{1, 1}(e) + \nabla_{1, 2}(e)) \otimes f
   + e \otimes (\nabla_{2, 1}(f) + \nabla_{2, 2}(f)) \\
=& \nabla_{1, 1}(e) \otimes f + \nabla_{1, 2}(e) \otimes f
   + e \otimes \nabla_{2, 1}(f) + e \otimes \nabla_{2, 2}(f) \\
=& \nabla_{1, 1}(e) \otimes f + e \otimes \nabla_{2, 1}(f)
   + \nabla_{1, 2}(e) \otimes f + e \otimes \nabla_{2, 2}(f) \\
=& (\nabla_{1, 1} \otimes \nabla_{2, 1})(e \otimes f)
   + (\nabla_{1, 2} \otimes \nabla_{2, 2})(e \otimes f) \\
=& (\nabla_{1, 1} \otimes \nabla_{2, 1} + \nabla_{1, 2} \otimes \nabla_{2, 2})(e \otimes f)
\end{align*}
This, in particular, shows that in a local frame, the two sides have the same connection forms
and are hence equal by \cref{prop:conn-form-coll}.
\end{proof}

\begin{prop}\label{prop:rest-conn-ext-prt}
In the context of \cref{prop:rest-diff-ext}, suppose we have a finite rank locally free $R$--module
$E$. For any morphisms $F : E \to E \otimes K$, $f : E \to E \otimes K^{j, 1 - j}$, denote the
composites $pr^{j, 1 - j} \circ F$ and $\iota^{j, 1 - j} \circ f$ by $F^{j, 1 - j}$ by
$f_{ext}$ respectively for $j = 0, 1$.
Then, the following hold:
\begin{enumerate}
\item If $\nabla$ is a $\lambda$--$d^{\bullet, 0}$--connection, then $\nabla_{ext}$
is a $\lambda$--$D^\bullet$--connection called the extension of $\nabla$ to $K$.
If $\nabla$ is flat, so is $\nabla_{ext}$.

\item If $\nabla$ is a $\lambda$--$D^\bullet$--connection, then $\nabla^{1, 0}$
is a $\lambda$--$d^{1, 0}$--connection called the restriction of $\nabla$ to $K^{1, 0}$.
If $\nabla$ is flat, so is $\nabla^{1, 0}$.

\item If $\nabla$ is a $\lambda$--$D^\bullet$--connection, then $\nabla^{0, 1}$ is
a $0$--$\ol{d}^{0, \bullet}$--connection, $\nabla^{0, 1}_{ext}$ is both a
$0$--$\ol{D}^\bullet$--connection and a $0$--$D^\bullet$--connection with extensions
$(\nabla^{0, 1}_{ext})^k$ to higher wedge powers (\cref{defn:conn-higher-forms}), independent of the
differential. If $\nabla$ is flat, then $\nabla^{0, 1}$ is flat as a
$0$--$\ol{d}^{0, \bullet}$--connection and $\nabla^{0, 1}_{ext}$ is flat as a
$0$--$D^\bullet$--connection and as a $0$--$\ol{D}^\bullet$--connection, and the following identity
holds:
\[
(\nabla^{1, 0}_{ext})^1 \circ (\nabla^{0, 1}_{ext})^0
    + (\nabla^{0, 1}_{ext})^1 \circ (\nabla^{1, 0}_{ext})^0 = 0
\]
where the superscripts are again as in \cref{defn:conn-higher-forms}.

\item If $\prt_E$ and $\theta$ are $\lambda$--$d^{\bullet, 0}$-- and
$0$--$\ol{d}^{0, \bullet}$--connections respectively, then
$\nabla = (\prt_E)_{ext} + \theta_{ext}$ is a $\lambda$--$D^\bullet$--connection. In addition,
$\nabla$ is flat if $\prt_E$ and $\theta$ are flat, and:
\[
(\prt_E)_{ext}^1 \circ \theta_{ext}^0 + \theta_{ext}^1 \circ (\prt_E)_{ext}^0 = 0
\]
where the superscripts are again as in \cref{defn:conn-higher-forms}.

\item There is a bijection between flat $\lambda$--$D^\bullet$--connections and
pairs $(\prt_E, \theta)$ satisfying the conditions of (iv).
\end{enumerate}
The analogous statements hold when we switch $D^\bullet, d^{\bullet, 0}, \iota^{1, 0}$ with
$\ol{D}^\bullet, \ol{d}^{0, \bullet}, \iota^{0, 1}$ respectively.
\end{prop}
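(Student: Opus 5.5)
The plan is to reduce everything to local connection matrices and bidegree bookkeeping. Since $d = D + \ol{D}$ and the graded pieces $K^{p,q}$ give a direct sum decomposition of each $K^{\wedge k}$, the key tools are: the proof of \cref{prop:rest-diff-ext}, \cref{rmk:conn-higher-forms-loc}, and \cref{prop:curvature-is-linear}. Throughout I will assume $\lambda \in \ker{d}$, or at least that $d(\lambda)$ has the appropriate bidegree, when invoking curvature linearity. This assumption is implicit in the flatness claims.

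For (i), I first check the Leibniz rule. For a local section $r$ of $R$, $D^0(r) = \iota^{1,0} d^{0,0}(r)$, so
\[
\iota^{1,0}\nabla(rs) = \lambda\, s \otimes \iota^{1,0} d^{0,0}(r) + r\,\iota^{1,0}\nabla(s),
\]
which is the Leibniz rule for $D$. For flatness, I compare the local formulas in \cref{rmk:conn-higher-forms-loc} for $\nabla$ and $\nabla_{ext}$. Both have connection matrix $[A_{ij}]$, with $A_{ij} \in K^{1,0}$, and $D(A_{ij}) = \iota\, d^{1,0}(A_{ij})$. Hence the curvature entries $\lambda D(A_{ij}) + \sum_k A_{ik}\wedge A_{kj}$ are the images under $\iota$ of those of $\nabla$, and they vanish. \cref{prop:curvature-is-linear} then propagates flatness to all degrees.

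For (ii), the argument is symmetric. Composing with $pr^{1,0}$ sends $\lambda D(r)$ to $\lambda d^{0,0}(r)$, since $D(r)$ already lies in $K^{1,0}$. The curvature entries of $\nabla^{1,0}$ are the $(2,0)$-components of those of $\nabla$. To see this, split $A = A^{1,0} + A^{0,1}$: the $(2,0)$-part of $\lambda D(A) + A\wedge A$ equals $\lambda d^{1,0}(A^{1,0}) + A^{1,0}\wedge A^{1,0}$, because $D$ raises only the first degree.

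For (iii), since $\ol{D}$ has no $(1,0)$-part on functions, $pr^{0,1}\circ\lambda D = 0$. So $\nabla^{0,1}$ is $R$-linear, and \cref{rmk:0-d-conn} and \cref{rmk:0-d-conn-curvature} give independence of the extensions from the differential. For flatness I take the bidegree components of the curvature $\lambda D(A) + A\wedge A$ of $\nabla$, which lies in $K^{2,0}\oplus K^{1,1}\oplus K^{0,2}$. The $(0,2)$-part is $A^{0,1}\wedge A^{0,1}$, which is exactly the curvature of $\nabla^{0,1}$, since $D$ contributes nothing in bidegree $(0,2)$. The $(1,1)$-part is $\lambda D(A^{0,1}) + A^{1,0}\wedge A^{0,1} + A^{0,1}\wedge A^{1,0}$. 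By \cref{prop:conn-sum}(iii) applied to $\nabla_{ext}^{1,0}$ and $\nabla_{ext}^{0,1}$, this is the matrix of the mixed composite $(\nabla^{1,0}_{ext})^1\circ(\nabla^{0,1}_{ext})^0 + (\nabla^{0,1}_{ext})^1\circ(\nabla^{1,0}_{ext})^0$. Vanishing of each component then gives the claimed identity and the flatness statements.

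Part (iv) is the converse. By \cref{prop:conn-sum}(i) and (iii), $\nabla = (\prt_E)_{ext} + \theta_{ext}$ is a $\lambda$--$D$ connection, using \cref{rmk:0-d-conn} to view $\theta_{ext}$ as a $0$--$D$ connection. Its curvature is the sum of the three bidegree pieces, each zero by hypothesis together with (i).

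For (v), the maps $\nabla \mapsto (\nabla^{1,0}, \nabla^{0,1})$ and $(\prt_E, \theta) \mapsto (\prt_E)_{ext} + \theta_{ext}$ are mutually inverse. This holds because $\iota^{1,0}pr^{1,0} + \iota^{0,1}pr^{0,1} = \id$ on $K$ and $pr\circ\iota = \id$. The barred statements follow by the symmetric argument.

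I expect the main obstacle to be the bookkeeping in (iii). One must verify that the local curvature matrix of the sum correctly identifies the cross terms with the two mixed composites, with the correct signs from the graded wedge of $E$-valued forms. One must also verify that $\lambda D(A^{0,1})$ lands in $K^{1,1}$; this uses the fact from \cref{prop:rest-diff-ext} that $D$ preserves the second degree.
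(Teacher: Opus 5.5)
Your proof is correct. For (i), (ii), (iv) and (v) it is essentially the paper's argument. In (i) the paper gets flatness from the square $\nabla^1_{ext}\circ(\id_E\otimes\iota^{1,0}) = (\id_E\otimes\iota^{2,0})\circ\nabla^1$, built from the universal property of the tensor product, and in (ii) from the analogous square with projections. That square is the frame-free form of your comparison of the local curvature matrices $\lambda D(A)+A\wedge A$. Parts (iv) and (v) are the same appeal to \cref{prop:conn-sum} and to $\iota^{1,0}pr^{1,0}+\iota^{0,1}pr^{0,1}=\id_K$.

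The real difference is in (iii). The paper writes $\nabla^{0,1}_{ext}=\nabla-\nabla^{1,0}_{ext}$ and uses \cref{prop:conn-scale}(i), \cref{prop:conn-sum}(i) and \cref{rmk:0-d-conn} to see it is a $0$--connection. It then cites \cref{prop:conn-sum}(iv) for both the flatness of $\nabla^{0,1}_{ext}$ and the mixed identity. But \cref{prop:conn-sum}(iv) presupposes that both summands are already flat. Given only the flatness of $\nabla$ and of $\nabla^{1,0}_{ext}$, \cref{prop:conn-sum}(iii) yields only that the mixed term plus $(\nabla^{0,1}_{ext})^1\circ(\nabla^{0,1}_{ext})^0$ vanishes. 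Your observation that these two pieces take values in $E\otimes K^{1,1}$ and $E\otimes K^{0,2}$ respectively is exactly what separates them; the paper uses this device only later, in \cref{prop:rest-conn-ext-full}(vi). So your version of (iii) is the more complete one. Your direct check that $pr^{0,1}(D^0(r))=0$ is also simpler than the paper's detour through differences of connections.

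Three small fixes:
\begin{enumerate}
\item The reason $pr^{0,1}\circ D^0=0$ is that $D^0(r)=\iota^{1,0}d^{0,0}(r)$ lies in $K^{1,0}$, not that ``$\ol{D}$ has no $(1,0)$-part''.
\item The only hypothesis needed for \cref{prop:curvature-is-linear} is that $\lambda$ is killed by the differential in question, which the paper also assumes implicitly. You can drop the hedge about the bidegree of $d(\lambda)$.
\item To get the mixed identity on all of $E$ rather than on frame elements, apply $\id_E\otimes pr^{1,1}$ directly to the global decomposition of $\nabla^1\circ\nabla^0$ from \cref{prop:conn-sum}(iii). Alternatively, note first that the mixed term is $R$-linear before arguing in frames.
\end{enumerate}
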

\begin{proof}
(i) First observe that $\nabla_{ext}$ is $\bF$--linear because it is a composite of such maps.
Next, observe that:
\begin{align*}
\nabla_{ext}(rs)
=& (\id_E \otimes \iota^{1, 0})(\lambda s \otimes d^{1, 0}(r) + r\nabla(s)) \\
=& \lambda s \otimes \iota^{1, 0}(d^{1, 0}(r)) + r (\id_E \otimes \iota^{1, 0})(\nabla(s)) \\
=& \lambda s \otimes D^1(r) + r\nabla_{ext}(s)
\end{align*}
For flatness, we first recall how the extension of a connection to higher wedge powers is defined
(\cref{defn:conn-higher-forms}): the map $\nabla_{ext}^1$ is the unique map
$E \otimes K \to E \otimes K^{\wedge 2}$ provided by the universal property of tensor products
applied to the map:
\[
\nabla^1_{ext, 0} : E \times K \to E \otimes K^{\wedge 2}
                : (s, \omega) \mapsto s \otimes D(\omega) + \nabla_{ext}(s) \wedge \omega
\]
Then, by the fact that the wedge product of the direct summand $K^{1, 0}$ of $K$ is the wedge
product of $K$ restricted to $K^{1, 0}$, the following diagram commutes:
\[\begin{tikzcd}[column sep = huge]
E \times K^{1, 0} \ar[r, "\nabla(-) \otimes -" ] \ar[d, "\id_E \times \iota^{1, 0}" left] &
E \otimes K^{1, 0} \otimes K^{1, 0}
    \ar[r, "\id_E \otimes (- \wedge -)"]
    \ar[d, "\id_E \otimes \iota^{1, 0} \otimes \iota^{1, 0}" left] &
E \otimes (K^{1, 0})^{\wedge 2} \ar[d, "\id_E \otimes \iota^{2, 0}"] \\
E \times K \ar[r, "\nabla_{ext}(-) \otimes -" below] &
E \otimes K \otimes K \ar[r, "\id_E \otimes (- \wedge -)" below] &
E \otimes K^{\wedge 2}
\end{tikzcd}\]
This, in turn, implies, along with the definition of $D$, that the following diagram commutes:
\[\begin{tikzcd}
E \times K^{1, 0} \ar[r, "\nabla^1_0"] \ar[d, "\id_E \otimes \iota^{1, 0}" left] &
E \otimes (K^{1, 0})^{\wedge 2} \ar[d, "\id_E \otimes \iota^{2, 0}"] \\
E \times K \ar[r, "\nabla^1_{ext, 0}" below] &
E \otimes K^{\wedge 2}
\end{tikzcd}\]
By the universal property of tensor products, the following diagram then commutes:
\[\begin{tikzcd}
E \ar[r, "\nabla^0"] \ar[rd, "\nabla_{ext}^0" below left] &
E \otimes K^{1, 0} \ar[r, "\nabla^1"] \ar[d, "\id_E \otimes \iota^{1, 0}"] &
E \otimes (K^{1, 0})^{\wedge 2} \ar[d, "\id_E \otimes \iota^{2, 0}"] \\
& E \otimes K \ar[r, "\nabla_{ext}^1" below] & E \otimes K^{\wedge 2}
\end{tikzcd}\]
and the flatness of $\nabla_{ext}$ follows from that of $\nabla$ and
\cref{prop:curvature-is-linear}.
The proof for the case of $\ol{d}^{0, \bullet}$ and $\ol{D}^\bullet$ is symmetric.

(ii) $\bF$--linearity is similar to (i). Next, observe that:
\begin{align*}
\nabla^{1, 0}(rs)
=& (\id_E \otimes pr^{1, 0})(\lambda s \otimes D^{1}(r) + r\nabla(s)) \\
=& \lambda s \otimes pr^{1, 0}(D^{1}(r)) + r(\id_E \otimes pr^{1, 0})(\nabla(s)) \\
=& \lambda s \otimes d^{1, 0}(r) + r\nabla_{d^{1, 0}}(s)
\end{align*}
The proof of flatness is similar to (i) but with $\iota^{1, 0}$ replaced by $pr^{1, 0}$.
The proof of the analogous statement for $\ol{d}^{0, \bullet}$ and $\ol{D}^\bullet$ is symmetric.

(iii) First apply (i) and (ii) to see that $\nabla^{1, 0}_{ext}$ is a
$\lambda$--$D^\bullet$--connection.
Next, apply \cref{prop:conn-scale}(i) to see that $-\nabla^{1, 0}_{ext}$ is
a $-\lambda$--$D^\bullet$--connection.
Then, apply \cref{prop:conn-sum}(i) to see that
$\nabla^{0, 1}_{ext} = \nabla - \nabla^{1, 0}_{ext}$ is a $\lambda$--$0$--connection which makes it
both a $0$--$\ol{D}^\bullet$--connection and a $0$--$D^\bullet$--connection by \cref{rmk:0-d-conn}.
Finally, by (ii), we deduce that
$\nabla^{0, 1}$ is a $0$--$\ol{d}^{0, \bullet}$--connection.
Suppose, now, that $\nabla$ is flat. By (i) and (ii), $\nabla^{1, 0}_{ext}$ is flat, which, by
\cref{prop:conn-sum}(iv), implies $\nabla^{0, 1}_{ext}$ is flat as a $0$--$D^\bullet$--connection
and:
\[
(\nabla^{1, 0}_{ext})^1 \circ (\nabla^{0, 1}_{ext})^0
+ (\nabla^{0, 1}_{ext})^1 \circ (\nabla^{1, 0}_{ext})^0 = 0
\]
Note that $\nabla^{1, 0}_{ext}$ is also flat as a $0$--$\ol{D}^\bullet$--connection by
\cref{rmk:0-d-conn-curvature}.
Finally, apply (ii) to deduce that $\nabla^{1, 0}$ and $\nabla^{0, 1}$ are flat as
$\lambda$--$d^{\bullet, 0}$-- and $0$--$\ol{d}^{0, \bullet}$--connections respectively.

(iv) By (i), $(\oprt_E)_{ext}$ is a $\lambda$--$D^\bullet$--connection and
$\theta_{ext}$ is a $0$--$\ol{D}^\bullet$--connection and hence, also a $0$--$D^\bullet$--connection
by \cref{rmk:0-d-conn}. The rest follows from \cref{prop:conn-sum}.

(v) follows from (iii) and (iv).
\end{proof}

\begin{prop}\label{prop:rest-conn-ext-full}
In the context of \cref{prop:rest-diff-ext} and \cref{prop:rest-conn-ext-prt}, if $\nabla$ is
a $\lambda$--$d$--connection, then the following hold:
\begin{enumerate}
\item $\nabla^{1, 0}$ is a $1$--$d^{\bullet, 0}$--connection.
\item $\nabla^{1, 0}_{ext}$ is a $1$--$D^\bullet$--connection.
\item $\nabla^{0, 1}$ is a $1$--$\ol{d}^{0, \bullet}$--connection.
\item $\nabla^{0, 1}_{ext}$ is a $1$--$\ol{D}^\bullet$--connection.
\item $\nabla^k = (\nabla^{1, 0}_{ext})^k + (\nabla^{0, 1}_{ext})^k$
\item If $\lambda \in \ker{d}$, then $\nabla$ is flat if and only if
$\nabla^{1, 0}$ and $\nabla^{0, 1}$ are flat, and the following identity holds:
\[
(\nabla^{1, 0}_{ext})^1 \circ (\nabla^{0, 1}_{ext})^0
+ (\nabla^{0, 1}_{ext})^1 \circ (\nabla^{1, 0}_{ext})^0 = 0
\]
\end{enumerate}
\end{prop}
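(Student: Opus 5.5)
The plan is to reduce everything to the Leibniz rule together with the decomposition $d = D + \ol{D}$ from \cref{prop:rest-diff-ext}, and to track where the scalar $\lambda$ goes. Write $\nabla^{1,0} := (\id_E \otimes pr^{1,0}) \circ \nabla$ and $\nabla^{0,1} := (\id_E \otimes pr^{0,1}) \circ \nabla$. In degree one we have $pr^{1,0} d = d^{0,0} = d^{\bullet,0}$ on functions and $pr^{0,1} d = \ol{d}^{0,0}$. Applying $pr^{1,0}$ to the Leibniz identity $\nabla(rs) = \lambda s \otimes d(r) + r\nabla(s)$ then gives $\nabla^{1,0}(rs) = s \otimes \lambda d^{\bullet,0}(r) + r\nabla^{1,0}(s)$. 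By \cref{prop:conn-scale}(i), read as ``a $\lambda$--$d'$--connection is a $1$--$\lambda d'$--connection'', this is the sense of (i). The statement is presumably meant with the differential $\lambda d^{\bullet,0}$. I will interpret the ``$1$'' in this way throughout, noting that $\lambda d^{\bullet,0}$ is again a differential when $\lambda \in \ker{d}$, by the scalar-product corollary. Items (iii) are identical with $pr^{0,1}$.

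For (ii) and (iv), compose with $\iota^{1,0}$ (resp.\ $\iota^{0,1}$), exactly as in the proof of \cref{prop:rest-conn-ext-prt}(i). The only input needed is $\iota^{1,0} d^{0,0} = D^0$ on $R$, which holds by the definition of $D^k$. Since $\iota^{1,0}pr^{1,0} + \iota^{0,1}pr^{0,1} = \id_K$, we get $\nabla = \nabla^{1,0}_{ext} + \nabla^{0,1}_{ext}$. Item (v) then follows at once from \cref{prop:conn-sum}(ii), applied to this sum of a $1$--$\lambda D$-- and a $1$--$\lambda\ol{D}$--connection, together with $\lambda d = \lambda D + \lambda \ol{D}$. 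This is the point where the hypothesis $d^k = D^k + \ol{D}^k$ from \cref{prop:rest-diff-ext} is used.

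For (vi), \cref{prop:conn-sum}(iii) expands $\nabla^1\circ\nabla^0$ into four terms. I then check bidegrees. The term $(\nabla^{1,0}_{ext})^1(\nabla^{1,0}_{ext})^0$ lands in $E\otimes K^{2,0}$. The term $(\nabla^{0,1}_{ext})^1(\nabla^{0,1}_{ext})^0$ lands in $E\otimes K^{0,2}$. The two cross terms land in $E\otimes K^{1,1}$. For the first and third claims I verify them in a local frame using \cref{rmk:conn-higher-forms-loc}: with $\lambda\in\ker{d}$ the curvature matrix is $\lambda D(A^{1,0}) + A^{1,0}\wedge A^{1,0}$. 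Here one needs $D$ to preserve bidegree shifts by $(1,0)$, which is built into its definition via $\iota^k$. Because these three summands of $K^{\wedge 2}$ are independent, $\nabla^1\circ\nabla^0 = 0$ if and only if each component vanishes. The $(2,0)$ component is identified with the curvature of $\nabla^{1,0}$ through the commuting square in the proof of \cref{prop:rest-conn-ext-prt}(i), which shows that $\iota^{2,0}$ intertwines the curvatures. The same holds for the $(0,2)$ component. The $(1,1)$ component is exactly the stated identity. Finally, \cref{prop:curvature-is-linear} upgrades vanishing of the degree-one curvature to flatness in all degrees, in both directions. Its hypothesis $\lambda\in\ker{d}$ also gives $\lambda\in\ker{D}\cap\ker{\ol{D}}$, by projecting $d\lambda = D\lambda + \ol{D}\lambda = 0$ onto the two summands.

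I expect the main obstacle to be this bidegree bookkeeping in (vi), specifically justifying that $(\nabla^{1,0}_{ext})^1$ maps $E\otimes K^{0,1}$ into $E\otimes K^{1,1}$ and no further. I will do it first in local frames, where it reduces to $D$ raising only the first index and to the wedge product respecting bigrading.
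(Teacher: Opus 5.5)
Your proposal is correct and follows essentially the same route as the paper: you project or include the Leibniz identity for (i)--(iv), split $\lambda d = \lambda D + \lambda \ol{D}$ for (v), and for (vi) expand $\nabla^1 \circ \nabla^0$ into four terms, separate them by the bidegrees $(2,0)$, $(1,1)$, $(0,2)$, and finish with \cref{prop:curvature-is-linear} and the $\iota^{2,0}$-intertwining from \cref{prop:rest-conn-ext-prt}. Reading the ``$1$'' in (i)--(iv) as $1$--$\lambda d^{\bullet,0}$ matches what the paper's computation actually gives, and your version is slightly more complete: the paper only argues the forward direction of (vi), and it never notes that $\lambda \in \ker{D} \cap \ker{\ol{D}}$ is needed to apply \cref{prop:curvature-is-linear} to the two pieces.
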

\begin{proof}
(i)
$\nabla^{1, 0} = (\id_E \otimes pr^{1, 0}) \circ \nabla$ is a $\bF$--linear because it is a
composite of such maps. We then observe that:
\begin{align*}
\nabla^{1, 0}(rs)
=& (\id_E \otimes pr^{1, 0})(\nabla(rs)) \\
=& (\id_E \otimes pr^{1, 0})(\lambda s \otimes d(r) + r\nabla(s)) \\
=& \lambda s \otimes pr^{1, 0}(d(r)) + r(\id_E \otimes pr^{1, 0})(\nabla(s)) \\
=& \lambda s \otimes d^{1, 0}(r) + r \nabla^{1, 0}(s)
\end{align*}

(ii) Apply \cref{prop:rest-conn-ext-prt}(i).

(iii) Similar to (i).

(iv) Similar to (ii).

(v)
We first observe that $\nabla^k = (\nabla^{1, 0}_{ext})^k + (\nabla^{0, 1}_{ext})^k$ for all
$k = 0, 1, 2, \dots$. For $k = 0$, this is immediate. For higher $k$, we observe that:
\begin{align*}
\nabla^k(s \otimes \omega)
=& s \otimes d(\omega) + \nabla(s) \wedge \omega \\
=& s \otimes (D + \ol{D})(\omega) + (\nabla^{1, 0}_{ext} + \nabla^{0, 1}_{ext})(s) \wedge \omega \\
=& (s \otimes D(\omega) + \nabla^{1, 0}(s) \wedge \omega)
   + (s \otimes \ol{D}(\omega) + \nabla^{0, 1}(s) \wedge \omega) \\
=& (\nabla^{1, 0}_{ext})^k(s \otimes \omega)
   + (\nabla^{0, 1}_{ext})^k(s \otimes \omega)
\end{align*}

(vi)
Then, we have:
\[
\nabla^1 \circ \nabla^0
= (\nabla^{1, 0}_{ext})^1 \circ (\nabla^{1, 0}_{ext})^0
  + (\nabla^{1, 0}_{ext})^1 \circ (\nabla^{0, 1}_{ext})^0
  + (\nabla^{0, 1}_{ext})^1 \circ (\nabla^{1, 0}_{ext})^0
  + (\nabla^{0, 1}_{ext})^1 \circ (\nabla^{0, 1}_{ext})^0
\]
If $\nabla$ is flat, this is zero. We then observe that the first summand, the last summand
and the remaining summands have images in $E \otimes K^{2, 0}, E \otimes K^{0, 2}$ and
$E \otimes K^{1, 1}$ respectively. Therefore, we have three separate identities:
\[
(\nabla^{1, 0}_{ext})^1 \circ (\nabla^{1, 0}_{ext})^0 = 0
\hspace{1.5em}
(\nabla^{1, 0}_{ext})^1 \circ (\nabla^{0, 1}_{ext})^0
  + (\nabla^{0, 1}_{ext})^1 \circ (\nabla^{1, 0}_{ext})^0 = 0
\hspace{1.5em}
(\nabla^{0, 1}_{ext})^1 \circ (\nabla^{0, 1}_{ext})^0 = 0
\]
This shows that $\nabla^{1, 0}_{ext}$ and $\nabla^{0, 1}_{ext}$
are flat by \cref{prop:curvature-is-linear}. We can apply \cref{prop:rest-conn-ext-prt}(ii) to
deduce that $\nabla^{1, 0}$ and $\nabla^{0, 1}$ are flat, and we are done.
\end{proof}

\begin{prop}\label{prop:tensor-conn-ext}
In the context of \cref{prop:rest-diff-ext} and \cref{prop:rest-conn-ext-prt}, consider two
$\lambda$--$d^{\bullet, 0}$--connections $\nabla_1$, $\nabla_2$ on $E_1, E_2$ respectively.
Then, the following identity of $\lambda$--$D^\bullet$--connections holds:
\[
(\iota^{1, 0} \circ \nabla_1) \otimes (\iota^{1, 0} \circ \nabla_2)
= \iota^{1, 0} \circ (\nabla_1 \otimes \nabla_2)
\]
The analogous statement holds if $d^{\bullet, 0}, D^\bullet$ are replaced with
$\ol{d}^{0, \bullet}, \ol{D}^\bullet$ respectively.
\end{prop}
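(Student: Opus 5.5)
The plan is to compare the two sides on local frames and then invoke the uniqueness part of \cref{prop:conn-form-coll}. Here $\iota^{1, 0} \circ \nabla_i$ is shorthand for the extension $(\nabla_i)_{ext} = (\id_{E_i} \otimes \iota^{1, 0}) \circ \nabla_i$ of \cref{prop:rest-conn-ext-prt}(i), which is a $\lambda$--$D^\bullet$--connection on $E_i$. Consequently the left-hand side is a $\lambda$--$D^\bullet$--connection by \cref{defn:tensor-conn}. For the right-hand side, $\nabla_1 \otimes \nabla_2$ is a $\lambda$--$d^{\bullet, 0}$--connection on $E_1 \otimes E_2$, by \cref{defn:tensor-conn} applied over the differential space $(Y, R, P, d^{\bullet, 0})$ of \cref{prop:rest-diff-ext}(iii). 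Applying \cref{prop:rest-conn-ext-prt}(i) once more, its extension is a $\lambda$--$D^\bullet$--connection. So both sides are connections of the same type on $E_1 \otimes E_2$.

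Fix an open cover on which $E_1$ and $E_2$ both trivialize, with frames $\set{e_i}_i$ and $\set{f_k}_k$. Let $[A_{ij}]$ and $[B_{kl}]$ be the connection matrices of $\nabla_1$ and $\nabla_2$, with entries in $P = K^{1, 0}$. First I will check, directly from \cref{defn:conn-form}, that the connection matrix of $(\nabla_i)_{ext}$ in the same frame is $[\iota^{1, 0}(A_{ij})]$, respectively $[\iota^{1, 0}(B_{kl})]$. The key fact is that $D^\bullet$ restricted to functions equals $\iota^{1, 0} \circ d^{1, 0}$, which holds by the definition of $D$ in \cref{prop:rest-diff-ext}. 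Hence the "$\lambda d^V$" parts are compatible under $\iota^{1, 0}$ as well.

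Next, by \cref{rmk:tensor-conn-matrix}, the connection matrix of the left-hand side in the frame $\set{e_i \otimes f_k}$ has entries $\delta_{kl}\iota^{1, 0}(A_{ij}) + \delta_{ij}\iota^{1, 0}(B_{kl})$. By the same remark and the previous paragraph, the connection matrix of the right-hand side has entries $\iota^{1, 0}(\delta_{kl}A_{ij} + \delta_{ij}B_{kl})$. These agree because $\iota^{1, 0}$ is $R$--linear. Since both sides are $\lambda$--$D^\bullet$--connections with the same connection forms on every member of the cover, \cref{prop:conn-form-coll} shows they are equal.

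An alternative is to verify the identity on elementary tensors using \cref{prop:tensor-conn-elem}, exactly as in \cref{prop:tensor-conn-sum}; this needs only that $\id \otimes \iota^{1, 0}$ is additive. The analogous statement for $\ol{d}^{0, \bullet}, \ol{D}^\bullet$ follows by the symmetric argument with $\iota^{0, 1}$.

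The only step I expect to need care is the bookkeeping of identifications: checking that $\iota^{1, 0}$ commutes with the isomorphisms $E_1 \otimes E_2 \otimes K \cong E_1 \otimes K \otimes E_2$ implicit in forming $A \otimes \id + \id \otimes B$. This is a naturality check and should be routine.
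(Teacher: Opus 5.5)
Your proposal is correct and is essentially the paper's argument. The paper's proof is the route you list as the alternative: it evaluates both sides on elementary tensors $e \otimes f$ via \cref{prop:tensor-conn-elem}, pulls $\iota^{1, 0}$ out of the sum, and then concludes that the connection forms agree in local frames, so the two connections are equal by \cref{prop:conn-form-coll}. Your primary route does that same connection-form comparison explicitly, using \cref{rmk:tensor-conn-matrix} and the fact that $(\nabla_i)_{ext}$ has connection matrix $[\iota^{1, 0}(A_{ij})]$.
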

\begin{proof}
On elementary tensors $e \otimes f \in (E \otimes F)(U)$ over an open subset $U \subset Y$, we have,
by \cref{prop:tensor-conn-elem}:
\begin{align*}
 & ((\iota^{1, 0} \circ \nabla_1) \otimes (\iota^{1, 0} \circ \nabla_2))(e \otimes f) \\
=& (\iota^{1, 0} \circ \nabla_1)(e) \otimes f + e \otimes (\iota^{1, 0} \circ \nabla_2)(f) \\
=& \iota^{1, 0}(\nabla_1(e)) \otimes f + e \otimes \iota^{1, 0}(\nabla_2(f)) \\
=& \iota^{1, 0}(\nabla_1(e) \otimes f) + \iota^{1, 0}(e \otimes \nabla_2(f)) \\
=& \iota^{1, 0}((\nabla_1 \otimes \nabla_2)(e \otimes f)) \\
=& (\iota^{1, 0} \circ (\nabla_1 \otimes \nabla_2))(e \otimes f)
\end{align*}
This, in particular, shows that in a local frame, the two sides have the same connection forms, and
are, hence, equal by \cref{prop:conn-form-coll}. The proof of the analogous statement for
$\ol{d}^{0, \bullet}, \ol{D}^\bullet$ is symmetric.
\end{proof}

\begin{prop}\label{prop:dual-conn-ext}
In the context of \cref{prop:rest-diff-ext} and \cref{prop:rest-conn-ext-prt},
for any $\lambda$--$d^{\bullet, 0}$--connection $\nabla$, the following identity of
$\lambda$--$D^\bullet$--connections holds:
\[
(\iota^{1, 0} \circ \nabla)^\vee = \iota^{1, 0} \circ \nabla^\vee
\]
The analogous statement holds when replace $d^{\bullet, 0}, D^\bullet$ with
$\ol{d}^{0, \bullet}, \ol{D}^\bullet$ respectively.
\end{prop}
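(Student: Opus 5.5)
The plan is to mirror the proof of \cref{prop:tensor-conn-ext}: check that both sides agree on every local section of $E^\vee$, and then conclude equality as connections on $E^\vee$. The type-checking comes first. By \cref{prop:rest-conn-ext-prt}(i), $\iota^{1, 0} \circ \nabla$ is a $\lambda$--$D^\bullet$--connection on $E$. By \cref{cor:dual-conn}, its dual is a $\lambda$--$D^\bullet$--connection on $E^\vee$. On the other side, $\nabla^\vee$ is a $\lambda$--$d^{\bullet, 0}$--connection on $E^\vee$ by \cref{cor:dual-conn}, and applying \cref{prop:rest-conn-ext-prt}(i) again makes $\iota^{1, 0} \circ \nabla^\vee$ a $\lambda$--$D^\bullet$--connection. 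So both sides are operators $E^\vee \to E^\vee \otimes K \cong \HHom(E, K)$ of the same type.

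Next, compute directly from \cref{defn:dual-conn}. Fix opens $Y'' \subset Y' \subset Y$, $\eta \in E^\vee(Y')$ and $s \in E(Y'')$. Then
\[
(\iota^{1, 0} \circ \nabla)^\vee(\eta)(s) = \lambda D(\eta(s)) - (\eta \otimes \id_K)(\iota^{1, 0}(\nabla(s))).
\]
Here $\eta(s)$ is a section of $R$, so $D(\eta(s)) = D^0(\eta(s)) = \iota^{1, 0}(d^{0, 0}(\eta(s)))$ by the definition of $D^0$ in \cref{prop:rest-diff-ext}. The map $(\eta \otimes \id_K) \circ (\id_E \otimes \iota^{1, 0})$ equals $\iota^{1, 0} \circ (\eta \otimes \id_{K^{1, 0}})$, since both act on elementary tensors $e \otimes \omega$ by sending them to $\eta(e)\iota^{1, 0}(\omega)$. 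Because $\iota^{1, 0}$ is $R$-linear, the whole expression becomes
\[
\iota^{1, 0}\br{\lambda d^{0, 0}(\eta(s)) - (\eta \otimes \id)(\nabla(s))} = \iota^{1, 0}(\nabla^\vee(\eta)(s)),
\]
which is $(\iota^{1, 0} \circ \nabla^\vee)(\eta)(s)$.

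The main obstacle is the identification $E^\vee \otimes K \cong \HHom(E, K)$. To compose with $\iota^{1, 0}$ on the right-hand side, this isomorphism must intertwine $\id_{E^\vee} \otimes \iota^{1, 0}$ with postcomposition by $\iota^{1, 0}$. I will verify this on elementary tensors $\eta \otimes \omega \mapsto (s \mapsto \eta(s)\omega)$, where it is immediate. As an alternative, I can check equality of connection matrices in a local frame using \cref{rmk:dual-conn-matrix}: both sides have matrix $-\iota^{1, 0}(A_{ji})$ in the dual frame. Equality then follows from \cref{prop:conn-form-coll}.

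The barred version, with $d^{\bullet, 0}, D^\bullet$ replaced by $\ol{d}^{0, \bullet}, \ol{D}^\bullet$, is symmetric, using $\iota^{0, 1}$ in place of $\iota^{1, 0}$.
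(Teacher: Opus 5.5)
Your proposal is correct and follows essentially the same route as the paper: a direct computation from \cref{defn:dual-conn}, using $D^0 = \iota^{1,0}\circ d^{0,0}$ and the $R$--linearity of $\iota^{1,0}$ to factor it out, and then identifying $(\iota^{1,0}\circ\nabla^\vee)(\eta)$, as a map $E \to K$, with the composite of $\nabla^\vee(\eta)$ and $\iota^{1,0}$. Your minus sign in the final bracket is the correct one; the paper's displayed computation has a sign slip at that step.
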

\begin{proof}
We observe that:
\begin{align*}
 & (\iota^{1, 0} \circ \nabla)^\vee(\eta)(s) \\
=& \lambda D(\eta(s)) - (\eta \otimes \id_K)((\iota^{1, 0} \circ \nabla)(s)) \\
=& \iota^{1, 0}(\lambda d^{0, 0}(\eta(s)))
   - (\eta \otimes \id_K \circ \iota^{1, 0})(\nabla(s)) \\
=& \iota^{1, 0}(\lambda d^{0, 0}(\eta(s)))
   - \iota^{1, 0}((\eta \otimes \id_K)(\nabla(s))) \\
=& \iota^{1, 0}(\lambda d^{0, 0}(\eta(s)) + (\eta \otimes \id_K)(\nabla(s))) \\
=& \iota^{1, 0}(\nabla^\vee(\eta)(s)) \\
=& (\iota^{1, 0} \circ \nabla^\vee(\eta))(s)
\end{align*}
We then notice that $(\iota^{1, 0} \circ \nabla^\vee)(\eta)$ is equal as a map $E \to K$ to
the composite $E \to[\nabla^\vee(\eta)] K^{1, 0} \to[\iota^{1, 0}] K$, as required.
\end{proof}

\subsection{Pullbacks of Connections}
\label{subsec:pullback-conn}

\begin{defn}[Pullback Connection]\label{defn:pullback-conn}
Let $(f, f^\sharp) : (Y, R, K, d) \to (Z, S, L, d')$ be a morphism of differential spaces.
By a slight abuse of notation, let us use $f$ to denote both the morphism of differential spaces
and the continuous map of underlying spaces.
Let $E$ be a rank $n$ locally free sheaf on $(Z, S)$ with a $\lambda$--$d'$--connection
$\nabla$. Let $A^\alpha$ be the connection forms of $E$ over an open cover
$Z = \bigcup_{\alpha \in I} Z_\alpha$ such that $A^\alpha$ is given in the corresponding frame
$\set{e^\alpha_i}_{i = 1}^n$ by the formula:
\[
A^\alpha\br{\sum_{i = 1}^n s^\alpha_i e^\alpha_i} =
 \sum_{i = 1}^n e^\alpha_i \otimes \br{\sum_{j = 1}^n A^\alpha_{ij} \wedge s^\alpha_j}
\]
for sections $A^\alpha_{ij} \in L(Z_\alpha)$.
Then, if we write $Y_\alpha := f^{-1}Z_\alpha$,
$Y = \bigcup_{\alpha \in I} Y_\alpha$ is an open cover of $Y$ over which $f^*E$ trivializes
so that $\set{f^\sharp e^\alpha_i}_{i = 1}^n$ is a frame of $f^*E|_{Y_\alpha}$.
Consider the sections $f^\sharp A^\alpha_{ij} \in K(Y_\alpha)$. By the assumption that
$f$ is a morphism of differential spaces, we have that the $f^\sharp$ commute with
the differentials and preserve the wedge products. Furthermore, the matrices of the transition
functions of the pulled back sheaf -- call these $f^\sharp g^{\alpha\beta}$ -- are
$[f^\sharp g^{\alpha\beta}_{ij}]$ by \cref{cor:pullback-frame-change}.
These, together with \cref{rmk:conn-descent-data-matrix} show that the linear maps
$f^\sharp A^\alpha$ given by the matrices $[f^\sharp A^\alpha_{ij}]_{ij}$ form a descent datum for
an $f^\sharp\lambda$--$d$--connection on $f^*E$. That is,
\[
f^\sharp A^\beta
= f^\sharp \lambda (f^\sharp g^{\alpha \beta} \otimes \id) d(f^\sharp g^{\beta\alpha})
+ f^\sharp A^\alpha
\]
giving a connection $f^*\nabla$ on $f^*E$, which we will call the pullback of $\nabla$ along
$f$.
\end{defn}

\begin{prop}\label{prop:pullback-conn-unique}
In the context of \cref{defn:pullback-conn}, $f^* \nabla$ does not depend on the choice of open
cover.
\end{prop}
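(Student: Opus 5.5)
The plan is to compare the connections obtained from two covers by passing to a common refinement and then using the uniqueness clause of \cref{prop:conn-form-coll}.

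Let $\set{(Z_\alpha, \phi^\alpha)}_{\alpha \in I}$ and $\set{(Z'_\beta, \psi^\beta)}_{\beta \in J}$ be two trivializing covers of $E$. Write $f^*\nabla$ and $f^*\nabla'$ for the connections they produce via \cref{defn:pullback-conn}. It suffices to show that $f^*\nabla$ and $f^*\nabla'$ agree on each open set $f^{-1}(Z_\alpha \cap Z'_\beta)$. Indeed, these sets cover $Y$, so the two connections then agree globally by the gluing and uniqueness argument of \cref{cor:conn-coll}.

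Fix $\alpha, \beta$ and set $W := Z_\alpha \cap Z'_\beta$. On $f^{-1}W$ we have two frames for $f^*E$: $\set{f^\sharp e^\alpha_i}_i$ and $\set{f^\sharp e'^\beta_i}_i$. By construction, $f^*\nabla$ has connection form $f^\sharp A^\alpha$ in the first frame, and $f^*\nabla'$ has connection form $f^\sharp A'^\beta$ in the second.

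First I would relate the original forms on $W$. By \cref{prop:conn-form-compat}, applied to $\nabla$ on $E$ with the trivializations over $Z_\alpha$ and $Z'_\beta$ restricted to $W$, we get
\[
A'^\beta = \lambda \cdot (g \otimes \id_L) \circ d'(g^{-1}) + A^\alpha ,
\]
where $g$ is the transition function between the two frames. Entrywise, this is the matrix identity of \cref{rmk:conn-descent-data-matrix}.

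Next I would apply $f^\sharp$ to this matrix identity. This step uses the defining properties of a morphism of differential spaces:
\begin{itemize}
\item $f^\sharp$ is a ring map in degree $0$ and commutes with sums;
\item it commutes with the differentials ($f^\sharp d' = d f^\sharp$);
\item it preserves wedge products.
\end{itemize}
By \cref{cor:pullback-frame-change}, the transition matrix between the pulled-back frames is $[f^\sharp g_{ij}]$. So the pulled-back identity says precisely that $f^\sharp A^\alpha$ and $f^\sharp A'^\beta$ satisfy the descent compatibility of \cref{defn:conn-descent-data}, with respect to $f^\sharp\lambda$, $d$ and these pulled-back frames.

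Now recall what the connection form of $f^*\nabla$ is in each frame. By definition it is $f^*\nabla - f^\sharp\lambda\, d^{V}$ in the frame $V$. In the frame $\set{f^\sharp e^\alpha_i}$ it is $f^\sharp A^\alpha$. By \cref{prop:conn-form-compat}, applied now to $f^*\nabla$ on $Y$, its form in the frame $\set{f^\sharp e'^\beta_i}$ equals $f^\sharp\lambda\,(f^\sharp g \otimes \id)\, d(f^\sharp g^{-1}) + f^\sharp A^\alpha$. By the previous step, this is $f^\sharp A'^\beta$.

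So $f^*\nabla|_{f^{-1}W}$ and $f^*\nabla'|_{f^{-1}W}$ have the same connection form in a common frame. Since each connection equals $f^\sharp\lambda\, d^V + A^V$ there, they coincide on $f^{-1}W$.

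The main obstacle is the bookkeeping in the step where $f^\sharp$ is applied. The pullback frames and transition matrices involve $f^{-1}$ and the pulled-back module $f^*E$, not just the sheaf maps $f^\sharp$ into $f_*K$. One must check that $f^\sharp$ applied to the products $g \wedge d'(g^{-1})$ and $g\, A\, g^{-1}$ really gives the corresponding products of the pulled-back entries. I would handle this entrywise using \cref{rmk:conn-descent-data-matrix}, the multiplicativity of $f^{\sharp,0}$, and the wedge compatibility diagram in \cref{defn:diff-on-sh}. This is the same verification already implicit in \cref{defn:pullback-conn}, so I expect it to go through.
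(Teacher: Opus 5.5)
Your proposal is correct and is essentially the paper's argument. The paper likewise fixes $\alpha \in I$, $b \in J$ and relates the connection forms of $\nabla$ over $Z_\alpha$ and $Z'_b$ by \cref{prop:conn-form-compat}. It then applies $f^\sharp$ entrywise, using compatibility with $d$, with products, and \cref{cor:pullback-frame-change}, and notes that the resulting identity is exactly the condition for the two pulled-back connections to agree on $f^{-1}(Z_\alpha \cap Z'_b)$. Your extra steps (applying \cref{prop:conn-form-compat} to $f^*\nabla$ on $Y$, then gluing via \cref{cor:conn-coll}) only spell out what the paper summarizes as ``as in the proof of \cref{prop:conn-form-coll}''.
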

\begin{proof}
Choose another cover $Z = \bigcup_{b \in J} Z'_b$ with local trivializations
$\phi^b : S|_{Z'_b}^{\oplus n} \to[\cong] E|_{Z'_b}$ and frames $\set{e^b_i}_{i}$. Let $B^b$ be the
connection forms of $\nabla$ over $Z'_b$.
Let $\nabla'$ be the pullback of $\nabla$ constructed using this cover.
Then, for each $\alpha \in I$ and $b \in J$, let $g^{\alpha b}$ be the transition function
with $g^{\alpha b}(e^b_i) = e^{\alpha}_i$.
Let $[B^b_{ij}]$ be the matrix of $B^b$ in the $\set{e^b_{i}}_{i}$ frame.
Since $B^b$ and $A^\alpha$ are connection forms of the
same connection on $Z$, \cref{prop:conn-form-compat} yields:
\[
B^b = \lambda \cdot (g^{b\alpha} \otimes \id)d(g^{\alpha b}) + A^\alpha
\]
By applying $f^\sharp$ to the matrices entry-wise, this shows that:
\[
f^\sharp B^b
= f^\sharp\lambda \cdot (f^\sharp g^{b\alpha} \otimes \id)d(f^\sharp g^{\alpha b})
+ f^\sharp A^\alpha
\]
which is precisely the condition that $\nabla'$ and $f^*\nabla$ agree locally, as in the proof
of \cref{prop:conn-form-coll}.
\end{proof}

\begin{prop}\label{prop:pullback-conn-curvature}
In the contexts of \cref{defn:conn-higher-forms} and \cref{defn:pullback-conn}, when
$\lambda \in \ker{d}$, we have
$(f^*\nabla)^{k + 1} \circ (f^*\nabla)^k = f^*(\nabla^{k + 1} \circ \nabla^{k})$,
where the right side is the pullback of the $R$--linear (by \cref{prop:curvature-is-linear})
map $\nabla^{k + 1} \circ \nabla^k$. In particular, if $\nabla$ is flat, then so is $f^*\nabla$.
\end{prop}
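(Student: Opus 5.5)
The argument is local: both sides are sheaf maps $f^*E \otimes K^{\wedge k} \to f^*E \otimes K^{\wedge (k+2)}$, so it suffices to compare them over each $Y_\alpha = f^{-1}Z_\alpha$ in the pulled back frame $\set{f^\sharp e^\alpha_i}_i$ of \cref{defn:pullback-conn}.

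First, since $(f, f^\sharp)$ is a morphism of differential spaces, $f^\sharp$ commutes with the differentials. Hence $d(f^\sharp \lambda) = f^\sharp(d'\lambda) = 0$, so $f^\sharp\lambda \in \ker{d}$. By \cref{prop:curvature-is-linear}, both $(f^*\nabla)^{k+1}\circ(f^*\nabla)^k$ and $\nabla^{k+1}\circ\nabla^k$ are then linear over $R$ and $S$ respectively. In particular, the pullback $f^*(\nabla^{k+1}\circ\nabla^k)$ makes sense as the pullback of a module map. I read its target as $f^*E \otimes K^{\wedge(k+2)}$, with the form part transported by $f^{\sharp,k+2}$. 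So it is the $R$--linear map whose matrix in the pulled back frames is obtained by applying $f^\sharp$ entrywise.

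Second, I use the final formula of \cref{rmk:conn-higher-forms-loc} with $\lambda \in \ker{d}$ (as in the proof of \cref{prop:curvature-is-linear}). On $Z_\alpha$, the curvature sends $\sum_i e^\alpha_i \otimes \tau_i$ to $\sum_i e^\alpha_i \otimes \sum_j F^\alpha_{ij}\wedge\tau_j$, where
\[
F^\alpha_{ij} = \lambda\, d'(A^\alpha_{ij}) + \sum_k A^\alpha_{ik}\wedge A^\alpha_{kj}.
\]
The connection matrix of $f^*\nabla$ on $Y_\alpha$ is $[f^\sharp A^\alpha_{ij}]$. Its curvature matrix is therefore
\[
f^\sharp\lambda\, d(f^\sharp A^\alpha_{ij}) + \sum_k f^\sharp A^\alpha_{ik}\wedge f^\sharp A^\alpha_{kj}.
\]
Using that $f^\sharp$ commutes with $d$ and preserves wedge products (conditions (b) and (c) of \cref{defn:diff-on-sh}) and is multiplicative on $S$, this equals $f^\sharp F^\alpha_{ij}$.

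Third, I compare the two maps on a general local section $\sum_i f^\sharp e^\alpha_i\otimes\tau_i$ with $\tau_i \in K^{\wedge k}$. Both act by wedging with the matrix $[f^\sharp F^\alpha_{ij}]$: the left side by the previous step, the right side by the definition of pulling back an $S$--linear map together with \cref{cor:pullback-frame-change}. Because the local matrices agree on a cover, the two sheaf maps agree globally. Flatness of $f^*\nabla$ follows directly: if $\nabla^1\circ\nabla^0=0$ then each $F^\alpha_{ij}=0$, so each $f^\sharp F^\alpha_{ij}=0$, and \cref{prop:curvature-is-linear}(ii) applies.

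The main obstacle is bookkeeping. One has to state precisely what $f^*$ of a map into $E\otimes L^{\wedge(k+2)}$ means, namely $f^*E\otimes K^{\wedge(k+2)}$ via $f^\sharp$, and check that it is independent of the cover, in the spirit of \cref{prop:pullback-conn-unique}. It also has to be checked that the local form of the curvature in \cref{rmk:conn-higher-forms-loc} holds for all $k$ with the same matrix $F^\alpha$, which the remark provides.
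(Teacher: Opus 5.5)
Your proposal is correct and follows essentially the same route as the paper: work in the pulled-back frames, apply the local curvature formula of \cref{rmk:conn-higher-forms-loc} with $\lambda \in \ker{d}$, and use that $f^\sharp$ commutes with the differentials and preserves wedge products. You go further than the paper in two places it leaves implicit: you check that $f^\sharp\lambda \in \ker{d}$, and you say precisely what the pullback of the curvature map means, namely wedging with the entrywise pulled-back curvature matrix.
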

\begin{proof}
It suffices to verify this in local trivializations and the local statement follows from the
formulas in \cref{rmk:conn-higher-forms-loc} along with the fact that $f^\sharp$ commutes with $d$
and preserves wedge products.
\end{proof}

\begin{prop}\label{prop:pullback-conn-dual}
In the context of $\cref{defn:pullback-conn}$, let $\alpha : f^*(E^\vee) \to (f^*E)^\vee$ be the
canonical natural isomorphism. Then, $\alpha$ is a morphism of connections
$f^*(\nabla^\vee) \to (f^*\nabla)^\vee$.
\end{prop}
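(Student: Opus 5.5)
Since both $f^*(\nabla^\vee)$ and $(f^*\nabla)^\vee$ are $f^\sharp\lambda$--$d$--connections, and a morphism of connections is a local condition, I will check that $\alpha$ intertwines them over the cover $Y = \bigcup_\alpha Y_\alpha$, $Y_\alpha = f^{-1}Z_\alpha$, of \cref{defn:pullback-conn}. By \cref{prop:pullback-conn-unique} the pullback may be computed with any trivializing cover, so I fix a cover $Z = \bigcup_\alpha Z_\alpha$ trivializing $E$, with frames $\set{e^\alpha_i}_i$ and dual frames $\set{\epsilon^\alpha_i}_i$ of $E^\vee|_{Z_\alpha}$. Gluing of sheaf maps then reduces the claim to the commutativity of the square in \cref{defn:conn} on each $Y_\alpha$.

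First I identify $\alpha$ in frames. $f^*(E^\vee)|_{Y_\alpha}$ has the pulled back frame $\set{f^\sharp\epsilon^\alpha_i}_i$ (\cref{rmk:pullback-frame}), and $(f^*E)^\vee|_{Y_\alpha}$ has the frame dual to $\set{f^\sharp e^\alpha_i}_i$. The canonical map sends $f^\sharp\epsilon^\alpha_i$ to the functional $f^\sharp e^\alpha_j \mapsto f^\sharp(\epsilon^\alpha_i(e^\alpha_j)) = f^\sharp\delta_{ij} = \delta_{ij}$, using that $f^{\sharp,0}$ is a map of algebras so $f^\sharp 1 = 1$ and $f^\sharp 0 = 0$. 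Hence, over $Y_\alpha$, $\alpha$ carries the first frame to the second, and its matrix in these frames is the identity.

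Next I compare connection matrices. By \cref{rmk:dual-conn-matrix}, $\nabla^\vee$ has matrix $[-A^\alpha_{ji}]$ in $\set{\epsilon^\alpha_i}_i$, so by \cref{defn:pullback-conn} $f^*(\nabla^\vee)$ has matrix $[f^\sharp(-A^\alpha_{ji})] = [-f^\sharp A^\alpha_{ji}]$ in $\set{f^\sharp\epsilon^\alpha_i}_i$, using additivity of $f^{\sharp,1}$. On the other side, $f^*\nabla$ has matrix $[f^\sharp A^\alpha_{ij}]$ in $\set{f^\sharp e^\alpha_i}_i$, and applying \cref{rmk:dual-conn-matrix} again, $(f^*\nabla)^\vee$ has matrix $[-f^\sharp A^\alpha_{ji}]$ in the dual frame. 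Both connections are $f^\sharp\lambda$--$d$--connections, so on each $Y_\alpha$ each equals $f^\sharp\lambda\, d^\alpha$ plus its connection form, where $d^\alpha$ is the frame differential of \cref{defn:conn-form}. Since $\alpha$ identifies the two frames, it intertwines the $d^\alpha$ terms, which act coefficientwise in the frame. The connection forms also agree, because their matrices coincide. Hence $(\alpha \otimes \id_K)\circ f^*(\nabla^\vee) = (f^*\nabla)^\vee \circ \alpha$ on each $Y_\alpha$, and therefore globally.

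The only delicate step is the identification of $\alpha$ with the identity matrix in the chosen frames. This requires unwinding the canonical isomorphism $f^*(E^\vee)\cong (f^*E)^\vee$, which for locally free sheaves is defined via $f^{-1}$ and tensoring with $R$. I would argue it directly on the frames, since $\alpha$ is $R$-linear and both sides are free on $Y_\alpha$. The rest is bookkeeping of transposes and signs.
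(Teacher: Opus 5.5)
Your proposal is correct and follows essentially the same route as the paper: identify $\alpha$ as carrying the pulled back dual frame $\{f^\sharp\epsilon_i\}_i$ to the frame dual to $\{f^\sharp e_i\}_i$, use \cref{rmk:dual-conn-matrix} to see that both connections have matrix $[-f^\sharp A_{ji}]$ in these frames, and conclude locally and then globally. Your explicit splitting of each connection as $f^\sharp\lambda\, d^\alpha$ plus its connection form is a slightly more careful version of the paper's step ``since all local sections are generated by the frame''. That step needs justifying, because the connections themselves are not $R$--linear.
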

\begin{proof}
Let $\set{e_i}_i$ be a local frame of $E$ with dual frame $\set{\epsilon_i}_i$. Let
$\set{\epsilon'_i}_{i}$ be the dual frame of the pulled back frame $\set{f^\sharp e_i}_i$
Then, $\alpha$ sends $f^\sharp \epsilon_i$ to $\epsilon_i'$. Now, let $A = [A_{ij}]$ be the
connection matrix of $\nabla$ in the $\set{e_i}_i$ frame.
Then, by \cref{rmk:dual-conn-matrix} and the definition of pullback connections,
the $(i, j)$--entries of the connection matrix of $f^*(\nabla^\vee)$ in the
$\set{f^\sharp \epsilon_i}_i$ frame are $f^\sharp(-A_{ji}) = -f^\sharp(A_{ji})$ and those of
$(f^*\nabla)^\vee$ in the $\set{\epsilon_i'}_i$ frame are the same.
This implies:
\begin{align*}
 & (f^*\nabla)^\vee\br{\alpha\br{f^\sharp \epsilon_j}} \\
=& (f^*\nabla)^\vee\br{\epsilon_j'} \\
=& \sum_{i} \epsilon'_i \otimes (-f^\sharp(A_{ji})) \\
=& (\alpha \otimes \id_K)\br{\sum_{i} f^\sharp \epsilon_i \otimes (-f^\sharp(A_{ji}))} \\
=& (\alpha \otimes \id_K)(f^*(\nabla^\vee)(f^\sharp \epsilon_j))
\end{align*}
Since all local sections are generated by the frame, $(f^*\nabla)^\vee \circ \alpha$ and
$(\alpha \otimes \id_K) \circ f^*(\nabla^\vee)$ agree in this frame. Since the frame was arbitrary,
this agreement holds globally.
\end{proof}

\begin{prop}\label{prop:pullback-conn-tensor}
In the context of $\cref{defn:pullback-conn}$, let $F$ be another finite rank locally free
$S$--module equipped with a $\lambda$--$d'$--connection $\nabla'$.
Let $\beta : f^*(E \otimes F) \to f^*E \otimes f^*F$ be the canonical natural isomorphism.
Then, $\beta$ is a morphism of connections
$f^*(\nabla \otimes \nabla') \to f^*\nabla \otimes f^*\nabla'$.
\end{prop}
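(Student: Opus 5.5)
The plan mirrors the proof of \cref{prop:pullback-conn-dual}: compare connection matrices in matched local frames. Because both sides are determined by local data (\cref{prop:conn-form-coll}), it suffices to check locally that $\beta$ intertwines the connections.

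\textbf{Step 1: choose frames.} Choose an open cover $Z = \bigcup_\alpha Z_\alpha$ over which both $E$ and $F$ trivialize, with frames $\set{e_i}_i$ and $\set{f_k}_k$. Let $[A_{ij}]$ and $[B_{kl}]$ be the connection matrices of $\nabla$ and $\nabla'$. By \cref{rmk:tensor-conn-matrix}, the connection matrix of $\nabla \otimes \nabla'$ in the frame $\set{e_i \otimes f_k}_{i,k}$ has entries
\[
C_{ijkl} = \delta_{kl}A_{ij} + \delta_{ij}B_{kl}.
\]

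\textbf{Step 2: compute $f^*(\nabla \otimes \nabla')$.} By \cref{defn:pullback-conn} and \cref{prop:pullback-conn-unique}, over $f^{-1}Z_\alpha$ this connection has matrix $[f^\sharp C_{ijkl}]$ in the pulled back frame $\set{f^\sharp(e_i \otimes f_k)}$. Since $f^\sharp$ is additive and $f^\sharp$ of the constants $\delta$ are again $\delta$ (because $f^{\sharp,0}$ is a morphism of algebras), this equals $\delta_{kl} f^\sharp A_{ij} + \delta_{ij} f^\sharp B_{kl}$.

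\textbf{Step 3: compute $f^*\nabla \otimes f^*\nabla'$.} The pullbacks $f^*\nabla$ and $f^*\nabla'$ have matrices $[f^\sharp A_{ij}]$ and $[f^\sharp B_{kl}]$ in the frames $\set{f^\sharp e_i}$ and $\set{f^\sharp f_k}$. Applying \cref{rmk:tensor-conn-matrix} again, $f^*\nabla \otimes f^*\nabla'$ has matrix $\delta_{kl} f^\sharp A_{ij} + \delta_{ij} f^\sharp B_{kl}$ in the frame $\set{f^\sharp e_i \otimes f^\sharp f_k}$.

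\textbf{Step 4: conclude.} The canonical isomorphism $\beta$ sends $f^\sharp(e_i \otimes f_k)$ to $f^\sharp e_i \otimes f^\sharp f_k$. One checks this directly from the construction of $f^*$ as $f^{-1}(-) \otimes_{f^{-1}S} R$ together with the definition of pulled back frames in \cref{rmk:pullback-frame}; I expect this identification of frames under $\beta$ to be the only nontrivial point. So in these frames $\beta$ is the identity matrix and the two connection matrices coincide. Therefore, on frame elements,
\[
(f^*\nabla \otimes f^*\nabla')\circ\beta = (\beta\otimes\id_K)\circ f^*(\nabla\otimes\nabla').
\]
Both sides obey the Leibniz rule and $\beta$ is $R$-linear, so their difference is $R$-linear. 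A difference that is $R$-linear and vanishes on a frame vanishes on all local sections. Agreement on every member of the cover then gives agreement globally by the sheaf property, exactly as in \cref{prop:pullback-conn-dual}.
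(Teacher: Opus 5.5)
Your proposal is correct and follows essentially the same route as the paper's proof: identify $\beta$ on the pulled back frames, use \cref{rmk:tensor-conn-matrix} together with the entrywise description of pullback connections to see that both connection matrices equal $\delta_{kl} f^\sharp A_{ij} + \delta_{ij} f^\sharp B_{kl}$, and then pass from frame elements to all sections. Your extra step, observing that $(f^*\nabla \otimes f^*\nabla')\circ\beta - (\beta\otimes\id_K)\circ f^*(\nabla\otimes\nabla')$ is $R$--linear because both terms satisfy the Leibniz rule for the same $f^\sharp\lambda$--$d$, spells out what the paper leaves implicit when it says local sections are generated by the frame.
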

\begin{proof}
Let $\set{e_i}_i, \set{f_j}_j$ be local frames of $E, F$ respectively.
Then, $\beta$ sends $f^\sharp(e_i \otimes f_j)$ to $f^\sharp(e_i) \otimes f^\sharp(f_j)$.
Now, let $A = [A_{ij}], B = [B_{ij}]$ be the connection matrices of $\nabla, \nabla'$ in the
$\set{e_i}_i, \set{f_j}_j$ frames respectively.
Then, by \cref{rmk:tensor-conn-matrix} and the definition of pullback connections,
the $(i, j)$--entries of the connection matrix of $f^*(E \otimes F)$ in the
$\set{f^\sharp(e_i \otimes f_j)}_{i, j}$ frame are
$f^\sharp(\delta_{jq}A_{ip} + \delta_{ip}B_{jq}) = \delta_{jq}f^\sharp(A_{ip})
+ \delta_{ip}f^\sharp(B_{jq})$, and those of $f^*\nabla \otimes f^*\nabla'$ in
the $\set{f^\sharp(e_i) \otimes f^\sharp(f_j)}_{i, j}$ frame are the same.
This implies:
\begin{align*}
 & (f^*\nabla \otimes f^*\nabla)\br{\beta\br{f^\sharp(e_p \otimes f_q)}} \\
=& (f^*\nabla \otimes f^*\nabla)\br{f^\sharp(e_p) \otimes f^\sharp(f_q)} \\
=& \sum_{a, b} f^\sharp(e_i) \otimes f^\sharp(f_j)
               \otimes (\delta_{jq} f^\sharp(A_{ip}) + \delta_{ip} f^\sharp(B_{jq})) \\
=& (\beta \otimes \id_K)\br{\sum_{a, b} f^\sharp(e_i \otimes f_j)
       \otimes (\delta_{jq} f^\sharp(A_{ip}) + \delta_{ip} f^\sharp(B_{jq}))} \\
=& (\beta \otimes \id_K)((f^*(\nabla \otimes \nabla')(e_p \otimes f_q))
\end{align*}
Since all local sections are generated by the frame,
$(f^*\nabla \otimes f^*\nabla') \circ \beta$ and
$(\beta \otimes \id_K) \circ f^*(\nabla \otimes \nabla')$ agree in this frame.
Since the frame was arbitrary, this agreement holds globally.
\end{proof}

\begin{rmk}\label{rmk:pullback-conn-dual-tensor-nat}
In light of \cref{prop:pullback-conn-dual}, we will not
distinguish between $f^*(\nabla^\vee)$ and $(f^*\nabla)^\vee$, and will simply write it as
$f^*\nabla^\vee$. Similarly, because of \cref{prop:pullback-conn-tensor}, we will not distinguish
between $f^*\nabla \otimes f^*\nabla'$ and $f^*(\nabla \otimes \nabla')$.
\end{rmk}

\subsection{Examples of Connections}
\label{subsec:exm-conn}

\begin{exm}[Ordinary Connections]
For any smooth manifold $X$, consider a smooth complex vector bundle $E \to X$. Then,
a $1$--$d_{\bC}$--connection on $E^{sec}$, where $d_\bC$ is the complex exterior differential of
$X$ (\cref{exm:ext-diff}), is just a complex linear connection on $E$.
\end{exm}

\begin{exm}[Holomorphic Structures on Vector Bundles]\label{exm:cmplx-struct-bun}
For any complex manifold $X$, consider a smooth complex vector bundle $E \to X$. Then, a holomorphic
structure on $E$ is a flat $1$--$\oprt_X^{0, \bullet}$--connection $\oprt_E$ on $E^{sec}$,
where $\oprt_X^{p, q}$ is as in \cref{exm:Dol-op}.
\end{exm}

\begin{exm}[Higgs Bundles]\label{exm:Higgs-bun}
Let $X$ be a complex manifold with smooth complex vector bundle $E \to X$.
Consider a flat $1$--$\oprt^\bullet$--connection $D''$ on $E^{sec}$.
By taking
$d^{\bullet, 0} = \prt^{\bullet, 0}, \ol{d}^{0, \bullet} = \oprt^{0, \bullet},
D^\bullet = \prt^\bullet, \ol{D}^\bullet = \oprt^\bullet$, the Dolbeault operators, in
\cref{prop:rest-conn-ext-prt}(iii), we get a flat $1$--$\oprt^{0, \bullet}$--connection
$\oprt_E = (D'')^{0, 1}$ and a flat $1$--$pr^{1, 0} \oprt^{\bullet}$--connection
$\theta = (D'')^{1, 0}$ satisfying:
\[
(\oprt_E)^1_{ext} \circ \theta^0_{ext} + \theta^1_{ext} \circ (\oprt_E)^0_{ext} = 0
\]
Notice that $pr^{1, 0}\oprt^{\bullet} = 0$ is the zero map $E \to E \otimes \sA_{X}^{1, 0}$,
so that $\theta$ is also a $0$--$\prt^{\bullet, 0}$--connection by \cref{rmk:0-d-conn}.
By \cref{prop:rest-conn-ext-prt}, the triple $(E, \oprt_E, \theta)$ is precisely the data of a Higgs
bundle as defined in \cite[Example 10]{GR15}. From this point onwards, by a Higgs bundle, we will
refer to a pair of the form $(E, D'')$ as above.
\end{exm}

\begin{exm}[Smooth Families of Connections]\label{exm:conn-family}
In the context of \cref{exm:prt-ext-diff},
consider a $1$--$d_{X/U}$--connection $\nabla$ on $E^{sec}$ for a smooth complex vector bundle
$p : E \to U \times X$. Then, for each $u \in U$, we have the inclusion
$\iota_u : X \cong \set{u} \times X \to U \times X$. By taking $V = \set{u}$, $Y = X$, $f = \iota_u$
and $g = \id_X$ in \cref{exm:prt-ext-diff-morphism}, we have a morphism of differential spaces:
\[
(\iota_u, \iota_u^\sharp) : (X, \sA^0_X, \sA^\bullet_X, d) \to
    (U \times X, \sA^0_{U \times X}, \sA^\bullet_{X/U}, d_{X/U})
\]
We then get, by pullback, a $1$--$d$--connection $\nabla_u := \iota_u^*\nabla$ on $E_u^{sec}$ where
$E_u := \iota_u^*E$. Thus, the pair $(E, \nabla)$ is a smooth family of vector bundles on $X$
equipped with ordinary connections parametrized by $U$. By \cref{prop:pullback-conn-curvature},
if $\nabla$ is flat, then so is $\nabla_u$ for each $u \in U$.
\end{exm}

\begin{exm}[Smooth Families of Holomorphic Vector Bundles]\label{exm:hol-bun-family}
In the context of \cref{exm:prt-Dol-op}, consider a smooth vector bundle $p : E \to U \times X$
with a $1$--$\oprt_{X/U}^{0, \bullet}$--connection $\oprt_{E/U}$ on $E^{sec}$.
Then, for each $u \in U$, we have the inclusion $\iota_u : X \cong \set{u} \times X \to U \times X$.
By taking $V = \set{u}$, $Y = X$, $f = \iota_u$ and $g = \id_X$ in \cref{exm:prt-Dol-morphism},
we have a morphism of differential spaces:
\[
(\iota_u, \iota_u^{\sharp, 0, \bullet})
    : (X, \sA^0_X, \sA^{0, \bullet}_X, \oprt_X^{0, \bullet}) \to
      (U \times X, \sA^0_{U \times X}, \sA^{0, \bullet}_{X/U}, \oprt_{X/U}^{0, \bullet})
\]
We then get, by pullback, a $1$--$\oprt_{X}^{0, \bullet}$--connection
$\oprt_{E_u} := \iota_u^*\oprt_{E/U}$ on $E_u^{sec}$ where $E_u := \iota_u^*E$.
By \cref{prop:pullback-conn-curvature}, if $\oprt_{E/U}$ is flat, then so is
$\oprt_{E_u}$ for each $u \in U$ so that, the pair $(E_u, \oprt_{E_u})$ is a holomorphic vector
bundle on $X$. We will call the pair $(E, \oprt_{E/U})$ a smooth family of holomorphic vector
bundles on $X$ parametrized by $U$ in this case.
\end{exm}

\begin{exm}[Smooth Families of Higgs Bundles]\label{exm:Higgs-bun-family}
In the context of \cref{exm:prt-Dol-op}, consider a smooth vector bundle $p : E \to U \times X$
equipped with a flat $1$--$\oprt_{X/U}^{\bullet}$--connection $D''$. For any
$u \in U$ and $\iota_u : X \cong \set{u} \times X \to U \times X$, the
inclusion, we can take $V = \set{u}$, $Y = X$, $f = \iota_u$, $g = \id_X$ in
\cref{exm:prt-Dol-morphism} to obtain a morphism of differential spaces:
\[
(\iota_u, \iota_u^{\bullet})
: (X, \sA_{X}^0, \sA_{X}^{\bullet}, \oprt_{X}^\bullet)
\to (U \times X, \sA_{U \times X}^0, \sA_{X/U}^{\bullet}, \oprt_{X/U}^{\bullet})
\]
Then, by \cref{prop:pullback-conn-curvature}, $D''_u := \iota_u^*D''$ is a flat
$1$--$\oprt_X^{\bullet}$--connection on $E_u^{sec}$ where $E_u := \iota_u^*E$, so that the pair
$(E_u, D''_u)$ is a Higgs bundle just as in \cref{exm:Higgs-bun}.
\end{exm}

\begin{prop}\label{prop:conn-slicewise}
Let $(U \times X, R, K, \delta)$ be any of the differential spaces of
\cref{exm:prt-ext-diff} or \cref{exm:prt-Dol-op}.
Then for any $\lambda$--$\delta$--connection $\nabla$ on a vector bundle $E$ and any
$s \in (E \otimes K^{\wedge k})(W)$ for any open $W \subset U \times X$, we have:
\[
\nabla^k(s)(u, x) = (\nabla_u)^k(s_u)(x)
\]
where $\nabla_u$ and $s_u$ are the pullbacks of $\nabla$ and $s$ along the inclusion
$\set{u} \times X \hto U \times X$, and the right hand side is identified with its image
under the inclusion $(E \otimes K^{\wedge k})|_{\set{u} \times X} \hto E \otimes K^{\wedge k}$.
In particular, $\nabla$ is flat if and only if $\nabla_u$ is flat for all $u \in U$.
\end{prop}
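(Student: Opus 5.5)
The plan is to reduce everything to local frames and compare the local formula of \cref{rmk:conn-higher-forms-loc} on $U \times X$ with the same formula on the slice $\set{u} \times X$.

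First, fix $(u, x) \in W$ and choose a product coordinate neighbourhood $U' \times X'$ of $(u, x)$ inside $W$, as in \cref{exm:prt-ext-diff} or \cref{exm:prt-Dol-op}. Shrinking if necessary, take a frame $\set{e_i}_i$ of $E$ there. The sheaf $K$ is one of $\sA^\bullet_{U/X}$, $\sA^\bullet_{X/U}$, $\sA^{\bullet, 0}_{X/U}$, $\sA^{0, \bullet}_{X/U}$, all of which have local frames built from wedges of $du_I$, $dx_J$, $dz_P$, $d\ol{z}_Q$. Write $s = \sum_i e_i \otimes \tau_i$ and let $[A_{ij}]$ be the connection matrix. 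By \cref{rmk:conn-higher-forms-loc},
\[
\nabla^k(s) = \sum_i e_i \otimes \br{\lambda \cdot \delta(\tau_i) + \sum_j A_{ij} \wedge \tau_j}.
\]
On the other side, by \cref{defn:pullback-conn} the pullback frame $\set{\iota_u^\sharp e_i}$ carries connection matrix $[\iota_u^\sharp A_{ij}]$ for $\nabla_u$, and $s_u = \sum_i \iota_u^\sharp e_i \otimes \iota_u^\sharp \tau_i$. So the same remark gives $(\nabla_u)^k(s_u)$ as $\sum_i \iota_u^\sharp e_i \otimes \br{\iota_u^\sharp\lambda \cdot \delta_X(\iota_u^\sharp\tau_i) + \sum_j \iota_u^\sharp A_{ij} \wedge \iota_u^\sharp \tau_j}$. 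Here $\delta_X$ is the corresponding differential on $X$.

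The key step is to check that $\iota_u^\sharp$, which is the morphism of differential spaces from \cref{exm:prt-ext-diff-morphism} or \cref{exm:prt-Dol-morphism}, is pointwise ``evaluation at $u$''. That is, for any partial form $\omega$ one should have $(\iota_u^\sharp\omega)(x) = \omega(u, x)$ under the identification of fibres. The reason is that $T\iota_u = 0 \oplus \id$, so on the relevant summand the pullback only freezes $u$ in the coefficient functions. Since $\iota_u^\sharp$ is a morphism of differential spaces, it commutes with $\delta$ and preserves wedges. Evaluating the first display at $(u, x)$ and the second at $x$ then gives identical expressions, which proves the formula locally. Because $(u, x)$ was arbitrary, the formula holds on all of $W$.

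I expect the main obstacle to be the differential spaces of the form $\sA^\bullet_{U/X}$, where $d_{U/X}$ differentiates in the $U$-direction. There, slicing at $\set{u} \times X$ would kill the forms, and the literal statement seems to concern only the $X/U$-type spaces. I would handle this by restricting attention to $K$ of $X/U$ type, possibly also reading $\nabla_u$ via the morphisms of \cref{exm:prt-ext-diff-morphism} as written. For the $X/U$ cases, the coefficients $\partial f/\partial x_j$, $\partial f/\partial \ol{z}_j$ restrict correctly to the slice, which is exactly the frozen-$u$ computation.

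Finally, for the flatness claim, apply the formula to $\nabla^{k+1}\circ\nabla^k$, using that $(\nabla^k(s))_u = (\nabla_u)^k(s_u)$ (the same identity read as sections). This gives $(\nabla^{k+1}\nabla^k s)(u, x) = ((\nabla_u)^{k+1}(\nabla_u)^k s_u)(x)$. Hence $\nabla$ is flat if and only if every $\nabla_u$ is flat, using that every section on a slice is locally the restriction of some $s$, for example by extending coefficient functions constantly in $u$.
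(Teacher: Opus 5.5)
Your proposal is correct and follows essentially the same route as the paper. Both are local-frame computations resting on two facts: pullback along $\iota_u$ is evaluation at $u$, and the partial differential has no $U$--derivatives. The only difference is packaging: the paper works with elementary tensors $a \otimes \omega$ and checks $d_{X/U}(\omega)(u,x) = d(\omega_u)(x)$ and $\nabla(a)(u,x) = \nabla_u(a_u)(x)$ separately, whereas you use the full matrix formula of \cref{rmk:conn-higher-forms-loc}. Restricting to the $X/U$--type spaces is sound, since the paper itself only works out $d_{X/U}$ and only applies the result to $d_{X/U}$, $\prt_{X/U}^\bullet$ and $\oprt_{X/U}^\bullet$. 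Your argument for the flatness clause also fills in a step the paper leaves implicit.
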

\begin{proof}
First, suppose $K = \sA_{X/U}^1, \delta = d_{X/U}$. Consider $s$ to be an elementary tensor
$a \otimes \omega$. Then,
\begin{align*}
 & \nabla^k(a \otimes \omega)(u, x) \\
=& (a \otimes d_{X/U}(\omega) + \nabla(a) \wedge \omega)(u, x) \\
=& (a \otimes d_{X/U}(\omega))(u, x) + (\nabla(a) \wedge \omega)(u, x) \\
=& a(u, x) \otimes d_{X/U}(\omega)(u, x) + \nabla(a)(u, x) \wedge \omega(u, x) \\
=& a_u( x) \otimes d_{X/U}(\omega)(u, x) + \nabla(a)(u, x) \wedge \omega_u(x)
\end{align*}
From the local formula for $d_{X/U}$, we can see that $d_{X/U}(\omega)(u, x) = d(\omega_u)(x)$.
We can also show that $\nabla(a)(u, x) = \nabla_u(a_u)(x)$. For this,
pick an open neighbourhood $(u, x) \in W' \subset W$ over which $E$ admits a local frame
$\set{e_i}_i$ and $a|_{W'} = \sum_{i} a_i e_i$. Then, taking $A$ to be the connection form of
$\nabla$ in the $\set{e_i}_i$ frame, we get:
\begin{align*}
\nabla(a)(u, x)
=& \br{\sum_i e_i \otimes d_{X/U}(a_i) + \sum_i a_iA(e_i)}(u, x) \\
=& \sum_i e_i(u, x) \otimes d_{X/U}(a_i)(u, x) + \sum_i a_i(u, x)A(e_i)(u, x) \\
=& \sum_i e_i(u, x) \otimes d_{X/U}(a_i)(u, x) + \sum_i a_i(u, x)A(e_i)(u, x) \\
=& \sum_i e_{i, u}(x) \otimes d(a_{i, u})(x) + \sum_i a_{i, u}(x)A_u(e_{i, u})(x) \\
=& \br{\sum_i e_{i, u} \otimes d(a_{i, u}) + \sum_i a_{i, u}A_u(e_{i, u})}(x) \\
=& \nabla_u(a_u)(x)
\end{align*}
Putting these back in the first computation, we get:
\begin{align*}
 & \nabla^k(a \otimes \omega)(u, x) \\
=& a_u(x) \otimes d_{X/U}(\omega)(u, x) + \nabla(a)(u, x) \wedge \omega_u(x) \\
=& a_u(x) \otimes d(\omega_u)(x) + \nabla_u(a_u)(x) \wedge \omega_u(x) \\
=& (a_u \otimes d(\omega_u) + \nabla_u(a_u) \wedge \omega_u)(x) \\
=& \nabla_u(a_u \otimes \omega_u)(x) \\
=& \nabla_u((a \otimes \omega)_u)(x)
\end{align*}
Since every tensor can locally be written as a sum of elementary tensors, we are done.
The cases of the other differential spaces are similar.
\end{proof}

\section{Moduli Stacks of Connections}
\label{sec:mod-st-conn}

Using the language of \cref{sec:diff-conn}, we will now define our moduli stacks of interest.

\begin{conv}
For any differential space $(Y, R, K, d)$, when we speak about $\lambda$--$d$--connections,
we will assume $\lambda$ is in $\ker{d}$.
Furthermore, we will refer to a pair $(E, \nabla_E)$ where $E$ is a
finite rank locally free $R$--module and $\nabla_E$ is a $\lambda$--$d$--connection on $E$,
as simply a $\lambda$--$d$--connection, as opposed to a sheaf equipped with a connection.
We will write pairs of this form simply as $\nabla_E$ for brevity, when there is no ambiguity.
\end{conv}

\begin{notn}
For a commutative ring $\bF$, we will denote the category of $\bF$--modules as $\bF\Mod$.
Given an Abelian category $\mcA$ such as $\bF\Mod$, We will denote the category of cochain complexes
in $\mcA$ as $\Ch(\mcA)$ and the subcategory thereof consisting of the non-negatively graded
cochain complexes as $\Ch^{\geq 0}(\mcA)$.
\end{notn}

The route we will take starts with defining a differential graded category of flat connections
on a differential $\bF$--space. For this, we require Hom complexes between two flat connections.
Given two flat connections $\nabla_E, \nabla_F$, we can form a Hom
flat connection $[\nabla_E, \nabla_F]$, whose complex of derived global sections will be taken to
be the Hom complex with domain $\nabla_E$ and codomain $\nabla_F$. With these Hom complexes,
flat connections form a differential graded category. The homotopy category of this differential
graded is the ordinary category of connections with morphisms as defined in \cref{defn:conn}.
However, a bit more is true: taking the
complex $\dbr{\nabla_E, \nabla_F}$ of higher forms of $[\nabla_E, \nabla_F]$ as Hom objects
gives an enrichment of flat connections in the category of complexes of sheaves of $\bF$--modules,
and the differential graded category structure is obtained by applying the derived global sections
functor Hom-wise. This picture is developed in detail in \cref{subsec:Hom-conn-cmplx}.
Given a morphism of differential spaces $f : Y \to Z$, the pullback of connections along $f$
described in \cref{subsec:pullback-conn} is a $\bF\dg$--functor between $\bF\dg$--categories of
connections on the two differential spaces.
We study this functoriality of pullback in \cref{subsec:functoriality-pullback}.
Next, we will show that the assignment:
\[
\text{differential $\bF$--space } Y \mapsto \text{$\bF\dg$--category of flat connections on $Y$}
\]
is also functorial: that is, it is a pseudofunctor from the $1$--category of differential
$\bF$--spaces to the $2$--category of $\bF\dg$--categories.
The action on morphisms of differential spaces is, as expected, by pullback of connections.
Given two composeable morphisms $f, g$ of differential spaces, the pullback  $\bF\dg$--functor for
$g \circ f$ and the composite of the pullback functors for $f$ and $g$ are naturally isomorphic.
At the same time, pullback along the identity morphism of differential spaces is naturally
isomorphic to the identity functor of the $\bF\dg$--category of connections. These natural
isomorphisms are simply the natural isomorphisms of the corresponding functors of locally free
sheaves, and hence they satisfy the coherence conditions needed to make the above mapping a
pseudofunctor. Restricting this pseudofunctor to manifolds of the form $U \times X$ with suitable
choices of partial differentials or partial Dolbeault operators, for a fixed complex manifold $X$,
gives differential graded prestacks --- pseudofunctors:
\[
\set{\text{manifolds}} \to \set{\bC\dg\text{--categories}}
\]
parametrizing Higgs bundles and flat connections. We can compose these
$\dg$--prestacks with the homtopy category functor followed by the maximal subgroupoid functor,
and then take the Grothendieck construction of the psedudofunctor valued in groupoids so obtained,
to get honest prestacks --- categories fibred in groupoids over the category of manifolds ---
parametrizing Higgs bundles and flat connections. These constructions are explained in detail
in \cref{subsec:prest-conn}. Finally, in \cref{subsec:geometricity}, we show that the prestacks
we obtained are, in fact, diffeological stacks: stacks over the site of smooth manifolds, equipped
with representable, epimorphic subductions --- that is, atlases --- from diffeological spaces,
considered via their embedding into the $2$--category of stacks.

\subsection{Hom Connections and Complexes}
\label{subsec:Hom-conn-cmplx}

\begin{defn}[Hom Connection]\label{defn:hom-conn}
Let $(Y, R, K, d)$ be a differential space over a base ring $\bF$ equipped with
$\lambda$--$d$--connections $\nabla_E, \nabla_F$. We denote by
$\CHom{E}{F}$ the $\lambda$--$d$--connection $\nabla_E^\vee \otimes \nabla_F$
(on $E^\vee \otimes F$),
and call it the Hom connection from $\nabla_E$ to $\nabla_F$.
By \cref{prop:dual-conn-flat} and \cref{cor:tensor-conn-flat}, when $\nabla_E$ and $\nabla_F$ are
flat we obtain a complex of sheaves of $\bF$--modules:
\[\begin{tikzcd}[column sep = huge]
0 \ar[r, "0"] &
E^\vee \otimes F
    \ar[r, "{[\nabla_E, \nabla_F]^0}"] &
E^\vee \otimes F \otimes K
    \ar[r, "{[\nabla_E, \nabla_F]^1}"] &
E^\vee \otimes F \otimes K^{\wedge 2}
    \ar[r, "{[\nabla_E, \nabla_F]^2}"] &
\cdots
\end{tikzcd}\]
which we will denote as $\CCHom{E}{F}$ and call the sheaf Hom complex from $\nabla_E$ to
$\nabla_F$.
\end{defn}

\begin{rmk}\label{rmk:hom-conn-desc}
In the context of \cref{defn:hom-conn}, let $U \subset Y$ be an open subset and let
$\alpha \otimes b \in (E^\vee \otimes F)(U)$ be an elementary tensor. Then, by by unwrapping
\cref{defn:dual-conn} and \cref{defn:tensor-conn}, for any open $U \subset Y$,
$\alpha \in E^\vee(U), b \in F(U), s \in E(U)$ we get:
\begin{align*}
 & \CHom{E}{F}(\alpha \otimes b)(s) \\
=& (\nabla_{E}^\vee(\alpha) \otimes b + \alpha \otimes \nabla_F(b))(s) \\
=& b \otimes \nabla_E^\vee(\alpha)(s) + \alpha(s) \cdot \nabla_F(b) \\
=& b \otimes (\lambda d(\alpha(s)) - (\alpha \otimes \id_K)(\nabla_E(s)))
   + \alpha(s) \cdot \nabla_F(b) \\
=& \lambda \cdot b \otimes d(\alpha(s)) + \alpha(s) \nabla_F(b)
   - b \otimes (\alpha \otimes \id_K)(\nabla_E(s)) \\
=& \nabla_F(\alpha(s) \cdot b)
   - (\alpha \otimes b \otimes \id_K)(\nabla_E(s)) \\
=& \nabla_F((\alpha \otimes b)(s))
   - (\alpha \otimes b \otimes \id_K)(\nabla_E(s)) \\
=& (\nabla_F \circ (\alpha \otimes b) - ((\alpha \otimes b) \otimes \id_K) \circ \nabla_E)(s)
\end{align*}
Since $\CHom{E}{F}$ is $R$--balanced in the $\alpha$ and $b$ arguments and agrees with the map
\[
E^\vee \otimes F \to E^\vee \otimes F \otimes K
: \beta \mapsto \nabla_F \circ \beta - (\beta \otimes \id_K) \circ \nabla_E
\]
on elementary tensors, the universal property of tensor products implies that the two maps
agree on all inputs.
Now, suppose $\beta \in (E^\vee \otimes F)(U), \omega \in K^{\wedge}(U)$. Then,
\begin{align*}
 & \CHom{E}{F}^k(\beta \otimes \omega) \\
=& \lambda \cdot \beta \otimes d(\omega) + \CHom{E}{F}(\beta) \wedge \omega \\
=& \lambda \cdot \beta \otimes d(\omega)
   + (\nabla_F \circ \beta - (\beta \otimes \id_K) \circ \nabla_E) \wedge \omega \\
=& \lambda \cdot \beta \otimes d(\omega)
   + (\nabla_F \circ \beta) \wedge \omega - ((\beta \otimes \id_K) \circ \nabla_E) \wedge \omega
\end{align*}
\end{rmk}

\begin{thm}\label{thm:conn-cat-enriched-in-sheaves}
With sheaf Hom complexes as in \cref{defn:hom-conn} taken as morphism objects, the collection of
flat $\lambda$--$d$--connections $\nabla_E$ form a category enriched in
$\Ch^{\geq 0}(\Sh(X, \bF\Mod))$, the category of bounded below complexes of sheaves of
$\bF$--modules.
\end{thm}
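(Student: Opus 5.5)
To make the flat $\lambda$--$d$--connections into a category enriched in $\Ch^{\geq 0}(\Sh(X, \bF\Mod))$, with the monoidal structure on complexes given by the usual tensor product of complexes (Koszul signs), we need three things: a composition, identities, and the enriched axioms. The hom objects are the complexes $\CCHom{E}{F}$ of \cref{defn:hom-conn}.

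\emph{Composition.} For flat connections $\nabla_E, \nabla_F, \nabla_G$ we define
\[
c_{E,F,G} : \CCHom{F}{G} \otimes \CCHom{E}{F} \to \CCHom{E}{G}.
\]
In bidegree $(p,q)$ it is induced by the contraction $F^\vee \otimes G \otimes E^\vee \otimes F \to E^\vee \otimes G$ together with the wedge product $K^{\wedge p} \otimes K^{\wedge q} \to K^{\wedge(p+q)}$. Concretely,
\[
(\beta \otimes \omega) \otimes (\alpha \otimes \tau) \mapsto (\beta \circ \alpha) \otimes (\omega \wedge \tau).
\]
Equivalently, one extends the composition of form-valued homomorphisms with the graded sign convention. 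This map is well defined because it is $R$--balanced, so the universal property of tensor products applies as in \cref{defn:conn-higher-forms}.

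\emph{Identity.} The unit $\bF \to \CCHom{E}{E}$ is concentrated in degree $0$ and sends $1$ to $\id_E \in (E^\vee \otimes E)(Y)$. That this is a chain map amounts to $\CHom{E}{E}(\id_E) = 0$. By the formula in \cref{rmk:hom-conn-desc}, this reads $\nabla_E \circ \id_E - (\id_E \otimes \id_K)\circ \nabla_E = 0$, which holds trivially.

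\emph{Associativity and unitality.} These are checked on local sections of the form $\beta \otimes \omega$. They follow from associativity of composition of homomorphisms and of the wedge product, together with unitality of $\id$. Since the Koszul signs enter symmetrically on both sides, the verification is routine.

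\emph{Chain map property (main step).} The central point is that $c$ commutes with the differentials, i.e.
\[
\CHom{E}{G}^{p+q}\bigl(c(x \otimes y)\bigr) = c\bigl(\CHom{F}{G}^p(x) \otimes y\bigr) + (-1)^p\, c\bigl(x \otimes \CHom{E}{F}^q(y)\bigr).
\]
I would verify this on elementary local sections $x = \beta \otimes \omega$ and $y = \alpha \otimes \tau$ using the last formula of \cref{rmk:hom-conn-desc}.

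The left side expands to
\[
\lambda\, (\beta\alpha) \otimes d(\omega\wedge\tau) + \bigl(\nabla_G \circ \beta\alpha - (\beta\alpha \otimes \id_K)\circ \nabla_E\bigr) \wedge \omega \wedge \tau.
\]
Expanding $d(\omega \wedge \tau)$ by the graded Leibniz rule gives the two $\lambda$--terms on the right. For the remaining terms, insert and subtract $(\beta \otimes \id_K)\circ\nabla_F\circ\alpha$. The resulting telescoping matches the two Hom-connection terms on the right, up to the sign $(-1)^p$ that arises from moving the $1$--form past $\omega$.

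The delicate part is this sign bookkeeping: the wedge products are taken in $K^{\wedge\bullet}$ while the homomorphism parts live in $E^\vee \otimes F$, and the two must be reconciled consistently. To keep it manageable, I would reduce to local frames. There, by \cref{rmk:conn-higher-forms-loc} and \cref{rmk:tensor-conn-matrix}, the Hom connection acts on matrix-valued forms as $M \mapsto \lambda\, dM + A_G \wedge M - (-1)^{|M|} M \wedge A_E$. The required identity then becomes the graded Leibniz rule for this twisted differential on products of matrix-valued forms, which follows directly from the Leibniz rule for $d$. Agreement on local frames then gives the global identity by the gluing argument of \cref{cor:conn-coll}.
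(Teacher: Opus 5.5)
Your proposal is correct and follows essentially the same route as the paper: composition by contraction together with the wedge product, the unit $1 \mapsto \id_E$ (a chain map because $\CHom{E}{E}(\id_E) = 0$), routine associativity and unitality, and a local-frame check that composition commutes with the differentials. The only difference is presentation. The paper carries out an explicit index computation on basis elements $\varphi_m g_l \omega \otimes_{\ul\bF} \epsilon_j f_n \tau$, whereas you phrase the same check as the graded Leibniz rule for the twisted differential $M \mapsto \lambda\, dM + A_G \wedge M - (-1)^{|M|} M \wedge A_E$ on matrix-valued forms, which is a tidier way to organize the identical calculation.
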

\begin{proof}
We first define composition maps of the form:
\[
c_{E, F, G} : \CCHom{F}{G} \otimes_{\ul{\bF}} \CCHom{E}{F}
    \to[] \CCHom{E}{G}
\]
where $\ul{\bF}$ is the constant sheaf (or, sheafification of the constant presheaf) valued at
$\bF$.
The tensor product forming the domain of $c$ is graded as follows
\[
\br{\CCHom{F}{G} \otimes_{\ul{\bF}} \CCHom{E}{F}}^k
    = \bigoplus_{p + q = k} (F^\vee \otimes G \otimes K^{\wedge p})
                            \otimes_{\ul{\bF}} (E^\vee \otimes F \otimes K^{\wedge q})
\]
and its differential, call it $D$, is the unique map of $\bF$--modules corresponding to the
$\bF$--bilinear map:
\[
(s, t) \mapsto \CHom{F}{G}(s) \otimes_{\ul{\bF}} t
               + (-1)^{\deg(s)} s \otimes_{\ul{\bF}} \CHom{E}{F}(t)
\]
where $s, t$ are local sections of
$F^\vee \otimes G \otimes K^{\wedge p}, E^\vee \otimes F \otimes K^{\wedge q}$ respectively.
We then define $c_{E, F, G}^k$ as the following composition:
\begin{align*}
 & \br{\CCHom{F}{G} \otimes_{\ul{\bF}} \CCHom{E}{F}}^k \\
\to[=] & \bigoplus_{p + q = k} (F^\vee \otimes G \otimes K^{\wedge p})
                            \otimes_{\ul{\bF}} (E^\vee \otimes F \otimes K^{\wedge q}) \\
\to & \bigoplus_{p + q = k} (F^\vee \otimes G \otimes K^{\wedge p})
                            \otimes (E^\vee \otimes F \otimes K^{\wedge q}) \\
\to[\cong]& F^\vee \otimes F \otimes E^\vee \otimes G \otimes
            \bigoplus_{p + q = k} K^{\wedge p} \otimes K^{\wedge q} \\
\to & R \otimes E^\vee \otimes G \otimes K^{\wedge k} \\
\to[\cong] & \CCHom{E}{G}^k
\end{align*}
where the second map is the canonical map changing the tensor product from over $\ul{\bF}$ to $R$,
and the second last map is $\tr \otimes \id_{E^\vee \otimes G} \otimes (- \wedge -)$.
On elementary tensors, this map is given by:
\[
c_{E, F, G} : (a \otimes \omega) \otimes_{\ul{\bF}} (b \otimes \tau) \mapsto
(a \circ b) \otimes (\omega \wedge \tau)
\]
We define the identity maps as follows:
\[
e_E : \ul{\bF}[0] \to \CCHom{E}{E}
    : 1_{\ul{\bF}(U)} \mapsto \id_{E|_U} \in (E^\vee \otimes E)(U) \cong \End(E)(U)
\]
for an open $U \subset Y$.
From these definitions, we can see that associativity and unitality of composition follows
from those of composition of sheaf maps and wedge products of sections of $K^\bullet$.

It remains to check that these composition and identity maps commute with the differentials.
For the identity, map we simply need to check that $\id_{E|_U} \in \ker{\CHom{E}{E}}$, which follows
from \cref{rmk:hom-conn-desc}:
\[
\CHom{E}{E}(\id_E) = \nabla_E \circ \id_E - (\id_E \otimes \id_K) \circ \nabla_E = 0
\]
For the composition maps, we have to check that the following diagram commutes:
\[\begin{tikzcd}
(\CCHom{F}{G} \otimes_{\ul{\bF}} \CCHom{E}{F})^k \ar[r, "D"] \ar[d, "c_{E, F, G}" left] &
(\CCHom{F}{G} \otimes_{\ul{\bF}} \CCHom{E}{F})^{k + 1} \ar[d, "c_{E, F, G}"] \\
\CCHom{E}{G}^k \ar[r, "\CHom{E}{G}^k" below] &
\CCHom{E}{G}^{k + 1}
\end{tikzcd}\]
It suffices to check that the two paths of maps agree in a local trivialization.
So, consider local frames $\set{e_i}, \set{f_j}, \set{g_k}$ for $E, F, G$ respectively with dual
frames $\set{\epsilon_i}, \set{\varphi_j}, \set{\gamma_k}$ respectively.
Let the connection forms and matrices of $\nabla_E, \nabla_F, \nabla_G$ be $A, B, C$ and
$[A_{aa'}], [B_{bb'}], [C_{cc'}]$ respectively with duals denoted by superscripts $(-)^\vee$. For
convenience, we will suppress ``$\otimes$''
from the notation but retain ``$\otimes_{\ul\bF}$''.
Now, it suffices to check the agreement of the maps on elementary tensors of the form
$\varphi_j g_k \omega \otimes_{\ul{\bF}} \epsilon_i f_{j'} \tau$.
Using the formulas from \cref{rmk:conn-higher-forms-loc}, \cref{rmk:dual-conn-matrix} and
\cref{rmk:tensor-conn-matrix}, we then compute:
\begin{align*}
 & \CHom{E}{G}^{k}\br{c_{E, F, G}\br{
     \varphi_m g_l \omega \otimes_{\ul{\bF}} \epsilon_j f_{n} \tau
   }} \\
=& \delta_{m n} \CHom{E}{G}^{k}\br{\epsilon_jg_l(\omega \wedge \tau)} \\
=& \delta_{m n} \br{
     \lambda \epsilon_j g_l d(\omega \wedge \tau)
     + \sum_{i, k} \epsilon_i g_k (\delta_{k l}A^\vee_{i j} + \delta_{i j} C_{kl})
                                  \wedge \omega \wedge \tau
   } \\
=& \delta_{m n} \br{
     \lambda \epsilon_j g_l \br{
       d(\omega)  \wedge \tau
       + (-1)^{\deg(\omega)} \wedge \omega \wedge d(\tau)
     }
     + \sum_{i, k} \epsilon_i g_k (-\delta_{k l}A_{j i} + \delta_{i j} C_{kl})
                                  \wedge \omega \wedge \tau
   }
\end{align*}
On the other hand, we have:
\begin{align*}
 & c_{E, F, G}\br{D\br{\varphi_m g_l \omega \otimes_{\ul{\bF}} \epsilon_j f_{n} \tau}} \\
=& c_{E, F, G}\br{
     \CHom{F}{G}\br{\varphi_m g_l \omega} \otimes_{\ul\bF} \epsilon_j f_{n} \tau
     + (-1)^{\deg(\omega)}
     \varphi_m g_l \omega \otimes_{\ul\bF} \CHom{E}{F}\br{\epsilon_j f_{n} \tau}
   } \\
=& c_{E, F, G}\br{\CHom{F}{G}\br{\varphi_m g_l \omega} \otimes_{\ul\bF} \epsilon_j f_{n} \tau}
     + (-1)^{\deg(\omega)} c_{E, F, G} \br{
       \varphi_m g_l \omega \otimes_{\ul\bF} \CHom{E}{F}\br{\epsilon_j f_{n} \tau}
     }
\end{align*}
We compute the summands separately:
\begin{align*}
 & c_{E, F, G}\br{\CHom{F}{G}\br{\varphi_m g_l \omega} \otimes_{\ul\bF} \epsilon_j f_{n} \tau} \\
=& c_{E, F, G}\br{
     \br{
       \lambda \varphi_m g_l d(\omega)
       + \sum_{i, k} \phi_ig_k (\delta_{k l}B^\vee_{i m} + \delta_{i m}C_{k l}) \wedge \omega
     }
     \otimes_{\ul\bF} \epsilon_j f_{n} \tau
   } \\
=&   \delta_{mn} \lambda \epsilon_j g_l d(\omega) \wedge \tau
     + \sum_{i, k} \delta_{i n} \epsilon_j g_k (- \delta_{k l}B_{m i} + \delta_{i m}C_{k l})
                                               \wedge \omega \wedge \tau \\
=&   \delta_{mn} \lambda \epsilon_j g_l d(\omega) \wedge \tau
     + \sum_{k} \epsilon_j g_k (- \delta_{k l}B_{m n} + \delta_{n m}C_{k l})
                               \wedge \omega \wedge \tau \\
=&   \delta_{mn} \lambda \epsilon_j g_l d(\omega) \wedge \tau
     + \br{-\epsilon_j g_l B_{m n}
           + \delta_{n m}\sum_{k} \epsilon_j g_k C_{k l}} \wedge \omega \wedge \tau
\end{align*}
Denote this as section as $\rho$.
We also have:
\begin{align*}
 & c_{E, F, G}\br{\varphi_m g_l \omega \otimes_{\ul\bF} \CHom{E}{F}\br{\epsilon_j f_{n} \tau}} \\
=& c_{E, F, G}\br{
     \phi_m g_l \omega \otimes_{\ul\bF}
     \br{
       \lambda \epsilon_j f_n d(\tau)
       + \sum_{i, k} \epsilon_i f_k \br{\delta_{k n} A^\vee_{i j} + \delta_{i j} B_{k n}}
                                    \wedge \tau
     }
   } \\
=& \delta_{mn} \lambda \epsilon_j g_l \omega \wedge d(\tau)
   + \sum_{i, k} \delta_{mk} \epsilon_i g_l
                 \omega \wedge \br{\delta_{k n} A^\vee_{i j} + \delta_{i j} B_{k n}} \wedge \tau \\
=& \delta_{mn} \lambda \epsilon_j g_l \omega \wedge d(\tau)
   + (-1)^{\deg(\omega)} \sum_{i, k} \delta_{m k} \epsilon_i g_l
                 \br{\delta_{k n} A^\vee_{i j} + \delta_{i j} B_{k n}} \wedge \omega \wedge \tau \\
=& \delta_{mn} \lambda \epsilon_j g_l \omega \wedge d(\tau)
   + (-1)^{\deg(\omega)} \sum_{i} \epsilon_i g_l
                 \br{\delta_{m n} A^\vee_{i j} + \delta_{i j} B_{m n}} \wedge \omega \wedge \tau \\
=& \delta_{mn} \lambda \epsilon_j g_l \omega \wedge d(\tau)
   + (-1)^{\deg(\omega)} \br{\delta_{m n} \sum_{i} \epsilon_i g_l A^\vee_{i j}
                             + \epsilon_j g_l B_{m n}} \wedge \omega \wedge \tau
\end{align*}
Denote this section as $\xi$. These computations show that:
\[
c_{E, F, G}\br{D\br{\varphi_m g_l \omega \otimes_{\ul{\bF}} \epsilon_j f_{n} \tau}}
= \rho + (-1)^{\deg(\omega)} \xi
= \CHom{E}{G}^{k}\br{c_{E, F, G}\br{\varphi_m g_l \omega \otimes_{\ul{\bF}} \epsilon_j f_{n} \tau}}
\]
as required.
\end{proof}

\begin{thm}\label{thm:dg-cat-conn}
In the context of \cref{thm:conn-cat-enriched-in-sheaves}, the collection of flat
$\lambda$--$d$--connections admits differential graded category structures with the following
$\Hom$ complexes:
\begin{enumerate}
\item the complex of global sections of the sheaf Hom complexes, in which case the homotopy category
is the ordinary category of $\lambda$--$d$--connections, or
\item the complex of derived global sections of the sheaf Hom complexes
\end{enumerate}
\end{thm}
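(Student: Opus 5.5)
The plan is to obtain both $\dg$--structures from the enrichment of \cref{thm:conn-cat-enriched-in-sheaves} by applying a lax monoidal functor Hom-wise. Such a functor carries a category enriched in $\Ch^{\geq 0}(\Sh(Y, \bF\Mod))$ to a category enriched in $\Ch(\bF\Mod)$, that is, to an $\bF\dg$--category. The composition is the composite
\[
\Phi(\CCHom{F}{G}) \otimes_\bF \Phi(\CCHom{E}{F}) \to \Phi\br{\CCHom{F}{G} \otimes_{\ul\bF} \CCHom{E}{F}} \to[\Phi(c_{E, F, G})] \Phi(\CCHom{E}{G}),
\]
and the units are $\bF = \Phi(\ul\bF[0]) \to \Phi(\CCHom{E}{E})$, induced by $e_E$. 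Associativity and unitality then follow formally from the coherence axioms of a lax monoidal functor together with the enriched axioms already checked.

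For (i), I take $\Phi = \Gamma(Y, -)$ applied termwise. The lax structure is the map $\Gamma(\mcA) \otimes_\bF \Gamma(\mcA') \to \Gamma(\mcA \otimes_{\ul\bF} \mcA')$, $s \otimes t \mapsto s \otimes t$, which commutes with the differentials because $D$ is defined by the Leibniz formula. The substantive claim is the identification of the homotopy category.

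\begin{itemize}
\item \emph{Closed degree-$0$ maps.} In degree $0$, $\Gamma(E^\vee \otimes F) = \Hom_R(E, F)$. By \cref{rmk:hom-conn-desc}, $\CHom{E}{F}(\beta) = \nabla_F \circ \beta - (\beta \otimes \id_K) \circ \nabla_E$. So the $0$--cocycles are exactly the morphisms of connections of \cref{defn:conn}.
\item \emph{No coboundaries in degree $0$.} The complex starts with $0$ in degree $-1$, so $H^0$ is just this set of morphisms.
\item \emph{Composition and identities.} On degree-$0$ classes, composition is $c(a \otimes b) = a \circ b$ (with the trace contraction) and the unit is $\id_E$.
\end{itemize}

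Hence $H^0$ of the $\dg$--category is the ordinary category of flat $\lambda$--$d$--connections.

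For (ii), I take $\Phi = \RGl(Y, -)$, realised functorially and with a lax monoidal structure. Concretely, I use the Godement resolution $\mathcal{G}^\bullet$ (canonical flasque), totalize it, and set $\Phi(\mcA) = \Gamma(Y, \mathrm{Tot}\,\mathcal{G}^\bullet(\mcA))$.

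The main obstacle is constructing a natural, associative and unital map
\[
\mathcal{G}^\bullet(\mcA) \otimes \mathcal{G}^\bullet(\mcB) \to \mathcal{G}^\bullet(\mcA \otimes_{\ul\bF} \mcB).
\]
There are two routes, and I would try the first.

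\begin{itemize}
\item \emph{Cosimplicial route (first choice).} View the Godement resolution as a cosimplicial sheaf. The pointwise products $\prod_{x} \mcA_x \otimes \prod_x \mcB_x \to \prod_x (\mcA \otimes \mcB)_x$ are compatible with the cosimplicial structure. The Alexander--Whitney / cup-product map then gives a lax monoidal structure on the normalized cochains. It is strictly associative and unital, though not symmetric, which is all that a $\dg$--category needs.
\item \emph{Fallback.} Use Čech cochains on a fixed good cover with the cup product, or equivalently any strictly associative multiplicative model of $\RGl$.
\end{itemize}

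Either way, the result is well defined up to quasi-equivalence, and the canonical map $\Gamma \to \RGl$ yields a $\dg$--functor from the structure in (i) to the structure in (ii).
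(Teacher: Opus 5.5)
Your proposal is correct and follows essentially the same route as the paper. Both structures come from change of enrichment along the lax monoidal functors $\Gamma$ and $\RGl = \Gamma \circ s \circ G^\bullet$, with the lax structure on the totalized Godement resolution supplied by the Alexander--Whitney map, and the homotopy category is identified via $H^0 = \ker{\CHom{E}{F}^0}$ and \cref{rmk:hom-conn-desc}. Your remark that the Godement functor only needs to be lax, not strong, monoidal is if anything more careful than the paper; your \v{C}ech fallback on a ``good cover'' does not make sense for a general ringed space $Y$, but you do not need it.
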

\begin{proof}
(i) The global sections functor
$\Gamma : \Ch^{\leq 0}(\Sh(X, \bF\Mod)) \to \Ch^{\leq 0}(\bF\Mod)$ is well-known to be lax monoidal,
providing a change of enrichment functor by \cite[Lemma 3.4.3]{Rie14}:
\[
\Gamma_* : \Ch^{\geq 0}(\Sh(X, \bF\Mod))-\Cat \to \Ch^{\geq 0}(\bF\Mod)-\Cat
\]
The first differential graded category structure then follows from
\cref{thm:conn-cat-enriched-in-sheaves}. For any two objects $\nabla_E, \nabla_F$,
$H^0(\Gamma(X, \CCHom{E}{F}])^\bullet) = \ker{\CHom{E}{F}^0}$. By \cref{rmk:hom-conn-desc},
we see that the homotopy category is precisely the ordinary category of $\lambda$--$d$--connections.

(ii) We can use the following construction for the derived global sections. Given a complex
$(A^\bullet, D)$ of $\bF$--module sheaves, we take its Godement resolution
$(A^\bullet, D) \to[\simeq] (\tilde{A}^\bullet, \tilde{D})$, where each term $\tilde{A}^n$ is a
flasque sheaf, making it a $\Gamma$--acyclic complex quasi-isomorphic to $(A^\bullet, D)$, so that
the complex of derived global sections of $(A^\bullet, D)$ is:
\[
\RGl(X, A^\bullet, D) = \Gamma(X, \tilde{A}^\bullet, \tilde{D}^\bullet)
\]
We recall how $(\tilde{A}^\bullet, \tilde{D})$ is obtained \cite[\S 3.3, 5.1]{RR15}:
we consider the ringed space $(X_{disc}, R_{disc})$ where $X_{disc}$ is the underlying set of $X$
equipped with the discrete topology and $R_{disc}$ is the sheaf of rings defined by
$R_{disc}(U) = \prod_{x \in U} R_x$ with restrictions induced by those of $R$. Note that
$R_{disc} = f^{-1}R$.
There is a morphism of ringed spaces $f : (X_{disc}, R_{disc}) \to (X, R)$ yielding a cosimplicial
cochain complex:
\[
G^n(A^\bullet, D) = (f_*f^{-1})^n(A^\bullet, D)
\]
Note that we use $f^{-1}$ because the differentials of complexes are not $R$--linear.
Then, we take $(\tilde{A}^\bullet, \tilde{D})$ to be the simple complex
$s(\set{G^n(A^\bullet, D)}_{n \geq 0})$ associated with this
cosimplicial cochain complex --- this is the totalization of the Moore double complex of the
cosimplicial cochain complex.
Now, the mapping
\[
G^\bullet : \Ch^{\geq 0}(\Sh(X, \bF\Mod)) \to cs\Ch^{\geq 0}(\Sh(X, \bF\Mod))
\]
is monoidal --- this is because $f^{-1}$ preserves tensor products in general, and we can verify
that so does $f_*$ in this specific case \cite[Lemma 3.11, Proposition 3.12]{RR17}. Then,
the mapping
\[
s : sc\Ch^{\leq 0}(\Sh(X, \bF\Mod)) \to \Ch^{\leq 0}(\Sh(X, \bF\Mod))
\]
has a lax monoidal structure given by the Alexander-Whitney map --- the formal dual
of the argument in \cite[\href{https://kerodon.net/tag/00S4}{Tag 00S4}]{kerodon} works for any
Abelian category in place of Abelian groups.
Finally, the ordinary global sections functor
$\Gamma : \Ch^{\leq 0}(\Sh(X, \bF\Mod)) \to \Ch^{\leq 0}(\bF\Mod)$ is well-known to be lax monoidal.
The derived global sections functor can then be written as a composition:
\[
\RGl = \Gamma \circ s \circ G^\bullet : \Ch^{\geq 0}(\Sh(X, \bF\Mod)) \to \Ch^{\geq 0}(\bF\Mod)
\]
and is hence lax monoidal --- we should note that we are not considering the functor on the derived
categories and the lax monoidality is with respect to the ordinary tensor product
$\otimes_{\ul\bF}$.
Thus, we again get a change of enrichment functor \cite[Lemma 3.4.3]{Rie14}:
\[
\RGl_* : \Ch^{\geq 0}(\Sh(X, \bF\Mod))-\Cat \to \Ch^{\geq 0}(\bF\Mod)
\]
The $\Ch^{\geq 0}(\bF\Mod)$--category structure now follows from
\cref{thm:conn-cat-enriched-in-sheaves}.
\end{proof}

\begin{notn}
We will denote the differential graded category structure of \cref{thm:dg-cat-conn}(i) by
$\Conn_{\lambda}(Y, R, K, d)$ and the one of \cref{thm:dg-cat-conn}(ii) by
$\RConn_{\lambda}(Y, R, K, d)$ or simply, $\Conn_{\lambda, d}(Y)$ and $\RConn_{\lambda, d}(Y)$
respectively, when $R, K, d$ are clear from context.
\end{notn}

\begin{warn}\label{warn:dg-cat-conn}
In the context of \cref{thm:dg-cat-conn}, the homotopy category of $\RConn_{\lambda}(Y)$ may not be
the ordinary category of $\lambda$--$d$--connections as the complex of derived global sectoins is
not, in general, quasi-isomorphic to the complex of ordinary
global sections. However, in the case that $(Y, R) = (X, \Cinf_X)$ is a smooth manifold, all sheaves
of $\Cinf_X$--modules are flasque by the existence of bump functions, making $\RGl = \Gamma$ in this
case, and the two differential graded category structures coincide with the homotopy category being
the ordinary category of $\lambda$--$d$--connections, regardless of what $d$ is.
\end{warn}

\begin{rmk}
For a differential graded category where the $\Hom$ complexes are concentrated in non-negative
degrees, the homotopy category is isomorphic to the underlying category (see
\cite[Remarks 1.3.1.4 and 1.3.1.5]{HA}).
\end{rmk}

\subsection{Functoriality of Pullback}
\label{subsec:functoriality-pullback}

\begin{prop}\label{prop:pullback-conn-higher-forms}
In the contexts of \cref{defn:pullback-conn} and \cref{defn:conn-higher-forms},
consider the map
\[
\eta^k_0
: E \times L^{\wedge k} \to f_*(f^*E \otimes K)
: (s, \omega) \mapsto \eta(s) \otimes f^\sharp(\omega)
\]
where $\eta : E \to f_*f^*E$ is the adjunction unit.
We can see that $\eta^k_0$ is $R$--bilinear and thus, by the universal property of tensor products,
gives an $R$--linear map $\eta^k$ making the following diagram commute:
\[\begin{tikzcd}
E \times L^{\wedge k} \ar[r, "\eta^k_0"] \ar[d, "\otimes" left] & f_*(f^*E \otimes K) \\
E \otimes L^{\wedge k} \ar[ru, "\eta^k" below right]
\end{tikzcd}\]
We then have a commutative diagram of sheaves of $\bF$--modules:
\[\begin{tikzcd}[column sep = large]
0 \ar[r] \ar[d, equal] &
E \ar[r, "\nabla^0"] \ar[d, "\eta^0"] &
E \otimes K \ar[r, "\nabla^1"] \ar[d, "\eta^1"] &
E \otimes K^{\wedge 2} \ar[r, "\nabla^2"] \ar[d, "\eta^2"] & \cdots \\
0 \ar[r] & f_*f^*E \ar[r, "f_*f^*\nabla^0" below] &
f_*(f^*E \otimes K) \ar[r, "f_*f^*\nabla^1" below] &
f_*(f^*E \otimes K^{\wedge 2}) \ar[r, "f_*f^*\nabla^2" below] & \cdots
\end{tikzcd}\]
In particular, when $\nabla$ is flat, we get morphisms of cochain complexes of $\bF$--modules:
\begin{enumerate}
\item $\Gamma(Z, E \otimes L^{\wedge \bullet}, \nabla^\bullet)
    \to \Gamma(Y, f^*E \otimes K^{\wedge \bullet}, f^*\nabla^\bullet)$
\item $\RGl(Z, E \otimes L^{\wedge \bullet}, \nabla^\bullet)
    \to \RGl(Z, f_*(f^*E \otimes K^{\wedge \bullet}), f_*f^*\nabla^\bullet)$
\item $\RGl(Z, E \otimes L^{\wedge \bullet}, \nabla^\bullet)
    \to \RGl(Y, f^*E \otimes K^{\wedge \bullet}, f^*\nabla^\bullet)$
\end{enumerate}
\end{prop}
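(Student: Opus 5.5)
The commutativity of the ladder is local on $Z$, so I will check it on elementary tensors inside a local frame. I will do this in terms of the pulled back frames of \cref{rmk:pullback-frame} and the connection matrices of \cref{defn:pullback-conn}. The labels are read as in the paper's sloppy notation: the top row uses $L$, the bottom row uses $K$, $\eta^k$ lands in $f_*(f^*E\otimes K^{\wedge k})$, and $\eta^0=\eta$ is the unit. One first checks that $\eta^k_0$ is balanced: $\eta(rs)\otimes f^\sharp\omega = f^\sharp(r)\eta(s)\otimes f^\sharp\omega = \eta(s)\otimes f^\sharp(r\omega)$, using that $f^{\sharp}$ is a morphism of modules over $f^{\sharp,0}$ (condition (c) with $p=0$). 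So $\eta^k$ exists.

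Fix a trivializing open $Z_\alpha$ with frame $\set{e_i}$ and connection matrix $[A_{ij}]$. Over $Y_\alpha=f^{-1}Z_\alpha$ we have the frame $\set{f^\sharp e_i}$ with matrix $[f^\sharp A_{ij}]$, and $\eta(e_i)=f^\sharp e_i$. Write a local section of $E\otimes L^{\wedge k}$ as $\sum_i e_i\otimes\tau_i$. By \cref{rmk:conn-higher-forms-loc},
\[
\eta^{k+1}\br{\nabla^k\br{\textstyle\sum_i e_i\otimes\tau_i}} = \sum_i f^\sharp e_i\otimes f^\sharp\br{\lambda\, d'(\tau_i)+\textstyle\sum_j A_{ij}\wedge\tau_j}.
\]
On the other side,
\[
f^*\nabla^k\br{\eta^k\br{\textstyle\sum_i e_i\otimes\tau_i}} = \sum_i f^\sharp e_i\otimes\br{f^\sharp\lambda\, d(f^\sharp\tau_i)+\textstyle\sum_j f^\sharp A_{ij}\wedge f^\sharp\tau_j}.
\]
These agree because $f^\sharp$ intertwines $d'$ and $d$ (condition (b)), preserves wedge products (condition (c)), and is multiplicative in degree zero. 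The two maps are sheaf maps that agree on every $Z_\alpha$ of a cover, so they agree globally. This establishes the diagram.

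The main subtlety is the bookkeeping of $f_*$. Sections of $f_*(\cdot)$ over $Z_\alpha$ are sections over $Y_\alpha$, and $f_*f^*\nabla^k$ is just $f^*\nabla^k$ on $Y_\alpha$. So the bottom row is literally the pushforward of the complex $(f^*E\otimes K^{\wedge\bullet}, f^*\nabla^\bullet)$, which is a complex by \cref{prop:pullback-conn-curvature} when $\nabla$ is flat.

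Given flatness, the diagram is a morphism of complexes of sheaves of $\bF$--modules on $Z$,
\[
(E\otimes L^{\wedge\bullet},\nabla^\bullet)\to f_*(f^*E\otimes K^{\wedge\bullet},f^*\nabla^\bullet).
\]
\begin{enumerate}
\item Apply $\Gamma(Z,-)$ and use $\Gamma(Z,f_*\mathcal{F})=\Gamma(Y,\mathcal{F})$.
\item Apply the functor $\RGl(Z,-)=\Gamma\circ s\circ G^\bullet$ of \cref{thm:dg-cat-conn}. It is an honest functor on complexes, since the Godement construction is functorial.
\item Compose (ii) with the canonical map $\RGl(Z,f_*\mathcal{C})\to\RGl(Y,\mathcal{C})$. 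This map can be realized concretely via $f_*$ of Godement resolutions: $f_*G(\mathcal{C})$ is flasque and receives the map from $f_*\mathcal{C}$, so functoriality of Godement resolutions gives $G(f_*\mathcal{C})\to G(f_*G(\mathcal{C}))$, and the comparison $\Gamma(Z,G(f_*G\mathcal{C}))\leftarrow\Gamma(Z,f_*G\mathcal{C})=\Gamma(Y,G\mathcal{C})$ is a quasi-isomorphism.
\end{enumerate}
Item (iii) is where I expect friction, because this last comparison points the wrong way and must be inverted. If a strict chain map is wanted, I would first try the standard natural map $G(f_*\mathcal{C})\to f_*G(\mathcal{C})$ of Godement resolutions. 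This map comes from stalkwise maps $(f_*\mathcal{C})_z\to\prod_{y\in f^{-1}z}\mathcal{C}_y$, but those land in the product over the fibre rather than in $\mathcal{C}_y$ for a single $y$, so one has to check carefully that they assemble.
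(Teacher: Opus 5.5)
Your proposal is correct and follows the paper's proof. The ladder is established by the same local-frame comparison using conditions (b) and (c) on $f^\sharp$, and (i)--(iii) are obtained, as in the paper, by applying $\Gamma$, then the Godement-based $\RGl$, then pushing forward the resolution map $\rho$.

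The friction you flag in (iii) is genuine, and the paper does not resolve it. It writes $\RGl(Z, f_*H(\mathcal{C})) = \Gamma(Z, f_*H(\mathcal{C}))$, but this is actually a quasi-isomorphism $\Gamma(Z,f_*H(\mathcal{C}))\to\RGl(Z,f_*H(\mathcal{C}))$ pointing the wrong way.

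Your proposed remedy does work, and your worry about assembling the stalk maps dissolves. The maps $(f_*\mathcal{C})_z\to\mathcal{C}_y$ combine into a single map because $\prod_{z\in V}\prod_{y\in f^{-1}z}\mathcal{C}_y=\prod_{y\in f^{-1}V}\mathcal{C}_y$. The resulting map $G(f_*\mathcal{C})\to f_*G(\mathcal{C})$ is compatible with the Godement unit and multiplication. It therefore yields a strict chain map $\RGl(Z,f_*\mathcal{C})\to\RGl(Y,\mathcal{C})$, which makes (iii) an honest morphism of complexes rather than a zigzag.
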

\begin{proof}
Consider an open $Z' \subset Z$ over which both $E$ and $L$ trivialize with frames
$\set{e_i}_i, \set{\omega_j}_j$ respectively. Let $A$ be the connection form in this local
trivialization and let $f^\sharp A$ be the connection form of the pulled back connection.
Observe that $\eta(e_i) = f^\sharp(e_i)$, and then apply the formula for $\nabla^{k + 1}$ from
\cref{rmk:conn-higher-forms-loc}, and that for $\eta^k$ from the statement,
to obtain, for any section
$s = \sum_{i} e_i \otimes \tau_j \in (E \otimes L)^{\wedge k}(Z')$, the following
equality:
\begin{align*}
 & \eta^{k + 1} \br{\nabla^k\br{\sum_{i} e_i \otimes \tau_j}} \\
=& \eta^{k + 1}\br{\sum_{i} e_i \otimes \br{\lambda \cdot d(\tau_i)
                   + \sum_{j} A_{ij} \wedge \tau_j}} \\
=& \sum_{i} \eta(e_i) \otimes f^\sharp\br{\lambda \cdot d(\tau_i)
                                  + \sum_{j} A_{ij} \wedge \tau_j} \\
=& \sum_{i} f^\sharp(e_i) \otimes \br{\lambda \cdot d(f^\sharp \tau_i)
                                  + \sum_{j} f^\sharp A_{ij} \wedge f^\sharp \tau_j}
\end{align*}
Similarly, we have:
\begin{align*}
 & f_*f^*\nabla^k \br{\eta^k\br{\sum_{i} e_i \otimes \tau_j}} \\
=& f^*\nabla^k \br{\sum_{i} \eta(e_i) \otimes f^\sharp \tau_i} \\
=& \sum_{i} f^\sharp(e_i) \otimes \br{\lambda \cdot d(f^\sharp \tau_i)
                                  + \sum_{j} f^\sharp A_{ij} \wedge f^\sharp \tau_j}
\end{align*}
This shows that $\eta^{k + 1} \circ \nabla^k$ and $f_*f^*\nabla^k \circ \eta^k$ agree in every local
trivialization, and hence on every open.

The morphism of cochain complexes in (i) is obtained by applying the global sections functor to the
given commutative diagram:
\[
\Gamma(\eta^\bullet) : \Gamma(Z, E \otimes L^{\wedge \bullet}, \nabla^\bullet)
    \to \Gamma(Y, f^*E \otimes K^{\wedge \bullet}, f^*\nabla^\bullet)
\]
For (ii), we may apply the derived global sections functor instead:
\[
\RGl(\eta^\bullet) : \RGl(Z, E \otimes L^{\wedge \bullet}, \nabla^\bullet)
    \to \RGl(Y, f^*E \otimes K^{\wedge \bullet}, f^*\nabla^\bullet)
\]
For the morphism of cochain complexes in (ii), we first recall that we have a natural transformation
of functors:
\[
\rho : \id_{\Ch^{\geq 0}(\Sh(Y, \bF\Mod))} \To s \circ G^\bullet
\]
where $s$ is the simple complex functor (totalization of the Moore complex of a cosimplicial cochain
complex) and $G^\bullet$ is the Godement resolution functor \cite[434]{RR15}. Since,
$s \circ G^\bullet$ provides resolutions where each term is a flasque sheaf,
$f_* \circ s \circ G^\bullet$ computes $\rmR f_*$. Thus, by horizontal composition, we get a natural
transformation: $f_* \star \rho : f_* \to f_* \circ s \circ G^\bullet = \rmR f_*$.
Let $H(f^*E \otimes K^{\wedge \bullet}, f^*\nabla^\bullet)$ denote
$s(G^\bullet(f^*E \otimes K^{\wedge \bullet}, f^*\nabla^\bullet))$.
Taking the component of $\rho$ at the complex
$(f^*E \otimes K^{\wedge \bullet}, f^*\nabla^\bullet)$ gives a morphism of complexes of sheaves of
$\bF$--modules:
\begin{equation}\label{eqn:pushforward-to-der-pushforward}
f_*(f^*E \otimes K^{\wedge \bullet}, f^*\nabla^\bullet)
\to f_*(H((f_*(f^*E \otimes K^{\wedge \bullet}), f_*f^*\nabla^\bullet)))
\end{equation}
Since each term of $H(f^*E \otimes K^{\wedge \bullet}, f^*\nabla_E^\bullet)$ is flasque, so are the
terms of its pushforward, and thus:
\begin{align*}
\RGl(Z, f_*(H(f^*E \otimes K^{\wedge \bullet}, f^*\nabla^\bullet)))
=& \Gamma(Z, f_*(H(f^*E \otimes K^{\wedge \bullet}, f^*\nabla^\bullet))) \\
=& \Gamma(Y, H(f^*E \otimes K^{\wedge \bullet}, f^*\nabla^\bullet)) \\
=& \RGl(Y, f^*E \otimes K^{\wedge \bullet}, f^*\nabla^\bullet)
\end{align*}
Applying the derived global sections functor to the morphism of complexes of sheaves of
$\bF$--modules of \ref{eqn:pushforward-to-der-pushforward} provides a morphism of
complexes of $\bF$--modules:
\[
\RGl(f_*(\rho^\bullet)) : \RGl(Z, f_*(f^*E \otimes K^{\wedge \bullet}), f_*f^*\nabla^\bullet)
\to \RGl(Y, f^*E \otimes K^{\wedge \bullet}, f^*\nabla^\bullet)
\]
We compose this with the morphism of (ii) to get the morphism of (iii):
\[
\RGl(f_*(\rho^\bullet) \circ \eta^\bullet) : \RGl(Z, E \otimes L^{\wedge \bullet}, \nabla^\bullet)
    \to \RGl(Y, f^*E \otimes K^{\wedge \bullet}, f^*\nabla^\bullet)
\]
\end{proof}

\begin{rmk}\label{rmk:hom-conn-pb-nat}
For any morphism of differential spaces $(f, f^\sharp)$ and any connections $\nabla_E, \nabla_F$
over its codomain, we have canonical natural isomorphisms:
\[
\xi_{E, F} : f^*[\nabla_E, \nabla_F] = f^*(\nabla_E^\vee \otimes \nabla_F)
\to[\cong] (f^*\nabla_E)^\vee \otimes f^*\nabla_F = [f^*\nabla_E, f^*\nabla_F]
\]
by \cref{prop:pullback-conn-dual} and \cref{prop:pullback-conn-tensor}.
By \cref{prop:conn-morphism-to-diagram-morphism}, there is an isomorphism of complexes of sheaves of
$\bF$--modules:
\[
f^*\dbr{\nabla_E, \nabla_F} \to \dbr{f^*\nabla_E, f^*\nabla_F}
\]
By the naturality $\xi$, this isomorphism of complexes is also natural in $E$ and $F$.
Hence, we will make no distinction between the two sides of these isomorphisms. In particular, we
will write morphisms into and out of these interchangeably. This causes no problems for the
arguments by the naturality of the isomorphism.
\end{rmk}

\begin{prop}\label{prop:pullback-func}
The mapping $\nabla_E \mapsto f^*\nabla_E$ along with morphisms in (i) and (iii) of
\cref{prop:pullback-conn-higher-forms} assemble into differential graded functors respectively:
\begin{enumerate}
\item $f^* : \Conn_{\lambda, d'}(Z) \to \Conn_{f^\sharp \lambda, d}(Y)$
\item $f^* : \RConn_{\lambda, d'}(Z) \to \RConn_{f^\sharp \lambda, d}(Y)$
\end{enumerate}
\end{prop}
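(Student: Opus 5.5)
To build each functor, I first fix its action on objects and Hom complexes and then check compatibility with identities and composition. On objects I send $\nabla_E \mapsto f^*\nabla_E$. This is a flat $f^\sharp\lambda$--$d$--connection by \cref{prop:pullback-conn-curvature}, and $f^\sharp\lambda \in \ker{d}$ because $f^\sharp$ commutes with the differentials.

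On Hom complexes, for flat $\nabla_E, \nabla_F$ on $Z$, I apply \cref{prop:pullback-conn-higher-forms} to the flat Hom connection $\CHom{E}{F}$. Its part (i) gives a morphism of complexes
\[
\Gamma(Z, \CCHom{E}{F}) \to \Gamma(Y, f^*\CCHom{E}{F}),
\]
and its part (iii) gives the analogous map on $\RGl$. I then compose with the natural isomorphism $f^*\CCHom{E}{F} \cong \CCHom{f^*E}{f^*F}$ of \cref{rmk:hom-conn-pb-nat}. The outcome is chain maps $f^*_{E,F}$ from the Hom complexes of $\Conn_{\lambda,d'}(Z)$ (resp. $\RConn$) into those of $\Conn_{f^\sharp\lambda,d}(Y)$ (resp. $\RConn$). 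Compatibility with differentials holds by construction, since these maps are morphisms of complexes.

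I then check functoriality at the level of sheaves, before applying $\Gamma$ or $\RGl$. This step is the core of case (i). I claim that the maps $\eta^\bullet$, composed with $\xi$, are compatible with the enriched structure of \cref{thm:conn-cat-enriched-in-sheaves}. Concretely, I claim the square built from $c_{E,F,G}$ on $Z$, $c_{f^*E,f^*F,f^*G}$ on $Y$ (pushed forward by $f_*$), and $\eta\otimes\eta$ commutes, and that $\eta(e_E(1)) = e_{f^*E}(1)$. Both are local statements, so I will check them on elementary tensors $(a\otimes\omega)\otimes_{\ul\bF}(b\otimes\tau)$ in local frames. The left-then-down path gives $f^\sharp(a\circ b)\otimes f^\sharp(\omega\wedge\tau)$. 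The down-then-left path gives $(f^\sharp a\circ f^\sharp b)\otimes(f^\sharp\omega\wedge f^\sharp\tau)$. These agree because $f^\sharp$ preserves wedge products (condition (c) of \cref{defn:diff-on-sh}), and because pullback of $\HHom$ sends the trace pairing to the trace pairing. For the identity, $\id_E$ pulls back to $\id_{f^*E}$. Taking global sections and using lax monoidality of $\Gamma$ then yields (i).

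For (ii) the main obstacle is that the map in \cref{prop:pullback-conn-higher-forms}(iii) passes through Godement resolutions on two different spaces, namely $\eta$ followed by $\RGl(f_*\rho)$. I must show this composite is compatible with the lax monoidal structures $\RGl(A)\otimes\RGl(B)\to\RGl(A\otimes B)$ used in \cref{thm:dg-cat-conn}(ii). I would argue in three steps:
\begin{enumerate}
\item $\rho : \id \Rightarrow s\circ G^\bullet$ is a monoidal natural transformation, since $G^\bullet$ is monoidal and the Alexander--Whitney map restricts correctly in cosimplicial degree zero.
\item The identification $\Gamma(Z, f_*H) = \Gamma(Y, H)$ is monoidal.
\item Naturality of $\rho$ and of the Alexander--Whitney maps then reduces the composition square to the sheaf-level square established above.
\end{enumerate}
If checking monoidality of $\rho$ directly is too messy, my fallback is to note that in the cases of interest (smooth manifolds) \cref{warn:dg-cat-conn} gives $\RGl=\Gamma$, so (ii) follows from (i). I would still attempt the general argument via step 1.
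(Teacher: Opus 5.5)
Your proposal follows essentially the same route as the paper. You check the composition and unit compatibilities of $\eta$ at the sheaf level in local frames, using that $f^\sharp$ preserves wedge products and that pulled-back contraction is contraction, and then pass through the lax monoidal structure of $\Gamma$ for (i); for (ii) you paste that sheaf-level square with the naturality of $\rho$ and of the lax monoidal maps of $f_*$, $s \circ G^\bullet$ and $\mathrm{R}\Gamma$. Your explicit step that $\rho$ is a monoidal natural transformation is exactly what the paper uses, without stating it, when it closes its final pasting diagram for (ii), so spelling it out is a useful clarification.
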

\begin{proof}
(i)
By the naturality of the lax monoidal structure maps of $\Gamma$, the following diagram commutes:
\begin{equation}\label{eqn:gl-sec-lax-mon}
\begin{tikzcd}
\Gamma\dbr{\nabla_F, \nabla_G} \otimes_{\ul\bF} \Gamma\dbr{\nabla_E, \nabla_F}
    \ar[r] \ar[d, "\Gamma(\eta) \otimes_{\ul\bF} \Gamma(\eta)" left] &
\Gamma(\dbr{\nabla_F, \nabla_G} \otimes_{\ul\bF} \dbr{\nabla_E, \nabla_F})
    \ar[d, "\Gamma(\eta \otimes_{\ul\bF} \eta)"] \\
\Gamma f_* \dbr{f^*\nabla_F, f^*\nabla_G} \otimes_{\ul\bF} \Gamma f_*\dbr{f^*\nabla_E, f^*\nabla_F}
    \ar[r] &
\Gamma f_*(\dbr{f^*\nabla_F, f^*\nabla_G} \otimes_{\ul\bF} \dbr{f^*\nabla_E, f^*\nabla_F})
\end{tikzcd}
\end{equation}
where the horizontal maps are the lax monoidal structure maps of $\Gamma$.
Next, we can show that the following diagram commutes:
\begin{equation}\label{eqn:pullback-func-comp}
\begin{tikzcd}[column sep = huge]
\CCHom{F}{G} \otimes_{\ul\bF} \CCHom{E}{F}
    \ar[r, "c"] \ar[d, "\eta \otimes_{\ul\bF} \eta" left] &
\CCHom{E}{G}
    \ar[dd, "\eta"] \\
f_*\dbr{f^*\nabla_F, f^*\nabla_G} \otimes_{\ul\bF} f_*\dbr{f^*\nabla_E, f^*\nabla_F}
    \ar[d] & \\
f_*(\dbr{f^*\nabla_F, f^*\nabla_G} \otimes_{\ul\bF} \dbr{f^*\nabla_E, f^*\nabla_F})
    \ar[r, "f_*(c)" below] &
f_*\dbr{f^*\nabla_E, f^*\nabla_G}
\end{tikzcd}
\end{equation}
This can be shown by working in local frames,
using \cref{rmk:hom-conn-desc}, the description of
connection matrices of pulled back connections as in \cref{defn:pullback-conn}, the properties of
morphisms of differentials as in \cref{defn:diff-on-sh} along
with formulas similar to the proof of \cref{thm:conn-cat-enriched-in-sheaves}.
Applying $\Gamma$ to
diagram \ref{eqn:pullback-func-comp} and pasting with diagram \ref{eqn:gl-sec-lax-mon} along the
$\Gamma(\eta \otimes_{\ul\bF} \eta)$ edge shows that the following diagram commutes:
\[\begin{tikzcd}
\Gamma\dbr{\nabla_F, \nabla_G} \otimes_{\ul\bF} \Gamma\dbr{\nabla_E, \nabla_F}
    \ar[r, "c"] \ar[d, "\eta \otimes_{\ul\bF} \eta" left] &
\Gamma(\dbr{\nabla_E, \nabla_G})
    \ar[d, "\eta"] \\
\Gamma \dbr{f^*\nabla_F, f^*\nabla_G} \otimes_{\ul\bF} \Gamma \dbr{f^*\nabla_E, f^*\nabla_F}
    \ar[r, "c" below] &
\Gamma \dbr{f^*\nabla_E, f^*\nabla_G}
\end{tikzcd}\]

Now, consider the following diagram:
\begin{equation}\label{eqn:pullback-func-unit}\begin{tikzcd}
\ul\bF[0] \ar[r, "e"] \ar[d] & \dbr{\nabla_E, \nabla_E} \ar[d, "\eta"] \\
f_*\ul\bF[0] \ar[r, "f_*e" below]& f_*\dbr{f^*\nabla_E, f^*\nabla_E}
\end{tikzcd}\end{equation}
where horizontal maps sends $1$ to the sections of $E^\vee \otimes E$ and $f^*E^\vee \otimes f^*E$
corresponding to the identity maps, and the left vertical map is the canonical one.
It is straightforward to see that this diagram commutes by passing to a local frame. Passing to
global sections, we have the following commutative diagram of complexes:
\[\begin{tikzcd} &
\Gamma(Z, \ul\bF[0]) \ar[r, "e"] \ar[dd] & \Gamma(Z, \dbr{\nabla_E, \nabla_E}) \ar[dd, "\eta"] \\
\bF[0] \ar[ru] \ar[rd] & & \\ &
\Gamma(Z, f_*\ul\bF[0]) \ar[r, "f_*e" below] & \Gamma(Z, f_*\dbr{f^*\nabla_E, f^*\nabla_E})
\end{tikzcd}\]

(ii)
Let the morphism of \cref{prop:pullback-conn-higher-forms}(iii) be denoted
$\sigma := \RGl(f_*(\rho^\bullet) \circ \eta^\bullet)$.
Then, we wish to show the commutativity of the following diagram:
\begin{equation}\label{eqn:pullback-func-der}\begin{small}\begin{tikzcd}
\RGl(Z, \CCHom{F}{G}) \otimes_{\ul\bF} \RGl(Z, \CCHom{E}{F})
    \ar[d] \ar[r, "\sigma \otimes_{\ul\bF} \sigma"] &
\RGl(Y, \dbr{f^*\nabla_F, f^*\nabla_G}) \otimes_{\ul\bF}
    \RGl(Y, \dbr{f^*\nabla_E, f^*\nabla_F})
    \ar[d, "" right] \\
\RGl(Z, \CCHom{F}{G} \otimes_{\ul\bF} \CCHom{E}{F})
    \ar[d, "\RGl(c_{E, F, G})" left] &
\RGl(Y, \dbr{f^*\nabla_F, f^*\nabla_G} \otimes_{\ul\bF}
        \dbr{f^*\nabla_E, f^*\nabla_F})
    \ar[d, "\RGl(c_{f^*E, f^*F, f^*G})" right] \\
\RGl(Z, \CCHom{E}{G}) \ar[r, "\sigma" below] &
\RGl(Y, \dbr{f^*\nabla_E, f^*\nabla_G})
\end{tikzcd}\end{small}\end{equation}
where the vertical maps on the top row are the lax monoidal structure maps on $\RGl$, as in the
proof of \cref{thm:dg-cat-conn}.
By the functoriality of $\otimes_{\ul\bF}$, we have:
\[
\sigma \otimes_{\ul\bF} \sigma = (\RGl(f_*(\rho)) \otimes_{\ul\bF} \RGl(f_*(\rho))) \circ
                                 (\RGl(\eta) \otimes_{\ul\bF} \RGl(\eta))
\]
We also observe that there is a map:
\begin{align*}
\CCHom{F}{G} \otimes_{\ul\bF} \CCHom{E}{F}
\to[\eta \otimes_{\ul\bF} \eta] & f_*\dbr{f^*\nabla_F, f^*\nabla_G} \otimes_{\ul\bF}
        f_*\dbr{f^*\nabla_E, f^*\nabla_F} \\
\to & f_*(\dbr{f^*\nabla_F, f^*\nabla_G} \otimes_{\ul\bF}
        \dbr{f^*\nabla_E, f^*\nabla_F})
\end{align*}
where the $\eta$ are as in \cref{prop:pullback-conn-higher-forms}, the second map is the lax
monoidal structure on $f_*$, and the third map is the natural map $\rho$ to the resolution, again
as in the proof of \cref{prop:pullback-conn-higher-forms}. Let us call this map $\mu$.
Let us also denote $H = s \circ G^\bullet$ for the resolution functor.

The naturality of $\rho$ implies that the following square commutes:
\begin{equation}\label{eqn:resolution-nat}
\begin{tikzcd}[column sep = huge]
f_*(\dbr{f^*\nabla_F, f^*\nabla_G} \otimes_{\ul\bF} \dbr{f^*\nabla_E, f^*\nabla_F})
    \ar[r, "f_*(c)" below] \ar[d, "f_*(\rho)" left] &
f_*\dbr{f^*\nabla_E, f^*\nabla_G} \ar[d, "f_*(\rho)"] \\
f_*(H\br{\dbr{f^*\nabla_F, f^*\nabla_G} \otimes_{\ul\bF} \dbr{f^*\nabla_E, f^*\nabla_F}})
    \ar[r, "f_*(H\br{c})" below] &
f_*(H\br{\dbr{f^*\nabla_E, f^*\nabla_G}})
\end{tikzcd}
\end{equation}
Taking derived global sections of diagrams \ref{eqn:pullback-func-comp} and
\ref{eqn:resolution-nat} and pasting them together shows the commutativity of the
following diagram:
\begin{equation}\label{eqn:pullback-func-der-1}\begin{tikzcd}[column sep = huge]
\RGl(Z, \CCHom{F}{G} \otimes_{\ul\bF} \CCHom{E}{F})
    \ar[r, "\RGl(f_*(\rho) \circ \mu)"]
    \ar[d, "\RGl(c_{E, F, G})" left] &
\RGl(Y, \dbr{f^*\nabla_F, f^*\nabla_G} \otimes_{\ul\bF}
        \dbr{f^*\nabla_E, f^*\nabla_F})
    \ar[d, "\RGl(c_{f^*E, f^*F, f^*G})" right] \\
\RGl(Z, \CCHom{E}{G}) \ar[r, "\sigma = \RGl(f_*(\rho) \circ \eta)" below] &
\RGl(Y, \dbr{f^*\nabla_E, f^*\nabla_G})
\end{tikzcd}\end{equation}

By the naturality of lax monoidal structure maps, the following diagram commutes:
\[\begin{small}\begin{tikzcd}[column sep = small]
\RGl\dbr{\nabla_F, \nabla_G} \otimes_{\ul\bF} \RGl\dbr{\nabla_E, \nabla_F}
    \ar[r] \ar[d, "\RGl\eta \otimes_{\ul\bF} \RGl\eta" left] &
\RGl(\dbr{\nabla_F, \nabla_G} \otimes_{\ul\bF} \dbr{\nabla_E, \nabla_F})
    \ar[d, "\RGl(\eta \otimes_{\ul\bF} \eta)"] \\
\RGl f_*(\dbr{f^*\nabla_F, f^*\nabla_G}) \otimes_{\ul\bF} \RGl f_*(\dbr{f^*\nabla_E, f^*\nabla_F})
    \ar[r]
    \ar[d, "\RGl f_*(\rho) \otimes_{\ul\bF} \RGl f_*(\rho)" left] &
\RGl(f_*\dbr{f^*\nabla_F, f^*\nabla_G} \otimes_{\ul\bF} f_*\dbr{f^*\nabla_E, f^*\nabla_F})
    \ar[d, "\RGl(f_*(\rho) \otimes_{\ul\bF} f_*(\rho))"] \\
\RGl f_* H\dbr{f^*\nabla_F, f^*\nabla_G} \otimes_{\ul\bF} \RGl f_* H\dbr{f^*\nabla_E, f^*\nabla_F}
    \ar[r] &
\RGl(f_* H\dbr{f^*\nabla_F, f^*\nabla_G} \otimes_{\ul\bF} f_* H\dbr{f^*\nabla_E, f^*\nabla_F})
\end{tikzcd}\end{small}\]
where the horizontal maps are the lax monoidal structure maps of $\RGl$.
Similarly, we have a further commutative diagram:
\[\begin{small}\begin{tikzcd}[column sep = small]
\RGl(\dbr{\nabla_F, \nabla_G} \otimes_{\ul\bF} \dbr{\nabla_E, \nabla_F})
    \ar[rd, shift left, "\RGl(\mu)" above right]
    \ar[d, "\RGl(\eta \otimes_{\ul\bF} \eta)" left] & \\
\RGl(f_*\dbr{f^*\nabla_F, f^*\nabla_G} \otimes_{\ul\bF} f_*\dbr{f^*\nabla_E, f^*\nabla_F})
    \ar[r]
    \ar[d, "\RGl(f_*(\rho) \otimes_{\ul\bF} f_*(\rho))" left] &
\RGl(f_*(\dbr{f^*\nabla_F, f^*\nabla_G} \otimes_{\ul\bF} \dbr{f^*\nabla_E, f^*\nabla_F}))
    \ar[d, "\RGl(f_*(\rho \otimes \rho))"] \\
\RGl(f_* H\dbr{f^*\nabla_F, f^*\nabla_G} \otimes_{\ul\bF} f_* H\dbr{f^*\nabla_E, f^*\nabla_F})
    \ar[r] &
\RGl(f_*(H\dbr{f^*\nabla_F, f^*\nabla_G} \otimes_{\ul\bF} H\dbr{f^*\nabla_E, f^*\nabla_F}))
    \ar[d] \\ &
\RGl(f_*(H(\dbr{f^*\nabla_F, f^*\nabla_G} \otimes_{\ul\bF} \dbr{f^*\nabla_E, f^*\nabla_F})))
\end{tikzcd}\end{small}\]
where the horizontal maps are $\RGl$ applied to the lax monoidal structure maps of $f_*$
and the bottom vertical map is $\RGl \circ f_*$ applied to the lax monoidal structure map of
$H$. Pasting these last two diagrams along common edges, and using the identification
$\RGl \circ f_* \circ H \to[\simeq] \RGl$, we have the commutativity of the following diagram:
\begin{equation}\label{eqn:pullback-func-der-2}\begin{small}\begin{tikzcd}
\RGl(Z, \CCHom{F}{G}) \otimes_{\ul\bF} \RGl(Z, \CCHom{E}{F})
    \ar[d] \ar[r, "\sigma \otimes_{\ul\bF} \sigma"] &
\RGl(Y, \dbr{f^*\nabla_F, f^*\nabla_G}) \otimes_{\ul\bF}
    \RGl(Y, \dbr{f^*\nabla_E, f^*\nabla_F})
    \ar[d, "" right] \\
\RGl(Z, \CCHom{F}{G} \otimes_{\ul\bF} \CCHom{E}{F})
    \ar[r, "\RGl(f_*(\rho) \circ \mu)" below] &
\RGl(Y, \dbr{f^*\nabla_F, f^*\nabla_G} \otimes_{\ul\bF}
        \dbr{f^*\nabla_E, f^*\nabla_F})
\end{tikzcd}\end{small}\end{equation}
Pasting diagrams \ref{eqn:pullback-func-der-1} and \ref{eqn:pullback-func-der-2} yields the
commutativity of diagram \ref{eqn:pullback-func-der}.

Next, we have the following commutative diagram:
\[\begin{tikzcd}[column sep = huge] &
\RGl \ul\bF[0] \ar[r, "\RGl e"] \ar[d] &
\RGl \dbr{\nabla_E, \nabla_E} \ar[d, "\RGl \eta"] \\
\bF[0] \ar[ru, shift left] \ar[r] \ar[rd, shift right, shorten >=1.5em] &
\RGl f_*\ul\bF[0] \ar[r, "\RGl f_*(e)" below] \ar[d, "\RGl f_*(\rho)" left] &
\RGl f_*\dbr{f^*\nabla_E, f^*\nabla_E} \ar[d, "\RGl f_*(\rho)"] \\ &
\RGl(\ul\bF[0]) = \RGl f_*H(\ul\bF[0]) \ar[r, "\RGl f_*(H(e))" below] &
\RGl f_*H\dbr{f^*\nabla_E, f^*\nabla_E}
\end{tikzcd}\]
where the top right square commutes because \ref{eqn:pullback-func-unit} commutes, the bottom
right square commutes by the naturality of $\rho$, and the top left and bottom left triangles
commute by the lax monoidal structure on $\RGl$.
\end{proof}

\begin{rmk}\label{rmk:RConn-is-Conn-smooth-man}
When $(Y, R) = (X, \Cinf_X), (Z, S) = (X', \Cinf_{X'})$ are smooth manifolds, then all
$\Cinf_X$-- and $\Cinf_{X'}$--modules are flasque and $\RGl$ and $\Gamma$ coincide. In this case,
the two functors of \cref{prop:pullback-func} coincide.
\end{rmk}

\begin{thm}\label{thm:dg-cat-conn-complete}
Consider a differential space $(Y, R, K, d)$ where $Y$ is a smooth manifold and $R = \Cinf_Y$.
Then, $\Conn_{\lambda, d}(Y)$ is a complete $\bC\dg$--category in the sense of
\cite[\S 3]{HiggsLocSys}. In particular, $\Conn_{\lambda, d}(Y)$ is quasi-equivalent to its
completion.
\end{thm}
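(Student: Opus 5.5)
Recall that in \cite[\S 3]{HiggsLocSys} a $\bC\dg$--category $\sC$ is \emph{complete} when every extension in it is already represented by an object. Precisely: given objects $A, B$ and a degree one closed morphism $\eta \in \Hom^1(B, A)$, with $d\eta = 0$, there should be an object $C$ together with degree zero morphisms
\[
A \to C \to B \quad \text{and splittings} \quad B \to C \to A,
\]
exhibiting $C$ as the twisted sum $A \oplus_\eta B$. The plan is to verify this directly in $\Conn_{\lambda, d}(Y)$ by constructing the twisted connection explicitly. Once completeness is known, the claimed quasi-equivalence with the completion follows formally from \cite[\S 3]{HiggsLocSys}: the inclusion of a complete category into its completion is essentially surjective on homotopy categories.

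The construction goes as follows. Let $\nabla_A, \nabla_B$ be flat $\lambda$--$d$--connections. Take a closed element
\[
\eta \in \Gamma(Y, A \otimes B^\vee \otimes K),
\]
that is, a section of the degree one part of $\CCHom{B}{A}$ with $\CHom{B}{A}^1(\eta) = 0$. By \cref{rmk:hom-conn-desc}, $\eta$ is an $R$--linear map $B \to A \otimes K$. Define
\[
\nabla_C := \nabla_A \oplus \nabla_B + \eta
\]
on $C = A \oplus B$, where $\eta$ acts by $B \to A \otimes K \hookrightarrow C \otimes K$. This is a $\lambda$--$d$--connection because adding an $R$--linear term preserves the Leibniz rule; this is the content of \cref{prop:conn-sum} together with \cref{rmk:0-d-conn}.

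For flatness, expand the curvature by \cref{prop:conn-sum}(iii), working in local frames via \cref{rmk:conn-higher-forms-loc}. It splits into four pieces:
\begin{enumerate}
\item the curvatures of $\nabla_A$ and $\nabla_B$, which vanish;
\item the term $\eta \wedge \eta$, which vanishes because it maps $B \to A \to A$ and $\eta$ kills $A$;
\item the cross terms $\nabla_A^1 \circ \eta + \eta^1 \circ \nabla_B^0$.
\end{enumerate}
By \cref{rmk:hom-conn-desc} and the formula for $\CHom{B}{A}^1$, the cross terms are exactly $\CHom{B}{A}^1(\eta)$, up to the sign conventions there, so they vanish by closedness. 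The inclusion $A \to C$ and projection $C \to B$ are then closed degree zero morphisms. The projection $C \to A$ and inclusion $B \to C$ are degree zero maps whose differentials recover $\pm\eta$. This matches Simpson's axioms, and it is a routine check with \cref{rmk:hom-conn-desc}. Direct sums and shifts, if the definition requires them, are handled similarly: direct sums via block-diagonal connections, and shifts are not needed since the Hom complexes are non-negatively graded.

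I expect the main obstacle to be the following. Simpson's completeness is phrased using all closed degree one elements of the Hom complex. Here $\Gamma$ equals $\RGl$ because $Y$ is a smooth manifold, using \cref{warn:dg-cat-conn}, so every class is represented by an honest global section $\eta$. The remaining danger is matching signs and conventions, in particular the $\lambda$ factor and the ordering $A \otimes B^\vee$ versus $B^\vee \otimes A$ implicit in \cref{rmk:hom-conn-desc}. I would fix these first on a local frame before writing the global argument.
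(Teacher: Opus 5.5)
Your proposal is correct and is essentially the paper's argument: the paper also puts the twisted connection $(e,f)\mapsto(\nabla_E(e)+\phi(f),\nabla_F(f))$ on the direct sum and asserts that it is flat with extension class $[\phi]$, deferring the check to the classical case, and your curvature decomposition makes that check explicit. In fact the cross term is exactly $\CHom{B}{A}^1(\eta)$ with no sign correction, and your orientation $\eta\in\Hom^1(B,A)$ is the consistent one, whereas the paper writes the class in $H^1\dbr{\nabla_E,\nabla_F}$ while using it as a map $F\to E\otimes K$.
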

\begin{proof}
Consider two objects $\nabla_E, \nabla_F$ along with some class
$[\phi] \in H^1\dbr{\nabla_E, \nabla_F} = \Ext^1(\nabla_E, \nabla_F)$.
Then consider the sheaf $E \oplus F$ with differential defined by:
\[
\nabla_{E \oplus F, \phi}(e, f)
= (\nabla_E(e) + \phi(f), \nabla_F(f))
\]
The same arguments as in the case of the $\bC\dg$--category of ordinary connections on
vector bundles on a manifold show that this is a flat $\lambda$--$d$--connection on $E \oplus F$
whose extension class is $[\phi]$.
\end{proof}

\subsection{Prestacks of Connections}
\label{subsec:prest-conn}

\begin{notn}
Given a ring $\bF$, by $\bF\dg-\Cat$, we will denote the $2$--category of differential graded
categories over $\bF$, or equivalently, categories enriched in the monoidal category
$\Ch(\bF\Mod)$ of cochain complexes over $\bF$.
\end{notn}

\begin{prop}\label{prop:pullback-comp-nat}
Let $(X, Q, J, d_J) \to[(f, f^\sharp)] (Y, R, K, d_K) \to[(g, g^\sharp)] (Z, S, L, d_L)$
be a composeable chain of morphisms of differential spaces. Then, consider any
$\lambda$--$d_L$--connection $\nabla_E$. The natural isomorphism
$\alpha_E : (g \circ f)^*(E) \to (f^* \circ g^*)(E)$ is also an isomorphism of connections
$(g \circ f)^*\nabla_E \to (f^* \circ g^*)\nabla_E$ and, in particular,
$\alpha_E \in \ker{\sbr{(g \circ f)^*\nabla_E, f^*g^*\nabla_E}}$. Furthermore, the mappings
\[
\hat{\alpha}_E : \ul\bF[0] \to \dbr{(g \circ f)^*\nabla_E, f^*g^*\nabla_E}
               : 1 \mapsto \alpha_E
\]
as $\nabla_E$ varies, assemble to a natural isomorphism of differential graded functors:
\[\begin{tikzcd}[column sep = 0pt] &
\Conn_{g^\sharp \lambda, d_K}(Y)
    \ar[from=rd, "g^*" above right] & \\
\Conn_{f^\sharp g^\sharp \lambda, d_J}(X) \ar[from=ru, "f^*" above left]
    \ar[from=rr, "(g \circ f)^*" below, ""{name=A, above}] & &
\Conn_{\lambda, d_L}(Z)
\ar[Rightarrow, from=A, to=1-2, ]
\end{tikzcd}\]
\end{prop}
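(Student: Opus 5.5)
The proof splits into three steps: compare connection matrices to show $\alpha_E$ is a morphism of connections, turn this into the statement that $\hat{\alpha}_E$ is a closed degree-zero cocycle, and then check dg-naturality against the morphisms of \cref{prop:pullback-conn-higher-forms}(i).

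For the first step, I work in a local frame. Fix an open $Z' \subset Z$ on which $E$ has a frame $\set{e_i}_i$ and connection matrix $[A_{ij}]$. On $g^{-1}Z'$, the pulled back frame $\set{g^\sharp e_i}_i$ gives $g^*\nabla_E$ the matrix $[g^\sharp A_{ij}]$, by \cref{defn:pullback-conn}. Pulling back once more along $f$, $f^*g^*\nabla_E$ has matrix $[f^\sharp g^\sharp A_{ij}]$ in the frame $\set{f^\sharp g^\sharp e_i}_i$. On the other side, $(g \circ f)^*\nabla_E$ has matrix $[(g\circ f)^\sharp A_{ij}]$ in the frame $\set{(g\circ f)^\sharp e_i}_i$. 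Here I take the composite morphism of differential spaces to have $(g\circ f)^{\sharp} = g_*(f^\sharp)\circ g^\sharp$, so the two matrices agree entrywise. Moreover, $\alpha_E$ sends $(g\circ f)^\sharp e_i$ to $f^\sharp g^\sharp e_i$. So in these frames $\alpha_E$ is the identity matrix, and the two connections have equal connection forms. Hence $\nabla \circ \alpha_E = (\alpha_E \otimes \id) \circ \nabla$ holds locally, and therefore globally. The same applies to $\alpha_E^{-1}$, so $\alpha_E$ is an isomorphism of connections.

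For the second step, the description in \cref{rmk:hom-conn-desc} gives
\[
\sbr{(g \circ f)^*\nabla_E, f^*g^*\nabla_E}(\alpha_E) = f^*g^*\nabla_E \circ \alpha_E - (\alpha_E \otimes \id)\circ (g\circ f)^*\nabla_E,
\]
and this vanishes by the first step. So $\hat{\alpha}_E$ is a morphism of complexes of sheaves, and taking global sections gives a closed degree-zero element of the Hom complex in $\Conn_{f^\sharp g^\sharp\lambda, d_J}(X)$. Its inverse $\hat{\alpha}_E^{-1}$ is also closed, and the two compose to the units $e$ under the composition $c$ of \cref{thm:conn-cat-enriched-in-sheaves}, since on elementary tensors $c$ is just composition of maps.

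For dg-naturality, I must show that for all $\nabla_E, \nabla_F$ and every section $\phi$ of $\dbr{\nabla_E,\nabla_F}$,
\[
c(\hat\alpha_F, (g\circ f)^*\phi) = c(f^*g^*\phi, \hat\alpha_E)
\]
in $\Gamma\dbr{(g\circ f)^*\nabla_E, f^*g^*\nabla_F}$. Here the pullback of $\phi$ is the map $\eta^\bullet$ of \cref{prop:pullback-conn-higher-forms}(i), read through the identification of \cref{rmk:hom-conn-pb-nat}. I check this on elementary tensors $\epsilon_j \otimes f_l \otimes \omega$ in local dual frames, where it reduces to two facts. First, $\alpha$ is compatible with duals and tensor products, which is the naturality of the canonical isomorphisms in \cref{prop:pullback-conn-dual} and \cref{prop:pullback-conn-tensor}. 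Second, $(g\circ f)^\sharp \omega = f^\sharp g^\sharp \omega$ on forms of all degrees.

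I expect this bookkeeping to be the main obstacle: the identifications of \cref{rmk:hom-conn-pb-nat} have to be tracked carefully so that the pullback of Hom complexes along $g\circ f$ matches the iterated pullback, transported by $\alpha$. In the chosen frames every $\alpha$ is the identity matrix, so both sides reduce to the same matrix of forms, and the identity follows.
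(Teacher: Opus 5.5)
Your proposal is correct and follows essentially the same route as the paper: a local-frame comparison in which $\alpha_E$ carries $(g\circ f)^\sharp e_i$ to $f^\sharp g^\sharp e_i$ and the connection matrices $(g\circ f)^\sharp A_{ij} = f^\sharp g^\sharp A_{ij}$ agree, followed by a check of dg-naturality on elementary tensors $\epsilon_j \otimes f_l \otimes \omega$ using $f^\sharp g^\sharp(\epsilon_j)\circ\alpha_E = (g\circ f)^\sharp\epsilon_j$ and $(g\circ f)^\sharp\omega = f^\sharp g^\sharp\omega$. The differences are minor: the paper verifies the naturality square at the sheaf level (through $(g\circ f)_*$) and then applies $\Gamma$ with its lax monoidal structure, whereas you check it directly on sections, and you also spell out the invertibility of $\hat\alpha_E$, which the paper leaves implicit.
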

\begin{proof}
Let $\set{e_i}_i$ be a local frame for $E$ and $\set{\epsilon_i}_i$, its dual frame.
Let $[A_{ij}]$ be the connection matrix of $\nabla_E$ in this frame.
Then we get pulled back frames
$\set{f^\sharp g^\sharp e_i}_i$ of $f^*g^*E$ and $\set{(g \circ f)^\sharp e_i}_i$ of
$(g \circ f)^*E$. The isomorphism $\alpha_E$ sends $(g \circ f)^\sharp e_i$ to
$f^\sharp g^\sharp e_i$. This also implies $f^\sharp g^\sharp(\epsilon_i) \circ \alpha_E
= (g \circ f)^\sharp \epsilon_i$
The same holds for the $p_j$ and the $\phi_j$.
Next, by definition, on local sections of $L^{\wedge \bullet}$, $(g \circ f)^\sharp$ is the
composite
$L^{\wedge \bullet} \to[g^\sharp] g_*K^{\wedge \bullet} \to[g_*f^\sharp] g_*f_*J^{\wedge \bullet}$
which on an open $V \subset Z$ is just the map
$L^{\wedge \bullet}(V)
\to[g^\sharp] K^{\wedge \bullet}(g^{-1}(V))
\to[f^\sharp] J^{\wedge \bullet}(f^{-1}g^{-1}(V))$. Using these observations along with connection
matrices, we can show that $\alpha_E$ is a morphism of connections:
\begin{align*}
 & (\alpha_E \otimes \id_L)((gf)^*\nabla_E((gf)^\sharp(e_j))) \\
=& (\alpha_E \otimes \id_L)\br{\sum_{i} (gf)^\sharp(e_i) \otimes f^\sharp g^\sharp A_{ij}} \\
=& \sum_{i} \alpha_E((gf)^\sharp(e_i)) \otimes (gf)^\sharp A_{ij} \\
=& \sum_{i} f^\sharp(g^\sharp(e_i)) \otimes f^\sharp g^\sharp A_{ij} \\
=& f^*g^*\nabla_E(f^\sharp(g^\sharp(e_j))) \\
=& f^*g^*\nabla_E(\alpha_E((gf)^\sharp(e_j)))
\end{align*}
Of course, a morphism of connections that is an isomorphism of the underlying sheaves is an
isomorphism of connections.

Now, let $\set{p_j}_j$ be local frames for another finite rank locally free $S$--module $F$
and $\set{\phi_j}_j$, its dual frames.
We can then show that the following diagram commutes:
\[\begin{tikzcd}
\ul\bF[0] \otimes_{\ul\bF} \dbr{\nabla_{E}, \nabla_{F}}
    \ar[r, "\tilde{\alpha}_F \otimes_{\ul\bF} \eta"] &
(gf)_*\dbr{(gf)^*\nabla_F, f^*g^*\nabla_F} \otimes_{\ul\bF}
(gf)_*\dbr{(gf)^*\nabla_E, (gf)^*\nabla_F}
    \ar[d, "c'"] \\
\dbr{\nabla_{E}, \nabla_{F}}
    \ar[u, "\cong" left]
    \ar[d, "\cong" left] &
(gf)^*\dbr{(gf)^*\nabla_E, f^*g^*\nabla_F} \\
\dbr{\nabla_{E}, \nabla_{F}} \otimes_{\ul\bF} \ul\bF[0]
    \ar[r, "\eta \otimes_{\ul\bF} \tilde{\alpha}_E" below] &
(gf)_*\dbr{f^*g^*\nabla_E, f^*g^*\nabla_F}
  \otimes_{\ul\bF} (gf)_*\dbr{(gf)^*\nabla_E, f^*g^*\nabla_E}
    \ar[u, "c'" right]
\end{tikzcd}\]
where the $\eta$ are as in \cref{prop:pullback-conn-higher-forms},
the $\tilde\alpha$ are the $(gf)_*\hat\alpha$ composed with the canonical maps
$\ul\bF[0] \to (gf)_*\ul\bF[0]$, and
the $c'$ are the composition maps $(gf)_*c$ composed with the lax monoidal structure maps of
the pushforward functor $(gf)_*$. To see this, we first observe that:
\begin{align*}
 & c'((\tilde\alpha_F \otimes_{\ul\bF[0]} \eta)(
        1 \otimes_{\ul\bF[0]} (\epsilon_i p_j \omega)
   ) \\
=& c(\alpha_F \otimes_{\ul\bF[0]} (gf)^\sharp(\epsilon_i) (gf)^\sharp(p_j) (gf)^\sharp(\omega)) \\
=& (gf)^\sharp(\epsilon_i) \alpha_E((gf)^\sharp(p_j)) (gf)^\sharp(\omega) \\
=& (gf)^\sharp(\epsilon_i) f^\sharp(g^\sharp(p_j)) (gf)^\sharp(\omega)
\end{align*}
Next, we observe that:
\begin{align*}
 & c'((\eta \otimes_{\ul\bF[0]} \tilde\alpha_F)(
        (\epsilon_i p_j \omega) \otimes_{\ul\bF[0]} 1
   ) \\
=& c(f^\sharp(g^\sharp(\epsilon_i)) f^\sharp(g^\sharp(p_j)) f^\sharp(g^\sharp(\omega))
     \otimes_{\ul\bF[0]} \alpha_E) \\
=& (f^\sharp(g^\sharp(\epsilon_i)) \circ \alpha_E) f^\sharp(g^\sharp(p_j))
   (gf)^\sharp(\omega) \\
=& (gf)^\sharp(\epsilon_i) f^\sharp(g^\sharp(p_j)) (gf)^\sharp(\omega)
\end{align*}
This shows that the above diagram commutes.
Applying $\Gamma$ to the diagram and using its lax monoidal structure now shows that
$\tilde{\alpha}$ is a $\bC\dg$--natural transformation.
\end{proof}

\begin{prop}\label{prop:pullback-unit-nat}
Let $(Y, R, K, d)$ be a differential space. Then, let $\id_Y$ denote the identity morphism
of differential space and the underlying ringed space. Consider any $\lambda$--$d$--connection
$\nabla_E$. The natural isomorphism $\beta_E : E \to \id_Y^*E$ is also an isomorphism of connections
$\nabla_E \to \id_Y^*\nabla_E$, and, in particular,
$\beta_E \in \ker{\sbr{\nabla_E, \id_Y^*\nabla_E}}$. Furthermore, the mappings:
\[
\hat{\id}_E : \ul\bF[0] \to \Gamma(\dbr{\nabla_E, \id_Y^*\nabla_E}) : 1 \mapsto \beta_E
\]
as $\nabla_E$ varies, assemble to a natural isomorphism of differential graded functors:
\[\begin{tikzcd}
\Conn_{\lambda, d}(Y)
    \ar[from=r, bend left, "\id" below, ""{above, name=A}]
    \ar[from=r, bend right, "\id_Y^*" above, ""{below, name=B}] &
\Conn_{\lambda, d}(Y)
\ar[Rightarrow, from=A, to=B]
\end{tikzcd}\]
\end{prop}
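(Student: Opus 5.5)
The plan is to mirror the proof of \cref{prop:pullback-comp-nat}, specialised to the identity morphism $(\id_Y, \id_Y^\sharp)$, where $\id_Y^\sharp$ is the identity on $K^{\wedge \bullet}$. The argument has two parts: first show that each $\beta_E$ is an isomorphism of connections, then show that the family $\hat{\id}_E$ is a $\bF\dg$--natural transformation $\id \To \id_Y^*$.

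For the first part, I will work in a local frame $\set{e_i}_i$ of $E$ with connection matrix $[A_{ij}]$. By \cref{rmk:pullback-frame} and \cref{defn:pullback-conn}, the pulled back frame $\set{\id_Y^\sharp e_i}_i$ of $\id_Y^*E$ is a frame in which $\id_Y^*\nabla_E$ has connection matrix $[\id_Y^\sharp A_{ij}] = [A_{ij}]$. The canonical isomorphism $\beta_E$ sends $e_i$ to $\id_Y^\sharp e_i$. Hence
\[
(\beta_E \otimes \id_K)(\nabla_E(e_j)) = \sum_i \id_Y^\sharp e_i \otimes A_{ij} = \id_Y^*\nabla_E(\beta_E(e_j)).
\]
Both sides satisfy the same Leibniz rule, so they agree on all local sections. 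By gluing they agree globally, so $\beta_E$ is a morphism of connections, and hence an isomorphism since it is invertible on the underlying sheaves. By \cref{rmk:hom-conn-desc}, this is exactly the statement
\[
\CHom{E}{\id_Y^*E}(\beta_E) = \nabla_{\id_Y^*E} \circ \beta_E - (\beta_E \otimes \id_K) \circ \nabla_E = 0.
\]
So $\beta_E$ is a degree-zero cocycle, and $\hat{\id}_E$ is a well-defined chain map $\ul\bF[0] \to \dbr{\nabla_E, \id_Y^*\nabla_E}$, which passes to global sections.

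For naturality, I will show, for any $\nabla_E, \nabla_F$, the commutativity of the square of complexes of sheaves analogous to the one in \cref{prop:pullback-comp-nat}. One path is $\dbr{\nabla_E,\nabla_F} \cong \ul\bF[0]\otimes_{\ul\bF}\dbr{\nabla_E,\nabla_F}$, followed by $\hat{\id}_F \otimes \eta$, followed by composition $c$. The other path is $\dbr{\nabla_E,\nabla_F}\cong \dbr{\nabla_E,\nabla_F}\otimes_{\ul\bF}\ul\bF[0]$, followed by $\eta\otimes \hat{\id}_E$, followed by $c$. Both paths land in $\dbr{\nabla_E, \id_Y^*\nabla_F}$, and here $\eta$ is the map of \cref{prop:pullback-conn-higher-forms} for $f = \id_Y$. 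I will check this on elementary tensors $\epsilon_i p_j \omega$, with $\set{p_j}$ a frame of $F$, using the formula $c(a\omega \otimes b\tau) = (a \circ b)(\omega\wedge\tau)$. The first path gives $\epsilon_i\, \beta_F(p_j)\, \omega = \epsilon_i\, \id_Y^\sharp p_j\, \omega$. The second gives $(\id_Y^\sharp\epsilon_i \circ \beta_E)\, \id_Y^\sharp p_j\, \omega$, and this equals the first because $\id_Y^\sharp \epsilon_i \circ \beta_E = \epsilon_i$. Applying $\Gamma$ together with its lax monoidal structure then gives the $\bF\dg$--naturality, exactly as at the end of \cref{prop:pullback-comp-nat}. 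Invertibility of the transformation follows from invertibility of each $\beta_E$, since $\beta_E^{-1}$ is likewise a cocycle.

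I expect the main obstacle to be bookkeeping rather than mathematics. The target is stated with $\Gamma(\dbr{\nabla_E, \id_Y^*\nabla_E})$ instead of a sheaf-level map, so I have to make the identifications $\id_{Y*} = \id$ and $\id_Y^*\dbr{-,-} \cong \dbr{\id_Y^*-,\id_Y^*-}$ from \cref{rmk:hom-conn-pb-nat} explicit. I also have to check that the canonical maps $\ul\bF[0] \to \id_{Y*}\ul\bF[0]$ are identities, so that the diagram from \cref{prop:pullback-comp-nat} applies verbatim.
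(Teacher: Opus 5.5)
Your proposal is correct and follows the paper's proof essentially step for step. The paper also checks $(\beta_E \otimes \id_K)\circ \nabla_E = \id_Y^*\nabla_E \circ \beta_E$ on a local frame using $\id_Y^\sharp A_{ij} = A_{ij}$. It then verifies the same naturality square on elementary tensors $\epsilon_i p_j \omega$ via $\id_Y^\sharp \epsilon_i \circ \beta_E = \epsilon_i$, and finishes by applying $\Gamma$ with its lax monoidal structure.

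One small slip: the source functor is $\id$, so the first path of your square should use $\hat{\id}_F \otimes \id$, not $\hat{\id}_F \otimes \eta$. With $\eta$ there, the composite would start at $\id_Y^*E$ rather than $E$ and would not land in $\dbr{\nabla_E, \id_Y^*\nabla_F}$. Your elementary-tensor computation already uses the correct map.
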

\begin{proof}
Consider the same frames as in \cref{prop:pullback-comp-nat}. Then, $\beta_E$ sends $\set{e_i}_i$ to
the pulled back frame $\set{\id_Y^\sharp e_i}_i$. We will first show that $\beta_E$ is a morphism of
connections:
\begin{align*}
 & (\beta_E \otimes \id_K)(\nabla_E(e_j)) \\
=& (\beta_E \otimes \id_K)\br{\sum_{i} e_i \otimes A_{ij}} \\
=& \sum_{i} \id_Y^\sharp(e_i) \otimes \id_Y^\sharp(A_{ij}) \\
=& \id_Y^*\nabla_E(\id_Y^\sharp(e_j)) \\
=& \id_Y^*\nabla_E(\beta_E(e_i))
\end{align*}
Next, we consider the following diagram:
\[\begin{tikzcd}
\ul\bF[0] \otimes_{\ul\bF} \dbr{\nabla_{E}, \nabla_{F}}
    \ar[r, "\tilde{\beta}_F \otimes_{\ul\bF} \id"] &
\dbr{\nabla_F, id_Y^*\nabla_F} \otimes_{\ul\bF}
\dbr{\nabla_E, \nabla_F}
    \ar[d, "c'"] \\
\dbr{\nabla_{E}, \nabla_{F}}
    \ar[u, "\cong" left]
    \ar[d, "\cong" left] &
\dbr{\nabla_E, id_Y^*\nabla_F} \\
\dbr{\nabla_{E}, \nabla_{F}} \otimes_{\ul\bF} \ul\bF[0]
    \ar[r, "\eta \otimes_{\ul\bF} \tilde{\beta}_E" below] &
\dbr{id_Y^*\nabla_E, id_Y^*\nabla_F}
  \otimes_{\ul\bF} \dbr{\nabla_E, id_Y^*\nabla_E}
    \ar[u, "c'" right]
\end{tikzcd}\]
where the maps are all obtained just as in \cref{prop:pullback-comp-nat}. We can show that this
diagram commutes in a frame as follows. For this, we first observe that:
\begin{align*}
 & c'((\tilde\beta_F \otimes_{\ul\bF[0]} \id)(
        1 \otimes_{\ul\bF[0]} (\epsilon_i p_j \omega)
   ) \\
=& c(\beta_F \otimes_{\ul\bF[0]} \epsilon_i p_j \omega) \\
=& \epsilon_i \beta_F(p_j) \omega \\
=& \epsilon_i \id_Y^\sharp(p_j) \omega
\end{align*}
Next, we observe that:
\begin{align*}
 & c'((\eta \otimes_{\ul\bF} \tilde{\beta}_E)(\epsilon_i p_j \omega \otimes_{\ul\bF} 1)) \\
=& c'(\id_Y^\sharp(\epsilon_i) \id_Y^\sharp(p_j) \omega \otimes_{\ul\bF} \beta_E) \\
=& (\id_Y^\sharp(\epsilon_i) \circ \beta_E) \id_Y^\sharp(p_j) \omega \\
=& \epsilon_i \id_Y^\sharp(p_j) \omega
\end{align*}
where last equality holds because
$\id_Y^\sharp(\epsilon_i)(\beta_E(e_j))
= \id_Y^\sharp(\epsilon_i)(\id_Y^\sharp(e_j))
= \delta_{ij}$, since the dual of the pulled back frame is equal to the pulled back dual frame.
This shows that the above diagram commutes. Applying $\Gamma$ to this diagram and using its lax
monoidal structure shows that $\tilde{\beta}$ is a $\bC\dg$--natural transformation.
\end{proof}

\begin{thm}\label{thm:Conn-functor}
Let $\fD$ be the category whose objects are tuples $(Y, R, K, d_K, \lambda)$, where first four
entries form a differential $\bF$--space $(Y, R, K, d_K)$, and $\lambda \in R(Y)$ is a global
section of $R$ with $\lambda \in \ker{d}$; and, whose morphisms
$f = (f_0, f^\sharp) : (Y, R, K, d_K, \lambda) \to (Z, S, L, d_L, \lambda')$ are morphisms of the
underlying differential spaces such that $f^\sharp \lambda' = \lambda$.
Then, the mapping:
\[\begin{array}{ccccc}
\Conn &:& \fD^\op &\to& \bF\dg-\Cat \\
      &:& (Y, R, K, d_K, \lambda) &\mapsto& \Conn_{\lambda, d_K}(Y) \\
      &:& (f, f^\sharp)  &\mapsto& (f^* : \Conn_{\lambda', d_L}(Z) \to \Conn_{\lambda, d_K}(Y))
\end{array}\]
along with the natural transformations $\alpha$ of \cref{prop:pullback-comp-nat} and
$\beta$ of \cref{prop:pullback-unit-nat} is a pseudofunctor.
\end{thm}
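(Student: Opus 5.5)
Most of the data of the pseudofunctor is already in place, so what remains is the coherence conditions. On objects, $\Conn$ assigns to $(Y, R, K, d_K, \lambda)$ the $\bF\dg$--category of \cref{thm:dg-cat-conn}(i). This is well defined because $\lambda \in \ker{d_K}$, so the Hom complexes exist by \cref{prop:dual-conn-flat} and \cref{cor:tensor-conn-flat}. On a morphism $f$, the condition $f^\sharp\lambda' = \lambda$ makes the target of $f^*$ from \cref{prop:pullback-func}(i) exactly $\Conn_{\lambda, d_K}(Y)$. The compositors $\alpha^{f,g} : (g \circ f)^* \To f^* g^*$ come from \cref{prop:pullback-comp-nat} and the unitors $\beta : \id \To \id_Y^*$ from \cref{prop:pullback-unit-nat}. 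Both are $\bF\dg$--natural isomorphisms: each component is an isomorphism of connections lying in degree zero and in the kernel of the Hom differential, and its inverse is again such a morphism. Invertibility therefore holds in the $2$--category $\bF\dg-\Cat$.

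I then need to verify the two coherence axioms of a pseudofunctor.

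\textbf{Associativity.} For a composable chain $W \to[h] X \to[f] Y \to[g] Z$ and each flat $\nabla_E$ on $Z$, the two $2$--cells $(gfh)^* \To h^*f^*g^*$ must coincide. Written out, this requires
\[
h^*(\alpha^{f,g}_E) \circ \alpha^{h, gf}_E = \alpha^{h,f}_{g^*E} \circ \alpha^{fh, g}_E .
\]
Since the components are degree-zero closed global sections of the Hom complexes, i.e.\ honest sheaf maps, the axiom reduces to an equality of morphisms of locally free sheaves. I would check it in a local frame $\set{e_i}_i$ of $E$. By the description in the proof of \cref{prop:pullback-comp-nat}, every $\alpha$ sends a pulled back frame element to the corresponding iterated pulled back frame element, for instance $(gf)^\sharp e_i \mapsto f^\sharp g^\sharp e_i$. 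By \cref{rmk:pullback-frame}, the pullback of a morphism of sheaves is computed entrywise on frames. Hence both sides send $(gfh)^\sharp e_i$ to $h^\sharp f^\sharp g^\sharp e_i$. Because the frame is arbitrary, the two sides agree globally. Equivalently, these are the standard coherence isomorphisms of the pullback pseudofunctor on locally free modules over ringed spaces, and the forgetful map from connections to sheaves is faithful on degree-zero closed morphisms.

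\textbf{Unitality.} For $f : Y \to Z$, I need $\alpha^{\id_Y, f}_E \circ \beta_{f^*E} = \id$ and $f^*(\beta_E) \circ \alpha^{f, \id_Z}_E = \id$ (up to the orientation conventions of the cells). Again in frames, $\beta$ sends $e_i \mapsto \id^\sharp e_i$ and $\alpha$ sends $\id^\sharp f^\sharp e_i \mapsto f^\sharp e_i$, so both composites fix the frame.

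The main obstacle is bookkeeping rather than any conceptual step. I have to confirm that the natural transformations constructed in \cref{prop:pullback-comp-nat} and \cref{prop:pullback-unit-nat} as maps $\ul\bF[0] \to \dbr{-,-}$ compose, in the $\dg$ sense via the composition $c$ of \cref{thm:conn-cat-enriched-in-sheaves}, exactly as the underlying sheaf maps do. On degree-zero elements $c$ is just composition of sheaf homomorphisms, so vertical composition of these $\dg$--transformations is computed by composing sheaf maps. Whiskering by $h^*$ acts on components through $\eta$ of \cref{prop:pullback-conn-higher-forms}, which sends a sheaf map to its pullback; this uses the identification of \cref{rmk:hom-conn-pb-nat}. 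Once these two compatibilities are stated, the frame computations above finish the proof.
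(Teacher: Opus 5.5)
Your proposal is correct and follows the same route as the paper's proof sketch: with the data supplied by \cref{prop:pullback-func}, \cref{prop:pullback-comp-nat} and \cref{prop:pullback-unit-nat}, the coherence axioms are checked in local frames, where they reduce to the coherence of pullback of locally free sheaves. You spell out the associativity and unit equations, and the compatibility of dg vertical composition and whiskering with sheaf-level composition and pullback, all of which the paper leaves implicit. One small fix: with $\alpha$ and $\beta$ oriented as in the paper, $\alpha^{\id_Y,f}_E$ and $\beta_{f^*E}$ both go $f^*E \to \id_Y^*f^*E$, so the unit axioms should read $\alpha^{\id_Y,f}_E = \beta_{f^*E}$ and $\alpha^{f,\id_Z}_E = f^*(\beta_E)$. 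Your frame computation, done with these orientations, shows that each pair agrees on pulled back frames and so verifies the axioms directly.
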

\begin{proof}[Proof sketch]
After \cref{prop:pullback-func}, \cref{prop:pullback-comp-nat} and \cref{prop:pullback-unit-nat}, it
only remains to verify the pseudofunctor coherence conditions are satisfied by the natural
transformations $\alpha$ and $\beta$. By the description in local frames of
$g^*, f^*, (gf)^*, \id_Y^*$, $\alpha$ and $\beta$, it is not difficult to see that these coherence
conditions follow from those satisfied by the $\alpha$ and $\beta$ as natural transformations of
pullback functors of sheaves.
\end{proof}

\begin{rmk}
It should not be significantly more difficult to replace $\Gamma$ with $\RGl$ and $\Conn$ with its
analogue $\RConn$ defined using the $\RConn_{\lambda, d}(Y)$ place of $\Conn_{\lambda, d}(Y)$, in
the results of this section so far.
This would involve using the lax monoidal structure of $\RGl$ and some properties of the
Godement resolution as we did in the results of \cref{subsec:Hom-conn-cmplx}. However, we will
not pursue this generality further in the remainder of the paper, as our main results will concern
smooth manifolds --- see \cref{rmk:RConn-is-Conn-smooth-man}.
\end{rmk}

\begin{defn}\label{defn:diff-sp-cat}
Let $\Diff$ denote the category of differential spaces.
In the context of \cref{thm:Conn-functor}, if $\lambda \in \bF$, then, we get a functor:
\[
I_\lambda : \Diff \to \fD : (Y, R, K, d) \mapsto (Y, R, K, d, \lambda)
\]
by considering the image of $\lambda$ under the structure map $\bF \to R(Y)$. We denote the
composition:
\[
\Diff^\op \to[I_\lambda] \fD^\op \to[\Conn] \bF\dg-\Cat
\]
by $\Conn_\lambda$.
\end{defn}

\begin{defn}[$\dg$--Prestacks]
Given any category $\sC$, we will call a pseudofunctor $F : \sC^\op \to \bF\dg-\Cat$, an
$\bF\dg$--prestack over $\sC$. We will also call them $dg$--prestacks, when $\bF$ is clear from
context. Let $h : \bF\dg-\Cat \to \Cat$ denote the functor sending a $\bF\dg$--category to its
homotopy category. Then, we will call the composition of pseudofunctors $hF$, the ordinary
prestack associated to $F$.
\end{defn}

\begin{rmk}
Instead of the homotopy category, we could also use the underlying category in the previous
definition. However, in our case, all $\dg$--categories will have Hom complexes concentrated in
non-negative degrees and hence will homotopy categories that are the same as the underlying
categories.
\end{rmk}

\begin{defn}[$\dg$--Prestack of Connections]\label{defn:mod-prest-conn}
Consider the contexts of \cref{thm:Conn-functor} and \cref{defn:diff-sp-cat}.
Let $X$ be a fixed smooth manifold. Then, by \cref{exm:prt-ext-diff} and
\cref{exm:prt-ext-diff-morphism}, we have a functor:
\[
\diff_{X} : \Mfd \to \Diff : U \mapsto (U \times X, \sA_{U \times X}^0, \sA^\bullet_{X/U}, d_{X/U})
\]
For any $\lambda \in\bC$, we call the composition of pseudofunctors:
\[
\Mfd^\op \to[\diff_{X}^\op] \Diff^\op \to[I_\lambda^\op] \fD^\op \to[\Conn] \bC\dg-\Cat
\]
the $\dg$--prestack of complex flat $\lambda$--$d_\bC$--connections on $X$ and denote it
$\mcM_{\lambda}(X)$. For $\lambda = 1$, we will simply call this the $\dg$--prestack of
complex flat connections and denote it as $\mcM_{dR}(X)$.
\end{defn}

\begin{rmk}
From the definition, it is immediate that $\mcM_\lambda(X)(U)$ is the $\dg$--category of flat
$\lambda$--$d_{X/U}$--connections on $U \times X$ or smooth families of finite rank
locally free $\Cinf_X$--modules equipped with flat $\lambda$--$d_{\bC}$--connections parametrized by
$U$ as in \cref{exm:conn-family}.
Furthermore, $\mcM_{\lambda}(X)(U)$ acts on morphisms of manifolds $f : V \to U$ by pullback of
connections along $f \times \id_X$.
Of course, here we are using the equivalence of categories of finite rank locally free
$\Cinf_X$--modules and of smooth vector bundles.
\end{rmk}

\begin{defn}[$\dg$--Prestack of Higgs Bundles]\label{defn:mod-prest-Higgs}
Consider the contexts of \cref{thm:Conn-functor} and \cref{defn:diff-sp-cat}.
Let $X$ be a fixed complex manifold. Then, by \cref{exm:prt-Dol-op} and
\cref{exm:prt-Dol-morphism}, we have a functor:
\[
\Dol_{X} : \Mfd \to \Diff :
    U \mapsto (U \times X, \sA^0_{U \times X}, \sA^\bullet_{X/U}, \oprt^\bullet_{X/U})
\]
We call the composition of pseudofunctors:
\[
\Mfd^\op \to[\Dol_X^\op] \Diff^\op \to[I_1^\op] \fD^\op \to[\Conn] \bC\dg-\Cat
\]
the $\dg$--prestack of finite rank locally free Higgs sheaves and denote it as
$\mcM_{Dol}(X)$.
\end{defn}

\begin{rmk}
By construction, $\mcM_{Dol}(X)(U)$ is the $\dg$--category of $1$--$\oprt^\bullet_{X/U}$--connections
on $U \times X$, which as in \cref{exm:Higgs-bun-family}, are smooth families of Higgs bundles on
$X$ parametrized by $U$. The action on morphisms of manifolds $f : V \to U$ is again by pullback
along $f \times \id_X$.
\end{rmk}

\begin{notn}
For a morphism of smooth manifolds $f : V \to U$, we will denote pullbacks of connections on
$U \times X$ with respect to various differentials to those on $V \times X$ along $f \times \id_X$
by $f^<$ and $f_>$ instead of $(f \times \id_X)^*$ and $(f \times \id_X)_*$ respectively, for
brevity.
\end{notn}

\begin{prop}\label{prop:htpy-cat-func}
The mapping $h : \bF\dg-\Cat \to \Cat$ sending an $\bF\dg$--category to its homotopy category is a
strict functor.
\end{prop}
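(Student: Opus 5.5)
The plan is to construct $h$ explicitly on objects, $1$--morphisms and $2$--morphisms, and then check that composition and identities are preserved on the nose rather than up to coherent isomorphism. On objects, an $\bF\dg$--category $\sC$ is sent to its homotopy category $h\sC$. This has the same objects as $\sC$ and $\Hom_{h\sC}(x, y) := H^0(\Hom_\sC(x, y))$. Composition is induced on cohomology by the composition chain maps $\Hom_\sC(y,z) \otimes \Hom_\sC(x,y) \to \Hom_\sC(x,z)$, precomposed with the Künneth-type map $H^0(A) \otimes H^0(B) \to H^0(A \otimes B)$. That map is well defined because cocycles tensor to cocycles and a coboundary tensored with a cocycle is a coboundary, by the Leibniz rule for the tensor differential. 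Identities are the classes of the unit maps $\bF[0] \to \Hom_\sC(x,x)$. Associativity and unitality in $h\sC$ follow from those in $\sC$ because $H^0$ is a lax monoidal functor $\Ch(\bF\Mod) \to \bF\Mod$; equivalently, this is the change of enrichment along a lax monoidal functor \cite[Lemma 3.4.3]{Rie14}, already used in the proof of \cref{thm:dg-cat-conn}.

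On $1$--morphisms, a $\dg$--functor $F : \sC \to \sD$ is sent to $hF$. This is $F$ on objects, and on Hom sets it is $H^0$ of the chain maps $F_{x,y}$. That $hF$ is a functor follows from the compatibility of $F$ with the composition and unit chain maps together with the naturality of the lax monoidal structure of $H^0$. Strictness then comes down to two identities of functions: $h(G \circ F) = hG \circ hF$ and $h(\id_\sC) = \id_{h\sC}$. Both hold on objects trivially, and on morphisms because $H^0$ is a strict functor, so that $H^0(G_{Fx,Fy} \circ F_{x,y}) = H^0(G_{Fx,Fy}) \circ H^0(F_{x,y})$. No comparison isomorphisms appear anywhere, which is exactly what makes the functor strict.

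Since the paper treats $\bF\dg-\Cat$ as a $2$--category, I will also send a $\dg$--natural transformation $\tau : F \To G$ to the natural transformation $h\tau$ with components $[\tau_x]$. Here $\tau_x \in Z^0\Hom_\sD(Fx, Gx)$, and naturality of $h\tau$ follows from the $\dg$--naturality square after applying $H^0$. Vertical and horizontal composition are preserved strictly because they are defined by the composition maps, which $h$ respects. This is the part needed when $h$ is composed with the pseudofunctor $\Conn$, since $h$ must carry the isomorphisms $\alpha$ and $\beta$ of \cref{prop:pullback-comp-nat} and \cref{prop:pullback-unit-nat} to isomorphisms; it does so because it preserves composition and identities of $2$--cells.

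The only step requiring any care, and the one I expect to be the main obstacle, is well-definedness. One has to confirm that the induced composition on $H^0$ and the action of $hF$ on classes are independent of representatives, and that $H^0$ of the components of a $\dg$--natural transformation, which are closed degree-$0$ elements, gives honest natural transformations. All of this follows from the Leibniz compatibility of the differentials with composition. Everything else is formal functoriality of $H^0$.
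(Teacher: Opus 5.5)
Your proposal is correct and follows the same route as the paper. The paper's entire proof is the observation that cohomology $H^i : \Ch(\bF\Mod) \to \bF\Mod$ is a functor. You spell out what the paper leaves implicit: that composition in $h\sC$ is well defined via the lax monoidal structure of $H^0$, that $\dg$--functors have their composites and identities preserved strictly, and how $\dg$--natural transformations are handled. Only $H^0$ is actually needed for this, as you use it.
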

\begin{proof}
This follows from the fact that taking the $i$--th cohomology is a functor
$H^i : \Ch(\bF\Mod) \to \bF\Mod$ for all $i$.
\end{proof}

\begin{prop}\label{prop:ext-compl-func}
The mapping $(-)^{ext} : \bF\dg-\Cat \to \bF\dg-\Cat$ sending an $\bF\dg$--category $\sA$ to its
completion $\sA^{ext}$ under extensions in the sense of \cite[\S 3]{HiggsLocSys} is a strict
functor.
\end{prop}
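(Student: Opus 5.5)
We first recall Simpson's construction of the extension completion of a $\bF\dg$--category $\sA$ from \cite[\S 3]{HiggsLocSys}. The objects of $\sA^{ext}$ are iterated extensions: finite tuples $(A_1, \dots, A_n)$ of objects of $\sA$, together with elements $\phi_{ij} \in \Hom^1_\sA(A_j, A_i)$ for $i < j$ (strictly upper triangular) satisfying the Maurer--Cartan equation $d\phi + \phi\phi = 0$. The morphism complexes are $\bigoplus_{i,j} \Hom_\sA(A_j, B_i)$, with the differential twisted by the $\phi$'s, and composition is matrix multiplication.

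Given a $\bF\dg$--functor $F : \sA \to \sB$, we will define $F^{ext}$ entrywise. On objects it sends $((A_i), (\phi_{ij}))$ to $((FA_i), (F\phi_{ij}))$. On morphism matrices it sends $(f_{ij})$ to $(Ff_{ij})$. The key steps, in order, are as follows.

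First, check that $F^{ext}$ is well defined on objects. This holds because $F$ is a chain map on Hom complexes and strictly preserves composition, so $F(d\phi + \phi\phi) = dF\phi + F\phi F\phi$; the Maurer--Cartan equation is therefore preserved. Degree and upper triangularity are clearly preserved as well.

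Second, check that $F^{ext}$ is a chain map on the twisted Hom complexes. The twisted differential has the form $f \mapsto df + \psi f - (-1)^{|f|} f\phi$, and applying $F$ entrywise commutes with each of its terms, again by functoriality and $\bF$--linearity of $F$.

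Third, check that $F^{ext}$ preserves composition and identities. Composition is matrix multiplication, which is a finite sum of compositions in $\sA$, and $F$ preserves both sums and compositions. Identities are diagonal matrices of identities, and $F$ preserves these.

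Finally, we verify strict functoriality. Entrywise application gives $(G \circ F)^{ext} = G^{ext} \circ F^{ext}$ and $(\id_\sA)^{ext} = \id_{\sA^{ext}}$ on the nose, since both sides agree on every tuple and every matrix entry. If the statement is meant at the $2$--categorical level, we also send a $\bF\dg$--natural transformation $\tau : F \Rightarrow G$ to the diagonal transformation $\tau^{ext}_{(A_i,\phi)} = \mathrm{diag}(\tau_{A_i})$. This is closed and natural because $\tau$ is closed and $\tau_{A_i} F\phi_{ij} = G\phi_{ij}\,\tau_{A_j}$. Vertical and horizontal compositions are then preserved entrywise.

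The main obstacle is bookkeeping: we must match the paper's use of \cite{HiggsLocSys} for the precise form of objects of $\sA^{ext}$, in particular whether extensions are recorded as iterated pairs or as filtered objects with Maurer--Cartan data. We must also confirm the sign conventions in the twisted differential. Once the definition is fixed, the whole proof is a direct consequence of $F$ being a strict $\dg$--functor. We would write this step out carefully in the two-step case first, and then note that the general case follows identically by induction on the length of the extension.
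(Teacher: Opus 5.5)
Your argument is in substance the paper's: $F^{ext}$ applies $F$ to the underlying objects and to the Maurer--Cartan data. The Maurer--Cartan equation, the twisted differential, composition, identities and strict functoriality are then all preserved because $F$ is a strict $\bF\dg$--functor.

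What differs is the model of the completion. The paper, following Simpson, does not use formal tuples with strictly upper triangular matrices. It forms $\ol\sA$, whose objects are pairs $(A,\eta)$ with $A$ an object of $\sA$ and $\eta \in \Hom^1_\sA(A,A)$ satisfying $d\eta + \eta^2 = 0$. The $\Hom$ complexes have the same graded modules as in $\sA$, with differential $\ol{d}(f) = d(f) + \eta' f - (-1)^{\deg f} f\eta$. Then $\sA^{ext}$ is the full subcategory of objects obtained from $\sA$ by finitely many extensions, so the underlying object of every extension must already exist in $\sA$.

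In that model $F^{ext}(A,\eta) = (FA, F\eta)$, and one step that your tuple model gets for free must be supplied: that $(FA, F\eta)$ actually lies in $\sB^{ext}$. This follows by induction on the number of extensions. An extension in $\ol\sA$ is witnessed by two closed degree--zero morphisms and a degree--zero splitting satisfying finitely many identities among composites and sums, and $F$ preserves these identities as well as the twisted differentials. The paper dispatches this in one sentence; if you transport your argument to the paper's definition, you should include it.

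Conversely, your model needs no direct sums in $\sA$. Your treatment of $2$--cells is also useful: in the paper's model it reads $\tau^{ext}_{(A,\eta)} = \tau_A$, which is closed because $G(\eta)\tau_A = \tau_A F(\eta)$. The paper's proof omits this, but implicitly uses it when it composes $(-)^{ext}$ with the pseudofunctors $\mcM_{Dol}(X)$, $\mcM_{dR}(X)$, $\mcM_\sH(X)$ and whiskers the pseudonatural transformations in \cref{thm:harmonic-Dol-dR-prest}.
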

\begin{proof}
We first recall the construction of $\sA^{ext}$ for any $\bF\dg$--category $\sA$. For this, we
consider the $\bF\dg$--category $\ol\sA$ whose objects are pairs $(A, \eta)$, where $A$ is an object
of $\sA$ and $\eta \in \Hom_{\sA}^1(A, A)$ with $d\eta + \eta^2 = 0$. The complex
$\Hom_{\ol\sA}^\bullet((U, \eta), (U', \eta'))$ has the same underlying graded $\bF$--module
as $\Hom_\sA^\bullet(U, U')$ but with differential:
\[
\ol{d}(f) = d(f) + \eta'f - (-1)^{\deg(f)} f \eta
\]
With this definition, $\sA \to \ol\sA : A \mapsto (A, 0)$ is a fully-faithful embedding.
$\sA^{ext}$ is then the full sub-category of $\ol\sA$ consisting of objects that can be obtained
from objects of $\sA$ by finitely many extensions. For a $\bF\dg$--functor $F : \sA \to \sB$,
the functor $F^{ext} : \sA^{ext} \to \sB^{ext}$ is given by $F^{ext}(A, \eta) = (F(A), F(\eta))$,
where one verifies that $(F(A), F(\eta)) \in \sB^{ext}$ by the properties of an $\bF\dg$--functor.
From this description, the functoriality of $(-)^{ext}$ is immediate.
\end{proof}

\begin{defn}[Prestack of Higgs Bundles]\label{defn:Dol-prest}
Given a complex manifold $X$, we obtain a composite pseudofunctor:
\[\begin{tikzcd}[column sep=huge]
h\mcM^{\simeq}_{Dol}(X) : \Mfd^\op \ar[r, "\mcM_{Dol}(X)"] & \bC\dg-\Cat \ar[r, "h"] &
    \Cat \ar[r, "(-)^\simeq"] & \Grpd
\end{tikzcd}\]
where the last map sends a category to its core or maximal subgroupoid.
By the Grothendieck construction, we then obtain a prestack
\[
\sM_{Dol}(X) = \int h\mcM^{\simeq}_{Dol}(X) \to \Mfd
\]
which we call the prestack of Higgs bundles on $X$.
\end{defn}

\begin{rmk}
The underlying category of the prestack $\sM_{Dol}(X)$ has the following concrete description:
\begin{enumerate}
\item Objects are tuples $(U, E, \nabla_E)$ where:
    \begin{enumerate}[label=(\alph*)]
    \item $U$ is a smooth manifold, and
    \item $(E, \nabla_E)$ is a smooth family of Higgs bundles on $X$ parametrized by $U$.
    \end{enumerate}
\item Morphisms $(V, F, \nabla_F) \to (U, E, \nabla_E)$ are tuples $(f, \alpha)$ where:
    \begin{enumerate}[label=(\alph*)]
    \item $f : V \to U$ is a smooth map, and
    \item $\alpha : f^<(E, \nabla_E) \to (F, \nabla_F)$ is a morphism of families of Higgs bundles
    in the sense of \cref{defn:conn}.
    \end{enumerate}
\end{enumerate}
\end{rmk}

\begin{defn}[Prestack of Flat Bundles]\label{defn:dR-prest}
Given a complex manifold $X$, we obtain a composite pseudofunctor:
\[\begin{tikzcd}[column sep=huge]
h\mcM^{\simeq}_{dR}(X) : \Mfd^\op \ar[r, "\mcM_{dR}(X)"] & \bC\dg-\Cat \ar[r, "h"] &
    \Cat \ar[r, "(-)^\simeq"] & \Grpd
\end{tikzcd}\]
where the last map sends a category to its core or maximal subgroupoid.
By the Grothendieck construction, we then obtain a prestack
\[
\sM_{dR}(X) = \int h\mcM^{\simeq}_{dR}(X) \to \Mfd
\]
which we call the prestack of flat bundles on $X$.
\end{defn}

\begin{rmk}
The underlying category of the prestack $\sM_{dR}(X)$ has the following concrete description:
\begin{enumerate}
\item Objects are tuples $(U, E, \nabla_E)$ where:
    \begin{enumerate}[label=(\alph*)]
    \item $U$ is a smooth manifold, and
    \item $(E, \nabla_E)$ is a smooth family of flat connections on $X$ parametrized by $U$.
    \end{enumerate}
\item Morphisms $(V, F, \nabla_F) \to (U, E, \nabla_E)$ are tuples $(f, \alpha)$ where:
    \begin{enumerate}[label=(\alph*)]
    \item $f : V \to U$ is a smooth map, and
    \item $\alpha : f^<(E, \nabla_E) \to (F, \nabla_F)$ is a morphism of connections
    in the sense of \cref{defn:conn}.
    \end{enumerate}
\end{enumerate}
\end{rmk}

\begin{rmk}\label{rmk:ext-compl-equiv-prest}
We could also consider the pseudofunctors:
\[\begin{tikzcd}[column sep=large]
h\mcM_{Dol}^{ext, \simeq}(X) : \Mfd^\op \ar[r, "\mcM_{Dol}(X)"] & \bC\dg-\Cat \ar[r, "(-)^{ext}"] &
    \bC\dg-\Cat \ar[r, "h"] & \Cat \ar[r, "(-)^\simeq"] & \Grpd
\end{tikzcd}\]
and
\[\begin{tikzcd}[column sep=large]
h\mcM_{dR}^{ext, \simeq}(X) : \Mfd^\op \ar[r, "\mcM_{dR}(X)"] & \bC\dg-\Cat \ar[r, "(-)^{ext}"] &
    \bC\dg-\Cat \ar[r, "h"] & \Cat \ar[r, "(-)^\simeq"] & \Grpd
\end{tikzcd}\]
However, by \cref{thm:dg-cat-conn-complete}, they are equivalent to $h\mcM_{Dol}(X)$ and
$h\mcM_{dR}(X)$ respectively.
\end{rmk}

\subsection{Geometricity}
\label{subsec:geometricity}

\begin{thm}\label{thm:mod-st-Dol-dR}
For a complex manifold $X$, the prestacks $\sM_{Dol}(X)$ and $\sM_{dR}(X)$ of \cref{defn:Dol-prest}
and \cref{defn:dR-prest} respectively are stacks with respect to the usual open cover topology on
the category of smooth manifolds.
\end{thm}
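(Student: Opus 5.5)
The plan is to check the two stack conditions for the categories fibred in groupoids $\sM_{Dol}(X) \to \Mfd$ and $\sM_{dR}(X) \to \Mfd$ over the site of smooth manifolds with open covers: (a) for every $U$ and objects $\nabla_E, \nabla_F$ over $U$, the presheaf $\mathrm{Isom}(\nabla_E, \nabla_F)$ on $U$ is a sheaf, and (b) descent data along any open cover $U = \bigcup_a U_a$ are effective. Both prestacks come from the same pseudofunctor $\Conn$ (\cref{thm:Conn-functor}), precomposed with $\diff_X$ or $\Dol_X$. So I would do one argument for an arbitrary functor $\Mfd \to \Diff$ of the form $U \mapsto (U \times X, \sA^0_{U\times X}, K_U, \delta_U)$, where $K_U$ is a locally free summand of forms that restricts along open embeddings, i.e.\ $K_{U_a} = K_U|_{U_a \times X}$ and $\delta_{U_a} = \delta_U|_{U_a\times X}$. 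For an open embedding $j_a : U_a \hto U$, the map $j_a \times \id_X$ is an open embedding, and the pullback $j_a^<$ of \cref{defn:pullback-conn} is canonically the restriction of the sheaf and of the connection. The last claim holds because the pulled-back connection matrices $(j_a\times\id_X)^\sharp A^\alpha_{ij}$ are just restrictions. Hence the compatibility isomorphisms $\alpha$ of \cref{prop:pullback-comp-nat} become the canonical restriction isomorphisms.

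For (a), an isomorphism in the fibre over $U$ is, by \cref{thm:dg-cat-conn}(i) and \cref{warn:dg-cat-conn}, a global section $\phi$ of $E^\vee \otimes F$ that is invertible and lies in $\ker{[\nabla_E,\nabla_F]^0}$. By \cref{rmk:hom-conn-desc} this is the same as $\nabla_F \circ \phi = (\phi\otimes\id)\circ\nabla_E$. A compatible family $\phi_a$ over $U_a \times X$ glues to a unique sheaf isomorphism $\phi$, since $\HHom(E,F)$ is a sheaf. Whether $\phi$ intertwines the connections is a local question, and it holds on each $U_a \times X$, so it holds globally. Uniqueness of the glued map is automatic.

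For (b), take objects $\nabla_a$ on $E_a \to U_a \times X$ with isomorphisms of connections $\phi_{ab} : E_b|_{U_{ab}\times X} \to E_a|_{U_{ab}\times X}$ satisfying the cocycle condition. First glue the locally free $\sA^0_{U\times X}$-modules, using gluing of sheaves along the cover $\{U_a\times X\}$, to get $E$ with isomorphisms $\psi_a : E|_{U_a\times X} \cong E_a$. Then transport each $\nabla_a$ to a $1$--$\delta$--connection $\nabla^a := (\psi_a^{-1}\otimes\id)\circ\nabla_a\circ\psi_a$ on $E|_{U_a\times X}$. Because the $\phi_{ab}$ are morphisms of connections, the $\nabla^a$ agree on overlaps, so \cref{cor:conn-coll} glues them to a connection $\nabla$ on $E$. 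Flatness is local: $\nabla^1\circ\nabla^0$ is $R$-linear by \cref{prop:curvature-is-linear} and vanishes on each piece. Finally, the $\psi_a$ are isomorphisms of connections $j_a^<\nabla \to \nabla_a$ that are compatible with the $\phi_{ab}$, which gives effectivity.

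The main obstacle I expect is bookkeeping rather than analysis. The prestacks are built through $h$, the core, and the Grothendieck construction of a pseudofunctor. So I have to make sure that the descent data in $\int h\mcM^{\simeq}$, which involve the coherence isomorphisms $\alpha$ and $\beta$, really reduce to ordinary cocycles of sheaf isomorphisms. I would first prove the identification $j_a^< \cong$ restriction, together with its compatibility with $\alpha$ for composites of open embeddings, by using the frame descriptions in the proofs of \cref{prop:pullback-comp-nat} and \cref{prop:pullback-unit-nat}. Once that is in place, the argument above applies verbatim to both $\Dol_X$ and $\diff_X$.
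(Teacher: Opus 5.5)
Your proposal is correct and follows essentially the same route as the paper. The paper also checks the two stack conditions of Alper's definition. For morphisms, it glues maps into an object over an arbitrary $W$, which is equivalent to your Isom-sheaf condition. For objects, it glues the underlying bundles, transports the connections along the gluing isomorphisms, and applies \cref{cor:conn-coll}. Two of your details are left implicit in the paper: your explicit check that flatness of the glued connection is local (via \cref{prop:curvature-is-linear}), and your identification of $j_a^<$ with restriction, compatibly with the coherence isomorphisms of \cref{prop:pullback-comp-nat}.
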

\begin{proof}
We will show that the conditions of \cite[Definition 2.5.1]{AlperModuli}.
Consider an open cover $U = \bigcup_{i \in I} U_i$ of a smooth manifold $U$, and smooth families
of flat bundles $(E_i, \nabla_{E_i})$ on $X$ parametrized by $U_i$ along with isomorphisms
$\alpha_{i, j} : (E_i, \nabla_{E_i}) \to (E_j, \nabla_{E_j})$ satisfying the cocycle condition
$\alpha_{j, k} \circ \alpha_{i, j} = \alpha_{i, k}$ on intersections $U_i \cap U_j \cap U_k$.
By the gluing of vector bundles, we get a vector bundle $E$ on $U \times X$ with isomorphisms
$\phi_i : E|_{U_i} \to E_i$ such that $\phi_j = \alpha_{i, j} \circ \phi_i$ on intersections
$U_i \cap U_j$. The isomorphism $\phi_i$ provides a sheaf map
$\nabla_i = (\phi_i^{-1} \otimes \id) \nabla_{E_i} \circ \phi_i$ and it is straighforward to verify
that $(E|_{U_i}, \nabla_i)$ is a smooth family of flat bundles on $X$ parametrized by $U_i$.
Furthermore, the maps $\nabla_i, \nabla_j$ agree on $U_i \cap U_j$. We can then apply
\cref{cor:conn-coll} to obtain a family of flat bundles $(E, \nabla)$ on $X$ parametrized by $U$.
Of course, by construction $\phi$ is an isomorphism
$(E|_{U_i}, \nabla|_{U_i}) \to (E_i, \nabla_{U_i})$ is an isomorphism.

Next, consider another smooth manifold $W$ and smooth family $(F, \nabla_F)$ of flat bundles on $X$
parametrized by $W$, along with a morphism
$(f_i, \alpha_i) : (U_i, E|_{U_i}, \nabla_E|_{U_i}) \to (W, F, \nabla_F)$ in
$\sM_{dR}(X)$, such that
$(f_i|_{U_i \cap U_j}, \alpha_i|_{U_i \cap U_j})
= (f_j|_{U_i \cap U_j}, \alpha_{j}|_{U_i \cap U_j})$. By the gluing of morphisms of vector bundles,
we obtain a unique morphism $\phi : f^<F \to E$ with $\phi|_{E_{U_i}} = \alpha_i$.
Then, the fact that $\phi$ is a morphism of families of flat bundles can be checked locally, and, by
hypothesis, the $\alpha_i$ are morphisms of families of flat bundles.

This shows that $\sM_{dR}(X)$ is a stack. The argument for $\sM_{Dol}(X)$ is similar.
\end{proof}

\begin{prop}\label{prop:diffeological-atlas-simple}
Let $\mcY$ be a diffeological stack \cite[Definition 2.9 and following paragraphs]{RV18}.
Then, $\mcY$ admits an atlas or presentation $A \to \mcY$ where $A$ is a disjoint union of open
subsets of real Euclidean spaces.
\end{prop}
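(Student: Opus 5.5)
The plan is to start from an arbitrary atlas of $\mcY$ and precompose it with the canonical ``plot atlas'' of its domain. By \cite[Definition 2.9]{RV18}, a diffeological stack $\mcY$ comes with a representable surjective subduction $p : D \to \mcY$, where $D$ is a diffeological space regarded as a stack via the Yoneda-type embedding of diffeological spaces into stacks on $\Mfd$.

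First I build the candidate atlas. Let $\mcP(D)$ be the set of all plots $\varphi : U_\varphi \to D$, where each $U_\varphi$ is an open subset of some $\bR^n$. This is a set, because the domains range over a set of open subsets of Euclidean spaces and the plots on each domain form a set of functions. Put
\[
A := \coprod_{\varphi \in \mcP(D)} U_\varphi, \qquad q : A \to D, \quad q|_{U_\varphi} = \varphi .
\]
Then $q$ is smooth, since plots are smooth maps and smoothness out of a coproduct is checked summand-wise. It is also a subduction: every plot $\psi$ of $D$ factors as $\psi = q \circ \iota_\psi$, where $\iota_\psi : U_\psi \hto A$ is the inclusion of the summand, so plots of $D$ even lift globally through $q$. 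The candidate atlas is $p \circ q : A \to \mcY$.

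Next I verify the atlas conditions.

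\emph{Representability.} Take any smooth manifold (or diffeological space) $V$ with a map $V \to \mcY$. Then
\[
A \times_{\mcY} V \simeq A \times_D (D \times_{\mcY} V).
\]
The stack $D \times_{\mcY} V$ is a diffeological space because $p$ is representable. The fibre product of diffeological spaces along the smooth map $q$ is again a diffeological space, carrying the subspace diffeology of the product. Hence $p \circ q$ is representable.

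\emph{Surjective subduction.} Pulling back along $V \to \mcY$, it suffices to show that the base change of $q$, namely $A \times_D (D \times_{\mcY} V) \to D \times_{\mcY} V$, is a surjective subduction. This holds because surjective subductions of diffeological spaces are stable under pullback, and then it composes with the surjective subduction $D \times_{\mcY} V \to V$. Composites of surjective subductions are surjective subductions.

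Concretely, the composite condition can be checked directly as a local lifting property. Given $V \to \mcY$, one lifts it locally on $V$ to $D$ using $p$. The resulting local maps into $D$ are, locally on small Euclidean charts of $V$, plots of $D$. By construction of $A$, these plots lift through $q$ verbatim.

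The step I expect to be the main obstacle is matching these arguments to the precise conventions of \cite{RV18}. Their stacks may be defined over the site of Cartesian spaces or of manifolds, and ``representable'' and ``subduction'' for a map of stacks are phrased via fibre products with representables. The verification therefore has to pass through the equivalence between those conventions and the pullback-stability and composition of surjective subductions used above. I would handle this by checking the local lifting property on Euclidean test objects, which are exactly the summands of $A$.
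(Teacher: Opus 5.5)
Your proposal is correct and takes essentially the same approach as the paper: precompose a given atlas $D \to \mcY$ with the map from the disjoint union of all plots of $D$, check that this map is representable and a (surjective) subduction, hence an epimorphism, and then compose. Your treatment of representability via $A \times_{\mcY} V \simeq A \times_D (D \times_{\mcY} V)$, together with pullback and composition stability of subductions, is slightly more careful than the paper's: the paper asserts that the fibre products $P_i \times_A M$ are manifolds, whereas in general they are only diffeological spaces, which is all that is needed.
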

\begin{proof}
Choose a diffeological atlas $A \to \mcY$: that is, a representable and epimorphic subduction.
Then, consider the collection of all plots $p_i : P_i \to A, i \in I$. Then,
$P := \coprod_{i} P_i$ is a diffeological space since it is a disjoint union of open subsets of
Euclidean spaces. The morphism $p : P \to A$ induced by the universal property of coproducts
can be seen to be representable by diffeological spaces. For this, let $M$ be a smooth manifold
and $f : M \to A$ be a diffeologically smooth map and observe that
$P \times_A M = \coprod_i P_i \times_A M$ is a disjoint union of smooth manifolds and is, hence a
diffeological space. Next, it is immediate from the definition of subduction
\cite[paragraph before Example 2.6]{RV18} that $p$ is one.
Then, it follows from the defintion of epimorphism of stacks \cite[Definition 2.3]{BX11} that any
subduction of diffeological spaces is an epimorphism of the associated stacks. The composite
$P \to A \to \mcY$ is thus an atlas and its domain is a disjoint union of open subsets of real
Euclidean spaces.
\end{proof}

\begin{prop}\label{prop:diffeological-rep-morphism}
Let $f : \mcX \to \mcY$ be a morphism of stacks on the site of smooth manifolds that is
representable by diffeological spaces in the sense of \cite[Definition 2.9]{RV18}. Then, if
$\mcY$ is diffeological, so is $\mcX$.
\end{prop}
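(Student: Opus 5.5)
The plan is to build an atlas of $\mcX$ by pulling back an atlas of $\mcY$ along $f$. By the definition of diffeological stack \cite[Definition 2.9]{RV18} (equivalently, by \cref{prop:diffeological-atlas-simple}), we may choose an atlas $p : A \to \mcY$ that is a representable, epimorphic subduction from a diffeological space $A$. We then form the $2$--fibre product $\mcX \times_\mcY A$ with projections $q : \mcX \times_\mcY A \to \mcX$ and $f' : \mcX \times_\mcY A \to A$.

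\textbf{Step 1: the fibre product is a diffeological space.} Write $A$ as the colimit of its plots; by \cref{prop:diffeological-atlas-simple} we may even replace $A$ by a coproduct $P = \coprod_i P_i$ of open subsets of Euclidean spaces. Each $P_i$ is a manifold, so representability of $f$ gives that $\mcX \times_\mcY P_i$ is a diffeological space. Coproducts commute with fibre products of stacks, so $\mcX \times_\mcY P = \coprod_i \mcX \times_\mcY P_i$ is a disjoint union of diffeological spaces. Call this space $B$.

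\textbf{Step 2: $q : B \to \mcX$ is representable and a subduction.} Let $M$ be a manifold and $g : M \to \mcX$ a map. By pasting of pullbacks, $B \times_\mcX M \simeq P \times_\mcY M$, where $M$ maps to $\mcY$ via $f \circ g$. This is a diffeological space because $P \to \mcY$ is representable. Moreover, $B \times_\mcX M \to M$ is identified with the base change $P \times_\mcY M \to M$ of the atlas, which is a subduction since the atlas is.

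\textbf{Step 3: $q$ is an epimorphism.} An epimorphism of stacks is one that is locally surjective on objects \cite[Definition 2.3]{BX11}. Given $x \in \mcX(M)$, the object $f(x) \in \mcY(M)$ lifts locally on an open cover $\{M_j\}$ to $P$. The pair (the restriction of $x$ to $M_j$, the lift) together with the isomorphism witnessing the lift gives an object of $B(M_j)$ mapping to $x|_{M_j}$. Therefore $q$ is an atlas, and $\mcX$ is diffeological.

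\textbf{Main obstacle.} The hardest step is Step 1 in the generality of \cite{RV18}, where representability by diffeological spaces is only tested against manifolds $M$, while the atlas domain $A$ is merely diffeological. This is why I would first reduce to the coproduct $P$ of Euclidean opens via \cref{prop:diffeological-atlas-simple}. After that reduction, all base changes are along manifolds, and the only remaining check is that the $2$--fibre product of stacks commutes with disjoint unions. That fact should follow from the stack condition applied to the cover by the components.
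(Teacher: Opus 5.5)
Your proposal is correct and follows the paper's proof: choose the atlas $P=\coprod_i P_i\to\mcY$ of Euclidean opens from \cref{prop:diffeological-atlas-simple}, use representability of $f$ to make each $P_i\times_\mcY\mcX$ diffeological, note that a disjoint union of diffeological spaces is diffeological, and use that representable epimorphic subductions are stable under base change. Your Steps 2 and 3 just verify explicitly the base-change stability that the paper asserts in one line.
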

\begin{proof}
By \cref{prop:diffeological-atlas-simple}, choose an atlas $P = \coprod_i P_i \to \mcY$ where
each $P_i$ is an open subset of some real Euclidean space. Then,
$P \times_\mcY \mcX = \coprod_i P_i \times_\mcY \mcX \to \mcX$ is
a representable, epimorphic subduction because these properties are stable under pullback.
Therefore, it suffices to show that $P \times_\mcY \mcX$ is a diffeological spaces.
For this, we first observe that each $P_i \times_\mcY \mcX$ is a diffeological space since $f$ is
representable by diffeological spaces, by hypothesis, and $P_i$ is a smooth manifold, by
construction. Then, $P \times_\mcY \mcX$, being a disjoint union of diffeological spaces, is also a
diffeological space.
\end{proof}

\begin{thm}\label{thm:mod-st-Dol-dR-diff}
Let $\sM_{\Vect}(X)$ denote the moduli stack of vector bundles on a complex manifold $X$.
The forgetful mappings of stacks:
\[\begin{array}{ccccccc}
\sM_{dR}(X) & \to & \sM_{\Vect}(X) & : & (U, E, \nabla_E) & \mapsto & (U, E) \\
\sM_{Dol}(X) & \to & \sM_{\Vect}(X) & : & (U, E, D''_E) & \mapsto & (U, E)
\end{array}\]
are representable by diffeological spaces. In particular, the stacks $\sM_{dR}(X)$ and
$\sM_{Dol}(X)$ are diffeological stacks in the sense of \cite[Definition 2.9]{RV18}.
\end{thm}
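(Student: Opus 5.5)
The plan is to reduce everything to \cref{prop:diffeological-rep-morphism}. Granting that $\sM_{\Vect}(X)$ is a diffeological stack (it is the stack of smooth vector bundles on $U \times X$, presented by the gauge/frame data; I would take this as known or quickly verified via \cref{prop:diffeological-atlas-simple}), it suffices to show the forgetful maps are representable by diffeological spaces. Concretely, for a smooth manifold $U$ and an object $(U, E) \in \sM_{\Vect}(X)(U)$, i.e.\ a map $U \to \sM_{\Vect}(X)$, I must show the fibre product $U \times_{\sM_{\Vect}(X)} \sM_{dR}(X)$ is (the stack associated to) a diffeological space.

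First, I would unwind the $2$--fibre product. Its objects over a test manifold $V$ are triples $(f, \nabla, \phi)$ with $f : V \to U$ smooth, $\nabla$ a flat $1$--$d_{X/V}$--connection on some bundle $F$ over $V \times X$, and $\phi : f^<E \to F$ an isomorphism of bundles. Using $\phi$ to transport $\nabla$, this is equivalent to the groupoid of pairs $(f, \nabla')$ with $\nabla'$ a flat $1$--$d_{X/V}$--connection on $f^<E$ itself, and morphisms are only identities (an isomorphism must cover the identity on $f$ and intertwine $\phi$'s, hence is forced). So the fibre product is equivalent to a sheaf of sets $\mcP_E$ on $\Mfd$, namely $V \mapsto \set[(f, \nabla')]{f : V \to U,\ \nabla' \text{ flat } 1\text{--}d_{X/V}\text{--connection on } f^<E}$, with pullback by \cref{defn:pullback-conn}; the sheaf property is \cref{thm:mod-st-Dol-dR} / \cref{cor:conn-coll}.

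Next, I show $\mcP_E$ is a diffeological space, i.e.\ a concrete sheaf. Define the underlying set $P_E := \set[(u, \nabla_u)]{u \in U,\ \nabla_u \text{ flat connection on } E_u}$ and the map $\mcP_E(V) \to \Map(V, P_E)$ sending $(f, \nabla')$ to $v \mapsto (f(v), \nabla'_v)$ using \cref{prop:conn-slicewise}. Injectivity (concreteness) follows because, by \cref{prop:conn-slicewise}, a connection on $V \times X$ is determined by its slices: locally $\nabla' = d_{X/V} + A$ and the matrix $A$ is determined pointwise. The plots are then the images; the diffeology axioms (constants, locality, precomposition with smooth maps) follow from constant families, the sheaf property, and pullback functoriality (\cref{thm:Conn-functor}). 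Flatness is a slicewise condition by \cref{prop:conn-slicewise}, so it poses no extra constraint on plots. The Dolbeault case is identical with $\oprt_{X/U}^\bullet$, using \cref{exm:prt-Dol-op} and the analogous slicewise statement.

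The main obstacle is the precomposition/locality bookkeeping in showing that a map $V \to P_E$ which is locally a plot is a plot — i.e.\ that smoothness of a family of connection matrices in the $V$ direction is local and glues consistently with the identification through $\phi$; this is where the sheaf gluing of \cref{cor:conn-coll} and the independence-of-cover result \cref{prop:pullback-conn-unique} do the work. Having this, \cref{prop:diffeological-rep-morphism} gives that $\sM_{dR}(X)$ and $\sM_{Dol}(X)$ are diffeological stacks.
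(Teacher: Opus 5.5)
Your argument is correct, and its skeleton is the paper's. Both compute the $2$--fibre product $U \times_{\sM_{\Vect}(X)} \sM_{dR}(X)$, note that its fibres are setoids, show it is the stack of a diffeological space, and conclude with \cref{prop:diffeological-rep-morphism}.

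The difference is in how the diffeological space is produced. The paper fixes a base relative connection $\nabla_E$ and writes down an explicit space $S_E$: the fibre product of $U$ with the functional diffeological space $\Cinf(X, \End(E) \otimes \sA^1_{X/U})$ over $\Cinf(X, U \times X)$. A plot of $S_E$ is a smooth $v : V \to U$ together with a smooth section $\tilde{s}$ of $\End(v^<E) \otimes \sA^1_{X/V}$, and it is sent to $v^<\nabla_E + \tilde{s}$. One then checks by hand that this map $\Psi_E$ to the fibre product is fully faithful and essentially surjective. You instead use the general principle that a sheaf of sets on $\Mfd$ whose sections are determined by their values at points (here by \cref{prop:conn-slicewise}) is a diffeological space, with underlying set $\mcP_E(\pt)$.

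Your route needs no choices and builds flatness in automatically. The paper's gives a concrete affine model, but it has two defects as written:
\begin{itemize}
\item its $S_E$ parametrizes all connections $v^<\nabla_E + \tilde{s}$, flat or not, so it has to be cut down to the flat locus with the subspace diffeology;
\item a flat base connection need not exist, although any relative connection serves once flatness is imposed on the sum.
\end{itemize}
On the flat locus the two diffeologies coincide, since in both the plots are exactly the smooth families of flat connection forms.

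One step of your write-up does not work as stated. \cref{prop:diffeological-atlas-simple} assumes its stack is already diffeological, so it cannot be used to show that $\sM_{\Vect}(X)$ is. That input is needed for the ``in particular'' clause. The paper supplies it separately via $\sM_{\Vect}(X) \simeq \coprod_{n} \HHom(X, B\Gl_n(\bC))$, the atlas $\pt \to B\Gl_n(\bC)$, and the Roberts--Vozzo theorem that such Hom-stacks are diffeological \cite[Theorem 2]{RV18HomStack}.
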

\begin{proof}
We need to show that for any smooth manifold $U$ and any map $c_E : U \to \sM_\Vect(X)$
corresponding to a smooth vector bundle $E \to U \times X$, the fibre products
$P_{Dol, E} := \sM_{Dol}(X) \times_{\sM_\Vect(X)} U$ and
$P_{dR, E} := \sM_{dR}(X) \times_{\sM_\Vect(X)} U$
are representable by diffeological spaces. The stack $P_{dR, E}$ has the following concrete
description by \cite[\href{https://stacks.math.columbia.edu/tag/0040}{Tag 0040}]{stacks-project}:
\begin{itemize}
\item Objects are $(v, F, \nabla_F, \alpha)$ where:
    \begin{itemize}
    \item $v : V \to U$ is a smooth map from a smooth manifold $V$,
    \item $(F, \nabla_F)$ is a smooth family of flat bundles on $X$ parametrized by $V$, and
    \item $\alpha : v^<E \to F$ is an isomorphism of vector bundles.
    \end{itemize}
\item Morphisms $(w, G, \nabla_G, \beta) \to (v, F, \nabla_F, \alpha)$ are tuples $(f, a)$ where:
    \begin{itemize}
    \item $f : W \to V$ is a smooth map commuting with the maps to $U$, and
    \item $a : f^<(F, \nabla_F) \to (G, \nabla_G)$ is an isomorphism of smooth families of flat
    bundles on $X$ parametrized by $W$, such that the following diagram of vector bundles commutes:
    \[\begin{tikzcd}
    f^<v^<E \ar[r, "f^<\alpha"] \ar[d, "\cong" left] & f^<F \ar[d, "a"] \\
    w^<E \ar[r, "\beta" below] & G
    \end{tikzcd}\]
    \end{itemize}
\end{itemize}
We notice that by the commutativity of the above diagram, if $(f, a)$ and $(f, b)$ are two morphism
with the same source and target, then $a = b$. Thus, $P_{dR, E}$ is a prestack whose fibres are
setoids.

We will show that this prestack (in setoids) is represented by the diffeological space
defined by the following fibre product:
\[\begin{tikzcd}
S_E \ar[r] \ar[d] \ar[rd, phantom, "\lrcorner" very near start] &
\Cinf(X, \End(E) \otimes \sA_{X/U}^1) \ar[d, "p_*"] \\
U \ar[r, "\hat{\id_U}" below] &
\Cinf(X, U \times X)
\end{tikzcd}\]
where the right vertical map is given by post-composition by the vector bundle projection
$p : \End(E) \otimes \sA^1_{X/U} \to U \times X$,
and the bottom horizontal map is defined by $u \mapsto (x \mapsto (u, x))$. Of course, this fibre
product represents the $2$--fibre product when all vertices are considered as the corresponding
prestacks.
First, we choose any fixed smooth family of flat $1$--$d_{X/U}$--connection $\nabla_E$ --- choose
any Hermitian metric on $E$ and the associated flat $1$--$d_{X/U}$--connection provided by
\cref{prop:prt-Chern-conn}, for example.
Now, consider a diffeologically smooth map $s : V \to S_E$ from a smooth manifold $V$ given by two
diffeologically smooth maps $s_0 : V \to \Cinf(X, \End(E) \otimes \sA_{X/U}^1), s_1 : V \to U$ such
that $\hat{\id}_U \circ s_1 = p \circ s_0$.
This data corresponds to a section $\hat{s}$ of
$\End(s_1^<E) \otimes s_1^<\sA_{X/U}^1 \to V \times X$.
We have a canonical isomorphism
$\sA^1_{X/V} = \pi_X^*\sA^1_X \cong (s_1 \times \id_X)^*\pi_X^*\sA^1_X = s_1^<\sA^1_{X/U}$,
yielding an isomorphism
$\End(s_1^<E) \otimes s_1^<\sA_{X/U}^1 \cong \End(s_1^<E) \otimes \sA_{X/V}^1$, which, in turn,
yields a section $\tilde{s}$ of $\End(s_1^<E) \otimes \sA_{X/V}^1$. This $\tilde{s}$
is a morphism of vector bundles $s_1^<E \to s_1^<E \otimes \sA_{X/V}^1$.
Define an object $\Psi_E(s) := (s_1, s_1^<E, s_1^<\nabla_E + \tilde{s}, \id_{s_1^<E})$ of
$P_{dR, E}$, where $s_0$ corresponds to a section of $s_1^<E \to V \times X$.
Then, consider another diffeologically smooth map $t : W \to S_E$ given by two
diffeologically smooth maps $t_0 : W \to C^\infty(X, E), t_1 : W \to U$ such that
$\hat{\id}_U \circ t_1 = p \circ t_0$, and a smooth map $f : W \to V$ over $S_E$. Then, we get a
unique morphism $\Psi_E(f) = (f, a) : \Psi_E(s) \to \Psi_E(t)$ by the discussion in the previous
paragraph. The mapping $\Psi_E$ can be checked to be a morphism of prestacks $S_E \to P_{dR, E}$.
It is fully faithful by construction.
It then suffices to show that $\Psi_E$ is essentially surjective. We first observe that every object
$(v, F, \nabla_F, \alpha)$ in $P_{dR, E}$ is isomorphic to the object
$(v, v^<E, v^<\nabla_E, \id_{v^<E})$. Then, consider the section
$\tilde{s}_0 := (\alpha^{-1} \otimes \id) \circ \nabla_F \circ \alpha - v^<\nabla_E$ of
$\End(v^<E) \otimes \sA^1_{X/V}$. By the isomorphism $\sA^1_{X/V} \cong v^<\sA^1{X/U}$ discussed
above, we obtain a corresponding section $\hat{s}_0$ of $\End(v^<E) \otimes v^<\sA^1_{X/U}$
which, in turn gives a map $s'_0 : V \times X \to \End(E) \otimes \sA^1_{X/U}$ such that
$p \circ s'_0 = v \times \id_X$. Setting $s_0$ to be the maps
$V \to \Cinf(X, \End(E) \otimes \sA^1_{X/ U})$ corresponding to $s'_0$ and $s_1 = v$, we obtain
a map $s = (s_0, s_1) : V \to S_E$. By construction, $\Psi_E(s) = (v, v^<E, v^<\nabla_E, \id)$.

The case of $\sM_{Dol, E}(X) \to \sM_\Vect(X)$ is simpler. We have a similar fibre product of stacks
$P_{Dol, E} = \sM_{Dol}(X) \times_{\sM_{\Vect}(X)} U$, whose objects and morphisms are analogous,
with families of Higgs bundles in place of flat connections. An analogous construction as above but
with the zero map in place of the fixed connection $\nabla_E$ provies an equivalence of prestacks
$\Psi'_E : S_E \to P_{Dol, E}$.

For showing that the stacks $\sM_{Dol}(X)$ and $\sM_{dR}(X)$ are diffeological,
we first observe that we have an equivalence of stacks
$\sM_\Vect(X) \simeq \coprod_{n \in \bN} \HHom(X, B\Gl_n(\bC))$, where $B\Gl_n(\bC)$ is the quotient
stack $[\pt/\Gl_n(\bC)]$. The point $\pt$ is a diffeological spaces and provides a diffeological
atlas $\pt \to B\Gl_n(\bC)$, so that $\sM_\Vect(X)$ is a diffeological stack.
\cref{prop:diffeological-rep-morphism} in combination with
\cite[Theorem 2]{RV18HomStack} then prove that $\sM_{Dol}(X)$ and $\sM_{dR}(X)$ are diffeological.
\end{proof}

\begin{rmk}\label{rmk:Frechet-Lie-groupoid-weak-presentation}
$\HHom(X, B\Gl_n(\bC))$ is presented as a diffeological stack by the Fr\'echet Lie groupoid
$\Map(X, B\Gl_n(\bC))_1 \rightrightarrows \Map(X, B\Gl_n(\bC))_0$ of
\cite[2]{RV18HomStack}. One might think that this means $\sM_\Vect(X)$ is a Fr\'echet differentiable
stack, but the mapping $\Map(X, B\Gl_n(\bC))_0 \to \HHom(X, B\Gl_n(\bC))$, while representable by
diffeological spaces, might fail to be representable by Fr\'echet manifolds, as the Fr\'echet Lie
groupoid only weakly presents the stack. As a result, it is not clear if one can upgade
\cref{thm:mod-st-Dol-dR-diff} to a statement about Fr\'echet differentiable stacks.
\end{rmk}

\begin{rmk}
The proof \cref{thm:mod-st-Dol-dR-diff} was inspired by \cite[Theorem 6.13.]{GeomNonAbHodgeFilt}.
\end{rmk}

\section{Harmonic Bundles}
\label{sec:harmonic}

In non-Abelian Hodge theory, the correspondence between semistable Higgs bundles and flat bundles
is mediated by harmonic bundles. The correspondence, in our case, will be mediated by families
thereof. This section is dedicated to developing the theory of families of harmonic bundles.
This starts with defining a relative or partial version of the Chern connection for a Hermitian
metric. Families of harmonic bundles are then families of smooth bundles that are equipped with both
the structure of a family of Higgs bundles and of a family of flat bundles, with a partial Chern
connection relating these structures in the usual way. The usual facts about harmonic bundles
then carry over to families. This is the subject of \cref{subsec:harmonic-bun-families}.
The standard constructions of duals and tensor products applied to the Higgs part and the flat
connection part of a harmonic bundle family provide suitable notions of duals and tensor products,
and eventually, Hom objects of families of harmonic bundles. That is, tensor products and duals
of families of Higgs bundles and flat bundles related by a harmonic metric are related by the tensor
product and dual metrics respectively. We show these facts in \cref{subsec:cons-harmonic}.
Finally, in \cref{subsec:dg-prest-harmonic} we show that
the $\dg$--category structure of families of Higgs bundles induces one on the collection of families
of harmonic bundles, in the same way that the $\dg$--category structure of Higgs bundles induces the
one on the collection of harmonic bundles.
At the same time, we will see that the pullbacks of families of Higgs bundles and of flat
bundles gives a pullback of families of harmonic bundles: if families of the two structures
are related by a harmonic metric, then their pullbacks are related by the pulled back harmonic
metric.
This gives rise to a $\dg$--prestack of harmonic bundles,
which will mediate the non-Abelian Hodge correspondence at the level of diffeological moduli stacks.

\subsection{Smooth Families of Harmonic Bundles}
\label{subsec:harmonic-bun-families}

\begin{prop}\label{prop:prt-Chern-conn}
Let $(E \to U \times X, \oprt_{E/U})$ be a smooth family of holomorphic vector bundles as in
\cref{exm:hol-bun-family} and $H$, a Hermitian metric on $E$. Then, there exists a unique
flat $1$--$d_{X/U}$--connection $\nabla_{H/U}$ on $E$ such that for each $u \in U$
the pulled back $1$--$d$--connection $\nabla_{H/U, u}$ on $E_u$ is the Chern connection of the
restriction $H_u$ of $H$ to $E_u$.
\end{prop}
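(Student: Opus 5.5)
The plan is to build $\nabla_{H/U}$ by an explicit local formula in arbitrary smooth frames of $E$, glue with \cref{prop:conn-form-coll}, and get uniqueness by restricting to slices via \cref{prop:conn-slicewise}. I will not use frames that are holomorphic in the $X$ directions: their existence with smooth dependence on $u$ needs a parametric Newlander--Nirenberg argument, and the construction below avoids it.

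\emph{Local construction.} Take a local trivialization $(V, \phi^V)$ of $E$ over an open $V \subset U \times X$ with frame $\set{e_i}_i$. Write $B = [B_{ij}]$ for the connection matrix of $\oprt_{E/U}$ in this frame; its entries are sections of $\sA^{0,1}_{X/U}$. Write $h = [H(e_j, e_i)]$ for the matrix of $H$, a smooth Hermitian positive definite matrix function on $V$. On each slice $\set{u} \times X$ the Chern connection of $H_u$ is the unique connection whose $(0,1)$--part is $\oprt_{E_u}$ and which is $H_u$--compatible. The standard computation therefore gives its connection matrix in the frame $\set{e_{i,u}}_i$ as
\[
A_u = B_u + h_u^{-1}\br{\prt h_u - B_u^{*} h_u},
\]
with $B^*$ the conjugate transpose, up to the paper's index conventions. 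I then define $A^V := B + h^{-1}(\prt_{X/U} h - B^{*} h)$ on $V$. Its entries are sections of $\sA^1_{X/U}$, smooth in all variables because $h^{-1}$, $\prt_{X/U}h$ and $B$ are. By construction, the pullback of $A^V$ along $\iota_u$ is $A_u$, since pullback along $\iota_u$ commutes with $\prt_{X/U}$ and $\oprt_{X/U}$ (\cref{exm:prt-Dol-morphism}). So the local $1$--$d_{X/U}$--connection $\nabla^V = d^V + A^V$ restricts on each slice to the Chern connection of $H_u$.

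\emph{Gluing.} Next I check that the $A^V$ form a descent datum in the sense of \cref{defn:conn-descent-data}, i.e.\ satisfy the identity of \cref{rmk:conn-descent-data-matrix} with $\lambda = 1$ and $d = d_{X/U}$. Both sides are smooth forms on $V \cap W$, so it suffices to check equality pointwise at each $(u,x)$. There the identity reduces, via \cref{prop:conn-slicewise}, to the corresponding identity for the Chern connection of $H_u$ in the frames $\set{e^V_{i,u}}_i$ and $\set{e^W_{i,u}}_i$. That identity holds because the Chern connection is a globally well defined connection on $E_u$. (Alternatively, one can verify the transformation law directly from $B$'s transformation law and $h^W = g^{*} h^V g$; this is the routine computation.) \cref{prop:conn-form-coll} then glues the local pieces into $\nabla_{H/U}$. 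Uniqueness follows the same way: two such connections differ by a $\Cinf$--linear map $E \to E \otimes \sA^1_{X/U}$ whose restriction to every slice vanishes, by uniqueness of the Chern connection, so it vanishes by \cref{prop:conn-slicewise}.

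\emph{Main obstacle: flatness.} By \cref{prop:conn-slicewise}, $\nabla_{H/U}$ is flat if and only if each $\nabla_{H/U,u}$ is flat, and a Chern connection is generally not flat. So the word ``flat'' cannot be proved as literally stated for arbitrary $H$. I would either read it as ``flat exactly when each slice Chern connection is flat'', which is what \cref{prop:conn-slicewise} gives, or as flatness of its $(0,1)$--part $\oprt_{E/U}$, which holds by hypothesis. Either way the proof will flag this point explicitly. This is consistent with the later use in \cref{thm:mod-st-Dol-dR-diff}, where only the existence of a reference connection matters.
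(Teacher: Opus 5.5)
Your existence and uniqueness arguments are correct, and the gluing and uniqueness steps match the paper's. The paper also verifies the transformation law pointwise by restricting to slices $\set{u}\times X$, using that $d_{X/U}$ takes no $U$--derivatives. It likewise gets uniqueness from uniqueness of the Chern connection on each slice.

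The genuine difference is the local formula. The paper sets $A^\alpha = \bar{H}_\alpha^{-1}\prt_{X/U}\bar{H}_\alpha$ and asserts that it restricts to the Chern connection matrix of $H_u$. That is only true in frames holomorphic along $X$, and its gluing step likewise uses that $d(g^{\alpha\beta}_u)$ is of type $(1,0)$. So the paper tacitly assumes that a flat $1$--$\oprt^{0,\bullet}_{X/U}$--connection admits local frames holomorphic in $x$ and smooth in $u$. That is a parametric Koszul--Malgrange (Newlander--Nirenberg) statement, and the paper never addresses it.

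Your formula $A^V = B + h^{-1}\br{\prt_{X/U}h - B^{*}h}$ holds in any smooth frame. With $h = [H(e_j,e_i)]$ it is the correct Chern formula, and it reduces to the paper's when $B = 0$. It works directly from the definition of a family as an abstract connection and removes that dependence. The only cost is a messier formula, which your slicewise reduction makes harmless.

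Your objection to ``flat'' is also right, and it exposes an actual error in the paper's proof. The proof ends by asserting that $\nabla_{H/U,u}$, ``being a Chern connection, is flat''. That is false: for $U = \pt$, $X = \mathbb{P}^1$, $E = \mathcal{O}(1)$ there is no flat connection at all, since $c_1(E) \neq 0$. By uniqueness, the statement as written then fails.

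However, neither of your proposed readings is the useful one: the first is just \cref{prop:conn-slicewise} restated, and the second is a hypothesis. What is true, and what the paper actually uses later, is that the curvature of $\nabla_{H/U}$ is of pure type $(1,1)$. In \cref{prop:harmonic-op-conn}(ii), flatness of $\nabla_{H/U}$ is invoked only to get flatness of $\oprt_{E/U}$ and of $\prt_H = pr^{1,0}\circ\nabla_{H/U}$.

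The $(1,1)$ claim follows from your construction via \cref{prop:conn-slicewise}. Each slice curvature is the Chern curvature of $(E_u, \oprt_{E_u}, H_u)$, which is of type $(1,1)$. The $(2,0)$--component of the curvature of $\nabla_{H/U}$ is the curvature of $\prt_H$, so $\prt_H$ is a flat $1$--$\prt^{\bullet,0}_{X/U}$--connection. That is the statement you should prove in place of ``flat''.
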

\begin{proof}
Consider an open cover $U \times X = \bigcup_{\alpha \in I} W_\alpha$ so that $E|_{W_\alpha}$
trivializes with a local frame $\set{e^\alpha_i}_i$ and transition functions
$g^{\alpha\beta}$ for each $\alpha, \beta \in I$. Let $H_\alpha$ be the matrix of $H$ in this frame.
Let us also suppose we have holomorphic and antiholomorphic coordinates $z_i, \bar{z}_i$ for
$\pi_X(W_\alpha) \subset X$.
Then, for each $\alpha \in I$, set:
\[
A^\alpha_{ij} := \sum_k (\bar{H}_\alpha^{-1})_{ik} \prt_{X/U}((\bar{H}_\alpha)_{kj})
= \sum_{p} \sum_k (\bar{H}_\alpha^{-1})_{ik} \frac{\prt (\bar{H}_\alpha)_{kj}}{\prt z_p} dz_p
\]
We notice that
$\prt_{X/U}((\bar{H}_\alpha)_{kj})|_{(\set{u} \times X) \cap W_\alpha} =
\prt((\bar{H}_\alpha)_{kj}|_{(\set{u} \times X) \cap W_\alpha})$, where
$\prt$ is the Dolbeault operator of $X$. Thus, by construction,
$A^\alpha_u = [A^\alpha_{ij}|_{(\set{u} \times X) \cap W_\alpha}]$ is the connection matrix of the
Chern connection $\nabla_{H_u}$ for each $u \in \pi_U(W_\alpha)$. In particular, the
$\set{A^\alpha_u}_{\alpha \in I}$ satisfy the gluing condition for connections:
\[
A^\beta_u = g^{\alpha\beta}_u d(g^{\beta\alpha}_u)
          + g^{\alpha\beta}_u A^\alpha_u g^{\beta\alpha}_u
\]
where $g^{\alpha\beta}_u = g^{\alpha\beta}|_{(\set{u} \times X) \cap W_\alpha \cap W_\beta}$
and $d(g^{\alpha\beta}_u)$ is the matrix of $(1, 0)$--forms on $X$
obtained by applying the exterior differential of $X$ to $g^{\alpha\beta}_u$ entrywise.
However, by the local formula for the partial exterior differential $d_{X/U}$, we can see that
$d(g^{\beta\alpha}_u)(x) = d_{X/U}(g^{\beta\alpha})(u, x)$ since $d_{X/U}$ has no
derivatives in the $U$ direction. Thus, we have:
\begin{align*}
A^\beta(u, x)
=& A^\beta_u(x) \\
=& g^{\alpha\beta}_u(x) d(g^{\beta\alpha}_u)(x)
   + g^{\alpha\beta}_u(x) A^\alpha_u(x) g^{\beta\alpha}_u(x) \\
=& g^{\alpha\beta}(u, x) d_{X/U}(g^{\beta\alpha})(u, x)
   + g^{\alpha\beta}(u, x) A^\alpha(u, x) g^{\beta\alpha}(u, x) \\
=& \br{g^{\alpha\beta} d_{X/U}(g^{\beta\alpha})
   + g^{\alpha\beta} A^\alpha g^{\beta\alpha}}(u, x)
\end{align*}
That is, the $\set{A^\alpha}_{\alpha \in I}$ satisfy the gluing condition for
$1$--$\prt_{X/U}$--connections and yield such a connection $\nabla_{H/U}$ by
\cref{prop:conn-form-coll}.
If any other $1$--$d_{X/U}$--connections $\nabla$ satisfies that the pulled back connection
$\nabla_u$ is the Chern connection of $H_u$, then the connection matrices of $\nabla$ restricted to
$\set{u} \times X$ must equal $A^\alpha_u$ by the uniqueness of the Chern connection. Since this
holds for all $u \in U$, the connection matrices of $\nabla$ must equal $A^\alpha$. By
\cref{prop:conn-form-coll} again, $\nabla = \nabla_{H/U}$.

Next, observe that:
\[
d_{X/U}(A^\alpha_{ij})(u, x) + \sum_{k} (A^\alpha_{ik} \wedge A^\alpha_{kj})(u, x)
= d(A^\alpha_u(x)) + \sum_{k} A^\alpha_{u, ik}(x) \wedge A^\alpha_{u, kj}(x) = 0
\]
since $\nabla_{H/U, u}$, being a Chern connection, is flat. The flatness of $\nabla_{H/U}$ now
follows from \cref{rmk:conn-higher-forms-loc} and \cref{prop:curvature-is-linear}.
\end{proof}

\begin{defn}[Partial Chern Connection]\label{defn:prt-Chern-conn}
We will call the $1$--$d_{X/U}$--connection $\nabla_{H/U}$ from \cref{prop:prt-Chern-conn} the
partial Chern connection of $H$.
\end{defn}

\begin{prop}\label{prop:prt-Chern-conn-pullback}
In the context of \cref{prop:prt-Chern-conn}, consider a smooth map $f : V \to U$ and pullback
metric $f^<H$, then $f^<\nabla_{H/U} = \nabla_{f^<H/V}$.
\end{prop}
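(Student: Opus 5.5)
The plan is to use the uniqueness clause of \cref{prop:prt-Chern-conn} rather than compute connection matrices globally. Write $f^<$ for pullback along $f \times \id_X$. The family $(E, \oprt_{E/U})$ pulls back to a smooth family of holomorphic bundles $(f^<E, f^<\oprt_{E/U})$ over $V$, via the morphism of differential spaces from \cref{exm:prt-Dol-morphism}(ii). The metric $f^<H$ is a Hermitian metric on $f^<E$, so $\nabla_{f^<H/V}$ is defined. By \cref{prop:prt-Chern-conn} applied over $V$, $\nabla_{f^<H/V}$ is the unique $1$--$d_{X/V}$--connection on $f^<E$ whose restriction to each slice $\set{v} \times X$ is the Chern connection of $(f^<H)_v$. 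So it suffices to show the following: $f^<\nabla_{H/U}$ is a $1$--$d_{X/V}$--connection on $f^<E$, and its slice at each $v \in V$ is the Chern connection of $(f^<H)_v$ for the holomorphic structure $(f^<\oprt_{E/U})_v$.

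First, $f^<\nabla_{H/U}$ is a $1$--$d_{X/V}$--connection by \cref{defn:pullback-conn} together with the morphism $(f \times \id_X, h^\sharp)$ of \cref{exm:prt-ext-diff-morphism}. Note that $h^\sharp 1 = 1$, so $\lambda = 1$ is preserved.

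Second, for the slices, fix $v \in V$ and put $u = f(v)$. The inclusion $\iota_v : X \to V \times X$ composed with $f \times \id_X$ equals $\iota_u : X \to U \times X$, both as maps of manifolds and as morphisms of differential spaces, because both pullbacks of forms are just restriction to the slice. By \cref{prop:pullback-comp-nat}, the canonical isomorphism $\iota_v^*(f\times\id_X)^* \cong \iota_u^*$ is an isomorphism of connections. The same holds for the holomorphic structures and for the metrics. Consequently
\[
(f^<\nabla_{H/U})_v \cong (\nabla_{H/U})_u = \nabla_{H_u}, \qquad (f^<H)_v \cong H_u, \qquad (f^<\oprt_{E/U})_v \cong \oprt_{E_u},
\]
all compatibly under the single identification $(f^<E)_v \cong E_u$. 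The Chern connection is natural under isomorphisms that respect both the holomorphic structure and the metric. Hence $(f^<\nabla_{H/U})_v$ is the Chern connection of $(f^<H)_v$, and uniqueness gives $f^<\nabla_{H/U} = \nabla_{f^<H/V}$.

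The main obstacle is bookkeeping: making sure the three pullbacks (of $\oprt_{E/U}$, of $\nabla_{H/U}$, and of $H$) are compared through the same canonical identification $(f^<E)_v \cong E_u$, and that the composite morphism of differential spaces really equals $\iota_u$. If this becomes awkward, the fallback is a direct local check. In a frame $\set{e^\alpha_i}$ the connection matrix of $\nabla_{H/U}$ is $\bar H_\alpha^{-1}\prt_{X/U}\bar H_\alpha$, and the matrix of $f^<H$ in the pulled-back frame is $H_\alpha \circ (f\times\id_X)$. Since $\prt_{X/U}$ involves only $X$-derivatives, $h^\sharp(\prt_{X/U} g) = \prt_{X/V}(g \circ (f\times\id_X))$. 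This gives equality of the connection matrices of the two sides in every pulled-back frame, and \cref{prop:conn-form-coll} then concludes.
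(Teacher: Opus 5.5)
Your proposal is correct. Your fallback is essentially the paper's proof. The paper writes the connection matrix of $f^<\nabla_{H/U}$ in the pulled-back frame as $(f\times\id_X)^\sharp$ applied entrywise to $\sum_k(\bar H_\alpha^{-1})_{ik}\prt_{X/U}((\bar H_\alpha)_{kj})$. It then moves $(f\times\id_X)^\sharp$ through the product and past $\prt_{X/U}$, where it becomes $\prt_{X/V}$, notes that $(f\times\id_X)^\sharp H_\alpha=(f^<H)_\alpha$, and concludes with \cref{prop:conn-form-coll}.

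Your main route is genuinely different. You characterize $\nabla_{f^<H/V}$ by the slicewise uniqueness clause of \cref{prop:prt-Chern-conn} and reduce the statement to three facts:
\begin{enumerate}
\item the identity $(f\times\id_X)\circ\iota_v=\iota_{f(v)}$ as morphisms of differential spaces, which does hold because the maps $h^\sharp$ compose correctly by the commuting squares of \cref{exm:prt-ext-diff-morphism}, and likewise for the $\oprt^{0,\bullet}$ maps of \cref{exm:prt-Dol-morphism};
\item \cref{prop:pullback-comp-nat};
\item invariance of the ordinary Chern connection under isomorphisms preserving the holomorphic structure and the metric.
\end{enumerate}

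What your route buys is independence from the explicit formula $\bar H^{-1}\prt\bar H$. That formula gives the Chern connection matrix only in frames that are holomorphic along the slices. The paper's computation tacitly uses that the pulled-back frames remain slice-holomorphic, which is true because $f\times\id_X$ restricts to the identity of $X$ on each slice. Your argument never chooses frames at all. The cost is the bookkeeping you already flag: comparing three pullbacks through one canonical identification $(f^<E)_v\cong E_{f(v)}$.

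One small point: \cref{prop:prt-Chern-conn} is phrased as uniqueness among \emph{flat} connections. You should remark that its uniqueness argument only compares slice connection matrices and never uses flatness. Alternatively, you can import flatness of $f^<\nabla_{H/U}$ via \cref{prop:pullback-conn-curvature}.
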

\begin{proof}
The connection matrix of $f^<\nabla_{H/U}$ has entries:
\begin{align*}
 & (f \times \id_X)^\sharp\br{\sum_k (\bar{H}_\alpha^{-1})_{ik}\prt_{X/U}((\bar{H}_\alpha)_{kj})} \\
=& \sum_k (f \times \id_X)^\sharp(\bar{H}_\alpha^{-1})_{ik}
          \cdot (f \times \id_X)^\sharp(\prt_{X/U}((\bar{H}_\alpha)_{kj})) \\
=& \sum_k (f \times \id_X)^\sharp(\bar{H}_\alpha^{-1})_{ik}
          \cdot \prt_{X/V}((f \times \id_X)^\sharp(\bar{H}_\alpha)_{kj})
\end{align*}
by the properties of the pullback of differential forms.
Let $W'_\alpha = (f \times \id_X)^{-1}(W_\alpha)$ and $(f^<H)_\alpha$ be the matrix of $f^<H$ in the
$\set{f^\sharp e^\alpha_i}_i$ frame of $E$ over $W'_\alpha$.
Then, observe that $(f \times \id_X)^\sharp(H_{\alpha, ab}) = (f^<H)_{\alpha, ab}$
by the definition of pullback metrics. This implies
$(f \times \id_X)^\sharp((\bar{H}_\alpha^{-1})_{ab}) = (\ol{f^<H}^{-1})_{ab}$.
Thus, $f^<\nabla_{H/U}$ has the same connection matrices as $\nabla_{f^<H / V}$, and we are done
by \cref{prop:conn-form-coll}.
\end{proof}

\begin{defn}[Smooth Family of Harmonic Bundles]\label{defn:Harmonic-bun-family}
Let $X$ be a complex manifold and $U$, a smooth manifold. Consider a smooth complex vector bundle
$p : E \to U \times X$ equipped with the following data:
\begin{enumerate}
\item a flat $1$--$\oprt_{X/U}^\bullet$--connection, $D''$
\item a flat $1$--$d_{X/U}$--connection, $D$
\end{enumerate}
By the discussion in \cref{exm:Higgs-bun-family}, we also get:
\begin{enumerate}
\item a smooth family of holomorphic structures: that is, a
    $1$--$\oprt_{X/U}^{0, \bullet}$--connection $\oprt_{E/U}$
\item a smooth family of Higgs fields: that is, a
    $0$--$\prt_{X/U}^{\bullet, 0}$--connection $\theta$
\end{enumerate}
Suppose also that there exists a smooth Hermitian metric $H$ on $E$ with partial Chern connection
$\nabla_{H/U}$. Let $\bar{\theta}_{H} : E \to E \otimes \sA_{X/U}^{0, 1}$ denote the map
corresponding to the adjoint of $\theta$ with respect to $H$.
Then, we say that $D$ and $D''$ are $H$--related if the following identity holds:
\begin{equation}\label{eqn:Harmonicity}
D = \nabla_{H/U} + \iota^{1, 0} \circ \theta + \iota^{0, 1} \circ \bar\theta_H
\end{equation}
where $\iota^{j, 1 - j} : E \otimes \sA_{X/U}^{j, 1 - j} \to E \otimes \sA_{X/U}^{1}, j = 0, 1$ are
the inclusions. In this case, we call the triple $(E, D'', D)$, a smooth family of harmonic bundles
on $X$ parametrized by $U$, and a metric such as $H$, a smooth family of harmonic metrics.
\end{defn}

\begin{rmk}\label{rmk:harmonic-bun-family-pt}
When $U = \pt$, \cref{defn:Harmonic-bun-family} reduces to the usual notion of a harmonic bundle
on $X$ as defined in \cite[18]{HiggsLocSys} or \cite[\S 4.2]{GR15}.
\end{rmk}

\begin{prop}\label{prop:harmonic-bun-family-pullback}
In the context of \cref{defn:Harmonic-bun-family}, let $f : V \to U$ be a smooth map.
Then, the triple $(f^<E, f^<D'', f^<D)$ is a smooth family of harmonic bundles on $X$ parametrized
by $U$. More specifically, for any harmonic metric relating $D''$ and $D$, the pulled back metric
$f^<H$ relates $f^<D''$ and $f^<D$.
\end{prop}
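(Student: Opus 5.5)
We have to show that $f^<D''$ is a flat $1$--$\oprt_{X/V}^\bullet$--connection, that $f^<D$ is a flat $1$--$d_{X/V}$--connection, and that the identity \eqref{eqn:Harmonicity} holds for the pulled back data with respect to $f^<H$. (The statement says ``parametrized by $U$''; the family is of course parametrized by $V$.)

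Flatness is immediate. The pair $(f\times\id_X, h^\sharp)$ of \cref{exm:prt-ext-diff-morphism} and the pair $(f\times\id_X, h^{\sharp,\bullet})$ of \cref{exm:prt-Dol-morphism}(iv) are morphisms of differential spaces. They pull back $1$ to $1$, and $1 \in \ker{d}$, so \cref{prop:pullback-conn-curvature} applies and gives that $f^<D$ and $f^<D''$ are flat.

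For the harmonicity identity, I would pull back both sides of $D = \nabla_{H/U} + \iota^{1,0}\circ\theta + \iota^{0,1}\circ\bar\theta_H$ and compare them term by term. Each side is a sum of a $1$--$d_{X/U}$--connection and $\sA^0$--linear maps. Pullback is defined through connection matrices (\cref{defn:pullback-conn}), and $f^\sharp$ is additive and entrywise on matrices, so pulling back commutes with these sums. Hence
\[
f^<D = f^<\nabla_{H/U} + f^<(\iota^{1,0}\circ\theta) + f^<(\iota^{0,1}\circ\bar\theta_H).
\]
By \cref{prop:prt-Chern-conn-pullback}, $f^<\nabla_{H/U} = \nabla_{f^<H/V}$.

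For the remaining terms I would work in a local frame $\set{e^\alpha_i}$ on $W_\alpha$ together with its pulled back frame. There are three things to check:
\begin{enumerate}
\item The $(1,0)$ part of $f^<D''$ is $f^<\theta$, with the matrix $[(f\times\id_X)^\sharp\theta_{ij}]$. This uses the commuting squares of \cref{exm:prt-Dol-morphism}: $h^{\sharp}$ preserves the $(p,q)$ splitting, so projecting to $(1,0)$ commutes with pullback. Also $f^<(\iota^{1,0}\circ\theta) = \iota^{1,0}\circ f^<\theta$.
\item The same argument gives $f^<(\iota^{0,1}\circ\bar\theta_H) = \iota^{0,1}\circ f^<\bar\theta_H$.
\item $f^<\bar\theta_H = \overline{(f^<\theta)}_{f^<H}$.
\end{enumerate}
For (iii), in the frame the adjoint has matrix $\bar\theta_H = H^{-1}\,\overline{\theta}^{T} H$, up to the paper's convention on $\bar{H}$. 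The map $(f\times\id_X)^\sharp$ is pullback of forms along $f\times\id_X$, so it commutes with complex conjugation, transposition, products and inverses of matrices of functions. The matrix of $f^<H$ in the pulled back frame is $(f\times\id_X)^\sharp H_\alpha$, as already used in \cref{prop:prt-Chern-conn-pullback}. Hence the pulled back adjoint matrix equals the adjoint of the pulled back matrix with respect to $f^<H$.

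Substituting (i)--(iii) into the displayed equation yields \eqref{eqn:Harmonicity} for $(f^<E, f^<D'', f^<D)$ with metric $f^<H$. All equalities hold frame by frame, so they hold globally by \cref{prop:conn-form-coll}.

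I expect step (iii), the compatibility of the $H$-adjoint with pullback, to be the main obstacle. It requires a careful local formula for $\bar\theta_H$ and an explicit check that $(f\times\id_X)^\sharp$ commutes with conjugation, including conjugation sending $dz_p$ to $d\bar z_p$. Since $g=\id_X$ is holomorphic, pullback preserves $dz_p$ and $d\bar z_p$, so this should go through. Alternatively, the whole identity can be checked slicewise: \cref{prop:conn-slicewise} reduces it to each $v\in V$, where $\iota_v^*f^< = \iota_{f(v)}^*$ up to the canonical isomorphism of \cref{prop:pullback-comp-nat}. The identity then follows from harmonicity of $(E_{f(v)}, D''_{f(v)}, D_{f(v)})$ with metric $H_{f(v)}$.
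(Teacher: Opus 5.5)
Your proposal is correct and follows essentially the paper's route. You pull back the harmonicity identity term by term through connection matrices, invoke \cref{prop:prt-Chern-conn-pullback} for the partial Chern connection, check that taking the $(1,0)$-part and composing with $\iota^{j,1-j}$ commute with pullback, and verify $f^<\bar\theta_H = \overline{(f^<\theta)}_{f^<H}$ from the local formula $H^{-1}\bar\theta^{T}H$ together with compatibility of $(f\times\id_X)^\sharp$ with conjugation, products and inverses. Your explicit flatness check via \cref{prop:pullback-conn-curvature}, which the paper leaves implicit, and your correction of the typo ``parametrized by $U$'' to $V$ are both right.
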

\begin{proof}
Choose any harmonic metric $H$ relating $D''$ and $D$.
We saw in \cref{prop:prt-Chern-conn-pullback} that $\nabla_{f^<H/V} = f^<\nabla_{H/U}$.
By considering connection matrices and the definition of pullback connections
(\cref{defn:pullback-conn}), it is straightforward to show that
$pr^{1, 0} \circ f^<D'' = f^<(pr^{1, 0} \circ D'') = f^<\theta$ for $p = 0, 1$ --- the
connection matrices of both sides are entrywise pullbacks of the same submatrix of the connection
matrix of $D''$.
A similar argument shows that $\iota^{1, 0} \circ f^<\theta = f^<(\iota^{1, 0} \circ \theta)$,
and $\iota^{0, 1} \circ f^<\bar{\theta}_H = f^<(\iota^{0, 1} \circ \bar{\theta}_H)$.
We can further show that $f^<\bar{\theta}_H = \ol{(f^<\theta)}_{f^<H}$ as follows.
In a local frame, $\bar{\theta}_{H}$ has connection matrix entries:
\[
\sum_{k, l} (H^{-1})_{ik} \bar{\theta}^T_{kl} H_{lj}
= \sum_{k, l} (H^{-1})_{ik} \bar{\theta}_{lk} H_{lj}
\]
In the pulled back frame, $f^<\bar{\theta}_H$ has connection matrix entries:
\begin{equation}\label{eqn:harmonic-bun-pb-conj-Higgs-field}
(f \times \id_X)^\sharp\br{\sum_{k, l} H^{-1}_{ik} \bar{\theta}_{lk} H_{lj}}
= \sum_{k, l} (f \times \id_X)^\sharp(H^{-1}_{ik})
              (f \times \id_X)^\sharp(\bar{\theta}_{lk})
              (f \times \id_X)^\sharp(H_{lj})
\end{equation}
Recall that $(f \times \id_X)^\sharp(H^{-1}_{ik})(v, x) = H^{-1}_{ik}(f(v), x)$ as a smooth complex
valued function, which is the $(i, k)$--entry of the matrix
$H(f(v), x)^{-1} = (H \circ (f \times \id_X))(v, x)^{-1} = (f^<H)^{-1}(v, x)$. That is,
$(f \times \id_X)^\sharp(H^{-1}_{ik}) = (f^<H)^{-1}_{ik}$.
Next, we recall that pullback of complex differential forms commutes with conjugation, so that
$(f \times \id_X)^\sharp(\bar{\theta}_{lk}) = \ol{(f \times \id_X)^\sharp(\theta_{lk})}
= \ol{(f^<\theta)}_{lk}$. Of course, $(f \times \id_X)^\sharp(H_{lj}) = (f^<H)_{lj}$.
Putting these together, \ref{eqn:harmonic-bun-pb-conj-Higgs-field} becomes:
\[
(f \times \id_X)^\sharp\br{\sum_{k, l} H^{-1}_{ik} \bar{\theta}_{lk} H_{lj}}
= \sum_{k, l} (f^<H)^{-1}_{ik}
              \ol{(f^<\theta)}_{lk}
              (f^<H)_{lj}
\]
which is precisely the corresponding connection matrix entry of $\ol{(f^<H)}_{f^<H}$.

Therefore, we have:
\begin{align*}
 & \nabla_{f^<H/V} + \iota^{1, 0} \circ (pr^{1, 0} \circ f^<D'')
                   + \iota^{0, 1} \circ \ol{(pr^{0, 1} \circ f^<D'')}_{f^<H} \\
=& f^<\nabla_{H/U} + \iota^{1, 0} \circ f^<\theta
                   + \iota^{0, 1} \circ \ol{(f^<\theta)}_{f^<H} \\
=& f^<\nabla_{H/U} + \iota^{1, 0} \circ f^<\theta
                   + \iota^{0, 1} \circ f^<\bar{\theta}_H \\
=& f^<\nabla_{H/U} + f^<(\iota^{1, 0} \circ \theta) + f^<(\iota^{0, 1} \circ \bar{\theta}_H) \\
=& f^<(\nabla_{H/U} + \iota^{1, 0} \circ \theta + \iota^{0, 1} \circ \bar{\theta}_H) \\
=& f^<D
\end{align*}
where the second last step follows by a straightforward comparison of connection matrices again.
Now, this is exactly the desired conclusion.
\end{proof}

\begin{rmk}\label{rmk:harmonic-bun-family-slice}
In light of \cref{prop:harmonic-bun-family-pullback} and \cref{rmk:harmonic-bun-family-pt}, for each
$u \in U$, the triple $(E_u, D''_u, D_u)$ obtined by pullback along the inclusion
$\iota_u : X \cong \set{u} \times X \to U \times X$ is a harmonic bundle on $X$ in the usual sense.
\end{rmk}

\begin{defn}\label{defn:harmonic-op}
Let $(E, D'', D)$ be a smooth family of harmonic bundles on $X$ parametrized by $U$. Then,
suppose $H$ is a harmonic metric on $E$ that relates $D''$ and $D$. We define the
following operators:
\begin{enumerate}
\item $\prt_H := pr^{1, 0} \circ \nabla_{H/U}$
\item $D'_{H} := \iota^{1, 0} \circ \prt_H + \iota^{0, 1} \circ \bar{\theta}_H$
\end{enumerate}
\end{defn}

\begin{prop}\label{prop:harmonic-op-conn}
In the context of \cref{defn:harmonic-op}, the following hold:
\begin{enumerate}
\item $D'_H = D - D''$
\item $D'_H$ is a flat $1$--$\prt_{X/U}^\bullet$--connection
\item $(D'_H)^1 \circ (D'')^0 + (D'')^1 \circ (D'_H)^0 = 0$
\end{enumerate}
\end{prop}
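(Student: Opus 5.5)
The plan is to decompose every operator into its $(1,0)$ and $(0,1)$ parts, derive (i) algebraically from the structure of the partial Chern connection, and then obtain (ii) and (iii) by expanding the flatness of $D = D'' + D'_H$ and separating the result by bidegree, using adjointness with respect to $H$ to kill the mixed terms.

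\textbf{Step (i).} By \cref{prop:rest-conn-ext-prt}(iii) applied to the flat $1$--$\oprt_{X/U}^\bullet$--connection $D''$, we can write
\[
D'' = \iota^{0,1}\circ\oprt_{E/U} + \iota^{1,0}\circ\theta .
\]
Next I will use that the $(0,1)$-part of the partial Chern connection is the family of holomorphic structures, i.e.
\[
pr^{0,1}\circ\nabla_{H/U} = \oprt_{E/U}, \qquad \nabla_{H/U} = \iota^{1,0}\circ\prt_H + \iota^{0,1}\circ\oprt_{E/U}.
\]
This is checked in a local frame from the formula for $A^\alpha_{ij}$ in \cref{prop:prt-Chern-conn}. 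Those entries are of type $(1,0)$ in a holomorphic frame; strictly this requires choosing frames that are holomorphic on each slice, or else arguing slicewise via \cref{prop:conn-slicewise} and the classical Chern connection. Substituting both decompositions into \eqref{eqn:Harmonicity} and subtracting gives $D - D'' = \iota^{1,0}\circ\prt_H + \iota^{0,1}\circ\bar\theta_H = D'_H$.

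\textbf{Step (ii), connection type.} The operator $\prt_H$ is a $1$--$\prt^{\bullet,0}_{X/U}$--connection by \cref{prop:rest-conn-ext-full}(i), so $\iota^{1,0}\circ\prt_H$ is a $1$--$\prt_{X/U}^\bullet$--connection by \cref{prop:rest-conn-ext-prt}(i). The operator $\bar\theta_H$ is $\sA^0$--linear, hence a $0$--connection for any differential by \cref{rmk:0-d-conn}. \Cref{prop:conn-sum}(i) then shows that $D'_H$ is a $1$--$\prt_{X/U}^\bullet$--connection.

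\textbf{Flatness expansion.} Since $D$ and $D''$ are flat, \cref{prop:conn-sum}(iii) applied to $D = D'' + D'_H$ yields
\[
(D'_H)^1(D'_H)^0 + (D'_H)^1(D'')^0 + (D'')^1(D'_H)^0 = 0 .
\]
By \cref{prop:curvature-is-linear} it suffices to work in degree $0\to2$. I then expand into the pieces $\oprt_{E/U},\theta,\prt_H,\bar\theta_H$ and sort them by bidegree:
\begin{itemize}
\item the $(2,0)$-part is $\prt_H^2 + \{\theta,\prt_H\}$;
\item the $(0,2)$-part is $\bar\theta_H^2 + \{\oprt_{E/U},\bar\theta_H\}$;
\item the $(1,1)$-part is $\{\prt_H,\bar\theta_H\} + \{\oprt_{E/U},\prt_H\} + \{\theta,\bar\theta_H\}$.
\end{itemize}
Each of these three parts vanishes separately.

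\textbf{Killing the individual terms.} Flatness of $D''$ gives $\oprt_{E/U}^2 = 0$, $\theta\wedge\theta = 0$ and $\{\oprt_{E/U},\theta\} = 0$, using \cref{prop:rest-conn-ext-full}(vi) with the bidegree splitting. Taking $H$-adjoints, most conveniently slicewise via \cref{prop:conn-slicewise}, gives $\prt_H^2 = 0$, $\bar\theta_H^2 = 0$ and $\{\prt_H,\bar\theta_H\} = 0$. The adjoint relations used here are the classical ones: $\prt_H$ is $H$-adjoint to $\oprt_{E/U}$, and $\bar\theta_H$ to $\theta$. From the bidegree equations we then read off:
\begin{itemize}
\item $\{\theta,\prt_H\} = 0$ from the $(2,0)$-part;
\item $\{\oprt_{E/U},\bar\theta_H\} = 0$ from the $(0,2)$-part;
\item $\{\oprt_{E/U},\prt_H\} + \{\theta,\bar\theta_H\} = 0$ from the $(1,1)$-part.
\end{itemize}
Now $(D'_H)^2$ is the sum $\prt_H^2 + \{\prt_H,\bar\theta_H\} + \bar\theta_H^2$, so it vanishes, which proves flatness in (ii). For (iii), note that $\{D'',D'_H\}$ equals
\[
\{\oprt_{E/U},\prt_H\} + \{\theta,\bar\theta_H\} + \{\theta,\prt_H\} + \{\oprt_{E/U},\bar\theta_H\},
\]
and each grouping vanishes by the relations just listed.

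\textbf{Main obstacle.} The delicate point is justifying the adjoint identities (for example, that $\{\prt_H,\bar\theta_H\}$ is minus the $H$-adjoint of $\{\oprt_{E/U},\theta\}$) in this abstract partial setting. I plan to reduce these to each slice $\set{u}\times X$ by \cref{prop:conn-slicewise} and \cref{rmk:harmonic-bun-family-slice}, and then quote the classical identities for a single Hermitian holomorphic bundle.
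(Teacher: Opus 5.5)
Your argument is correct. For (i) and (iii) it is the paper's argument: the paper gets (iii) from (i), the flatness of $D$ and $D''$, and \cref{prop:conn-sum}(iv), which is your bidegree bookkeeping in compressed form.

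The genuine difference is the flatness claim in (ii). The paper proceeds in three steps:
\begin{enumerate}
\item It obtains $\prt_H^2=0$ from the flatness of $\nabla_{H/U}$ asserted in \cref{prop:prt-Chern-conn}, via \cref{prop:rest-conn-ext-full}(vi).
\item It gets $\bar\theta_H^2=0$ from the frame computation $H^{-1}\bar{T}H\wedge H^{-1}\bar{T}H=H^{-1}(\bar{T}\wedge\bar{T})H=0$.
\item It concludes that $D'_H$ is flat by citing \cref{prop:rest-conn-ext-full}(vi) again.
\end{enumerate}
You instead derive all three components of $(D'_H)^2$ as $H$-adjoints of the components of $(D'')^2=0$, checked slicewise.

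Your route is the more reliable one, for two reasons.
\begin{enumerate}
\item A Chern connection is not flat in general; only its $(2,0)$ and $(0,2)$ curvature vanish. Your own $(1,1)$ equation shows that the curvature $\{\oprt_{E/U},\prt_H\}$ of $\nabla_{H/U}$ equals $-\{\theta,\bar\theta_H\}$, which is typically nonzero. So $\prt_H^2=0$ needs the type or adjointness argument you give, not flatness of $\nabla_{H/U}$.
\item The criterion in \cref{prop:rest-conn-ext-full}(vi) also requires the mixed identity $\{\prt_H,\bar\theta_H\}=0$. The paper never checks this, and it is exactly what your adjoint of $\{\oprt_{E/U},\theta\}=0$ supplies.
\end{enumerate}

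What the paper's approach buys is that $\bar\theta_H^2=0$ becomes a one-line matrix identity in an arbitrary frame, with no sesquilinear adjoint formalism. What yours buys is a complete proof. The cost is importing the classical identities ($\prt_H$ adjoint to $\oprt_{E/U}$, $\bar\theta_H$ adjoint to $\theta$) and the slicewise reduction, which you correctly route through \cref{prop:conn-slicewise}.

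Your caveat about needing slicewise-holomorphic frames, or a slicewise argument, for $pr^{0,1}\circ\nabla_{H/U}=\oprt_{E/U}$ also makes explicit something the paper uses silently.
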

\begin{proof}
(i)
Observe that:
\begin{align*}
D - D''
=& \nabla_{H/U} + \iota^{1, 0} \circ \theta + \iota^{0, 1} \circ \bar{\theta}_H
   - \iota^{0, 1} \circ \oprt_{E/U} - \iota^{1, 0} \circ \theta \\
=& \iota^{0, 1} \circ \oprt_{E/U} + \iota^{1, 0} \circ \prt_H + \iota^{0, 1} \circ \bar{\theta}_H
   - \iota^{0, 1} \circ \oprt_{E/U} \\
=& \iota^{1, 0} \circ \prt_H + \iota^{0, 1} \circ \bar{\theta}_H \\
=& D'_H
\end{align*}

(ii)
Apply \cref{prop:conn-sum}(i) along with the fact that
$d_{X/U} = \prt_{X/U}^\bullet + \oprt_{X/U}^\bullet$ to deduce that $D'_H$ is a
$1$--$\prt_{X/U}^\bullet$--connection.
For flatness, first apply \cref{prop:rest-conn-ext-full}(vi)
along with the flatness of $\nabla_{H/U}$ to obtain the flatness of $\circ \oprt_{X/U}$
and $\circ \prt_{H}$. Then, we can show that $\bar{\theta}_H$ is flat from the flatness of $\theta$.
If $T = [\theta_{ij}]$ is the connection matrix of $\theta$ in a local frame, then the connection
matrix of $\bar{\theta}_H$ is $H^{-1}\bar{T}H$, where we are using $H$ to also denote its matrix in
the chosen frame and $\bar{T}$ is the matrix obtained by conjugating the differential forms giving
the entries of $T$. Then, the matrix of $\bar{\theta}_H^1 \circ \bar{\theta}_H^0$ is:
\[
H^{-1}\bar{T}H \wedge H^{-1}\bar{T}H = H^{-1}\bar{T} \wedge \bar{T}H
\]
where $\wedge$ means matrix multiplication with the product of entries given by wedge product of
differential forms. It is easy to see that the wedge product of forms commutes with conjugation.
Thus, $\bar{T} \wedge \bar{T} = \bar{T \wedge T} = \bar{0} = 0$, by the flatness of $\theta$.
This shows that $\bar{\theta}_H$ is flat. The flatness of $D'$ now follows from
\cref{prop:rest-conn-ext-full}(vi).

(iii)
Combine (i) above with the flatness of $D$ and $D''$, and \cref{prop:conn-sum}(iv).
\end{proof}

\begin{prop}\label{prop:harmonic-op-metric-inv}
Let $(E, D'', D)$ be a smooth family of harmonic bundles on $X$ parametrized by $U$. Then,
suppose $H_i, i = 1, 2$ are harmonic metrics on $E$ that relate $D''$ and $D$. Then,
\[
(D'_{H_1})^k = (D'_{H_2})^k
\]
for all $k = 0, 1, 2, \dots$.
\end{prop}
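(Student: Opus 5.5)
By \cref{prop:harmonic-op-conn}(i), both operators equal the same thing: $D'_{H_1} = D - D'' = D'_{H_2}$ as sheaf maps $E \to E \otimes \sA^1_{X/U}$. The point of the plan is that this equality in degree $0$ propagates to all the extensions $(D'_{H_i})^k$ of \cref{defn:conn-higher-forms}. A priori these extensions depend on the choice of $\lambda$ and of the differential, so the main check is that the two connections are regarded as connections of the same type.

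First I will note that, by \cref{prop:harmonic-op-conn}(ii), each $D'_{H_i}$ is a $1$--$\prt_{X/U}^\bullet$--connection. This type is independent of $H_i$: the differential $\prt_{X/U}^\bullet$ is intrinsic to $U \times X$, and $\lambda = 1$. Hence for $k \geq 1$ the map $(D'_{H_i})^k$ is the unique $\bF$--linear map induced by the $R$--balanced map
\[
(s, \omega) \mapsto s \otimes \prt_{X/U}(\omega) + D'_{H_i}(s) \wedge \omega ,
\]
through the universal property of the tensor product.

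Since $D'_{H_1}(s) = D'_{H_2}(s)$ for every local section $s$, the two balanced maps $(D'_{H_1})^k_0$ and $(D'_{H_2})^k_0$ agree on $E \times (\sA^1_{X/U})^{\wedge k}$. Uniqueness in the universal property then gives $(D'_{H_1})^k = (D'_{H_2})^k$. The case $k = 0$ is exactly \cref{prop:harmonic-op-conn}(i).

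Equivalently, the comparison can be made in a local frame using \cref{rmk:conn-higher-forms-loc}. There $(D'_{H_i})^k$ is determined entirely by the connection matrix of $D'_{H_i}$ together with $\prt_{X/U}$, and the connection matrices coincide because the operators do.

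The only subtle point is this typing issue. One could instead view $D - D''$ as a $0$--$d_{X/U}$--connection via \cref{prop:conn-sum} and \cref{prop:conn-scale}, and that would give a different extension. So I will state explicitly that the extensions are taken with respect to $\prt_{X/U}^\bullet$ with $\lambda = 1$, as in \cref{prop:harmonic-op-conn}(ii). With this convention nothing beyond the degree-zero equality is needed.
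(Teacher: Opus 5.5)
Your proposal is correct and follows the paper's proof. Both $D'_{H_i}$ equal $D - D''$ by \cref{prop:harmonic-op-conn}(i), and the extensions to higher forms are then determined by \cref{defn:conn-higher-forms}; your explicit choice of the type $1$--$\prt_{X/U}^\bullet$ just spells out what the paper leaves implicit. One aside is slightly off and should be dropped: $D - D''$ is not $R$--linear, so it cannot be a $0$--$d_{X/U}$--connection. \cref{prop:conn-sum}(i) makes it a $1$--$(d_{X/U} - \oprt_{X/U}) = 1$--$\prt_{X/U}$--connection, so no competing typing arises and the main argument is unaffected.
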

\begin{proof}
We can see that $D_{H_1} = D - D'' = D_{H_2}$.
It follows from the definition of extensions of connections to higher forms that
$(D'_{H_i})^k = (D'_{H_i})^k$ for all $k = 0, 1, 2, \dots$.
\end{proof}

\begin{defn}[Harmonic Operator]
In light of \cref{prop:harmonic-op-metric-inv}, we will write $D'$ to denote $D'_H$ from this point
onwards, and call it the harmonic operator of the harmonic bundle $(E, D'', D)$.
\end{defn}

\begin{prop}\label{prop:harmonic-op-Higgs-op}
Let $(E, D'', D)$ be a smooth family of harmonic bundles on $X$ parametrized by $U$. Then,
suppose $H$ is a harmonic metric on $E$ that relates $D''$ and $D$. Then,
$(D'')^\bullet$ restricts to a complex of sheaves of $\bF$--modules:
\[
0 \to \ker{(D')^0} \to[(D'')^0] \ker{(D')^{1}} \to[(D'')^1] \ker{(D')^2} \to[(D'')^2] \cdots
\]
Furthermore, $D^k|_{\ker{(D')^k}} = (D'')^k|_{\ker{(D')^k}}$ making the above complex equal to:
\[
0 \to \ker{(D')^0} \to[D^0] \ker{(D')^{1}} \to[D^1] \ker{(D')^2} \to[D^2] \cdots
\]
This gives inclusions of the above complex into the complexes
$(E \otimes \sA_{X/U}^\bullet, (D'')^\bullet)$ and $(E \otimes \sA_{X/U}^\bullet, D^\bullet)$.
\end{prop}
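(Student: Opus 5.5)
The plan is to derive everything from the three identities in \cref{prop:harmonic-op-conn}, upgraded from degree $0$ to all degrees. By \cref{prop:harmonic-op-conn}(i) we have $D = D' + D''$ as connections. Then \cref{prop:conn-sum}(ii) gives, in every degree,
\[
D^k = (D')^k + (D'')^k .
\]
By \cref{prop:harmonic-op-conn}(ii), $D'$ is flat, so $(D')^{k+1} \circ (D')^k = 0$. Flatness of $D''$ is part of \cref{defn:Harmonic-bun-family}.

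The first real step is to extend the anticommutation relation \cref{prop:harmonic-op-conn}(iii) from $k = 0$ to all $k$:
\[
(D')^{k+1} \circ (D'')^k + (D'')^{k+1} \circ (D')^k = 0 .
\]
I would argue this by expanding the flatness of $D$ as $D^{k+1} \circ D^k = 0$, using the sum formula above. The two pure terms $(D')^{k+1}(D')^k$ and $(D'')^{k+1}(D'')^k$ vanish by flatness of $D'$ and $D''$. What remains is exactly the cross term, so it vanishes too.

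This is the step I expect to need care. It requires flatness of $D$ in every degree, not only $k = 0$. That is supplied by \cref{prop:curvature-is-linear}, since here $\lambda = 1 \in \ker{d_{X/U}}$. The same proposition gives higher-degree flatness of $D'$ and $D''$ from their degree-$0$ flatness. If I want to avoid relying on that, the fallback is a local frame computation via \cref{rmk:conn-higher-forms-loc}. Writing $D' = \prt_{X/U} + A'$ and $D'' = \oprt_{X/U} + A''$ locally, the cross term in degree $k$ is wedge-multiplication by
\[
\prt_{X/U} A'' + \oprt_{X/U} A' + A' \wedge A'' + A'' \wedge A' ,
\]
which is independent of $k$ and vanishes by the $k = 0$ case.

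With the anticommutation in hand, the restriction claim is immediate. If $s \in \ker{(D')^k}$, then
\[
(D')^{k+1}\bigl((D'')^k s\bigr) = -(D'')^{k+1}\bigl((D')^k s\bigr) = 0,
\]
so $(D'')^k$ maps $\ker{(D')^k}$ into $\ker{(D')^{k+1}}$. The restricted maps square to zero because $D''$ is flat, so we get a complex of sheaves of $\bF$--modules. Here the kernels are taken as subsheaves of $E \otimes \sA^k_{X/U}$ and the maps are $\bF$--linear.

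The equality $D^k|_{\ker{(D')^k}} = (D'')^k|_{\ker{(D')^k}}$ follows from $D^k = (D')^k + (D'')^k$. Hence the two displayed complexes coincide. Finally, the inclusions $\ker{(D')^k} \hookrightarrow E \otimes \sA^k_{X/U}$ commute with the differentials by construction, since the differential on the subcomplex is the restriction of $(D'')^k$, equivalently of $D^k$. They are therefore morphisms of complexes into $(E \otimes \sA^\bullet_{X/U}, (D'')^\bullet)$ and into $(E \otimes \sA^\bullet_{X/U}, D^\bullet)$.
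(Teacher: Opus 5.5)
Your proposal is correct and is essentially the paper's proof: it derives $(D')^{k+1}\circ(D'')^k + (D'')^{k+1}\circ(D')^k = 0$ from the flatness of $D$, $D'$, $D''$ together with $D^k = (D')^k + (D'')^k$ (\cref{prop:conn-sum}), and then uses this identity to show that $(D'')^k$ maps $\ker{(D')^k}$ into $\ker{(D')^{k+1}}$ and agrees with $D^k$ there. Your extra care about higher degrees is harmless but not needed, since by \cref{defn:curvature} flatness already means that all the curvatures $\nabla^{k+1}\circ\nabla^k$ vanish.
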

\begin{proof}
By the flatness of $D$, $D''$ and $D'$ (\cref{prop:harmonic-op-conn}(ii)), the relation
$D = D'' + D'$ and \cref{prop:conn-sum}, we have:
\[
(D'')^{k + 1} \circ (D')^k + (D')^{k + 1} \circ (D'')^k = 0
\]
Let $s \in \ker{(D')^k}(W) \subset (E \otimes \sA_{X/U}^k)(W)$ for some open subset
$W \subset U \times X$. Then, the above identity yields:
\[
(D')^{k + 1}((D'')^k(s)) = -(D'')^{k + 1}((D')^k(s)) = 0
\]
which implies $(D'')^k(s) \in \ker{(D')^{k + 1}}$. This yields the complex
$(\ker{(D'_E)^\bullet}, (D''_E)^\bullet)$ and the inclusion into
$(E \otimes \sA_{X/U}^\bullet, (D'')^\bullet)$.
At the same time, $D^k(s) = (D')^k(s) + (D'')^k(s) = (D'')^k(s) \in \ker{(D')^{k + 1}}$. In
particular, $D^k|_{\ker{(D')^k}} = D''|_{\ker{(D')^k}}$, giving the
inclusion into $(E \otimes \sA_{X/U}^\bullet, D^\bullet)$.
\end{proof}

\begin{notn}
For brevity, we will denote a smooth family of harmonic bundles $(E, D'', D)$ as $\sH_E$.
Given such a family, we will write $D''_E, D'_E$ and $D_E$ for $D'', D'$ and $D$.
\end{notn}

\subsection{Constructions Involving Harmonic Bundle Families}
\label{subsec:cons-harmonic}

\begin{prop}\label{prop:harmonic-bun-tensor}
Let $\sH_E, \sH_F$ be two smooth families of harmonic bundles on $X$ parametrized by
$U$. Then, $\sH_E \otimes \sH_F = (E \otimes F, D''_E \otimes D''_F, D_E \otimes D_F)$ is a smooth
family of harmonic bundles with harmonic operator $D'_E \otimes D'_F$.
\end{prop}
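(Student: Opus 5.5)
The plan is to exhibit an explicit harmonic metric and verify \eqref{eqn:Harmonicity} for the tensor product. Choose harmonic metrics $H_E$ and $H_F$ relating $D''_E, D_E$ and $D''_F, D_F$ respectively, and take the tensor product metric $H := H_E \otimes H_F$ on $E \otimes F$.

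The first step is to check that the data have the right type. By \cref{defn:tensor-conn}, \cref{cor:tensor-conn-flat} and the convention that $\lambda = 1 \in \ker{d}$, the operator $D''_E \otimes D''_F$ is a flat $1$--$\oprt^\bullet_{X/U}$--connection and $D_E \otimes D_F$ is a flat $1$--$d_{X/U}$--connection on $E \otimes F$.

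The second step is to decompose the tensor product connections into their components. Applying \cref{prop:tensor-conn-sum} to $D''_E = \iota^{0,1}\circ\oprt_{E/U} + \iota^{1,0}\circ\theta_E$ and using \cref{prop:tensor-conn-ext}, the holomorphic part of $D''_E \otimes D''_F$ is $\oprt_{E/U}\otimes\oprt_{F/U}$. Its Higgs part is $\theta_E\otimes\id + \id\otimes\theta_F$, which is the $0$--connection analogue of the tensor product. This is most easily seen on connection matrices via \cref{rmk:tensor-conn-matrix}, where the entries are $\delta A + \delta B$.

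The third step identifies the partial Chern connection of $H$. Slicewise, the Chern connection of $H_{E,u}\otimes H_{F,u}$ for the tensor holomorphic structure is the tensor product of the Chern connections. By the uniqueness statement in \cref{prop:prt-Chern-conn} together with \cref{prop:conn-slicewise}, this gives $\nabla_{H/U} = \nabla_{H_E/U}\otimes\nabla_{H_F/U}$. Alternatively, one can compute directly with $\bar H = \bar H_E\otimes\bar H_F$ in product frames: $\bar H^{-1}\prt_{X/U}\bar H = \bar H_E^{-1}\prt\bar H_E\otimes 1 + 1\otimes\bar H_F^{-1}\prt\bar H_F$.

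The fourth step computes the adjoint Higgs field. In a frame, $(\theta_E\otimes 1 + 1\otimes\theta_F)$ has $H$--adjoint given by $H^{-1}\bar T^T H$ with $H = H_E\otimes H_F$. This factors as $\bar\theta_{E,H_E}\otimes 1 + 1\otimes\bar\theta_{F,H_F}$, by the mixed-product property of Kronecker products. Summing the pieces and using \cref{prop:tensor-conn-sum} once more gives
\[
\nabla_{H/U} + \iota^{1,0}\circ\theta + \iota^{0,1}\circ\bar\theta_H = (\nabla_{H_E/U} + \iota^{1,0}\theta_E + \iota^{0,1}\bar\theta_{E})\otimes(\nabla_{H_F/U} + \iota^{1,0}\theta_F + \iota^{0,1}\bar\theta_F) = D_E\otimes D_F .
\]

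For the harmonic operator, \cref{prop:harmonic-op-conn}(i) gives $D' = D_E\otimes D_F - D''_E\otimes D''_F$. By \cref{prop:tensor-conn-sum}, this equals $(D_E - D''_E)\otimes(D_F - D''_F) = D'_E\otimes D'_F$. Here we interpret the additivity of $\otimes$ in each slot via connection matrices, where both sides have entries $\delta(A-A'')+\delta(B-B'')$.

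The main obstacle is bookkeeping in \cref{prop:tensor-conn-sum}. Its statement mixes types: $\otimes$ of sums is additive only when the connection types align, and $\theta$ and $\bar\theta_H$ are $0$--connections while the others are $1$--connections. I would sidestep this by working throughout with connection matrices in product frames $\{e_i\otimes f_k\}$. There every identity reduces to the linear formula $C_{ijkl} = \delta_{kl}A_{ij} + \delta_{ij}B_{kl}$, and then \cref{prop:conn-form-coll} globalizes the result.
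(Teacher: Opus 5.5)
Your proposal is correct and follows essentially the same route as the paper. Both take the tensor product metric $H_E \otimes H_F$, compute in product frames that $\nabla_{H_E \otimes H_F/U} = \nabla_{H_E/U} \otimes \nabla_{H_F/U}$ and that the adjoint of the tensor Higgs field is the tensor of the adjoints, decompose $D''_E \otimes D''_F$ via \cref{prop:tensor-conn-sum} and \cref{prop:tensor-conn-ext}, and identify the harmonic operator through additivity of $\otimes$. Your Kronecker mixed-product shorthand and the optional slicewise-uniqueness argument for the partial Chern connection are only compact repackagings of the paper's index computations.
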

\begin{proof}
Let $H_E, H_F$ be harmonic metrics on $\sH_E, \sH_F$ respectively and consider the tensor product
metric $H_E \otimes H_F$.
We will first compute the connection matrix of $\nabla_{H_E \otimes H_F/U}$.
Let $\set{e_i}_i, \set{f_j}_j$ be local frames for $E$ and $F$, with dual frames
$\set{\epsilon_i}_i, \set{\phi_j}_j$ respectively.
We will denote the matrices of $\theta_E, H_E$ and $\theta_F, H_F$ in the $\set{e_i}, \set{f_j}$
frames respectively by the same symbols.
Then, the connection matrix of $\nabla_{H_E \otimes H_F/U}$ has the following entry
corresponding to the basis element $\epsilon_i \otimes e_p \otimes \phi_j \otimes f_q$:
\begin{align*}
 & \sum_{k, l} \br{\ol{H_E \otimes H_K}^{-1}}_{ijkl}
               \prt_{X/U}\br{\br{\ol{H_E \otimes H_F}}_{klpq}} \\
=& \sum_{k, l} \br{\bar{H}_E^{-1} \otimes \bar{H}_K^{-1}}_{ijkl}
          \prt_{X/U}\br{\br{\bar{H}_E \otimes \bar{H}_F}_{klpq}} \\
=& \sum_{k, l} \bar{H}_{E, ik}^{-1}\bar{H}_{F, jl}^{-1}
          \prt_{X/U}\br{\bar{H}_{E, kp}\bar{H}_{F, lq}} \\
=& \sum_{k, l} \bar{H}_{E, ik}^{-1}\bar{H}_{F, jl}^{-1}
          \br{\prt_{X/U}\br{\bar{H}_{E, kp}}\bar{H}_{F, lq} +
              \bar{H}_{E, kp}\prt_{X/U}\br{\bar{H}_{F, lq}}} \\
=& \br{\sum_{l} \bar{H}_{F, jl}^{-1}\bar{H}_{F, lq}}
   \br{\sum_{k} \bar{H}_{E, ik}^{-1}\prt_{X/U}\br{\bar{H}_{E, kp}}}
   + \br{\sum_k \bar{H}_{E, ik}^{-1}\bar{H}_{E, kp}}
     \br{\sum_l \bar{H}_{F, jl}^{-1}\prt_{X/U}\br{\bar{H}_{F, lq}}} \\
=& \delta_{jq}\br{\sum_{k} \bar{H}_{E, ik}^{-1}\prt_{X/U}\br{\bar{H}_{E, kp}}}
   + \delta_{ip} \br{\sum_l \bar{H}_{F, jl}^{-1}\prt_{X/U}\br{\bar{H}_{F, lq}}}
\end{align*}
which is precisely the corresponding connection matrix entry of
$\nabla_{H_E/U} \otimes \nabla_{H_F/U}$ by
\cref{rmk:tensor-conn-matrix}.

By the same remark, the connection matrix entry of
$\theta_E \otimes \theta_F$ corresponding to $\epsilon_i \otimes e_p \otimes \phi_j \otimes f_q$ is:
\[
\delta_{jq}\theta_{E, ip} + \delta_{ip}\theta_{F, jq}
\]
Similarly, the connection matrix entry of $\ol{\theta_E \otimes \theta_F}_{H_E \otimes H_F}$ is:
\begin{align*}
 & \sum_{k, l, m, n} (H_E \otimes H_F)^{-1}_{ijkl}
               \ol{(\theta_E \otimes \theta_F)}_{klmn}
               (H_E \otimes H_F)_{mnpq} \\
=& \sum_{k, l, m, n} (H_{E, ik}^{-1} H_{F, jl}^{-1})
               (\delta_{ln}\bar{\theta}_{E, km} + \delta_{km}\bar{\theta}_{F, ln})
               (H_{E, mp}H_{F, nq}) \\
=& \br{\sum_{l, n} \delta_{ln}H_{F, jl}^{-1}H_{F, nq}
       \sum_{k, m} H_{E, ik}^{-1}\bar{\theta}_{E, km}H_{E, mp}}
     + \br{\sum_{k, m} \delta_{km}H_{E, ik}^{-1}H_{E, mp}
           \sum_{l, n} H_{F, jl}^{-1}\bar{\theta}_{F, ln}H_{F, nq}} \\
=& \br{\sum_{l} H_{F, jl}^{-1}H_{F, lq}
       \sum_{k, m} H_{E, ik}^{-1}\bar{\theta}_{E, km}H_{E, mp}}
     + \br{\sum_{k}H_{E, ik}^{-1}H_{E, kp}
           \sum_{l, n} H_{F, jl}^{-1}\bar{\theta}_{F, ln}H_{F, nq}} \\
=& \delta_{jq} \sum_{k, m} H_{E, ik}^{-1}\bar{\theta}_{E, km}H_{E, mp}
     + \delta_{ip} \sum_{l, n} H_{F, jl}^{-1}\bar{\theta}_{F, ln}H_{F, nq}
\end{align*}
which is precisely the corresponding connection matrix entry of
$\bar{\theta}_{E, H_E} \otimes \bar{\theta}_{F, H_F}$.
By \cref{prop:tensor-conn-sum} and \cref{prop:tensor-conn-ext}, we have:
\begin{align*}
 & D''_E \otimes D''_F \\
=& (\iota^{0, 1} \circ \oprt_{E/U} + \iota^{1, 0} \circ \theta_E)
   \otimes (\iota^{0, 1} \circ \oprt_{F/U} + \iota^{1, 0} \circ \theta_F) \\
=& (\iota^{0, 1} \circ \oprt_{E/U}) \otimes (\iota^{0, 1} \circ \oprt_{F/U})
   + (\iota^{1, 0} \circ \theta_E) \otimes (\iota^{1, 0} \circ \theta_F) \\
=& \iota^{0, 1} \circ (\oprt_{E/U} \otimes \oprt_{F/U})
   + \iota^{1, 0} \circ (\theta_E \otimes \theta_F)
\end{align*}
Similarly:
\[
D'_E \otimes D'_F
= \iota^{1, 0} \circ (\prt_{H_E} \otimes \prt_{H_F})
  + \iota^{0, 1} \circ (\bar{\theta}_{E, H_E} \otimes \bar{\theta}_{F, H_F})
= \iota^{1, 0} \circ (\prt_{H_E} \otimes \prt_{H_F})
  + \iota^{0, 1} \circ (\ol{\theta_{E} \otimes \theta_{F}}_{H_E \otimes H_F})
\]

Combining these observations with the
harmonicity of $\sH_E, \sH_F$, \cref{prop:tensor-conn-sum} and \cref{prop:tensor-conn-ext}, we see
that:
\begin{align*}
 & \nabla_{H_E \otimes H_F/U} + \iota^{1, 0} \circ (\theta_E \otimes \theta_F)
   + \iota^{0, 1} \circ (\ol{\theta_E \otimes \theta_F}_{H_E \otimes H_F}) \\
=& \nabla_{H_E/U} \otimes \nabla_{H_F/U}
   + (\iota^{1, 0} \circ \theta_E) \otimes (\iota^{1, 0} \circ \theta_F)
   + (\iota^{0, 1} \circ \bar{\theta}_{E, H_E})
     \otimes (\iota^{0, 1} \circ \bar{\theta}_{E, H_E}) \\
=& (\nabla_{H_E/U} + \iota^{1, 0} \circ \theta_E + \iota^{0, 1} \circ \bar{\theta}_{E, H_E}) \otimes
   (\nabla_{H_F/U} + \iota^{1, 0} \circ \theta_F + \iota^{0, 1} \circ \bar{\theta}_{F, H_F}) \\
=& D_E \otimes D_F
\end{align*}
This shows the harmonicity of $\sH_E \otimes \sH_F$.
Finally, by \cref{prop:harmonic-op-conn}, we have $D_E = D'_E + D''_F, D_F = D'_F + D''_F$, and by
\cref{prop:tensor-conn-sum}, we have:
\[
D_E \otimes D_F = D'_E \otimes D'_F + D''_E \otimes D''_F
\implies D'_E \otimes D'_F = D_E \otimes D_F - D''_E \otimes D''_F
\]
showing that $D'_E \otimes D'_F$ is the harmonic operator by \cref{prop:harmonic-op-conn} again.
\end{proof}

\begin{prop}\label{prop:harmonic-bun-dual}
Let $\sH_E$ be a smooth family of harmonic bundles on $X$ parametrized by $U$. Then,
$(E^\vee, (D'')^\vee, D^\vee)$ is also such a family with harmonic operator $(D'_E)^\vee$.
\end{prop}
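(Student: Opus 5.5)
Following the pattern of \cref{prop:harmonic-bun-tensor}, I would take a harmonic metric $H$ relating $D''$ and $D$ and prove that the dual metric $H^\vee$ on $E^\vee$ relates $(D'')^\vee$ and $D^\vee$. In a local frame $\set{e_i}_i$ with dual frame $\set{\epsilon_i}_i$, the matrix of $H^\vee$ is $(H^{-1})^T$, up to the usual convention for conjugate-linearity. Flatness of $(D'')^\vee$ and $D^\vee$ follows from \cref{prop:dual-conn-flat}, since $\lambda = 1 \in \ker{d}$.

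The first step is to decompose the dual Higgs operator. By \cref{prop:dual-conn-sum} and \cref{prop:dual-conn-ext}, we have
$(D'')^\vee = \iota^{0, 1} \circ \oprt_{E/U}^\vee + \iota^{1, 0} \circ \theta^\vee$.
This shows that the holomorphic structure of the dual family is $\oprt_{E/U}^\vee$ and its Higgs field is $\theta^\vee$. By \cref{rmk:dual-conn-matrix}, the matrix of $\theta^\vee$ is $-T^T$, where $T$ is the matrix of $\theta$.

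The second step is to compute the two metric-dependent pieces in frames, checking entry-wise as in the tensor case.
\begin{itemize}
\item Partial Chern connection. The matrix of $\nabla_{H^\vee/U}$ is $\bar{H}^{T}\prt_{X/U}((\bar{H}^{-1})^T)$. Using $\prt(\bar{H}^{-1}) = -\bar{H}^{-1}\prt(\bar{H})\bar{H}^{-1}$, this equals $-(\bar{H}^{-1}\prt_{X/U}\bar{H})^T$. That is the negative transpose of the matrix of $\nabla_{H/U}$, so $\nabla_{H^\vee/U} = \nabla_{H/U}^\vee$ by \cref{prop:conn-form-coll}.
\item Adjoint Higgs field. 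The matrix of $\ol{(\theta^\vee)}_{H^\vee}$ is $(H^\vee)^{-1}\ol{(-T^T)}H^\vee = -H^T\bar{T}^T(H^{-1})^T = -(H^{-1}\bar{T}H)^T$. This is the negative transpose of the matrix of $\bar{\theta}_H$, so $\ol{(\theta^\vee)}_{H^\vee} = (\bar\theta_H)^\vee$.
\end{itemize}

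The third step assembles these using \cref{prop:dual-conn-sum} and \cref{prop:dual-conn-ext}:
\[
D^\vee = \nabla_{H/U}^\vee + (\iota^{1, 0}\circ\theta)^\vee + (\iota^{0, 1}\circ\bar\theta_H)^\vee
= \nabla_{H^\vee/U} + \iota^{1, 0}\circ\theta^\vee + \iota^{0, 1}\circ\ol{(\theta^\vee)}_{H^\vee}.
\]
This is exactly \eqref{eqn:Harmonicity} for the dual data, so the dual family is harmonic with harmonic metric $H^\vee$.

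For the harmonic operator, \cref{prop:harmonic-op-conn}(i) gives $D = D' + D''$, and \cref{prop:dual-conn-sum} then gives $D^\vee = (D')^\vee + (D'')^\vee$. Hence $(D')^\vee = D^\vee - (D'')^\vee$, which by \cref{prop:harmonic-op-conn}(i) again is the harmonic operator of the dual family.

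I expect the main obstacle to be the conventions in the second step. One has to pin down the matrix of $H^\vee$, whether transpose or conjugate-transpose, and the placement of $\bar{H}$ in the Chern formula, consistently with \cref{prop:prt-Chern-conn}. To fix the signs and transposes, I would first check the identity fibrewise on each slice $\set{u}\times X$, where the classical fact that the dual of a Chern connection is the Chern connection of the dual metric is available. I would then upgrade this to the family using \cref{prop:conn-slicewise}, together with the uniqueness statement in \cref{prop:prt-Chern-conn}.
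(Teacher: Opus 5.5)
Your proposal is correct and follows the paper's proof essentially step for step: you take the dual metric $H^\vee$ with matrix $(H^{-1})^T$, identify $\nabla_{H^\vee/U} = \nabla_{H/U}^\vee$ and $\ol{(\theta^\vee)}_{H^\vee} = (\bar\theta_H)^\vee$ entrywise, assemble with \cref{prop:dual-conn-sum} and \cref{prop:dual-conn-ext}, and read off the harmonic operator from $D^\vee = (D')^\vee + (D'')^\vee$. The remaining differences are cosmetic: you write the Chern-connection computation with transposes where the paper uses $\bar H = H^T$, and you invoke flatness via \cref{prop:dual-conn-flat}, which the paper leaves implicit.
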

\begin{proof}
We will show that the dual metric $H^\vee$ on $E^\vee$ is a harmonic metric relating
$(D'')^\vee$ and $D^\vee$.
Let $H$ be a harmonic metric on $E$ relating $D''$ and $D$.
Consider the matrix of $H$ in a local frame of $E$, and denote this matrix also by $H$.
Then, in the dual frame, the matrix of $H^\vee$ is $(H^{-1})^T$ and the connection matrix entries of
$\nabla_{H^\vee/U}$ are:
\[
\sum_k \br{\ol{(H^{-1})^T}}^{-1}_{ik}\prt_{X/U}\br{\br{\ol{(H^{-1})^T}}_{kj}} \\
= \sum_k H_{ik}\prt_{X/U}\br{H^{-1}_{kj}}
\]
Here, we are using the facts that taking conjugates and transposes commute with taking inverses, and
that $H$ is a Hermitian matrix over each point.
Now, notice that:
\begin{align*}
        & \prt_{X/U}\br{\sum_k H_{ik}H^{-1}_{kj}} = \prt_{X/U}(\delta_{ij}) \\
\implies& \sum_{k} \br{\prt_{X/U}(H_{ik})H^{-1}_{kj} + H_{ik}\prt_{X/U}(H^{-1}_{kj})} = 0 \\
\implies& \sum_{k} H_{ik}\prt_{X/U}(H^{-1}_{kj}) = -\sum_k \prt_{X/U}(H_{ik})H^{-1}_{kj}
\end{align*}
By \cref{rmk:dual-conn-matrix}, the connection matrix entries of $\nabla_{H/X}^\vee$ are:
\[
-\sum_k \bar{H}_{jk}^{-1} \prt_{X/U}(\bar{H}_{ki})
= -\sum_k H_{kj}^{-1} \prt_{X/U}(H_{ik})
= -\sum_k \prt_{X/U}(H_{ik})H_{kj}^{-1}
\]
This shows that $\nabla_{H^\vee/U} = \nabla_{H/U}^\vee$ by \cref{prop:conn-form-coll}.
It is straightforward to see that if $[\theta_{ij}]$ is the connection matrix of the Higgs field
$\theta$ of $D''$, then that of the Higgs field $\theta^\vee$ of $(D'')^\vee$ is $[-\theta_{ji}]$.
Then, the connection matrix entries of $\ol{\theta^\vee}_{H^\vee}$ are:
\begin{align*}
 & \sum_{k, l} (H^\vee)^{-1}_{ik} \ol{(-\theta_{lk})} H^\vee_{lj} \\
=& \sum_{k, l} (H^{-1, T})^{-1}_{ik} \ol{(-\theta_{lk})} (H^{-1, T})_{lj} \\
=& \sum_{k, l} H_{ki} \ol{(-\theta_{lk})} H^{-1}_{jl} \\
=& -\sum_{k, l}  H^{-1}_{jl} \ol{\theta_{lk}} H_{ki}
\end{align*}
These are precisely the entries of the connection matrix of $(\bar{\theta}_H)^\vee$.
Thus, $(\bar{\theta}_H)^\vee = \ol{\theta^\vee}_{H^\vee}$ by \cref{prop:conn-form-coll}.
Combining these observations with the harmonicity of $D$, \cref{prop:dual-conn-sum} and
\cref{prop:dual-conn-ext}, we get:
\begin{align*}
 & \nabla_{H^\vee/U}
   + \iota^{1, 0} \circ \theta^\vee
   + \iota^{0, 1} \circ \ol{\theta^\vee}_{H^\vee} \\
=& \nabla_{H/U}^\vee
   + (\iota^{1, 0} \circ \theta)^\vee
   + (\iota^{0, 1} \circ \ol{\theta}_{H})^\vee \\
=& \br{\nabla_{H/U}
   + \iota^{1, 0} \circ \theta
   + \iota^{0, 1} \circ \ol{\theta}_{H}}^\vee \\
=& D_E^\vee
\end{align*}
Similarly,
\[
(D''_E)^\vee = (\iota^{0, 1} \circ \oprt_{E/U} + \iota^{1, 0} \circ \theta)^\vee
= \iota^{0, 1} \circ \oprt_{E/U}^\vee + \iota^{1, 0} \circ \theta^\vee
\]
Finally, by \cref{prop:harmonic-op-conn}, we have $D_E = D'_E + D''_E$, and by
\cref{prop:tensor-conn-sum}, we have:
\[
D_E^\vee = (D'_E + D''_E)^\vee = (D'_E)^\vee + (D''_E)^\vee
\implies (D'_E)^\vee = D^\vee - (D''_E)^\vee
\]
so that $(D'_E)^\vee$ is the harmonic operator by \cref{prop:harmonic-op-conn} again.
\end{proof}

\subsection{$\dg$--Prestack of Harmonic Bundles}
\label{subsec:dg-prest-harmonic}

\begin{defn}[Hom Harmonic Bundle Family]\label{defn:harmonic-bun-family-hom}
Let $\sH_E$ and $\sH_F$ be two smooth families of harmonic bundles on a complex
manifold $X$ parametrized by a smooth manifold $U$.
By \cref{prop:harmonic-bun-tensor} and \cref{prop:harmonic-bun-dual}, the following triple:
\[
(E^\vee \otimes F, [D''_E, D''_F], [D_E, D_F])
= (E^\vee \otimes F, (D''_E)^\vee \otimes D''_F, D_E^\vee \otimes D_F)
\]
is a smooth family of harmonic bundles on $X$ parametrized by $U$.
We will denote this as $[\sH_E, \sH_F]$ and call it the Hom harmonic bundle from $\sH_E$ to
$\sH_F$. We will denote by $D'_{[E, F]}$ the harmonic operator of this family of harmonic bundles.
We will denote the following complex of sheaves of $\bF$--modules provided by
\cref{prop:harmonic-op-Higgs-op}:
\[\begin{tikzcd}[column sep = large]
0 \ar[r] &
\ker{(D'_{[E, F]})^0} \ar[r, "{[D''_E, D''_F]^0}"] &
\ker{(D'_{[E, F]})^1} \ar[r, "{[D''_E, D''_F]^1}"] &
\ker{(D'_{[E, F]})^2} \ar[r, "{[D''_E, D''_F]^2}"] &
\cdots
\end{tikzcd}\]
by $\dbr{\sH_E, \sH_F}$.
\end{defn}

\begin{prop}\label{prop:hom-harmonic-op}
In the context of \cref{defn:harmonic-bun-family-hom},
$D'_{[E, F]} = [D'_E, D'_F] = (D'_E)^\vee \otimes D'_F$ as a $1$--$\prt_{X/U}^\bullet$--connection.
\end{prop}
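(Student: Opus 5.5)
We have to show that the harmonic operator of $[\sH_E, \sH_F]$ equals $(D'_E)^\vee \otimes D'_F$. The point is that the harmonic operator of a tensor product and of a dual have already been identified in \cref{prop:harmonic-bun-tensor} and \cref{prop:harmonic-bun-dual}. Composing those two results settles the question.

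Set $\sH_{E^\vee} := (E^\vee, (D''_E)^\vee, D_E^\vee)$. By \cref{prop:harmonic-bun-dual}, this is a smooth family of harmonic bundles, related by the dual metric $H_E^\vee$, and its harmonic operator is $(D'_E)^\vee$. Now apply \cref{prop:harmonic-bun-tensor} to $\sH_{E^\vee}$ and $\sH_F$. It shows that $\sH_{E^\vee} \otimes \sH_F = (E^\vee \otimes F, (D''_E)^\vee \otimes D''_F, D_E^\vee \otimes D_F)$ is harmonic, related by $H_E^\vee \otimes H_F$, and that its harmonic operator is $(D'_E)^\vee \otimes D'_F$.

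This triple is exactly $[\sH_E, \sH_F]$ by \cref{defn:harmonic-bun-family-hom}. By \cref{prop:harmonic-op-metric-inv}, the harmonic operator does not depend on the choice of harmonic metric. Indeed, by \cref{prop:harmonic-op-conn}(i) it equals $[D_E, D_F] - [D''_E, D''_F]$, so we may compute it using $H_E^\vee \otimes H_F$. This gives
\[
D'_{[E, F]} = (D'_E)^\vee \otimes D'_F = [D'_E, D'_F],
\]
where the last equality is \cref{defn:hom-conn} applied to the $1$--$\prt_{X/U}^\bullet$--connections $D'_E, D'_F$. These are indeed connections of that type by \cref{prop:harmonic-op-conn}(ii).

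The only point needing care is the type of each connection. The dual and tensor product here are taken as $1$--$\prt_{X/U}^\bullet$--connections, whereas $D_E^\vee \otimes D_F$ and $(D''_E)^\vee \otimes D''_F$ are taken with respect to $d_{X/U}$ and $\oprt_{X/U}$. So the subtraction $[D_E, D_F] - [D''_E, D''_F]$ must be matched with $[D'_E, D'_F]$ using \cref{prop:dual-conn-sum} and \cref{prop:tensor-conn-sum}, viewing everything as $1$--$(\prt_{X/U} + \oprt_{X/U})$--connections. If the type bookkeeping inside \cref{prop:harmonic-bun-tensor} is in doubt, a direct check is available. Write $D_E = D'_E + D''_E$ and $D_F = D'_F + D''_F$, then expand $D_E^\vee \otimes D_F$ via \cref{prop:dual-conn-sum} and \cref{prop:tensor-conn-sum} into $(D'_E)^\vee \otimes D'_F + (D''_E)^\vee \otimes D''_F$. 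Subtracting $[D''_E, D''_F]$ then yields the claim.
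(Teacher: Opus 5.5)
Your proposal is correct and is exactly the paper's argument: \cref{prop:harmonic-bun-dual} gives the dual family with harmonic operator $(D'_E)^\vee$, and \cref{prop:harmonic-bun-tensor} applied to $E^\vee$ and $F$ then identifies the harmonic operator of $[\sH_E, \sH_F]$ as $(D'_E)^\vee \otimes D'_F = [D'_E, D'_F]$. Your fallback expansion via \cref{prop:dual-conn-sum} and \cref{prop:tensor-conn-sum} is the same computation the paper uses to close the proofs of those two propositions, so the connection-type bookkeeping you were concerned about is already handled there.
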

\begin{proof}
Apply \cref{prop:harmonic-bun-tensor} and \cref{prop:harmonic-bun-dual}.
\end{proof}

\begin{thm}\label{thm:harmonic-bun-family-cat-enriched-in-sheaves}
Taking $\dbr{\sH_E, \sH_F}$ as defined in \cref{defn:harmonic-bun-family-hom} as the $\Hom$ objects
makes the collection of smooth families of harmonic bundles on $X$ parametrized by $U$ a category
enriched in the category of complexes of sheaves of $\bC$--modules over $U \times X$ with
composition and identity morphisms obtained by restricting the ones from
\cref{thm:conn-cat-enriched-in-sheaves} applied to flat $1$--$\oprt_{X/U}^\bullet$--connections.
Taking global sections of the sheaf Hom complexes yields a differential graded category structure on
the collection of smooth families of harmonic bundles on $X$ parametrized by $U$.
\end{thm}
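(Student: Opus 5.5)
The plan is to realize $\dbr{\sH_E, \sH_F}$ as a subcomplex of $\dbr{D''_E, D''_F}$ and to show that the enriched structure of \cref{thm:conn-cat-enriched-in-sheaves}, applied to flat $1$--$\oprt_{X/U}^\bullet$--connections, restricts to these subcomplexes. By \cref{prop:harmonic-op-Higgs-op} applied to $[\sH_E, \sH_F]$, the complex $\dbr{\sH_E, \sH_F}$ is the subcomplex of $(E^\vee \otimes F \otimes \sA^\bullet_{X/U}, [D''_E, D''_F]^\bullet)$ on the kernels of $(D'_{[E, F]})^\bullet$, and it is also a subcomplex of the $D$--complex. So it suffices to check two things. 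First, the identity $e_E : \ul\bC[0] \to \dbr{D''_E, D''_E}$ lands in $\ker{(D'_{[E, E]})^0}$. Second, the composition $c_{E, F, G}$ maps $\dbr{\sH_F, \sH_G} \otimes_{\ul\bC} \dbr{\sH_E, \sH_F}$ into $\dbr{\sH_E, \sH_G}$. Once these hold, associativity and unitality are inherited from \cref{thm:conn-cat-enriched-in-sheaves}. Compatibility with differentials is also inherited, because the differentials on the subcomplexes are the restrictions of $[D'', D'']$, and the differential of the tensor product of subcomplexes is the restriction of $D$.

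For the identity, \cref{prop:hom-harmonic-op} identifies $D'_{[E, E]}$ with $[D'_E, D'_E]$. The computation in \cref{rmk:hom-conn-desc}, which uses only that $D'_E$ is a connection (with respect to $\prt_{X/U}$), gives
\[
[D'_E, D'_E](\id_E) = D'_E \circ \id_E - (\id_E \otimes \id) \circ D'_E = 0 .
\]

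The composition step is the main point. The key observation is that the proof of \cref{thm:conn-cat-enriched-in-sheaves} shows that composition satisfies a Leibniz rule for any flat connections over any differential space. It shows that $c$ intertwines $[\nabla_F, \nabla_G] \otimes 1 + (-1)^{\deg} 1 \otimes [\nabla_E, \nabla_F]$ with $[\nabla_E, \nabla_G]$. Note that $c$ itself (trace together with wedge) does not depend on the connections. Flatness of $D'_E, D'_F, D'_G$ as $1$--$\prt^\bullet_{X/U}$--connections holds by \cref{prop:harmonic-op-conn}(ii). So the same identity holds with $D'$ in place of $\nabla$, where \cref{prop:hom-harmonic-op} identifies $[D'_E, D'_F]$ with $D'_{[E, F]}$, and the local frame computation there carries over verbatim with $\lambda = 1$ and $d = \prt_{X/U}$. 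Hence if $s \in \ker{(D'_{[F, G]})^p}$ and $t \in \ker{(D'_{[E, F]})^q}$, then
\[
(D'_{[E, G]})^{p + q}(c(s \otimes t)) = c\br{D'_{[F, G]}(s) \otimes t + (-1)^p s \otimes D'_{[E, F]}(t)} = 0 .
\]
The one point needing care is that the extensions to higher forms $(D'_{[E, F]})^k$ agree with $[D'_E, D'_F]^k$. This follows from the equality of connections in \cref{prop:hom-harmonic-op} together with \cref{defn:conn-higher-forms}.

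Finally, global sections give the differential graded category structure. Here I would apply the lax monoidal functor $\Gamma$ and change of enrichment \cite[Lemma 3.4.3]{Rie14}, exactly as in \cref{thm:dg-cat-conn}(i). By \cref{warn:dg-cat-conn} no derived correction is needed on $U \times X$. One small caveat: the kernel sheaves are not $\Cinf$--modules, but since we only use $\Gamma$, not $\RGl$, this causes no issue.
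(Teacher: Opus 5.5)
Your proposal is correct and follows essentially the same route as the paper. You restrict the identity and composition of \cref{thm:conn-cat-enriched-in-sheaves} to the kernels of $D'_{[E, F]} = [D'_E, D'_F]$, using the Leibniz identity for $c$ and $[\nabla_E, \nabla_E](\id_E) = 0$ with $\nabla = D'$, and then pass to global sections via the lax monoidal functor $\Gamma$ and change of enrichment; your caveat about using only $\Gamma$ is in fact more careful than the paper, which asserts $\Gamma = \RGl$ here even though the kernel sheaves are not $\Cinf$--modules.
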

\begin{proof}
It suffices to show that the composition and identity maps of
\cref{thm:conn-cat-enriched-in-sheaves} considered for $\Conn_{1, \oprt_{X/U}}(U \times X)$
restrict to the $\ker{(D'_{[E, F]})^\bullet} \subset E^\vee \otimes F \otimes K^{\wedge \bullet}$.
In the proof of \cref{thm:conn-cat-enriched-in-sheaves},
we showed that for any three connections $\nabla_E, \nabla_F, \nabla_G$ with respect to any
differential, the following identity holds for elementary tensors:
\[
c\br{[\nabla_E, \nabla_F](s) \otimes_{\ul\bF} t
+ (-1)^{\deg s} s \otimes_{\ul\bF} [\nabla_E, \nabla_F](t)}
= [\nabla_E, \nabla_G](c(s \otimes_{\ul\bF} t))
\]
Taking $\nabla_P := D'_P, P = E, F, G$ implies that if $s \in \ker{[D'_F, D'_G]}$
and $t \in \ker{[D'_E, D'_F]}$, then $c(s \otimes_{\ul\bF} t) \in \ker{[D'_E, D'_G]}$.
Since every tensor is locally a sum of elementary tensors, we see that the composition morphisms
of $\Conn_{1, \oprt_{X/U}}(U \times X)$ restrict to composition morphisms for the sheaf Hom
complexes of harmonic bundles.

Next, we must show that the identity maps $e : \ul\bC[0] \to \dbr{D''_E, D''_E}$ factor through
$\ker{(D'_{[E, E]})^\bullet}$. Again, in the proof of \cref{thm:conn-cat-enriched-in-sheaves},
we showed that for any connections $\nabla_E$ with respect to any differential,
$[\nabla_E, \nabla_E](\id_E) = 0$. Thus, taking $\nabla_E = D'_E$ yields the first statement.

Since the global sections functor, which is the same as the derived global sections functor by
\cref{rmk:RConn-is-Conn-smooth-man}, is lax monoidal, the differential graded category structure
follows from change of enrichment just as in \cref{thm:dg-cat-conn}(i).
\end{proof}

\begin{defn}[$\dg$--Category of Smooth Families of Harmonic Bundle]
We will denote the differential graded category of
\cref{thm:harmonic-bun-family-cat-enriched-in-sheaves} as ${\sH}(U \times X)$.
\end{defn}

\begin{prop}\label{prop:harmonic-bun-family-pullback-harmonic-op}
In the context of \cref{prop:harmonic-bun-family-pullback}, the harmonic operator of
$(f^<E, f^<D'', f^<D)$ is $f^<D'$.
\end{prop}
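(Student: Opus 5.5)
By \cref{prop:harmonic-op-conn}(i), the harmonic operator of any family of harmonic bundles is the difference of its flat connection and its Higgs connection. So the plan is to identify the harmonic operator of $(f^<E, f^<D'', f^<D)$ as $f^<D - f^<D''$, and then show that pullback commutes with differences of connections. By \cref{prop:harmonic-bun-family-pullback}, $f^<H$ is a harmonic metric relating $f^<D''$ and $f^<D$. Applying \cref{prop:harmonic-op-conn}(i) to the pulled back family with this metric, its harmonic operator is $f^<D - f^<D''$. By \cref{prop:harmonic-op-metric-inv}, this does not depend on the choice of harmonic metric.

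It then remains to show $f^<D - f^<D'' = f^<(D - D'') = f^<D'$. The natural route is through connection matrices, using \cref{defn:pullback-conn}. On a local frame $\set{e_i}_i$ of $E$ over $W$, let $[A_{ij}]$ and $[B_{ij}]$ be the connection matrices of $D$ and $D''$. Here $D$ is a $1$--$d_{X/U}$--connection and $D''$ is a $1$--$\oprt_{X/U}^\bullet$--connection, both valued in $\sA^\bullet_{X/U}$. By \cref{prop:conn-sum}(i) and \cref{prop:conn-scale}(i), $D' = D - D''$ is a $1$--$(d_{X/U} - \oprt_{X/U}) = 1$--$\prt_{X/U}$--connection. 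Its connection matrix in the same frame is $[A_{ij} - B_{ij}]$, because the $d^V$ terms combine in the same way: $d_{X/U}^V - \oprt_{X/U}^V = \prt^V_{X/U}$.

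The pullbacks are taken along the morphisms $(f \times \id_X, h^\sharp)$ of \cref{exm:prt-ext-diff-morphism} and $(f \times \id_X, h^\sharp)$ from \cref{exm:prt-Dol-morphism}(iii), (iv). On $\sA^1_{X/U}$ these act by the same underlying map $h^{\sharp}$. So in the pulled back frame $\set{f^\sharp e_i}_i$ the connection matrices are $[h^\sharp A_{ij}]$, $[h^\sharp B_{ij}]$ and $[h^\sharp(A_{ij} - B_{ij})]$. By additivity of $h^\sharp$, the last equals the difference of the first two.

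Both $f^<D - f^<D''$ and $f^<D'$ are $1$--$\prt_{Y/V}$--connections on $f^<E$ with the same connection matrices on a cover. Uniqueness in \cref{prop:conn-form-coll} therefore gives equality.

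The only point needing care is compatibility of the three differential-space morphisms: the restriction $\xi^{\sharp}$ to $\prt$-- and $\oprt$--parts must be the same underlying $h^\sharp$ on $\sA^1_{X/U}$. The paper's identity $h^{\sharp,k} = \bigoplus \xi^{\sharp,p,q}$ in \cref{exm:prt-Dol-morphism} ensures this. I expect no further obstacle.
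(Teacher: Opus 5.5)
Your proposal is correct and follows essentially the paper's own argument: the paper notes that $f^<D = f^<D'' + f^<D'$ by comparing connection matrices, and then invokes \cref{prop:harmonic-op-conn} to identify $f^<D'$ as the harmonic operator of the pulled back family. You reach the same conclusion by the same two steps, spelling out the matrix computation and the role of the metric $f^<H$ in more detail.
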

\begin{proof}
We observe that $f^<D = f^<(D'' + D') = f^<D'' + f^<D'$, where the last step follows from comparing
connection matrices. By \cref{prop:harmonic-op-conn}, $f^<D'$ is the harmonic operator.
\end{proof}

\begin{rmk}\label{rmk:harmonic-bun-hom-pb-nat}
For two smooth families of harmonic bundles $\sH_E, \sH_F$ on $X$ parametrized by $U$, and a smooth
map $f : V \to U$, consider the following diagram:
\[\begin{small}\begin{tikzcd}[column sep=tiny]
{f^<(E^\vee \otimes F) \otimes \sA_{X/V}^k} &&
{f^<(E^\vee \otimes F) \otimes \sA_{X/V}^{k + 1}} & \\ &
{f^<(E^\vee \otimes F) \otimes \sA_{X/V}^{k + 1}} &&
{f^<(E^\vee \otimes F) \otimes \sA_{X/V}^{k + 2}} \\
{f^<E^\vee \otimes f^<F \otimes \sA_{X/V}^{k}} &&
{f^<E^\vee \otimes f^<F \otimes \sA_{X/V}^{k + 1}} \\ &
{f^<E^\vee \otimes f^<F \otimes \sA_{X/V}^{k + 1}} &&
{f^<E^\vee \otimes f^<F \otimes \sA_{X/V}^{k + 2}}
    \arrow["{f^<D'}", from=1-1, to=1-3]
    \arrow["{f^<D''}"', from=1-1, to=2-2]
    \arrow["\cong"', from=1-1, to=3-1]
    \arrow["{f^<D''}", from=1-3, to=2-4]
    \arrow["\cong"{pos=0.7}, from=1-3, to=3-3]
    \arrow["{-f^<D'}"{pos=0.3}, from=2-2, to=2-4, crossing over]
    \arrow["\cong", from=2-4, to=4-4]
    \arrow["{D'}"{pos=0.8}, from=3-1, to=3-3]
    \arrow["{D''}"', from=3-1, to=4-2]
    \arrow["{D''}", from=3-3, to=4-4]
    \arrow["\cong"'{pos=0.7}, from=2-2, to=4-2, crossing over]
    \arrow["{-D'}"', from=4-2, to=4-4]
\end{tikzcd}\end{small}\]
where the vertical isomorphisms are the natural ones from \cref{prop:pullback-conn-dual} and
\cref{prop:pullback-conn-tensor} making the vertical faces commute. The top and bottom faces
commute by a combination \cref{prop:harmonic-op-conn}(iii) and \cref{prop:conn-scale}.
The back face implies that the vertical isomorphisms restrict to the kernels of the harmonic
operators, while the left and right faces imply that isomorphisms commute with the differentials.
That is, the isomorphisms of the form in the above diagram assemble to an isomorphism of
complexes of sheaves of $\bC$--modules:
\begin{align*}
          & \br{\ker{f^<((D'_E)^\vee \otimes D'_F)^\bullet},
                f^<((D''_E)^\vee \otimes D''_F)^\bullet} \\
\to[\cong]& \br{\ker{(f^<(D'_E)^\vee \otimes f^<D'_F)^\bullet},
                (f^<(D''_E)^\vee \otimes f^<D''_F)^\bullet}
\end{align*}
natural in $E, F$. The latter complex is simply $\dbr{f^<\sH_E, f^<\sH_F}$ by
\cref{defn:harmonic-bun-family-hom}. We will not distinguish between these complexes in
the remainder of the paper.
\end{rmk}

\begin{thm}\label{thm:harmonic-bun-pullback-func}
Given a complex manifold $X$, a smooth map $f : V \to U$, and
a smooth family of harmonic bundles $\sH_E$ on $X$ parametrized by $U$, the following hold:
\begin{enumerate}
\item The morphism of sheaves of $\bC$--vector spaces $\eta^\bullet$ from
\cref{prop:pullback-conn-higher-forms} restricts to a morphism
$\ker{(D'_E)^k} \to f_>\ker{(f^<D'_E)^k}$ making the following diagram commute:
\[\begin{tikzcd}
\ker{(D'_E)^k} \ar[r] \ar[d, hook] & f_>\ker{(f^<D'_E)^k} \ar[d, hook] \\
E \otimes \sA_{X/U}^{k} \ar[r, "\eta^k" below] & f_>(f^<E \otimes \sA_{X/V}^{k})
\end{tikzcd}\]
Denote this map also by $\eta^k$, dropping the superscript when there is no confusion.

\item In particular, the morphism of complexes of $\bC$--vector spaces provided by
\cref{prop:pullback-conn-higher-forms}(i) restricts to a morphism of $\bC$--vector spaces:
\[
\Gamma(U \times X, \ker{(D'_E)^\bullet}, (D''_E)^\bullet)
\to \Gamma(V \times X, \ker{(f^<D'_E)^\bullet}, (f^<D''_E)^\bullet)
\]

\item Taking \cref{prop:harmonic-bun-family-pullback-harmonic-op} into account, the mapping
$\sH_E \mapsto f^<\sH_E := (f^<E, f^<D''_E, f^<D_E)$ and the above morphisms assemble to a
$\bC\dg$--functor:
\[
f^< : \sH(U \times X) \to \sH(V \times X)
\]
\end{enumerate}
\end{thm}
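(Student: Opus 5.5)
The plan is to reduce everything to the already-established pullback machinery for $\Conn_{1, \oprt_{X/U}}$, namely \cref{prop:pullback-conn-higher-forms} and \cref{prop:pullback-func}(i). The only new content is that these maps preserve the kernels of the harmonic operators.

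For (i), I apply \cref{prop:pullback-conn-higher-forms} to the flat $1$--$\prt_{X/U}^\bullet$--connection $D'_E$ (flat by \cref{prop:harmonic-op-conn}(ii)). The pullback morphism $f \times \id_X$ is a morphism of differential spaces for $\prt_{X/U}$ as well as for $\oprt_{X/U}$, by \cref{exm:prt-Dol-morphism}(iii),(iv). The map $\eta^k : E \otimes \sA^k_{X/U} \to f_>(f^<E \otimes \sA^k_{X/V})$ is defined purely in terms of the unit and $f^\sharp$ on forms, so it is the same map whichever differential is used. The commutative ladder of that proposition, read for $D'_E$, gives
\[
\eta^{k+1} \circ (D'_E)^k = f_>(f^<D'_E)^k \circ \eta^k .
\]
Hence if $(D'_E)^k(s) = 0$, then $f_>(f^<D'_E)^k(\eta^k(s)) = 0$, so $\eta^k$ maps $\ker{(D'_E)^k}$ into $f_>\ker{(f^<D'_E)^k}$. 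Here I use that $f_>$ is a right adjoint and therefore preserves kernels, which identifies $\ker{f_>(f^<D'_E)^k}$ with $f_>\ker{(f^<D'_E)^k}$. The square in (i) then commutes by construction. By \cref{prop:harmonic-bun-family-pullback-harmonic-op}, $f^<D'_E$ is indeed the harmonic operator of $f^<\sH_E$.

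For (ii), the same ladder applied to $D''_E$ shows that $\eta^\bullet$ commutes with $(D''_E)^\bullet$ and $f_>(f^<D''_E)^\bullet$. Together with (i), this shows that $\eta^\bullet$ restricts to a morphism of the subcomplexes of \cref{prop:harmonic-op-Higgs-op}. Taking global sections, and using $\Gamma(U \times X, f_> -) = \Gamma(V \times X, -)$, gives the stated map.

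For (iii), I apply (i) and (ii) to the Hom families $[\sH_E, \sH_F]$. Using \cref{prop:hom-harmonic-op} together with the natural identification of \cref{rmk:harmonic-bun-hom-pb-nat}, $f^<[\sH_E,\sH_F]$ is identified with $[f^<\sH_E, f^<\sH_F]$, compatibly with kernels and differentials. This yields chain maps $\Gamma\dbr{\sH_E, \sH_F} \to \Gamma\dbr{f^<\sH_E, f^<\sH_F}$. These chain maps are restrictions of the Hom-maps of the $\bC\dg$--functor $f^* : \Conn_{1,\oprt_{X/U}}(U\times X) \to \Conn_{1,\oprt_{X/V}}(V\times X)$ of \cref{prop:pullback-func}(i). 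The composition and unit of $\sH(-)$ are, by \cref{thm:harmonic-bun-family-cat-enriched-in-sheaves}, restrictions of those of $\Conn$. Compatibility with composition and units is then inherited: the relevant diagrams commute after composing with the inclusions of subcomplexes, and those inclusions are monomorphisms, so they commute on the subcomplexes themselves.

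The main obstacle I expect is the bookkeeping in (iii). I need the identification of \cref{rmk:harmonic-bun-hom-pb-nat} to be compatible both with the $\eta$ maps for the Hom connection $[D''_E, D''_F]$ and with the composition maps $c$. I would check this in local frames, as in the proof of \cref{prop:pullback-func}: the pulled-back frames of $E^\vee \otimes F$ correspond to tensor products of pulled-back frames, and the connection matrices pull back entrywise.
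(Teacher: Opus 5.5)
Your proposal is correct and follows essentially the same route as the paper. Part (i) comes from the ladder of \cref{prop:pullback-conn-higher-forms} applied to $D'_E$. Part (ii) combines this with the same ladder for $D''_E$; the paper phrases this as a cube whose back face commutes because the inclusion is monic. Part (iii) restricts the composition and unit diagrams of \cref{prop:pullback-func}(i) to the kernel subcomplexes by the same monomorphism argument, and your explicit remarks that $\eta^k$ does not depend on which partial differential is used and that $f_>$ preserves kernels fill in points the paper leaves implicit.
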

\begin{proof}
(i)
By \cref{prop:pullback-conn-higher-forms}, we have a morphism of complexes of sheaves of
$\bC$--modules:
\[\begin{tikzcd}[column sep = large]
0 \ar[r] \ar[d, equal] &
E \ar[r, "(D'_E)^0"] \ar[d, "\eta^0"] &
E \otimes \sA_{X/U}^1 \ar[r, "(D'_E)^1"] \ar[d, "\eta^1"] &
E \otimes \sA_{X/U}^2 \ar[r, "(D'_E)^2"] \ar[d, "\eta^2"] & \cdots \\
0 \ar[r] & f_>f^<E \ar[r, "f_>(f^<D'_E)^0" below] &
f_>(f^<E \otimes \sA_{X/V}^1) \ar[r, "f_>(f^<D'_E)^1" below] &
f_>(f^<E \otimes \sA_{X/V}^{2}) \ar[r, "f_>(f^<D'_E)^2" below] & \cdots
\end{tikzcd}\]
and the first claim follows.

(ii)
Consider the following diagram:
\[\begin{tikzcd}[column sep = small]
\ker{(D'_E)^k} \ar[rr, "D''_E"] \ar[rd, hook] \ar[dd, "\eta" left] & &
\ker{(D'_E)^{k + 1}} \ar[rd, hook] \ar[dd, "\eta" near start] & \\ &
E \otimes \sA_{X/U}^k \ar[rr, crossing over, "D''_E" near start] & &
E \otimes \sA_{X/U}^{k + 1} \ar[dd, "\eta"] \\
f_>\ker{(f^<D'_E)^k} \ar[rr, "f_>f^<D''_E" near start] \ar[rd, hook] & &
f_>\ker{(f^<D'_E)^{k + 1}} \ar[rd, hook] & \\ &
f_>(f^<E \otimes \sA_{X/U}^k)
    \ar[from=uu, crossing over, "\eta" near end] \ar[rr, "f_>f^<D''_E" below] & &
f_>(f^<E \otimes \sA_{X/U}^{k + 1}) \\
\end{tikzcd}\]
The top and bottom faces of this diagram commute by \cref{prop:harmonic-op-Higgs-op}.
The left and right faces commute by (i). The front face commutes by
\cref{prop:pullback-conn-higher-forms}. The back face now commutes because the bottom right
arrow is a monomorphism.

(iii)
We first recall from \cref{defn:harmonic-bun-family-hom} that:
\[
\ker{(D'_{[E, F]})^\bullet} = \ker{((D'_E)^\vee \otimes D'_F)^\bullet} = \dbr{\sH_E, \sH_F}
\]
We then proceed as in the proof of \cref{prop:pullback-func}(i).
First observe that the following diagram commutes by the naturality of the lax monoidal structure
maps of $\Gamma$:
\begin{equation}\label{eqn:harmonic-bun-pb-fun-comp-1}
\begin{small}\begin{tikzcd}
\Gamma\dbr{\sH_{F}, \sH_{G}} \otimes \Gamma\dbr{\sH_{E}, \sH_{F}}
    \ar[r] \ar[d, "\Gamma(\eta) \otimes_{\ul\bF} \Gamma(\eta)"] &
\Gamma(\dbr{\sH_{F}, \sH_{G}} \otimes \dbr{\sH_{E}, \sH_{F}})
    \ar[d, "\Gamma(\eta \otimes_{\ul\bF} \eta)"] \\
\Gamma f_>\dbr{f^<\sH_{F}, f^<\sH_{G}} \otimes_{\ul\bF} \Gamma f_> \dbr{f^<\sH_{E}, f^<\sH_{F}}
    \ar[r] &
\Gamma f_> \br{\dbr{f^<\sH_{F}, f^<\sH_{G}} \otimes_{\ul\bF} \dbr{f^<\sH_{E}, f^<\sH_{F}}}
\end{tikzcd}\end{small}\end{equation}
Consider the following diagram:
\begin{equation}\label{eqn:harmonic-bun-pb-fun-comp-2}\begin{small}
\begin{tikzcd}[column sep=-3.5em]
\dbr{\sH_{F}, \sH_{G}} \otimes_{\ul\bF} \dbr{\sH_{E}, \sH_{F}}
    \ar[rr, "c"] \ar[rd, hook] \ar[dd, "\eta \otimes_{\ul\bF} \eta" left] & &
\dbr{\sH_{E}, \sH_{G}}
    \ar[rd, hook] \ar[dddd, "\eta"] & \\ &
\dbr{D''_F, D''_G} \otimes_{\ul\bF} \dbr{D''_E, D''_F}
    \ar[rr, crossing over, "c" near start] \ar[dd, "\eta \otimes_{\ul\bF} \eta" left] & &
\dbr{D''_E, D''_G}
    \ar[dddd, "\eta"] \\
f_>\dbr{f^<\sH_{F}, f^<\sH_{G}} \otimes_{\ul\bF} f_>\dbr{f^<\sH_{E}, f^<\sH_{F}}
    \ar[rd, hook] \ar[dd] & & & \\ &
f_>\dbr{f^<D''_F, f^<D''_G} \otimes_{\ul\bF} f_>\dbr{f^<D''_E, D''_F} & & \\
f_>\br{\dbr{f^<\sH_{F}, f^<\sH_{G}} \otimes_{\ul\bF} \dbr{f^<\sH_{E}, f^<\sH_{F}}}
    \ar[rr, "f_>c" near end] \ar[rd, hook] & &
f_>\dbr{f^<\sH_{E}, f^<\sH_{G}}
    \ar[rd, hook] & \\
& f_>(\dbr{f^<D''_F, f^<D''_G} \otimes_{\ul\bF} \dbr{f^<D''_E, D''_F})
    \ar[from=uu, crossing over] \ar[rr, "f_>c"] & &
f_>\dbr{f^<D''_E, f^<D''_G}
\end{tikzcd}\end{small}\end{equation}
where the $c$ are the composition maps.
The top and bottom faces of this diagram commute by
\cref{thm:harmonic-bun-family-cat-enriched-in-sheaves}.
The top left face and the right face commute by (i).
The bottom left face commutes by the naturality of the lax
monoidal structure maps of $f_> = (f \times \id_X)_*$. We showed the front face commutes in the
proof of \cref{prop:pullback-func}(i). We can now conclude that the back face commutes because the
bottom right arrow is a monomorphism. Now, applying $\Gamma$ to the back face of diagram
\ref{eqn:harmonic-bun-pb-fun-comp-2} and pasting with diagram \ref{eqn:harmonic-bun-pb-fun-comp-1}
along the $\Gamma(\eta \otimes_{\ul\bF} \eta)$ edge shows that the following diagram commutes:
\[\begin{tikzcd}
\Gamma\dbr{\sH_{F}, \sH_{G}} \otimes_{\ul\bF} \Gamma\dbr{\sH_{E}, \sH_{F}}
    \ar[r, "c"] \ar[d, "\eta \otimes_{\ul\bF} \eta" left] &
\Gamma\dbr{\sH_{E}, \sH_{G}}
    \ar[d, "\eta"] \\
\Gamma\dbr{f^<\sH_{F}, f^<\sH_{G}} \otimes_{\ul\bF} \Gamma\dbr{f^<\sH_{E}, f^<\sH_{F}}
    \ar[r, "c" below] &
\Gamma\dbr{f^<\sH_{E}, f^<\sH_{G}}
\end{tikzcd}\]

Now, consider the following diagram:
\[\begin{tikzcd} &
{\dbr{H_E, H_E}} & \\
{\ul\bC[0]} &&
{\dbr{D''_E, D''_E}} \\ &
{f_>\dbr{f^<H_E, f^<H_E}} \\
{f_>f^<\ul\bC[0]} &&
{f_>\dbr{f^<D''_E, f^<D''_E}}
    \arrow[hook, from=1-2, to=2-3]
    \arrow["\eta"'{pos=0.7}, from=1-2, to=3-2]
    \arrow["e", from=2-1, to=1-2]
    \arrow["e"{pos=0.7}, from=2-1, to=2-3, crossing over]
    \arrow[from=2-1, to=4-1]
    \arrow[from=2-3, to=4-3, "\eta"]
    \arrow[hook, from=3-2, to=4-3]
    \arrow["{f_>e}", from=4-1, to=3-2]
    \arrow["{f_>e}"', from=4-1, to=4-3]
\end{tikzcd}\]
The front face commutes just as in the proof of \cref{prop:pullback-func}, while the
back right face commutes by (i). The back left face then commutes because the bottom arrow
of the back right face is a monomorphism. Taking global sections then shows the commutativity of the
following diagram:
\[\begin{tikzcd} &
{\Gamma(\ul\bC[0])} &
{\Gamma\dbr{H_E, H_F}} \\
{\bC[0]} \\ &
{\Gamma(f^<\ul\bC[0])} & {\Gamma\dbr{f^<H_E, f^<H_F}}
    \arrow["e", from=1-2, to=1-3]
    \arrow[from=1-2, to=3-2]
    \arrow["\eta", from=1-3, to=3-3]
    \arrow[from=2-1, to=1-2]
    \arrow[from=2-1, to=3-2]
    \arrow["e"', from=3-2, to=3-3]
\end{tikzcd}\]
and we are done.
\end{proof}

\begin{thm}\label{thm:harmonic-bun-prest}
Given a complex manifold $X$, the mapping:
\[\begin{array}{ccccc}
\mcM_\sH(X) &:& \Mfd^\op &\to& \bC\dg-\Cat \\
           &:& U &\mapsto& \sH(U \times X) \\
           &:& (f : V \to U) &\mapsto& (f^< : \sH(U \times X) \to \sH(V \times X))
\end{array}\]
is a pseudofunctor.
\end{thm}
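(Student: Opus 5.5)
The plan is to mirror the proof of \cref{thm:Conn-functor}: the action on objects and morphisms is already available, and what remains is to produce the compositor and unitor natural isomorphisms and check the coherence axioms. By \cref{thm:harmonic-bun-pullback-func}(iii), each smooth $f : V \to U$ gives a $\bC\dg$--functor $f^< : \sH(U \times X) \to \sH(V \times X)$. For composable $W \to[g] V \to[f] U$, take as compositor the family $\alpha_E : (f \circ g)^< E \to g^< f^< E$ of canonical isomorphisms of pulled back bundles from \cref{prop:pullback-comp-nat}, applied to the differential spaces $\Dol_X$ and $\diff_X$. For the unitor, take $\beta_E : E \to \id_U^< E$ from \cref{prop:pullback-unit-nat}.

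The first key step is to show that $\alpha_E$ and $\beta_E$ are morphisms in $\sH$, i.e. closed degree--zero elements of $\Gamma\dbr{(fg)^<\sH_E, g^<f^<\sH_E}$. By \cref{prop:pullback-comp-nat}, applied once with $D''$ and once with $D$, $\alpha_E$ intertwines $(fg)^<D''_E$ with $g^<f^<D''_E$ and $(fg)^<D_E$ with $g^<f^<D_E$. Since $D' = D - D''$ (\cref{prop:harmonic-op-conn}(i)), and since the harmonic operator of a pullback is the pullback of the harmonic operator (\cref{prop:harmonic-bun-family-pullback-harmonic-op}), $\alpha_E$ also intertwines the harmonic operators. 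By \cref{rmk:hom-conn-desc}, this means $[D'_{\cdot}, D'_{\cdot}]^0(\alpha_E) = 0$, so $\alpha_E \in \ker{(D'_{[\cdot,\cdot]})^0}$ and lies in the Hom complex of \cref{defn:harmonic-bun-family-hom}. Closedness under $[D''_{\cdot}, D''_{\cdot}]^0$ holds for the same reason. The argument for $\beta_E$ is identical.

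Next I would show $\dg$--naturality. Harmonic composition is the restriction of the composition in $\Conn_{1,\oprt_{X/U}}(U \times X)$ (\cref{thm:harmonic-bun-family-cat-enriched-in-sheaves}), and the harmonic pullback maps $\eta$ are restrictions of the connection ones (\cref{thm:harmonic-bun-pullback-func}(i)). So the naturality square for $\alpha$ in $\sH$ is the restriction of the square already proved commutative in \cref{prop:pullback-comp-nat} for Higgs families. I would handle the restriction with the same cube-and-monomorphism device used in \cref{thm:harmonic-bun-pullback-func}(iii): the inclusions $\dbr{\sH_E,\sH_F} \hto \dbr{D''_E, D''_F}$ are monic, and all other faces commute, so the face in $\sH$ commutes. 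The canonical identifications of \cref{rmk:harmonic-bun-hom-pb-nat} are used to regard $\dbr{(fg)^<\sH_E, \cdot}$ and the iterated pullback as subcomplexes of the same ambient Higgs complexes. Taking $\Gamma$ then gives a $\bC\dg$--natural isomorphism, with inverse $\alpha_E^{-1}$, which is also closed by the same argument.

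Finally come the pseudofunctor coherence axioms: associativity for triples $f,g,h$ and the two unit triangles. These are equalities of maps of underlying bundles, and they already hold for the canonical isomorphisms of pullback functors of vector bundles, exactly as in the proof sketch of \cref{thm:Conn-functor}. Since morphisms in $\sH$ are determined by their underlying bundle maps, the axioms transfer directly. I expect the main obstacle to be the naturality step: keeping track of the identification isomorphisms $f^<\dbr{\cdot,\cdot} \cong \dbr{f^<\cdot, f^<\cdot}$ for kernels of harmonic operators, and verifying that they are compatible with the compositor. I would resolve this first by working in local frames, where all maps are explicit matrices of pulled back entries, as in \cref{prop:pullback-comp-nat}.
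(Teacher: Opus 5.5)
Your proposal is correct and follows essentially the same route as the paper. The compositor and unitor come from \cref{prop:pullback-comp-nat} and \cref{prop:pullback-unit-nat} and are shown to lie in the kernels of both the harmonic and the Higgs Hom operators. Naturality is obtained by restricting the Higgs-level square through a cube whose inclusion into the $\dbr{D''_E, D''_F}$-type complexes is monic, and the coherence axioms are inherited from \cref{thm:Conn-functor}. The one cosmetic difference is that you deduce compatibility of $\alpha_E$ with the harmonic operators from $D' = D - D''$ and \cref{prop:harmonic-bun-family-pullback-harmonic-op}, whereas the paper applies \cref{prop:pullback-comp-nat} to $D'$ directly.
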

\begin{proof}
Let $W \to[f] V \to[g] U$ be a sequence of smooth maps.
Consider the natural isomorphism of finite rank locally free sheaves
$\alpha_E : (gf)^<E \to f^<g^<E$ for each $\sH_E \in \sH_{U \times X}$.
By \cref{prop:pullback-comp-nat}, this map is simultaneously a morphism of connections
$(gf)^<D_E \to f^<g^<D_E$, $(gf)^<D'_E \to f^<g^<D'_E$, $(gf)^<D''_E \to f^<g^<D''_E$.
Thus, it is in the kernels of $[(gf)^<D_E, f^<g^<D_E]$, $[(gf)^<D'_E, f^<g^<D'_E]$ and
$[(gf)^<D''_E, f^<g^<D''_E]$ simultaneously by \cref{rmk:hom-conn-desc}.
This implies that the map $\hat{\alpha}_E : \ul\bC \to ((gf)^<E)^\vee \otimes f^<g^<E$ corresponding
to $\alpha_E$ factors through the inclusion
$\ker{[(gf)^<D'_E, f^<g^<D'_E]} \hto ((gf)^<E)^\vee \otimes f^<g^<E$ and and is zero under
$[(gf)^<D''_E, f^<g^<D''_E]$. Thus, $\hat{\alpha}_E$ restricts to a morphism of complexes:
$\tilde{\alpha}_E : \ul\bC[0] \to \dbr{\sH_E, \sH_F}$. Then, consider the following diagram:
\begin{equation}\label{eqn:harmonic-bun-prest-1}\begin{small}
\begin{tikzcd}[column sep=-1em]
\ul\bC[0] \otimes_{\ul\bC} \dbr{\sH_{E}, \sH_{F}}
    \ar[rr, "\tilde{\alpha}_F \otimes_{\ul\bC} \eta"] \ar[rd, hook] & &
\begin{matrix}
(gf)_>\dbr{(gf)^<\sH_{F}, f^<g^<\sH_{F}} \\
\otimes_{\ul\bC} (gf)_>\dbr{(gf)^<\sH_{E}, (gf)^<\sH_{F}}
\end{matrix}
    \ar[rd, hook] \ar[dd, "c'" near end] & \\ &
\ul\bC[0] \otimes_{\ul\bC} \dbr{D''_{E}, D''_{F}}
    \ar[rr, crossing over, "\tilde{\alpha}_F \otimes_{\ul\bC} \eta" near start] & &
\begin{matrix}
(gf)_>\dbr{(gf)^<D''_{F}, f^<g^<D''_{F}} \\
\otimes_{\ul\bC} (gf)_>\dbr{(gf)^<D''_{E}, (gf)^<D''_{F}}
\end{matrix}
    \ar[dd, "c'"] \\
\dbr{\sH_E, \sH_F}
    \ar[uu, "\cong" left] \ar[rd, hook] \ar[dd, "\cong" left] & &
(gf)_>\dbr{(gf)^<\sH_E, f^<g^<\sH_F}
    \ar[rd, hook] & \\ &
\dbr{D''_E, D''_F}
    \ar[uu, "\cong" left] & &
(gf)_>\dbr{(gf)^<D''_E, f^<g^<D''_F} \\
\dbr{\sH_E, \sH_F} \otimes_{\ul\bC} \ul\bC[0]
    \ar[rr, "\eta \otimes_{\ul\bC} \tilde{\alpha}_E" near end, below] \ar[rd, hook] & &
\begin{matrix}
(gf)_>\dbr{f^<g^<\sH_E, f^<g^<\sH_E} \\
\otimes_{\ul\bC} (gf)_>\dbr{(gf)^<\sH_E, f^<g^<\sH_E}
\end{matrix}
    \ar[rd, hook] \ar[uu, "c'" right] & \\ &
\dbr{D''_E, D''_F} \otimes_{\ul\bC} \ul\bC[0]
    \ar[rr, "\eta \otimes_{\ul\bC} \tilde{\alpha}_E" below]
    \ar[from=uu, crossing over, "\cong" left, near start] & &
\begin{matrix}
(gf)_>\dbr{f^<g^<D''_E, f^<g^<D''_E} \\
\otimes_{\ul\bC} (gf)_>\dbr{(gf)^<D''_E, f^<g^<D''_E}
\end{matrix}
    \ar[uu, "c'" right]
\end{tikzcd}\end{small}\end{equation}
where the $\tilde{\alpha}_E, \tilde{\alpha}_F, c'$ are as in \cref{prop:pullback-comp-nat}, so that
the front face commutes. The top and bottom faces commute by the previous paragraph. The two left
faces commute by the naturality of the vertical isomorphisms --- recall that these are unitor
isomorphisms of the monoidal category of complexes of sheaves of $\bC$--modules. The two right faces
commute by the naturality of the lax monoidal structure maps of $(gf)_>$ along with the fact
that the composition maps of harmonic bundle families are obtained by restriction of the composition
maps of $\Conn_{1, D''}(U \times X)$ --- see \cref{thm:harmonic-bun-family-cat-enriched-in-sheaves}.
Now, the fact that the middle map on the right faces is a monomorphism implies that the back face
commutes. Applying the global sections functor $\Gamma$ along with its lax monoidal structure shows
that the $\tilde{\alpha}_E$ assemble to a $\bC\dg$--natural transformation:
\[\begin{tikzcd}[column sep = 0pt] &
\sH(V \times X)
    \ar[from=rd, "g^<" above right] & \\
\sH(W \times X) \ar[from=ru, "f^<" above left]
    \ar[from=rr, "(g \circ f)^<" below, ""{name=A, above}] & &
\sH(U \times X)
\ar[Rightarrow, from=A, to=1-2]
\end{tikzcd}\]

Next, by \cref{prop:pullback-unit-nat}, the natural isomorphisms $\beta_E : E \to \id_U^<E$ are also
morphisms of connections $D_E \to \id_U^<D_E$, $D'_E \to \id_U^<D'_E$ and $D''_E \to \id_U^<D''_E$.
Thus, $\beta_E$ is simultaneously in the kernels of $[D'_E, \id_U^<D'_E]$ and
$[D''_E, \id_U^<D''_E]$ by \cref{rmk:hom-conn-desc}. This implies that the map
$\ul\bC \to E^\vee \otimes \id_U^<D''_E$ corresponding to $\beta_E$ factors through
$\ker{[D'_E, \id_U^<D'_E]}$ and is also zero under $[D''_E, \id_U^<D''_E]$. This gives a morphism of
complexes of sheaves of $\ul\bC$--modules
$\tilde{\beta}_E : \ul\bC[0] \to \dbr{\sH_E, \id_U^<\sH_E}$.
Now, using \cref{prop:pullback-unit-nat}, arguments similar to the previous paragraph show that the
$\tilde{\beta}_E$ assemble to a $\bC\dg$--natural transformation:
\[\begin{tikzcd}[column sep=huge]
\sH(U \times X)
    \ar[from=r, bend right, "\id_U^<" above, ""{name=A, below}]
    \ar[from=r, bend left, "\id" below, ""{name=B, above}] &
\sH(U \times X)
\ar[Rightarrow, from=B, to=A]
\end{tikzcd}\]

It remains to verify the pseudofunctor coherence conditions for $\tilde{\alpha}$ and
$\tilde{\beta}$, but these are exactly the same as in \cref{thm:Conn-functor}.
\end{proof}

\begin{defn}[$\dg$--Prestack of Harmonic Bundles]
We will call the pseudofunctor $\mcM_\sH(X)$ of \cref{thm:harmonic-bun-prest}, the $\dg$--prestack of
harmonic bundles on $X$.
\end{defn}

\begin{rmk}\label{rmk:hbun-prest-Higgs-op-flat-conn-equiv}
By \cref{prop:harmonic-op-Higgs-op}, the results of this section hold with $D''$ replaced by
$D$.
\end{rmk}

\section{The Non-Abelian Hodge Correspondence}
\label{sec:NAH}

We now have all the foundational pieces necessary to produce the desired diffeological
correspondence between Higgs bundles and flat bundles. This will be achieved as follows.
We first establish forgetful morphisms of $\dg$--prestacks (pseudonatural tranformations):
\[
\mcM_{Dol}(X) \ot[\iota_{Dol}] \mcM_\sH(X) \to[\iota_{dR}] \mcM_{dR}(X)
\]
This is a generalization of the forgetful $\dg$--functors from harmonic bundles to Higgs bundles and
flat bundles in the original theory.
Interstingly, in contrast to the original setup, these forgetful maps, while quasi-faithful, are not
quasi-full --- see \cref{rmk:harmonic-bun-incl-not-quasi-full}. This phenomenon indicates the
presence of some obstruction to mediating between families of Higgs bundles and of flat connections
via families of harmonic bundles.
Nevertheless, the maps are surjective on the zeroth cohomologies of Hom complexes, and
this, in addition to injectivity on first cohomologies, is enough to ensure that the maps are fully
faithful after taking extension completions of $\dg$--categories described in
\cite[\S 3]{HiggsLocSys} and homotopy categories, which will be enough for our purposes.

Recall that $\dg$--prestacks are pseudofunctors $\Mfd^\op \to \bC\dg-\Cat$. Then, one can show
that the extension completion construction provides a pseudofunctor
$(-)^{ext} : \bC\dg-\Cat \to \bC\dg-\Cat$. Next, we have the pseudofunctor
$h : \bC\dg-\Cat \to \Cat$ given by sending a $\bC\dg$--category to its homotopy category. Finally,
we have the pseudofunctor $(-)^\simeq : \Cat \to \Grpd$ sending a category to its maximal
subgroupoid. In total, we have a composite:
$(-)^\simeq \circ h \circ (-)^{ext} : \bC\dg-\Cat \to \Cat$. We can whisker this composite with the
pseudonatural transformations $\iota_{Dol}, \iota_{dR}$, and take the Grothendieck construction
of the resulting diagram of pseudofunctors $\Mfd^\op \to \Cat$ to obtain a diagram of prestacks:
\[
\sM_{Dol}(X) \to \int h\mcM_{\sH}^{ext, \simeq}(X) \to \sM_{dR}(X)
\]

Using the last statement of the first paragraph, we then show that these morphisms of prestacks
are fully faithful so that they provide an equivalence of the essential images.
We provide the details of this argument in
\cref{subsec:harmonic-Dol-dR}.
In turn, we have an equivalence of the substacks $\sM^\sH_{Dol}(X)$ and $\sM^\sH_{dR}(X)$ generated
by the essential images. Furthermore, these generated substacks are diffeological substacks and
their fibres over the point contain contain semistable Higgs bundles with the usual condition on
Chern classes on the Dolbeault side and all flat bundles on the de Rham side. A point to note:
it is not clear exactly which families of semistable Higgs bundles or of flat bundles are captured
by the above stacks.
The stackification above means the families we obtain on either side are the ones that,
locally, are finite iterated extensions of families of harmonic bundles, but it might be that not all
families of semistable Higgs bundles or of flat bundles arise this way.
Importantly, however, there are some families
of semistable bundles that are not just families of stable or polystable bundles that are captured
by the above stacks --- see \cref{exm:sst-family}, which is the counterexample of
\cite[39]{ModRepFunGrpII}. In particular, the correspondence we have
produced applies for a family to a point of which the usual correspondence of coarse moduli spaces
does not extend continuously.
We describe the correspondence and what we know it captures in \cref{subsec:correspondence}.
We conclude with some questions for further study in \cref{subsec:questions}, including
the prospect of studying moduli stacks of $d$ times differentiable families of Higgs bundles and
flat connections for any $d \in \bN$.

\subsection{Harmonic Bundles to Higgs Bundles and Flat Connections}
\label{subsec:harmonic-Dol-dR}

\begin{prop}\label{prop:harmonic-bun-family-cohomology}
Let $(E, D'', D)$ be a smooth family of harmonic bundles on a complex manifold $X$,
parametrized by a smooth manifold $U$.
Then, the morphisms of complexes of sheaves of $\bC$--modules supplied by
\cref{prop:harmonic-op-Higgs-op}:
\begin{align*}
(\ker{(D')^\bullet}, (D'')^\bullet) &\to (E \otimes \sA_{X/U}^\bullet, (D'')^\bullet) \\
(\ker{(D')^\bullet}, (D'')^\bullet) &\to (E \otimes \sA_{X/U}^\bullet, D^\bullet)
\end{align*}
induce monomorphisms in cohomology. When $X$ is K\"haler as well, they induce isomorphisms in
the zeroth cohomology.
\end{prop}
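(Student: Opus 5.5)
The plan is to reduce everything to the slices $\set{u} \times X$ using \cref{prop:conn-slicewise}, and then run Simpson's argument from \cite{HiggsLocSys}, i.e.\ the principle of two types for the pair $(D', D'')$, on each harmonic bundle $(E_u, D''_u, D_u)$ (\cref{rmk:harmonic-bun-family-slice}). I would treat the two morphisms of complexes in parallel. \cref{prop:harmonic-op-Higgs-op} already gives $D^k|_{\ker{(D')^k}} = (D'')^k|_{\ker{(D')^k}}$, and \cref{prop:harmonic-op-conn}(iii) gives $(D')^{k+1}(D'')^k = -(D'')^{k+1}(D')^k$, so most of the formal manipulations carry over verbatim from one target to the other.

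\textbf{Injectivity.} Take a class in degree $k$ represented by $\alpha \in \ker{(D')^k}$ with $\alpha = (D'')^{k-1}\beta$ in the big complex. The goal is to replace $\beta$ by some $\gamma$ with $(D')^{k-1}\gamma = 0$ and $(D'')^{k-1}\gamma = \alpha$. The element $(D')\beta$ satisfies $D''(D'\beta) = -D'D''\beta = -D'\alpha = 0$ and is $D'$-exact. The natural route is therefore a $D'D''$-lemma: write $D'\beta = D'D''\xi$, and then $\gamma := \beta - D''\xi$ works, since $D'\gamma = D'\beta - D'D''\xi = 0$ and $D''\gamma = D''\beta = \alpha$. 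The same computation, with $D''$ replaced by $D = D' + D''$, handles the second morphism. On a single slice the $D'D''$-lemma comes from the Kähler identities for harmonic bundles, $\Delta_{D} = 2\Delta_{D''} = 2\Delta_{D'}$. Where these are unavailable (e.g.\ at sheaf level, where $X$ need not be Kähler), I would instead use a local Poincaré-type lemma for $D''$ and $D'$ on polydiscs and argue locally.

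\textbf{Degree zero (the isomorphism when $X$ is K\"ahler).} Here the claim is that a global section $s$ with $D''s = 0$, or with $Ds = 0$, automatically satisfies $D's = 0$. This is checked slice by slice: by \cref{prop:conn-slicewise}, $(D's)(u, x) = (D'_u s_u)(x)$. On each compact K\"ahler slice, apply Simpson's integration-by-parts identity
\[
\|D'_u s_u\|^2 = \|D''_u s_u\|^2
\]
(up to a constant coming from the Kähler identities), so that $D''_u s_u = 0$ forces $D'_u s_u = 0$. Together with injectivity, this gives surjectivity onto $H^0$.

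\textbf{Main obstacle.} I expect the hard step to be making the $D'D''$-lemma work uniformly in the parameter $u$, so that $\xi$ is a \emph{smooth} section on $U \times X$ and not merely slicewise. Concretely, Green's operators $G_u$ of $\Delta_{D''_u}$ need not vary smoothly when the dimension of harmonic forms jumps. To handle this I would first try to express $\xi$ through the elliptic family's parametrix, or to work locally in $U$ on the stratum where cohomology is constant. If that fails, I would restrict the injectivity claim to a local (sheaf-level) statement, where ellipticity is not needed.
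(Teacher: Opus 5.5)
\textbf{Degree zero.} Your argument here is correct and is the paper's: restrict to slices with \cref{prop:conn-slicewise} and apply Simpson's identity on each compact K\"ahler slice.

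\textbf{Positive degrees: this is the gap.} Your injectivity argument needs a \emph{global smooth} $\xi$ on $U\times X$ with $D'\beta=D'D''\xi$, and you leave that open. It cannot be supplied, because injectivity on cohomology of global sections fails for families.

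Take $X=\bC/(\bZ+i\bZ)$, $U=\bR^2$ with coordinates $(s,t)$, and $E$ the trivial line bundle with frame $e$. Put $H=1$, $\oprt_{E/U}=\oprt_{X/U}+t\,d\bar z$ and $\theta=s\,dz$. Then $\nabla_{H/U}=d_{X/U}+t\,(d\bar z-dz)$, so $D''=\oprt_{X/U}+t\,d\bar z+s\,dz$ and $D'=\prt_{X/U}-t\,dz+s\,d\bar z$. The sum $D=D'+D''$ is flat, so this is a smooth family of harmonic bundles.

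The small $2$-cocycle $\omega=s\,e\otimes dz\wedge d\bar z$ equals $D''(e\otimes d\bar z)$, and also $D(\tfrac12 e\otimes(d\bar z-dz))$. Suppose $\omega=D''\gamma$ with $\gamma=e\otimes(f\,dz+g\,d\bar z)$ and $D'\gamma=0$. The coefficients of $D'$ and $D''$ are constant along $X$, so averaging over $X$ gives continuous $f_0(s,t)$ and $g_0(s,t)$ with $sf_0+tg_0=0$ and $sg_0-tf_0=s$. This forces $g_0=s^2/(s^2+t^2)$ off the origin, which has no continuous extension.

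So $[\omega]\neq 0$ in the small $H^2$, but it maps to $0$ under both morphisms. The slicewise $\xi_u$ do exist, but they are forced to be unbounded as $(s,t)\to 0$, where the slice cohomology jumps. No parametrix or stratification argument can repair this.

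Your fallback for non-K\"ahler $X$ also cannot give a statement about global sections. Take $U=\pt$, $X=\bC^*$, the trivial line bundle, $\theta=0$ and $H=1$. The form $\bar z^{-1}d\bar z$ is $\prt$- and $\oprt$-closed and equals $\oprt\log|z|^2$, yet it is not $\oprt$ of an antiholomorphic function on $\bC^*$.

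\textbf{The paper does not close this gap either.} Its injectivity argument starts from $a=D''(a_0)$ with $a_0$ already in $\ker{D'}$ and deduces $a=D(a_0)$. That only checks that small coboundaries map to coboundaries, which holds for any chain map.

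\textbf{What you can actually prove.} For compact K\"ahler $X$, the true statement is bijectivity on $H^0$ plus injectivity on $H^1$. This is all that \cref{prop:ext-compl-cohomology} uses downstream, and the degree-one part needs no $D'D''$-lemma.

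Suppose a small $1$-cocycle is $\alpha=D''\beta$ or $\alpha=D\beta$ with $\beta\in\Gamma(E)$. Since $(D')^2=0$, the condition $D'\alpha=0$ gives $D'D''\beta=0$ in both cases. On each slice, the K\"ahler identity $(D''_u)^*=\pm i[\Lambda,D'_u]$ gives $(D''_u)^*D''_u\beta_u=\pm i\Lambda D'_uD''_u\beta_u=0$, hence $D''\beta=0$. The degree-zero statement then gives $D'\beta=0$, so $\alpha=0$ either way.

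You should prove this restricted statement rather than try to make the $D'D''$-lemma uniform in $u$.
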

\begin{proof}
Let $a \in \Gamma(\ker{(D')^{k + 1}} \cap \ker{(D'')^{k + 1}})$ such that $a = D''(a_0)$ for some
$a_0 \in \Gamma(\ker{(D')^k})$. Then, $D(a_0) = D'(a_0) + D''(a_0) = D''(a_0) = a$. This shows that
the maps are injective on cohomology groups of the complexes of global sections.

For surjectivity in degree $0$ when $X$ is K\"ahler,
first suppose $a'' \in \Gamma(\ker{(D'')^{0}})$.
We need to show that $a''$ is also in $\Gamma(\ker{(D')^0})$. By \cref{prop:conn-slicewise},
for each $u \in U$, $D''_u(a''_u) = D''(a'')_u = 0$. By \cref{rmk:harmonic-bun-family-slice},
$(E_u, D''_u, D_u)$ is a harmonic bundle on the compact K\"ahler manifold
$X \cong \set{u} \times X$  for each $u \in U$, and hence \cite[Lemma 2.2]{HiggsLocSys} applies,
showing that $D'(a'')_u = D'_u(a''_u) = 0$, by \cref{prop:conn-slicewise} again.
Since this holds for each $u \in U$, we have $D'(a) = 0$.
The argument for the second morphism of complexes is similar.
\end{proof}

\begin{rmk}\label{rmk:harmonic-bun-fmaily-cohomology-not-surjective}
Consider the elliptic curve $X = \bC/(\bZ + i\bZ)$, along with $U = \bR$ and
$E = U \times X \times \bC \to U \times X$,
\[
D'' = \oprt_{X/U} + u dz, D' = \prt_{X/U} + u d\bar{z}, D = D'' + D'
\]
We can immediately see that the constant metric is a harmonic metric relating $D''$ and $D$.
Observe that $dz \in \ker{(D'')^1}$ since $\oprt_{X/U}(dz) + u dz \wedge dz = 0$.
In order for the map of complexes
$\Gamma(\ker{D'}, D'') \to \Gamma(E \otimes \sA_{X/U}^\bullet, D'')
= \Gamma(\sA_{X/U}^\bullet, D'')$
to be surjective in first cohomology, we require a smooth function $s : U \times X \to \bC$ such
that $a := dz - D''(s) = -\oprt_{X/U}(s) + (1 - us) dz$ is in
$\Gamma(\ker{(D')^1}) \cap \Gamma(\ker{(D'')^1})$.
However, since $a$ is in $\Gamma(\sA_{X/U}^1)$, we must have $a = f dz + g d\bar{z}$ for some smooth
functions $f, g : U \times X \to \bC$.
Then, we have:
\begin{align*}
D''(a)
=& D''(dz) \\
=& \oprt_{X/U}(dz) + u dz \wedge dz \\
=& 0
\end{align*}
and
\begin{align*}
D'(a)
=& \prt_{X/U}(a) + u d\bar{z} \wedge a \\
=& -\prt_{X/U}\oprt_{X/U}(s) + \prt_{X/U}((1 - us)dz) - u d\bar{z} \wedge \oprt_{X/U}(s)
   + u d\bar{z} \wedge (1 - us)dz \\
=& -\prt_{X/U}\oprt_{X/U}(s) + u(1 - us) d\bar{z} \wedge dz \\
=& \br{\frac{\prt}{\prt z} \frac{\prt}{\prt \bar{z}}s + u(us - 1)} dz \wedge d\bar{z}
\end{align*}
where $x, y$ are the real coordinates satisfying $z = x + iy, \bar{z} = x - iy$.
It can be shown that $D'(a)$ being zero implies $s = 1/u$ for
$u \in \bR \setminus \set{0}$, but this admits no smooth extension to all of $U = \bR$.
Thus, the map cannot be surjective on first cohomology.
\end{rmk}

\begin{prop}\label{prop:harmonic-Dol-dR-functor}
Let $X$ be a complex manifold and $U$, a smooth manifold.
Then, there exist quasi-faithful $\bC\dg$--functors:
\[
\Conn_{0, \oprt_{X/U}^\bullet}(U \times X) \ot \sH(U \times X)
\to \Conn_{1, d_{X/U}}(U \times X)
\]
with the left map defined by $\sH_E \mapsto (E, D''_E)$ and the right map defined by
$\sH_E \mapsto (E, D_E)$. If $X$ is K\"ahler, the functors induce isomorphisms of the zeroth
cohomologies of morphism complexes and monomorphisms in higher degrees. Furthermore, for each
smooth map $f : V \to U$, the following diagram of $\bC\dg$--categories strictly commutes:
\[\begin{tikzcd}
\Conn_{0, \oprt_{X/U}^\bullet}(U \times X)
    \ar[d, "f^<" left] &
\sH(U \times X)
    \ar[l] \ar[r] \ar[d, "f^<"] &
\Conn_{1, d_{X/U}}(U \times X)
    \ar[d, "f^<"] \\
\Conn_{0, \oprt_{X/V}^\bullet}(V \times X) &
\sH(V \times X)
    \ar[r] \ar[l] &
\Conn_{1, d_{X/V}}(V \times X)
\end{tikzcd}\]
and the maps assemble to pseudonatural transformations:
\[
\mcM_{Dol}(X) \ot \mcM_{\sH}(X) \to \mcM_{dR}(X)
\]
\end{prop}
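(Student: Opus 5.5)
The plan is to define the two $\bC\dg$--functors directly from the sheaf-level inclusions of \cref{prop:harmonic-op-Higgs-op}, read off the cohomological statements from \cref{prop:harmonic-bun-family-cohomology}, and then check compatibility with pullback by comparing the relevant maps in local frames.

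\textbf{Definition of the functors.} On objects, the left functor sends $\sH_E$ to $(E, D''_E)$ and the right one sends $\sH_E$ to $(E, D_E)$. On morphism complexes, we use the Hom harmonic bundle $[\sH_E, \sH_F]$ of \cref{defn:harmonic-bun-family-hom} and apply \cref{prop:harmonic-op-Higgs-op} to it. This gives inclusions of complexes of sheaves
\[
\dbr{\sH_E, \sH_F} \hto \dbr{D''_E, D''_F}, \qquad \dbr{\sH_E, \sH_F} \hto \dbr{D_E, D_F},
\]
where the target of the second is identified with $(E^\vee\otimes F\otimes\sA^\bullet_{X/U},[D_E,D_F]^\bullet)$. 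These inclusions commute with composition and identities: by \cref{thm:harmonic-bun-family-cat-enriched-in-sheaves}, the composition and units of $\sH(U\times X)$ are restrictions of those for $D''$. By \cref{rmk:hbun-prest-Higgs-op-flat-conn-equiv}, the same holds with $D$ in place of $D''$, since both are the trace-and-wedge maps $c$ of \cref{thm:conn-cat-enriched-in-sheaves} on the same underlying sheaves. Applying $\Gamma$, with its lax monoidal structure as in \cref{thm:dg-cat-conn}(i), turns these into $\bC\dg$--functors. Note that the Higgs side should be read as the $\oprt_{X/U}$--category of \cref{defn:mod-prest-Higgs}.

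\textbf{Cohomological statements.} Quasi-faithfulness, meaning injectivity on all cohomology of the morphism complexes, follows from the first part of \cref{prop:harmonic-bun-family-cohomology} applied to $[\sH_E, \sH_F]$. When $X$ is K\"ahler, the isomorphism on zeroth cohomology is the second part of the same proposition, again applied to $[\sH_E, \sH_F]$. Here I would note that the degree-zero argument only uses the slicewise \cite[Lemma 2.2]{HiggsLocSys}, so it needs compactness of $X$; I would state that assumption as implicit.

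\textbf{Naturality and pseudonaturality.} For commutativity of the squares, on objects we have $f^<(E, D''_E) = (f^<E, f^<D''_E)$ by the definition of $f^<$ on $\sH$ (\cref{thm:harmonic-bun-pullback-func}(iii)). On morphisms, the functor $f^<$ on $\sH$ is the restriction of $\Gamma(\eta)$ from \cref{prop:pullback-conn-higher-forms} to the kernels (\cref{thm:harmonic-bun-pullback-func}(i),(ii)). So the square commutes because restricting and then applying $\eta$ agrees with applying $\eta$ and then restricting. This is exactly the commutative square in \cref{thm:harmonic-bun-pullback-func}(i), and for $D$ it is the analogous square after \cref{rmk:hbun-prest-Higgs-op-flat-conn-equiv}.

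The only genuine subtlety, and what I expect to be the main obstacle, is that the identifications $f^<\dbr{\nabla_E,\nabla_F}\cong\dbr{f^<\nabla_E,f^<\nabla_F}$ of \cref{rmk:hom-conn-pb-nat} and \cref{rmk:harmonic-bun-hom-pb-nat} must be compatible. They are: both are the same canonical isomorphism of underlying sheaves, restricted to kernels. This gives strict commutativity.

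For pseudonaturality, the components are the functors above and the $2$--cells are identities. The coherence with the compositors $\tilde\alpha$ and unitors $\tilde\beta$ holds because those of $\mcM_\sH(X)$ in \cref{thm:harmonic-bun-prest} were defined as the restrictions of the $\hat\alpha_E$ and $\hat\beta_E$ used in \cref{prop:pullback-comp-nat} and \cref{prop:pullback-unit-nat}. The functors therefore carry the structure isomorphisms of $\mcM_\sH(X)$ to those of $\mcM_{Dol}(X)$ and $\mcM_{dR}(X)$ on the nose, and the pseudonaturality axioms reduce to identities.
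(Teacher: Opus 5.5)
Your proposal is correct and follows essentially the same route as the paper. The functors come from the sheaf-level inclusions of \cref{prop:harmonic-op-Higgs-op} applied to the Hom harmonic bundle, the cohomological claims from \cref{prop:harmonic-bun-family-cohomology}, strict commutativity of the squares from \cref{thm:harmonic-bun-pullback-func}, and pseudonaturality (with identity $2$--cells) from the fact that the $\tilde{\alpha}_E, \tilde{\beta}_E$ of $\mcM_{\sH}(X)$ are restrictions of those of $\mcM_{Dol}(X)$ and $\mcM_{dR}(X)$. Your side remarks are accurate refinements of the paper's statement: the degree-zero claim really uses compactness of $X$, and the Higgs side should carry $\lambda = 1$, as in \cref{defn:mod-prest-Higgs}.
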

\begin{proof}
The mapping of object sets is given in the statement. We need morphisms of complexes
$\dbr{D''_E, D''_F} \ot \dbr{\sH_E, \sH_F} \to \dbr{D_E, D_F}$ and these are provided
by \cref{prop:harmonic-op-Higgs-op}. The fact that these morphisms assemble to a $\bC\dg$--functor
$\Conn_{0, \oprt_{X/U}^\bullet}(U \times X) \ot \sH(U \times X)$ is the content of
\cref{thm:harmonic-bun-family-cat-enriched-in-sheaves} combined with
\cref{rmk:hbun-prest-Higgs-op-flat-conn-equiv}.
The fact that we also get a $\bC\dg$--functor $\sH(U \times X) \to \Conn_{1, d_{X/U}}(U \times X)$
follows from the observation that $(D'')^k$ and $D^k$ are equal when restricted to
$\ker{(D')^k}$. The conclusions about quasi-faithfulness as well as the
zeroth cohomologies, when $X$ is K\"ahler, follow from \cref{prop:harmonic-bun-family-cohomology}.

The commutativity of the naturality diagram at the level of objects is immediate. At the level of
$\Hom$ complexes, the commutativity of the left square is a consequence of
\cref{thm:harmonic-bun-pullback-func}(ii) combined with \cref{prop:hom-harmonic-op}.
The commutativity of the right square at the level of $\Hom$ complexes is a consquence of the fact
that $D''$ restricted to $\ker{D'}$ is equal to $D$. We must now verify the coherence conditions
needed for a collection of morphisms to form a pseudonatural transformation.
Write $\mcM_{Dol}(X), \mcM_{dR}(X), \mcM_\sH(X)$ as simply $\mcM_{Dol}, \mcM_{dR}, \mcM_\sH$ for brevity.
Since the target
$2$--category $\bC\dg-\Cat$ has strict composition and identities, these conditions reduce to the
following equalities of horizontal composites in $\bC\dg-\Cat$:
\[\begin{tikzcd}
\mcM_\sH(V) \ar[r] &
\mcM_{Dol}(V)
    \ar[r, bend left, "\id", ""{below, name=A}]
    \ar[r, bend right, "\id_V^<" below, ""{above, name=B}] &
\mcM_{Dol}(V)
    \ar[Rightarrow, to=A, from=B, "\tilde\beta"]
\end{tikzcd} = \begin{tikzcd}
\mcM_\sH(V)
    \ar[r, bend left, "\id", ""{below, name=A}]
    \ar[r, bend right, "\id_V^<" below, ""{above, name=B}] &
\mcM_{\sH}(V) \ar[r] &
\mcM_{Dol}(V)
    \ar[Rightarrow, to=A, from=B, "\tilde\beta"]
\end{tikzcd}\]
\[\begin{tikzcd}[column sep=tiny] & &
\mcM_{Dol}(V) \ar[rd, "f^<" above right] & \\
\mcM_\sH(W) \ar[r] &
\mcM_{Dol}(W)
    \ar[rr, ""{above, name=B}, "(g \circ f)^<" below]
    \ar[ru, "g^<" above left] & &
\mcM_{Dol}(U)
    \ar[Rightarrow, to=1-3, from=B, "\tilde\alpha"]
\end{tikzcd} = \begin{tikzcd}[column sep=tiny] &
\mcM_{\sH}(V)
    \ar[rd, "g^<" above right] & & \\
\mcM_\sH(W)
    \ar[rr, ""{above, name=B}, "(g \circ f)^<" below]
    \ar[ru, "f^<" above left] & &
\mcM_{\sH}(U) \ar[r] &
\mcM_{Dol}(U)
    \ar[Rightarrow, to=1-2, from=B, "\tilde\alpha"]
\end{tikzcd}\]
and analogous equalities with $\mcM_{Dol}$ replaced by $\mcM_{dR}$. These equalities follow from the
observation that the maps $\tilde\beta_E : \ul\bC[0] \to \dbr{D''_E, D''_E}$ and
$\tilde\alpha_E : \ul\bC[0] \to (gf)_>\dbr{(gf)^<D''_E, f^<g^<D''_E}$ factor through
$\dbr{\sH_E, \sH_E}$ and $(gf)_>\dbr{(gf)^<\sH_E, f^<g^<\sH_E}$ respectively, as we saw in the proof
of \cref{thm:harmonic-bun-prest}. The case of $\mcM_{dR}$ is similar as $D$ restricted to $\ker{D'}$
is equal to $D''$.
\end{proof}

\begin{rmk}\label{rmk:harmonic-bun-incl-not-quasi-full}
Identifying $E$ with its endomorphism bundle in
\cref{rmk:harmonic-bun-fmaily-cohomology-not-surjective} shows that the $\bC\dg$--functors
of \cref{prop:harmonic-Dol-dR-functor} are not quasi-full. This indicates that there is some
potential obstruction in interpolating between smooth families of Higgs bundles and those of flat
bundles via smooth families of harmonic bundles.
\end{rmk}

\begin{prop}\label{prop:ext-compl-cohomology}
Let $F : \sA \to \sB$ be a $\bC\dg$--functor such that $\sA, \sB$ have morphism complexes
concentrated in non-negative degrees and the morphisms of cohomologies of the morphisms
complexes are bijective in degree $0$ and injective in degree $1$. Then, the morphisms of
cohomologies of morphism complexes induced by $F^{ext}$ are bijective in degree $0$.
\end{prop}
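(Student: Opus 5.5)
The plan is to follow Simpson's argument in \cite[\S 3]{HiggsLocSys}, inducting on the length of the iterated extensions that build objects of $\sA^{ext}$. Recall from the proof of \cref{prop:ext-compl-func} that an object of $\sA^{ext}$ is a pair $(A, \eta)$ obtained from objects of $\sA$ by finitely many extensions. Concretely, $A$ is a (formal) direct sum $A_1 \oplus \cdots \oplus A_n$ of objects of $\sA$, and $\eta$ is strictly upper triangular of degree $1$ with $d\eta + \eta^2 = 0$. Then $\Hom^\bullet_{\ol\sA}((A,\eta),(A',\eta'))$ has a finite filtration by ``distance from the diagonal''. Its associated graded pieces are direct sums of the complexes $\Hom^\bullet_\sA(A_i, A'_j)$ with their original differentials, since the twisting terms $\eta' f$ and $f\eta$ strictly raise filtration degree. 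The same holds for $F^{ext}$, and $F^{ext}$ respects the filtrations, because $F^{ext}(A,\eta) = (F(A), F(\eta))$ and $F$ is additive on the block matrix entries.

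The key step is a two-term long exact sequence argument. For an extension, write $(A,\eta)$ as the cone-like object with subobject $A_1$ and quotient $(A'',\eta'')$ of shorter length. Then
\[
0 \to \Hom(Q, B) \to \Hom((A,\eta), B) \to \Hom(A_1, B) \to 0
\]
is a short exact sequence of complexes, split as graded modules, for any target $B$, and similarly in the second variable. The long exact sequences in cohomology read
\[
0 \to H^0\Hom(Q,B) \to H^0\Hom((A,\eta),B) \to H^0\Hom(A_1,B) \to H^1\Hom(Q,B) \to H^1\Hom((A,\eta),B)\to \cdots.
\]
The leading $0$ uses concentration in non-negative degrees, so $H^{-1} = 0$. $F^{ext}$ induces a map of these long exact sequences. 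By the five lemma (the version needing only that the map on the left $H^0$ is iso, the $H^0$ of the subobject is iso, and the next $H^1$ term is mono), the map on $H^0\Hom((A,\eta),B)$ is bijective. Here surjectivity uses injectivity on $H^1$ of the third term, which is exactly the hypothesis on degree $1$.

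The main obstacle is propagating the degree-$1$ injectivity through the induction, since the five lemma only returns bijectivity in degree $0$, not injectivity in degree $1$ for the extended objects. First I would try to check whether the four-lemma variant suffices. For injectivity of $H^1\Hom((A,\eta),B)$ one needs injectivity on $H^1$ of the outer terms and surjectivity on $H^0$ of the right term, which is bijective by induction. So the four lemma gives $H^1$-injectivity provided the $H^1$ maps of the outer terms are injective. The induction should therefore carry the pair of properties ($H^0$ bijective, $H^1$ injective) together. I would do a double induction: first on the length of the source with the target in $\sA$, then on the length of the target, using the analogous sequence in the second variable. The base case is the hypothesis on $F$. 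I would carefully verify that the four-lemma diagrams apply at each stage, using non-negativity to supply the zero at the start.
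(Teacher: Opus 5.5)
Your argument is correct and has the same skeleton as the paper's proof. An extension in $\sA^{ext}$ gives a short exact sequence of Hom complexes, split as graded modules, and $F^{ext}$ maps it to the corresponding sequence in $\sB^{ext}$. The five lemma on the long exact sequences, with $H^{-1}=0$ supplied by non-negativity, then gives bijectivity in degree $0$.

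Where you differ is the induction, and your version is the one that actually closes. The paper inducts only on the number of extensions needed to build the target $B$, with inductive statement ``bijective in degree $0$''. This leaves two unjustified steps:
\begin{enumerate}
\item It treats the base case $B\in\sA$ with $A\in\sA^{ext}$ arbitrary as covered by the hypothesis, but the hypothesis only covers $A,B\in\sA$.
\item It asserts that the inductive hypothesis makes $F^1_{A,U}$ injective, although degree-$1$ injectivity is not part of what is being inducted.
\end{enumerate}

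Your changes repair exactly these two points:
\begin{enumerate}
\item You carry the pair ($H^0$ bijective, $H^1$ injective) together through the induction.
\item You propagate $H^1$-injectivity by the four lemma, using $H^1$-injectivity of the outer terms and $H^0$-surjectivity of the right-hand term.
\item You first induct on the source with target in $\sA$, using the contravariant sequence, and only then on the target.
\end{enumerate}
As a bonus, you show that $F^{ext}$ is also injective on $H^1$.

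Two small remarks. The filtration discussion in your first paragraph is not needed. You should state explicitly the assumption, also implicit in the paper, that $F$ preserves the direct sums used to form extensions, so that $F^{ext}$ sends extensions to extensions and induces a map of the long exact sequences.
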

\begin{proof}
Let $A, B$ be two objects of $\sA^{ext}$. By definition of $\sA^{ext}$, $B$ is obtained
by finitely many successive extensions of objects of $\sA$ in $\sA^{ext}$. We proceed
by induction on the number of extensions needed to obtain $B$. If this number is zero, then
the hypothesis guarantees the conclusion. Suppose this number is greater than zero so that we have
an extension $U \to[a] B \to[b] V$ in $\sA^{ext}$. This gives a diagram of long exact sequences
of cohomologies of $\Hom$ complexes:
\[\begin{tikzcd}[column sep=small]
0 \ar[r] \ar[d, equal] & \Ext^0(A, U) \ar[r] \ar[d, "F^0_{A, U}"] &
    \Ext^0(A, B) \ar[r] \ar[d, "F^0_{A, B}"] &
    \Ext^0(A, V) \ar[r] \ar[d, "F^0_{A, V}"] & \Ext^1(A, U) \ar[r] \ar[d, "F^1_{A, U}"]
    & \cdots \\
0 \ar[r] & \Ext^0(FA, FU) \ar[r] & \Ext^0(FA, FB) \ar[r] & \Ext^0(FA, FV) \ar[r] &
    \Ext^1(FA, FU) \ar[r]
    & \cdots
\end{tikzcd}\]
To see this, we use the $\bF\dg$--functoriality of $F$ along with the fact that extensions are taken
to short exact sequences by $\Hom_\sA(A, -), \Hom_\sB(FA, -)$. The latter of these two facts can be
seen, in turn, by using the morphisms $a$, $b$, a choice of splitting, and the relations defining
an extension \cite[\S 3]{HiggsLocSys}.
Next, the inductive hypothesis guarantees that the
morphisms $F^0_{A, U}, F^0_{A, V}$ are bijections and the morphism $F^1_{A, U}$ is an injection.
By the five lemma, the morphism $F^0_{A, B}$ is a bijection, as required.
\end{proof}

\begin{thm}\label{thm:harmonic-Dol-dR-prest}
In the context of \cref{prop:harmonic-Dol-dR-functor}, consider the pseudonatural transformations
obtained by whiskering (horizontal composition in the $3$--category of $2$--categories) as follows:
\[\begin{tikzcd}[column sep=huge]
\Mfd^\op
    \ar[r, bend left=5.5em, shift left, "\mcM_{Dol}(X)" above, ""{below, name=Dol}]
    \ar[r, "\mcM_{\sH}(X)"{description, name=H}]
    \ar[r, bend right=5.5em, shift right, "\mcM_{dR}(X)" below, ""{above, name=dR}]
    \ar[Rightarrow, from=H, to=Dol]
    \ar[Rightarrow, from=H, to=dR] &
\bC\dg-\Cat \ar[r, "h((-)^{ext})^\simeq"] &
\Grpd
\end{tikzcd}\]
Writing $h\mcM^{ext, \simeq}_{P}(X) = (-)^{\simeq} \circ h \circ (-)^{ext} \circ \mcM_P(X)$ for
$P = \sH, Dol, dR$, we obtain a diagram of pseudofunctors:
\begin{equation}\label{eqn:harmonic-Dol-dR-psfunc}
h\mcM^{\simeq}_{Dol}(X) \ot[\iota_{Dol}]
h\mcM^{ext, \simeq}_\sH(X) \to[\iota_{dR}]
h\mcM^{\simeq}_{dR}(X)
\end{equation}
where, for object $U \in \Mfd$, the two component functors $\iota_{Dol, U}, \iota_{dR, U}$ are fully
faithful. In particular, we obtain fully faithful morphisms of prestacks:
\begin{equation}\label{eqn:harmonic-Dol-dR-prest}
\sM_{Dol}(X) \ot \int h\mcM^{ext, \simeq}_{\sH}(X) \to \sM_{dR}(X)
\end{equation}
\end{thm}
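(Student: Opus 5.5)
The plan is to build the diagram of pseudofunctors by whiskering, to check full faithfulness one object $U$ at a time using \cref{prop:ext-compl-cohomology}, and then to pass to prestacks with the Grothendieck construction.

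\textbf{Step 1 (construction).} Whiskering the pseudonatural transformations of \cref{prop:harmonic-Dol-dR-functor} with the strict functors $(-)^{ext}$ (\cref{prop:ext-compl-func}) and $h$ (\cref{prop:htpy-cat-func}), and with the core functor $(-)^\simeq$, gives pseudonatural transformations
\[
h\mcM^{ext,\simeq}_{Dol}(X) \ot h\mcM^{ext,\simeq}_{\sH}(X) \to h\mcM^{ext,\simeq}_{dR}(X).
\]
To restrict $(-)^\simeq$ to the maximal subgroupoid functorially, I would take only invertible natural transformations and isomorphism-reflecting functors. This is harmless for the pseudonaturality $2$--cells: they are built from the isomorphisms $\tilde\alpha,\tilde\beta$, so they stay invertible.

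Next I identify the outer terms with $h\mcM^{\simeq}_{Dol}(X)$ and $h\mcM^{\simeq}_{dR}(X)$ using \cref{rmk:ext-compl-equiv-prest}. That remark relies on \cref{thm:dg-cat-conn-complete}: $\Conn$ on a smooth manifold is complete, so $\sA\to\sA^{ext}$ is a quasi-equivalence. Composing with a chosen pseudo-inverse turns the whiskered transformations into $\iota_{Dol},\iota_{dR}$.

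\textbf{Step 2 (full faithfulness per $U$).} Fix $U$. By \cref{prop:harmonic-Dol-dR-functor}, the $\dg$--functors $\sH(U\times X)\to\Conn_{0,\oprt_{X/U}^\bullet}(U\times X)$ and $\sH(U\times X)\to\Conn_{1,d_{X/U}}(U\times X)$ have the following properties, using that $X$ is compact K\"ahler:
\begin{itemize}
\item they are bijective on $H^0$ of the Hom complexes;
\item they are injective on all higher cohomology, in particular on $H^1$.
\end{itemize}
All Hom complexes are concentrated in non-negative degrees. So \cref{prop:ext-compl-cohomology} applies, and $F^{ext}$ is bijective on $H^0$ of Hom complexes. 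The homotopy category, and hence its morphisms, is read off from $H^0$. Therefore $h(F^{ext})$ is fully faithful, and composing with the quasi-equivalence $h(\sA^{ext})\simeq h(\sA)$ keeps it fully faithful.

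Passing to cores: a fully faithful functor reflects isomorphisms. Hence on cores it is still fully faithful, meaning bijective on isomorphism sets between any two objects.

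\textbf{Step 3 (prestacks).} A pseudonatural transformation between groupoid-valued pseudofunctors whose components are fully faithful induces, via the Grothendieck construction, a morphism of fibred categories over $\Mfd$ that is fully faithful on fibres. A morphism of categories fibred in groupoids that is fully faithful on every fibre is fully faithful globally: a morphism over $f:V\to U$ is an arrow in the fibre over $V$ into a pullback, and pullbacks are preserved up to the coherent isomorphisms. This gives \eqref{eqn:harmonic-Dol-dR-prest}.

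\textbf{Main obstacle.} I expect the hardest part to be the hypotheses of \cref{prop:ext-compl-cohomology}, specifically the induction for the first argument $A$ ranging over iterated extensions. The long exact sequence used there varies the second argument, so I would run the same five-lemma induction symmetrically in the first argument. For that I need degree-$1$ injectivity to persist for extensions, which requires tracking $H^1$ and $H^2$ through the sequence. My first attempt would be to strengthen the inductive statement to "bijective on $H^0$ and injective on $H^1$". This works with the four/five lemma provided injectivity on $H^2$ is also available, and \cref{prop:harmonic-bun-family-cohomology} does give injectivity in all degrees.
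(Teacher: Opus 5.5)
Your proposal is correct and follows the paper's own proof. The paper also whiskers, identifies the outer vertices via \cref{rmk:ext-compl-equiv-prest}, gets fibrewise full faithfulness from \cref{prop:harmonic-Dol-dR-functor} and \cref{prop:ext-compl-cohomology}, and passes to prestacks by the Grothendieck construction plus the fibrewise criterion for full faithfulness. There is one small slip in Step~1: $(-)^\simeq$ needs no condition on functors, since every functor preserves isomorphisms, and the pullbacks $f^<$ are in general not isomorphism-reflecting, so only invertibility of the $2$--cells matters.

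Your worry about \cref{prop:ext-compl-cohomology} is justified. Its induction runs only over the second argument, and its inductive hypothesis does not carry injectivity on $H^1$. Your strengthened induction in both variables repairs this, except that injectivity on $H^2$ is never needed. The four lemma applied to $H^0(A,V)\to H^1(A,U)\to H^1(A,B)\to H^1(A,V)$ propagates $H^1$--injectivity in the second variable. Its contravariant analogue $H^0(U,C)\to H^1(V,C)\to H^1(A,C)\to H^1(U,C)$ does the same when the first argument $A$ is an extension of $V$ by $U$. Both use only the hypotheses of \cref{prop:ext-compl-cohomology} as stated.
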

\begin{proof}
First recall from \cref{rmk:ext-compl-equiv-prest} that
$h\mcM^{ext, \simeq}_{Dol} \simeq h\mcM^{\simeq}_{Dol}(X),
h\mcM^{ext, \simeq}_{dR} = h\mcM^{\simeq}_{dR}(X)$ so that the whiskerings indeed provide
pseudonatural transformations of the form shown in diagram \ref{eqn:harmonic-Dol-dR-psfunc}.
Next, apply \cref{prop:harmonic-Dol-dR-functor} and \cref{prop:ext-compl-cohomology} to
deduce that the component functors of diagram \ref{eqn:harmonic-Dol-dR-psfunc} are fully faithful.
By the Grothendieck construction, we obtain the morphisms of prestacks shown in diagram
\ref{eqn:harmonic-Dol-dR-prest} and these are fibrewise fully faithful by the previous statement,
and hence fully faithful functors of the underlying categories by
\cite[\href{https://stacks.math.columbia.edu/tag/003Z}{Tag 003Z}]{stacks-project}.
\end{proof}

\subsection{The Correspondence}
\label{subsec:correspondence}

\begin{defn}[Extended Harmonic Bundle Families]\label{defn:ext-harmonic-family}
In the context of \cref{thm:harmonic-Dol-dR-prest}, denote by $\sM^{ext}_\sH(X)$, the prestack
forming the middle vertex of diagram \ref{eqn:harmonic-Dol-dR-prest}.
We will call the objects of $\sM^{ext}_\sH(X)$ extended smooth families of harmonic bundles.
\end{defn}

\begin{defn}[Substack generated by a subcategory {\cite[Remark 3.24]{FormalAlgSt}}]
\label{defn:generated-st}
Let $\mcX \to \sC$ be a stack over a site $\sC$ with a subcategory $\mcP \hto \mcX$. Then, we define
$\ol\mcP$ to be the intersection of all substacks of $\mcX$ that contain $\mcP$.
\end{defn}

\begin{prop}\label{prop:generated-st-stackification}
Let $\mcX \to \sC$ be a stack over a site $\sC$, with a full, replete sub-prestack
$\mcP \hto \mcX \to \sC$.
Then, substack $\ol\mcP \hto \mcX \to \sC$ generated by $\mcP$ is the full subcategory
of $\mcX$ consisting of those objects $x$ that admit a cover $\set{\beta_i : x_i \to x}_{i \in I}$
in the topology on $\mcX$ induced by that of $\sC$
\cite[\href{https://stacks.math.columbia.edu/tag/06NU}{Tag 06NU}]{stacks-project},
with each $x_i$ an object of $\mcP$. In particular, $\mcP \hto \ol\mcP$ realizes $\ol{\mcP}$
as the stackification of $\mcP$.
\end{prop}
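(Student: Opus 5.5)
I will define $\mcQ \subset \mcX$ to be the full subcategory of objects $x$ admitting a cover $\set{\beta_i : x_i \to x}_{i \in I}$ with every $x_i$ in $\mcP$. The proof then splits into two inclusions, $\ol\mcP \supseteq \mcQ$ and $\ol\mcP \subseteq \mcQ$. For the first, let $\mcY$ be any substack of $\mcX$ containing $\mcP$ and let $x \in \mcQ$ with a cover $\set{x_i \to x}$. By definition of the induced topology (Tag 06NU), the cover lies over a covering $\set{U_i \to U}$ in $\sC$, and each $x_i$ is isomorphic to a pullback $x|_{U_i}$. On overlaps the restrictions of the $x_i$ are canonically isomorphic in $\mcX$, through the isomorphisms to $x|_{U_{ij}}$, and the cocycle condition holds. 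So $x$ is the effective descent of a descent datum built from objects of $\mcY$. For this datum to lie in $\mcY$, the restrictions $x_i|_{U_{ij}}$ and the transition isomorphisms must be in $\mcY$; I will take "substack" to mean full, replete, and closed under pullback, so this holds. Since $\mcY$ is a stack, the datum glues inside $\mcY$. Because $\mcX$ is a stack, the glued object is isomorphic to $x$, and repleteness gives $x \in \mcY$. Hence $\mcQ \subseteq \ol\mcP$.

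For the reverse inclusion it suffices to show that $\mcQ$ is itself a substack containing $\mcP$. It contains $\mcP$ via the trivial cover $\set{\id_x}$. It is full by definition and replete because covers can be composed with isomorphisms. It is closed under pullback: for $x \in \mcQ$ and $V \to U$, the pullbacks $x_i|_{V \times_U U_i}$ lie in $\mcP$ since $\mcP$ is a sub-prestack and replete, and they form a cover of $x|_V$. Morphisms glue automatically because $\mcQ$ is full in the stack $\mcX$. Objects also glue: given a descent datum in $\mcQ$ over $\set{U_j \to U}$, glue it in $\mcX$ to some $x$. Each $x|_{U_j}$ has a cover by objects of $\mcP$, and composing these with the covering $\set{U_j \to U}$ produces a cover of $x$ by objects of $\mcP$, by transitivity of coverings in the site. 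Thus $x \in \mcQ$, so $\mcQ$ is a substack, and therefore $\ol\mcP \subseteq \mcQ$.

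For the stackification claim, I will verify the universal property. Given a morphism $\mcP \to \mcZ$ into a stack $\mcZ$, extend it to $\ol\mcP = \mcQ$ by descent: choose for each object a $\mcP$-cover, send the resulting descent datum to $\mcZ$, and glue there. Uniqueness up to unique $2$-isomorphism follows from the stack property of $\mcZ$. A simpler alternative is to argue that $\mcP \hto \mcQ$ is fully faithful and that every object of $\mcQ$ is locally in $\mcP$; this characterizes stackification, provided the standard criterion is cited.

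The main obstacle is pinning down what "substack" means. The definition by intersection only behaves well if substacks are taken to be full, replete, and pullback-closed. Without these conventions the first inclusion breaks, because the descent datum may not lie in $\mcY$. I will state that convention explicitly at the start.
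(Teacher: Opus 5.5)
Your proposal is correct and follows the same route as the paper. In both, you glue the $\mcP$-cover of $x$ inside an arbitrary substack containing $\mcP$, identify the glued object with $x$ in $\mcX$, conclude by repleteness and fullness, and then get the stackification claim from $\mcP \hto \ol\mcP$ being fully faithful with every object locally in $\mcP$ (the paper cites Tag 02ZP for this criterion). You also verify that $\mcQ$ is itself a substack containing $\mcP$. The paper leaves this implicit, but it is exactly what is needed for the reverse inclusion $\ol\mcP \subseteq \mcQ$.
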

\begin{proof}
Let $\tilde\mcP$ be the subcategory described in the statement.
Consider a substack $\mcX'$ of $\mcX$ containing $\mcP$. Let $x \in \Ob{\tilde\mcP}$. Choose a cover
$\set{\beta_i : x_i \to x}_{i \in I}$ as above over a cover $\set{c_i \to c}_{i \in I}$ in $\sC$.
Let $c_{ij} := c_i \times_c c_j$ and $c_{ijk} := c_i \times_c c_j \times_c c_k$.
There are isomorphisms $\alpha_{ij} : x_i|_{c_{ij}} \cong x|_{c_{ij}} \cong x_j|_{c_{ij}}$
satisfying $\alpha_{jk}|_{c_{ijk}} \circ \alpha_{ij}|_{c_{ijk}} = \alpha_{ik}|_{c_{ijk}}$.
Since $\mcX'$ is a stack containing the objects of $\mcP$, the
$x_i$ glue to an object $x'$ of $\mcX'$, such that there are isomorphisms
$\phi_i : x'|_{c_{i}} \to x_i$ satisfying $\phi_j|_{c_{ij}} = \alpha_{ij} \circ \phi_i|_{c_{ij}}$.
By gluing the $\phi_i$, we get an isomorphism $x' \cong x$ in the ambient
stack $\mcX$. Since substacks are replete subcategories by definition, $x \in \mcX'$.
Furthermore, for any morphism $f : x \to x'$ in $\mcP$, $x, x'$ are in $\mcX'$, as we have just
shown, and since $\mcX'$, being a substack, is full, it must contain $f$.
Thus, every substack of $\mcX$ containing $\mcP$ contains $\tilde{\mcP}$: that is, it is the full
subcategory whose object set is the intersection of the object sets of all substacks
containing $\mcP$. This is precisely $\ol\mcP$.

Given the above description of $\ol{P}$, the inclusion map $\mcP \hto \ol\mcP$ satisfies the
conditions of \cite[\href{https://stacks.math.columbia.edu/tag/02ZP}{Tag 02ZP}]{stacks-project}, and
hence, realizes $\ol\mcP$ as the stackification of $\mcP$.
\end{proof}

\begin{defn}[Higgs Bundle Families Locally Extended from Harmonic Bundles]
\label{defn:harmonic-Dol}
In the context of \cref{thm:harmonic-Dol-dR-prest}, denote by $\sM^{\sH, pre}_{Dol}(X)$
the essential images of the left morphism of prestacks of diagram \ref{eqn:harmonic-Dol-dR-prest}.
Let $\sM^\sH_{Dol}(X)$ denote the substack of $\sM_{Dol}(X)$ generated by $\sM^{\sH, pre}_{Dol}(X)$.
By \cref{prop:generated-st-stackification}, the objects of $\sM^\sH_{Dol}(X)$ are smooth families of
Higgs bundles on $X$ parametrized by $U$ for which there is an open cover $U = \bigcup_i U_i$
such that restricted to $U_i$, the family can be obtained by a finite iterated extension of
families of Higgs bundles coming from families of harmonic bundles parametrized by $U_i$.
Hence, we will call the objects of $\sM^\sH_{Dol}(X)$ smooth families of Higgs bundles locally
extended from harmonic bundles.
\end{defn}

\begin{thm}\label{thm:harmonic-Dol}
For a compact K\"ahler manifold $X$,
$\sM^\sH_{Dol}(X)(\pt)$ is the full subcategory of Higgs bundles on $X$ that are semistable
with vanishing projections of the first and second Chern classes along the K\"ahler form of $X$.
\end{thm}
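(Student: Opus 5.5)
The plan is to unwind the definitions at $U = \pt$ and then invoke Simpson's classical results from \cite{HiggsLocSys}. First, since $\pt$ has only the trivial open cover (covers of $\pt$ by nonempty opens are just $\pt$ itself), \cref{prop:generated-st-stackification} shows that $\sM^\sH_{Dol}(X)(\pt)$ coincides with the fibre over $\pt$ of the essential image $\sM^{\sH, pre}_{Dol}(X)$. Concretely, an object of $\sM_{Dol}(X)(\pt)$ lies in $\sM^\sH_{Dol}(X)(\pt)$ exactly when it is isomorphic, in the homotopy category of $\Conn_{1, \oprt^\bullet}(X)$, to $\iota_{Dol, \pt}$ of an object of $\sH(\pt \times X)^{ext}$. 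Fullness of the subcategory is automatic, since substacks are full by definition.

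Next, I identify the pieces over $\pt$ with Simpson's categories. By \cref{rmk:harmonic-bun-family-pt}, $\sH(\pt \times X)$ is the $\dg$--category of harmonic bundles on $X$. Its Hom complexes $\Gamma(\ker{(D'_{[E,F]})^\bullet}, [D''_E, D''_F]^\bullet)$ agree with Simpson's, by \cref{defn:harmonic-bun-family-hom} and \cref{prop:hom-harmonic-op}. By \cref{exm:Higgs-bun}, $\Conn_{1, \oprt^\bullet}(X)$ is the Dolbeault $\dg$--category of Higgs bundles, and the forgetful functor of \cref{prop:harmonic-Dol-dR-functor} is Simpson's functor $(E, D'', D) \mapsto (E, D'')$. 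Applying $(-)^{ext}$ as in \cref{prop:ext-compl-func} and then the functor to $\Conn_{1,\oprt^\bullet}(X)$, with \cref{thm:dg-cat-conn-complete} identifying that target with its own completion, the essential image is the class of Higgs bundles obtained as finite iterated extensions of Higgs bundles underlying harmonic bundles.

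The remaining step is to identify this class, and I expect this to be the main obstacle, in the sense that everything rests on imported analysis. I would use Simpson's theorem that a Higgs bundle underlies a harmonic bundle if and only if it is polystable with vanishing $\mathrm{ch}_1\cdot[\omega]^{n-1}$ and $\mathrm{ch}_2\cdot[\omega]^{n-2}$. I would also use \cite[\S 3, Lemma 3.5 and Corollary 3.10]{HiggsLocSys}: the semistable Higgs bundles with these vanishing Chern conditions form an abelian category closed under extensions. Every object in it has a Jordan--H\"older filtration whose graded pieces are stable with vanishing classes, and hence harmonic. This gives both inclusions. Extensions of harmonic bundles are semistable with vanishing classes, by additivity of Chern characters and extension-closedness. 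Conversely, every such semistable bundle is an iterated extension of stable, hence harmonic, pieces, and the extension class lies in $H^1$ of the Dolbeault Hom complex. That class is realized in $\sH^{ext}$ because $\sH \to \Conn$ is injective on $H^1$ and bijective on $H^0$ at $U = \pt$ (indeed a quasi-isomorphism there, by \cite[Lemma 2.2]{HiggsLocSys}). I would then check, using \cref{prop:ext-compl-cohomology} and induction on extension length, that the objects $(A, \eta)$ of $\sH^{ext}$ map to exactly these extension bundles.
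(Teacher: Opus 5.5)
Your proposal is correct and takes essentially the paper's route. Both unwind the definitions at $\pt$, where the only covers are trivial, to get finite iterated extensions of Higgs bundles underlying harmonic bundles, and then invoke Simpson's results: such Higgs bundles are exactly the polystable ones with the Chern class condition, and their iterated extensions are exactly the semistable ones with that condition.

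You are more explicit than the paper about lifting extension classes into $\sH^{ext}$. What does this is surjectivity on $H^1$, which comes from the full quasi-isomorphism at $U=\pt$ and passes to $F^{ext}$ by the same five-lemma induction as in \cref{prop:ext-compl-cohomology} run one degree up; the injectivity on $H^1$ that you cite first does not do it.
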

\begin{proof}
$\sM^\sH_{Dol}(X)(\pt)$ is the full subcategory of families of Higgs
bundles on $X$ parametrized by $\pt$ such that $\pt$ has an open cover on which the family is
a finite iterated extension of Higgs bundles associated to harmonic bundles. That is, it is the full
subcategory of Higgs bundles on $X$ that are finite iterated extensions of Higgs bundles associated
to harmonic bundles. Higgs bundles associated to harmonic bundles are precisely the polystable Higgs
bundles with the given condition on Chern classes: \cite[Theorem 1]{ConsVHS} is the stable
case, and the polystable case extends by using direct sum harmonic metrics for the stable pieces.
Extensions of polystable bundles are precisely the semistable Higgs bundles on $X$ with the given
condition on Chern classes.
\end{proof}

\begin{thm}\label{thm:harmonic-Dol-incl-rep}
For a compact K\"ahler manifold $X$, the inclusion $\sM^\sH_{Dol}(X) \to \sM_{Dol}(X)$ is
representable by diffeological spaces. In particular, $\sM^\sH_{Dol}(X)$ is a diffeological stack.
\end{thm}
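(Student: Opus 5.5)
To prove the statement, we show that for every smooth manifold $U$ and every morphism $c : U \to \sM_{Dol}(X)$, the $2$--fibre product $P := \sM^\sH_{Dol}(X) \times_{\sM_{Dol}(X)} U$ is representable by a diffeological space. Once this is done, the second assertion follows from \cref{prop:diffeological-rep-morphism} combined with \cref{thm:mod-st-Dol-dR-diff}, since $\sM_{Dol}(X)$ is already known to be diffeological.

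Let $c$ correspond to a smooth family $(E, D'')$ of Higgs bundles parametrized by $U$. Because $\sM^\sH_{Dol}(X) \hto \sM_{Dol}(X)$ is a full and replete substack, we can describe $P$ concretely. Its objects over $V$ are pairs $(v, \alpha)$, where $v : V \to U$ is smooth and $\alpha$ is an isomorphism from $v^<(E, D'')$ to some object of $\sM^\sH_{Dol}(X)(V)$. By repleteness, such pairs exist exactly when $v^<(E, D'')$ itself lies in $\sM^\sH_{Dol}(X)(V)$. The same argument as in \cref{thm:mod-st-Dol-dR-diff} shows that morphisms are unique when they exist, so $P$ is equivalent to the presheaf of sets
\[
V \mapsto \set[v \in \Cinf(V, U)]{v^<(E, D'') \in \sM^\sH_{Dol}(X)(V)}.
\]
In other words, $P$ is a subfunctor of the representable functor $U$.

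Now let $S \subset U$ be the set of points $u$ for which the Higgs bundle $(E_u, D''_u)$ is semistable with the stated vanishing of Chern class projections. By \cref{thm:harmonic-Dol}, these are exactly the points $u$ such that $u^<(E, D'')$ lies in $\sM^\sH_{Dol}(X)(\pt)$. Give $S$ the subset diffeology inherited from $U$, so that a map $V \to S$ is a plot precisely when the composite $V \to U$ is smooth. Every $v$ in $P(V)$ factors through $S$, because pulling back further along a point $\pt \to V$ stays inside the substack. This gives a monomorphism $P \hto S$.

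The main obstacle is the converse. Given a smooth $v : V \to U$ with image in $S$, we must show that the family $v^<(E, D'')$ is locally, on $V$, a finite iterated extension of families coming from harmonic bundle families. Pointwise membership is not obviously enough, and \cref{rmk:harmonic-bun-fmaily-cohomology-not-surjective} warns that families can behave badly. My first attempt would be to replace $S$ by the diffeological subspace $S'$ whose plots are exactly the maps $v$ with $v^<(E, D'') \in \sM^\sH_{Dol}(X)(V)$; by definition, this is the functor $P$ itself.

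It then remains to check the diffeology axioms for these plots. The covering and gluing axiom holds because $\sM^\sH_{Dol}(X)$ is a stack, by \cref{prop:generated-st-stackification}. Closure under precomposition with smooth maps holds because substacks are closed under pullback. Constant plots are included because points of $S$ lie in the fibre over $\pt$, and pullback along $V \to \pt$ stays in the substack.

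Hence $P$ is a sheaf of sets on $\Mfd$ satisfying the diffeology axioms and sitting inside the set $S$, so it is (the stack associated to) the diffeological space $S'$. This representability argument uses no specific property of Higgs bundles and would apply to any full, replete substack whose fibres over $\pt$ are detected pointwise. The statement is therefore mainly the verification that $P$ is concrete, that is, injective into $S$ on $V$--points.
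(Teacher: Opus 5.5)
Your proposal is correct and follows essentially the same route as the paper. The paper also identifies the fibre product with the subsheaf of $\Cinf(-, U)$ consisting of those $v$ for which $v^<(E, D'')$ is locally extended from harmonic bundles, checks pullback-stability and gluing via the stack property, and concludes with \cref{prop:diffeological-rep-morphism}. Your one deviation is to take the underlying set to be $S$ rather than all of $U$, as the paper does, and this is a small improvement: a constant map to a point $u$ with $u^<(E, D'') \notin \sM^\sH_{Dol}(X)(\pt)$ is not a plot, so the constant-plot axiom holds only on $S$.
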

\begin{proof}
Let $U$ be a smooth manifold, and $U \to \sM_{Dol}(X)$ be a map corresponding to a smooth family
$(E, D''_E)$ of Higgs bundles on $X$ parametrized by $U$. Then, by
\cite[\href{https://stacks.math.columbia.edu/tag/0040}{Tag 0040}]{stacks-project},
the fibre product $P_E := U \times_{\sM_{Dol}(X)} \sM^\sH_{Dol}(X)$ has the following concrete
description:
\begin{itemize}
\item Objects are tuples $(v, F, D''_F, \alpha)$, where:
    \begin{itemize}
    \item $v : V \to U$ is a smooth map from a smooth manifold $V$,
    \item $(F, D''_F)$ is a smooth family of Higgs bundles on $X$ parametrized by $V$,
    that is locally extended from harmonic bundles, and
    \item $\alpha : v^<(E, D''_E) \to (F, D''_F)$ is an isomorphism of smooth families of Higgs
    bundles on $X$ parametrized by $V$.
    \end{itemize}
\item Morphisms $(w, G, D''_G, \beta) \to (v, F, D''_F, \alpha)$ are tuples $(f, a)$, where:
    \begin{itemize}
    \item $f : W \to V$ is a smooth map commuting with the maps to $U$, and
    \item $a : f^<(F, D''_F) \to (G, D''_G)$ is an isomorphism of smooth families of flat bundles
    on $X$ parametrized by $X$, such that the following diagram commutes:
    \[\begin{tikzcd}
    f^<v^<(E, D''_E) \ar[r, "f^<\alpha"] \ar[d, "\cong" left] & f^<(F, D''_F) \ar[d, "a"] \\
    w^<(E, D''_E) \ar[r, "\beta" below] & G
    \end{tikzcd}\]
    where the left vertical arrow is the canonical isomorphism from \cref{prop:pullback-comp-nat}.
    \end{itemize}
\end{itemize}
Similar to the proof of \cref{thm:mod-st-Dol-dR-diff}, we can see that if $(f, a)$ and $(f, b)$
are two morphisms, then $a = b$.

We will show that this is equivalent as a stack to the stack associated to a diffeological space
$Q_E$ defined as follows.
The underlying set of $Q_E$ is $U$, and the diffeology on $Q_E$ is defined by the formula:
\[
\sD_{Q_E}(V) = \set[v \in \Cinf(V, U)]{v^<(E, D''_E) \text{ is locally extended from harmonic
                                                            bundles}}
\]
with the action on morphisms given by composition.
We need to verify that this is indeed a diffeology. First, if $f : W \to V$ is a smooth map over
$U$, then $(v \circ f)^<(E, D''_E) \cong f^<v^<(E, D''_E)$ is locally extended from harmonic
bundles because $\sM^\sH_{Dol}$ is a stack, and in particular, a prestack, so that
the condition of being locally extended from harmonic bundles is preserved under pullback.
Thus, $\sD_{Q_E}$ is a presheaf.
Suppose, we have an open cover $V = \bigcup_i V_i$ and elements $v_i \in \sD_{Q_E}(V_i)$ such that
$v_i|_{V_i \cap V_j} = v_j|_{V_i \cap V_j}$. Then, they glue to a unique smooth map $v : V \to U$.
The fact that $v_i^<(E, D''_E) \cong (v^<(E, D''_E))|_{V_i}$ is locally extended from harmonic
bundles for each $i$ implies that $v^<(E, D''_E)$ is locally extended from harmonic bundles, by
considering the cover of $V$ given by the union of the covers of the $V_i$ by open sets over which
$(E, D''_E)$ is a finite iterated extension of harmonic bundles.
This shows that $v \in \sD_{Q_E}(V)$. Therefore, $\sD_{Q_E}$ is a subsheaf of $\Cinf(-, U)$
which is a subsheaf of $\Set(-, U)$. Thus, $\sD_{Q_E}$ is a diffeology on the underlying set of $U$.

The equivalence $Q_E \to P_E$ is then given by sending $v \in \sD_{Q_E}(V)$ to
$(v, v^<E, v^<D''_E, \id)$, and $f : W \to V$ over $U$ to $(f, a)$, where $a$ is the canonical
isomorphism $(v \circ f)^<(E, D''_E) \to[\cong] f^<v^<(E, D''_E)$. The functoriality of this mapping
is immediate. The fact that it is fully faithful follows from the last statement of the first
paragraph. That fact that it is essentially surjective follows from the observation that every
object $(v, F, D''_F, \alpha)$ in $P_E$ is isomorphic to $(v, v^<E, v^<D''_E, \id)$ via
$(\id_V, \alpha \circ a)$, where $a$ is the canonical isomorphism
$\id_V^<v^<(E, D''_E) \to[\cong] v^<(E, D''_E)$.

We may now apply \cref{prop:diffeological-rep-morphism} to conclude that $\sM^\sH_{Dol}(X)$ is
a diffeological stack.
\end{proof}

\begin{rmk}\label{rmk:harmonic-Dol-sst}
In light of \cref{thm:harmonic-Dol} and \cref{thm:harmonic-Dol-incl-rep}, $\sM^\sH_{Dol}(X)$ is a
diffeological moduli stack parametrizing semistable Higgs bundles.
However, it is not clear to us yet whether every smooth family $(E, D'')$ of Higgs
bundles on $X$ whose pullbacks $(E_u, D''_u)$ to $\set{u} \times X \cong X$ is semistable is an
object of $\sM^\sH_{Dol}(X)$. Let us call such families semistable.
As a result, we cannot yet make any conclusions as to the relation
between $\sM^\sH_{Dol}(X)$ and the coarse moduli space of semistable Higgs bundles. We expect there
to be a mapping into the coarse moduli space (viewed as the Grothendieck constructions of
its functor of points on the category of smooth manifolds) from $\sM^\sH_{Dol}(X)$.
Our investigations suggest that in order for $\sM^\sH_{Dol}(X)$ to have semistable families as
objects, the following statement needs to be true: if $(E_u, D''_u)$ above is semistable, there is
a neighbourhood $N$ of $u$ in $U$, such that $(E, D'')|_{N}$ comes from a finite iterated extension
smooth families of harmonic bundles, which may or may not be true.

To see this, consider an alternative diffeological space $Q'_E$ in place of $Q_E$
of \cref{thm:harmonic-Dol-incl-rep} defined as follows. It is the subset of $U$ consisting
of those points $u \in U$ that have a neighbourhood $N_u$ on which $(E, D''_E)|_{N_u}$ is
locally extended from harmonic bundles. This is an open submanifold of $U$. The stack associated to
it admits a fully faithful morphism to $P_E$. However, it is unclear whether this map is essentially
surjective. One way to prove essential surjectivity would be to show that if $v^<(E, D''_E)$ is
locally generated by harmonic bundles, then $v$ factors through $Q'_E$, which would follow from
the above claim. Similar considerations apply for families of stable and polystable Higgs bundles.

Hence, we cannot yet make any solid claims about how $\sM^\sH_{Dol}(X)$ relates to all semistable
families or the coarse moduli space. However, there is a notion of diffeological coarse moduli space
associated to a diffeological space \cite{WW24} and the one associated to $\sM^\sH_{Dol}(X)$ is
likely to be related to the usual coarse moduli space.
\end{rmk}

\begin{defn}[Flat Bundle Families Locally Extended from Harmonic Bundles]
\label{defn:harmonic-dR}
In the context of \cref{thm:harmonic-Dol-dR-prest}, denote by $\sM^{\sH, pre}_{dR}(X)$
the essential images of the left morphism of prestacks of diagram \ref{eqn:harmonic-Dol-dR-prest}.
Let $\sM^\sH_{dR}(X)$ denote the substack of $\sM_{dR}(X)$ generated by $\sM^{\sH, pre}_{dR}(X)$.
By \cref{prop:generated-st-stackification}, objects of $\sM^\sH_{dR}(X)$ are smooth families of
flat bundles on $X$ parametrized by $U$ for which there is an open cover $U = \bigcup_i U_i$, such
that restricted to $U_i$, the family can be obtained by a finite iterated extension of families
of flat bundles coming from families of harmonic bundles parametrized by $U_i$.
Hence, we will call the objects of $\sM^\sH_{dR}(X)$ smooth families of flat bundles locally
extended from harmonic bundles.
\end{defn}

\begin{thm}\label{thm:harmonic-dR}
For a compact K\"ahler manifold $X$,
$\sM^\sH_{dR}(X)(\pt)$ is the category of flat bundles on $X$.
\end{thm}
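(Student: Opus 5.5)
We mirror the proof of \cref{thm:harmonic-Dol} on the de Rham side. First we unwind the definitions at $U = \pt$. The only open covers of $\pt$ are trivial, so by \cref{prop:generated-st-stackification} and \cref{defn:harmonic-dR} the category $\sM^\sH_{dR}(X)(\pt)$ is the full subcategory of flat bundles $(E, D)$ on $X$ that are isomorphic to finite iterated extensions of flat bundles of the form $(E, D_E)$, where $\sH_E = (E, D''_E, D_E)$ is a harmonic bundle in the usual sense (\cref{rmk:harmonic-bun-family-pt}). Fullness is automatic, because $\sM^\sH_{dR}(X)$ is a full substack of $\sM_{dR}(X)$ and $\sM_{dR}(X)(\pt)$ is the category of flat bundles on $X$. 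It therefore remains to show that every flat bundle on $X$ lies in this essential image.

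The key input is Corlette's theorem: every semisimple flat bundle on a compact K\"ahler manifold admits a harmonic metric. Concretely, for a semisimple $(E, D)$ there is a Hermitian metric $H$ such that, writing $D = \nabla_H + \Psi$ with $\Psi$ self-adjoint and splitting $\Psi = \theta + \bar\theta_H$ into types, the operator $D'' := \oprt_E + \theta$ is flat. Then $D$ and $D''$ are $H$--related in the sense of \cref{defn:Harmonic-bun-family}, so $(E, D'', D)$ is a harmonic bundle whose image under the forgetful functor of \cref{prop:harmonic-Dol-dR-functor} is $(E, D)$. For irreducible flat bundles this is exactly Corlette's statement (equivalently \cite[Theorem 1]{ConsVHS} read from the flat side). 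For semisimple bundles we take direct-sum harmonic metrics on the irreducible summands, as in the proof of \cref{thm:harmonic-Dol}.

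Next, let $(E, D)$ be an arbitrary flat bundle. Take a Jordan--H\"older filtration $0 = E_0 \subset E_1 \subset \cdots \subset E_m = E$ by $D$--invariant subbundles with irreducible quotients. Inducting on $m$ exhibits $(E, D)$ as a finite iterated extension of irreducible flat bundles, each of which comes from a harmonic bundle by the previous step.

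The remaining, and we expect main, obstacle is to check that this iterated extension is realized inside $h(\sH(\pt \times X)^{ext})$. Concretely, each step $0 \to E_{k-1} \to E_k \to E_k/E_{k-1} \to 0$ must be presented as an object $(A, \eta)$ of the extension completion, with $\eta$ a degree-one closed element of $\Hom$ in $\sH^{ext}$, which $\iota_{dR}$ then sends to the given extension. To do this we choose a $C^\infty$ splitting $E_k \cong E_{k-1} \oplus E_k/E_{k-1}$. The extension class then lies in $H^1\dbr{D_{E_k/E_{k-1}}, D_{E_{k-1}}}$. By Simpson's formality (principle of two types) \cite[Lemma 2.2 and \S 3]{HiggsLocSys}, the inclusion $\dbr{\sH_P, \sH_Q} \to \dbr{D_P, D_Q}$ is a quasi-isomorphism when $U = \pt$. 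This is where $U = \pt$ is essential, and it contrasts with \cref{rmk:harmonic-bun-fmaily-cohomology-not-surjective}. So the class is represented by some $\phi \in \ker{D'}$. Changing the splitting by a gauge transformation lets us assume the off-diagonal part of the connection is $\phi$ itself, and this yields an object of $\sH^{ext}$ whose image is isomorphic to $(E_k, D)$. Iterating, and noting that the objects built at earlier stages are again in $\sH^{ext}$ (so the quasi-isomorphism must be applied within the extension completion, via \cref{prop:ext-compl-cohomology}-style five-lemma arguments extended to all degrees), shows that $(E, D)$ lies in $\sM^{\sH, pre}_{dR}(X)(\pt)$. This completes the proof.
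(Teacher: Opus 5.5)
Your proposal is correct and follows the same route as the paper. Both arguments unwind $\sM^\sH_{dR}(X)(\pt)$ at the point, use Corlette's theorem to get semisimple flat bundles from harmonic bundles, and observe that every flat bundle is a finite iterated extension of these. You are also more careful than the paper in two places. The paper simply identifies the essential image of $h(\sH(\pt \times X)^{ext})$ with iterated extensions of flat bundles underlying harmonic bundles, whereas you justify this by lifting extension classes through Simpson's formality quasi-isomorphism at $U = \pt$ and a five-lemma induction in all degrees. And you invoke the existence direction of Corlette's theorem (semisimple implies harmonic), which is the direction actually needed, while the paper's proof cites only the converse.
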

\begin{proof}
Similar to \cref{thm:harmonic-Dol}, by unwrapping the definitions, we can see that
$\sM^\sH_{dR}(X)(\pt)$ is the full subcategory of the category of flat bundles on $X$
that are extensions of flat bundles associated to harmonic bundles. A flat bundle coming from
a harmonic bundle is semisimple \cite{FlatGBunMetrics}.
An arbitrary flat bundle is a finite iterated extension of semisimple flat bundles.
\end{proof}

\begin{thm}\label{thm:harmonic-dR-incl-rep}
For a compact K\"ahler manifold $X$, the inclusion $\sM^\sH_{dR}(X) \to \sM_{dR}(X)$ is
representable by diffeological spaces. In particular, $\sM^\sH_{dR}(X)$ is a diffeological stack.
\end{thm}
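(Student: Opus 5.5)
The plan is to mirror the proof of \cref{thm:harmonic-Dol-incl-rep} essentially verbatim, replacing families of Higgs bundles by families of flat bundles and $\sM^\sH_{Dol}(X)$ by $\sM^\sH_{dR}(X)$. Fix a smooth manifold $U$ and a map $U \to \sM_{dR}(X)$ classifying a smooth family $(E, D_E)$ of flat bundles on $X$ parametrized by $U$. First, I will describe the fibre product $P_E := U \times_{\sM_{dR}(X)} \sM^\sH_{dR}(X)$ concretely via \cite[\href{https://stacks.math.columbia.edu/tag/0040}{Tag 0040}]{stacks-project}. Its objects are tuples $(v, F, D_F, \alpha)$ with $v : V \to U$ smooth, $(F, D_F)$ a family locally extended from harmonic bundles, and $\alpha : v^<(E, D_E) \to (F, D_F)$ an isomorphism of flat families. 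Its morphisms are pairs $(f, a)$ compatible with the $\alpha$'s. As in \cref{thm:mod-st-Dol-dR-diff}, the compatibility square forces $a$ to be determined by $f$, so $P_E$ is fibred in setoids.

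Next, I will define the candidate representing diffeological space $Q_E$. Its underlying set is $U$, and its diffeology is
\[
\sD_{Q_E}(V) = \set[v \in \Cinf(V, U)]{v^<(E, D_E) \text{ is locally extended from harmonic bundles}}.
\]
Three properties must be checked.
\begin{enumerate}
\item $\sD_{Q_E}$ is a presheaf. This holds because $\sM^\sH_{dR}(X)$ is a substack, hence closed under pullback and replete, together with the canonical isomorphisms $(v \circ f)^< \cong f^< v^<$ of \cref{prop:pullback-comp-nat}.
\item $\sD_{Q_E}$ satisfies gluing. Smooth maps glue, and the local condition of \cref{defn:harmonic-dR} is verified on the refined cover obtained from the covers of the pieces.
\item $\sD_{Q_E}$ contains the constant maps, which is what makes it a diffeology rather than just a subsheaf of $\Cinf(-, U)$.
\end{enumerate}

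The third check is the one genuine difference from the Dolbeault case. For a constant map $v : V \to \set{u} \hto U$, the pullback $v^<(E, D_E)$ is the constant family on $V$ with fibre the flat bundle $(E_u, D_u)$. By \cref{thm:harmonic-dR}, $(E_u, D_u)$ lies in $\sM^\sH_{dR}(X)(\pt)$. Pulling back along $V \to \pt$, and using that $\sM^\sH_{dR}(X)$ is a substack, shows that the constant family lies in $\sM^\sH_{dR}(X)(V)$.

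The same point should also be checked in the Dolbeault argument, where constant maps need not give admissible families. There, one would either restrict to the relevant subset or note that \cite{RV18}'s definition only requires a subsheaf structure. I would handle it here by the argument above. I expect this step to be the main obstacle, and my plan is to use \cref{thm:harmonic-dR} for it, since on the de Rham side every flat bundle is admissible over a point.

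Finally, I will build the equivalence $Q_E \to P_E$ by
\[
v \mapsto (v, v^<E, v^<D_E, \id), \qquad f \mapsto (f, \text{canonical iso } (vf)^< \cong f^< v^<).
\]
Full faithfulness follows from the setoid property. For essential surjectivity, note that any object $(v, F, D_F, \alpha)$ is isomorphic to $(v, v^<E, v^<D_E, \id)$ via $\alpha$ composed with the unit isomorphism of \cref{prop:pullback-unit-nat}. Because $\sM^\sH_{dR}(X)$ is replete, $v^<(E, D_E)$ is then locally extended from harmonic bundles, so $v \in \sD_{Q_E}(V)$. This proves representability by diffeological spaces. Combining it with \cref{thm:mod-st-Dol-dR-diff} and \cref{prop:diffeological-rep-morphism} shows that $\sM^\sH_{dR}(X)$ is a diffeological stack.
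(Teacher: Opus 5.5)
Your proposal is correct and takes the paper's route: the paper proves this only by pointing to the argument of \cref{thm:harmonic-Dol-incl-rep}, and you reproduce that argument (the fibre product $P_E$ fibred in setoids, the diffeology $\sD_{Q_E}$, and the equivalence $Q_E \to P_E$). Your extra check that constant maps are plots, via \cref{thm:harmonic-dR} together with closure of the substack under pullback and repleteness, is a valid refinement the paper leaves implicit, as is your explicit use of repleteness in the essential-surjectivity step.
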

\begin{proof}
Similar to \cref{thm:harmonic-Dol-incl-rep}.
\end{proof}

\begin{rmk}\label{rmk:harmonic-dR}
By \cref{thm:harmonic-dR}, $\sM^\sH_{dR}(X)$ is a moduli stack parametrizing flat bundles. Similarly
to the Dolbeault side, it is not yet clear to us that an arbitrary family of flat bundles is locally
extended from harmonic bundles. If this holds, then we have an equivalence
$\sM^\sH_{dR}(X) \simeq \sM_{dR}(X)$. Again, we cannot yet make any statements about the relation
of $\sM^\sH_{dR}(X)$ with the coarse moduli space of flat bundles, but we do expect
(the Grothendieck construction of the functor of points of) the coarse moduli space to have a
mapping from $\sM^\sH_{dR}(X)$.
\end{rmk}

We can now prove a diffeological version of the non-Abelian Hodge correspondence.

\begin{prop}\label{prop:generated-st-equiv}
Let $\mcX, \mcY$ be stacks over a site $\sC$, and let $\mcP \subset \mcX, \mcQ \subset \mcY$
be full, replete sub-prestacks with an equivalence of prestacks $F : \mcP \to \mcQ$. Let $\ol\mcP$
and $\ol\mcQ$ be the substacks of $\mcX$ and $\mcY$ generated by $\mcP$ and $\mcQ$ respectively.
Then, $F$ extends to an equivalence of stacks $\ol{F} : \ol\mcP \to \ol\mcQ$.
\end{prop}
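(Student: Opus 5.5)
The plan is to use \cref{prop:generated-st-stackification}, which identifies $\ol\mcP$ and $\ol\mcQ$ as the stackifications of $\mcP$ and $\mcQ$ via the inclusions $\mcP \hto \ol\mcP$ and $\mcQ \hto \ol\mcQ$. Stackification is a $2$--functor with a universal property: any morphism from a prestack to a stack factors, uniquely up to unique $2$--isomorphism, through the stackification \cite[\href{https://stacks.math.columbia.edu/tag/02ZP}{Tag 02ZP}]{stacks-project}. Applying this to the composite $\mcP \to[F] \mcQ \hto \ol\mcQ$ gives a morphism of stacks $\ol F : \ol\mcP \to \ol\mcQ$ whose restriction to $\mcP$ is $2$--isomorphic to $F$. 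Similarly, a quasi-inverse $G : \mcQ \to \mcP$ of $F$ extends to $\ol G : \ol\mcQ \to \ol\mcP$.

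Next we show that $\ol G \circ \ol F \simeq \id_{\ol\mcP}$, and symmetrically $\ol F \circ \ol G \simeq \id_{\ol\mcQ}$. Both $\ol G \circ \ol F$ and $\id_{\ol\mcP}$ restrict on $\mcP$ to functors $2$--isomorphic to $G \circ F \simeq \id_\mcP$ followed by the inclusion. The uniqueness part of the universal property, which says that restriction along $\mcP \hto \ol\mcP$ induces an equivalence of morphism categories $\Hom(\ol\mcP, \mcZ) \to \Hom(\mcP, \mcZ)$ for any stack $\mcZ$, then yields the desired $2$--isomorphisms.

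If one wants to avoid citing the $2$--categorical universal property, there is a concrete route using the description in \cref{prop:generated-st-stackification}. For $x \in \ol\mcP$ with a cover $\set{x_i \to x}$ by objects of $\mcP$, the descent datum on the $x_i$ given by the $\alpha_{ij}$ lies in $\mcP$, since $\mcP$ is full and replete. Applying $F$ gives a descent datum in $\mcQ \subset \mcY$, which glues in the stack $\mcY$ to an object that lies in $\ol\mcQ$ by that proposition. We would define $\ol F(x)$ to be this glued object and define $\ol F$ on morphisms by gluing. Full faithfulness of $\ol F$ is local: morphism presheaves in both stacks are sheaves, and $F$ is fully faithful on the members of the cover. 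Essential surjectivity is symmetric, by gluing covers of objects of $\ol\mcQ$ by objects of $\mcQ$, which lie in the essential image of $F$.

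The main obstacle is the well-definedness of $\ol F$ in this concrete route: it must be independent of the choice of cover up to coherent isomorphism. We would handle this by passing to common refinements and invoking uniqueness of gluing. For this reason the universal-property argument of the first two paragraphs is preferred, and the gluing description is used only as a sanity check.
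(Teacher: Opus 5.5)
Your proposal is correct and follows the paper's argument. The paper's proof likewise uses \cref{prop:generated-st-stackification} to identify $\ol\mcP$ and $\ol\mcQ$ as stackifications, and gets $\ol F$ and the fact that it is an equivalence from the universal property (functoriality) of stackification. Your concrete gluing route, which extends a quasi-inverse $G$ to $\ol G$ via descent data, matches the remark that follows the paper's proof.
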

\begin{proof}
Given \cref{prop:generated-st-stackification}, the functoriality of stackification, a consequence
of the universal property of stackification, yields the morphism of stacks $\ol{F}$ and shows that
it is an equivalence.
\end{proof}

\begin{rmk}
The functor $\ol{F}$ can also be obtained more directly.
Observe that objects and morphisms in $\ol\mcP$ give descent data in
$\mcP$ which are carried to descent data in $\mcQ$ by $F$. These descent data then glue to objects
and morphisms in $\ol\mcQ$. This defines the mapping of objects and morphisms for $\ol{F}$. That
this is a morphism of stacks is a straightforward verification using descent data with refinements
of covers as needed.
Then, choose an inverse $G$ of $F$ and obtain a morphism of stacks $\ol{G} : \ol\mcQ \to \ol\mcP$
similarly. Another verification using descent data shows that $\ol{F}$ and $\ol{G}$ are inverses
as morphisms of prestacks. This kind of description might be helpful in understanding what
$\ol{F}$ does more concretely.
\end{rmk}

\begin{thm}[Non-Abelian Hodge Correspondence]\label{thm:NAH}
For a compact K\"ahler manifold $X$, we have an equivalence of stacks:
\[
\sM^\sH_{Dol}(X) \simeq \sM^\sH_{dR}(X)
\]
\end{thm}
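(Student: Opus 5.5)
The plan is to combine \cref{thm:harmonic-Dol-dR-prest} with \cref{prop:generated-st-equiv}.

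First, I will restrict the two morphisms of prestacks
\[
\sM_{Dol}(X) \ot \sM^{ext}_\sH(X) \to \sM_{dR}(X)
\]
from \cref{thm:harmonic-Dol-dR-prest} to their essential images. Call these $\iota_{Dol}$ and $\iota_{dR}$. Both are fully faithful, and they are compatible with the projection to $\Mfd$ because they come from pseudonatural transformations via the Grothendieck construction. So each corestricts to an equivalence of categories fibred in groupoids over $\Mfd$:
\[
\sM^{ext}_\sH(X) \simeq \sM^{\sH, pre}_{Dol}(X), \qquad \sM^{ext}_\sH(X) \simeq \sM^{\sH, pre}_{dR}(X).
\]
To justify this, I will check two things. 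The first is that a fibrewise fully faithful morphism of fibred categories is an equivalence onto its essential image. The essential image is taken fibrewise and closed under isomorphism, so it is a full, replete sub-prestack; pullback-stability follows from pseudonaturality. The second is that an equivalence of fibred categories admits a quasi-inverse over $\Mfd$ (\cite[Tag 003Z]{stacks-project}). Choosing a quasi-inverse $G$ of the first equivalence and composing gives an equivalence of prestacks
\[
F = \iota_{dR} \circ G : \sM^{\sH, pre}_{Dol}(X) \to \sM^{\sH, pre}_{dR}(X).
\]

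Next, by \cref{thm:mod-st-Dol-dR}, $\sM_{Dol}(X)$ and $\sM_{dR}(X)$ are stacks, and the sub-prestacks above are full and replete. So \cref{prop:generated-st-equiv} applies with $\mcX = \sM_{Dol}(X)$, $\mcY = \sM_{dR}(X)$, $\mcP = \sM^{\sH, pre}_{Dol}(X)$ and $\mcQ = \sM^{\sH, pre}_{dR}(X)$. It extends $F$ to an equivalence $\ol F : \sM^\sH_{Dol}(X) \to \sM^\sH_{dR}(X)$, using the identification of generated substacks with stackifications from \cref{prop:generated-st-stackification}.

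The main obstacle is the fullness and repleteness bookkeeping. In particular I must confirm that the essential image really is a sub-prestack, meaning it is stable under the pullbacks $f^<$. This holds because $\iota_{Dol}$ and $\iota_{dR}$ commute with pullback up to the coherent isomorphisms of \cref{prop:harmonic-Dol-dR-functor}, so a pullback of an object in the image is isomorphic to the image of a pullback.

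A second point needs care: the Kähler hypothesis. It enters through \cref{prop:harmonic-bun-family-cohomology}, which gives the degree-zero isomorphisms feeding \cref{prop:ext-compl-cohomology}. Compactness is needed for the slicewise lemma \cite[Lemma 2.2]{HiggsLocSys}. Both are already built into \cref{thm:harmonic-Dol-dR-prest}, so I only need to invoke it for compact Kähler $X$.

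Finally, I will remark that on fibres over $\pt$ this recovers, by \cref{thm:harmonic-Dol} and \cref{thm:harmonic-dR}, Simpson's equivalence between semistable Higgs bundles with the Chern class condition and flat bundles.
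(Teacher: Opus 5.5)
Your proposal is correct and matches the paper's proof: the paper takes the equivalence $\sM^{\sH, pre}_{Dol}(X) \simeq \sM^{\sH, pre}_{dR}(X)$ of essential images coming from the two fully faithful morphisms of \cref{thm:harmonic-Dol-dR-prest}, and extends it to the generated substacks via \cref{prop:generated-st-equiv}. Your additional bookkeeping (corestricting to essential images, choosing a quasi-inverse over $\Mfd$, and checking that the essential images are stable under pullback) just makes explicit what the paper's two-line proof leaves implicit.
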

\begin{proof}
We have an equivalence of prestacks $\sM^{\sH, pre}_{Dol}(X) \simeq \sM^{\sH, pre}_{dR}(X)$
by \cref{thm:harmonic-Dol-dR-prest}. This induces an equivalence of substacks generated by these
prestacks by \cref{prop:generated-st-equiv}, which are exactly the stacks in the statement.
\end{proof}

We will show with an example that the mapping of \cref{thm:NAH} extends to at least one family of
semistable objects that the mapping of coarse moduli spaces does not extend to.
Of course, this statement is a rough one, because the $U$--points of the coarse moduli spaces are
not families.

\begin{exm}\label{exm:sst-family}
Consider an elliptic curve $X$. Let $U = (\bC \setminus \set{0}) \times \bR$ and consider the
trivial bundle
$U \times X \times \bC^2 \to U \times X$. Then, we have a smooth family of Higgs bundles
on $X$ parametrized by $U$ whose Higgs field, in a chart, is given by the matrix:
\[
A(a, t) = \begin{bmatrix}
          0 & dz \\
          0 & at dz
          \end{bmatrix}
\]
This is the same family in the counterexample given in \cite[39]{ModRepFunGrpII} to show that
the non-Abelian Hodge correspondence for coarse moduli spaces does not extend continuously to the
semistable locus.
However, this family is globally an extension of the form:
\[
(U \times X \times \bC, \oprt_{X/U}) \to
(U \times X \times \bC^2, (\oprt_{X/U} \oplus \oprt_{X/U}) + A) \to
(U \times X \times \bC, \oprt_{X/U} + [at dz])
\]
To see this, use the description of extensions given in
\cite[Remark preceding Lemma 3.5]{HiggsLocSys}. The two rank $1$ families are families of stable
Higgs bundles, and hence come from harmonic bundles with the constant metric being a harmonic metric
for both. Hence, the extended family is a family of semistable Higgs bundles, and is an object of
$\sM^\sH_{Dol}(X)$.
This is not simply a direct sum of two rank $1$ families: in particular, it is not a direct sum
and hence not semistable at $t = 0$.
By the mapping of \cref{thm:NAH}, this maps to a family of flat bundles.
In fact, the computation in \cite[39]{ModRepFunGrpII}, along with the fact that morphisms of stacks
respect pullbacks, shows that it maps to the family of flat bundles whose underlying vector bundle
is again the trivial bundle $U \times X \times \bC^2$ and whose connection matrix is:
\[
B(a, t) = \begin{bmatrix}
          0 & dz + \frac{\ol{a}}{a} d\bar{z} \\
          0 & at dz + \ol{at} d\bar{z}
          \end{bmatrix}
\]
In the moduli space of semistable Higgs bundles $\lim_{t \to 0} A(a, t) = A(0, 0)$ for all
$a \in \bC$, but in the moduli space of flat bundles, the $\lim_{t \to 0} B(a, t)$ is dependent on
$a$. This prevents the non-Abelian Hodge correspondence between moduli spaces from being continuous
when extended to the semistable locus.
However, it causes no problems for the existence of the mapping of stacks realizing the
equivalence of \cref{thm:NAH}.
\end{exm}

\begin{rmk}
Let $(E, D''_E)$ be a smooth family of Higgs bundles on $X$ parametrized by $U$. We say that the
family is stable, semistable and polystable respectively if for each $u$ in $U$, the pullback of the
family to the slice $\set{u} \times X$ is so.
Let $\sM^{st}_{Dol}(X) \subset \sM^{pst}_{Dol}(X) \subset \sM^{sst}_{Dol} \subset \sM_{Dol}(X)$
denote the full subprestacks consisting of stable, polystable and semistable families.
Define simple and semisimple families of flat bundles similarly, and let
$\sM^{si}_{dR}(X) \subset \sM^{ssi}_{dR}(X) \subset \sM_{dR}(X)$ be the full subprestacks consiting
of simple and semisimple families respectively.
In light of \cref{rmk:harmonic-Dol-sst} and \cref{exm:sst-family}, we may summarize the version
of the non-Abelian Hodge correspondence developed in this paper in the following diagram:
\begin{figure}[H]
 
\tikzset{
pattern size/.store in=\mcSize, 
pattern size = 5pt,
pattern thickness/.store in=\mcThickness, 
pattern thickness = 0.3pt,
pattern radius/.store in=\mcRadius, 
pattern radius = 1pt}
\makeatletter
\pgfutil@ifundefined{pgf@pattern@name@_bley8yjc2}{
\pgfdeclarepatternformonly[\mcThickness,\mcSize]{_bley8yjc2}
{\pgfqpoint{0pt}{0pt}}
{\pgfpoint{\mcSize+\mcThickness}{\mcSize+\mcThickness}}
{\pgfpoint{\mcSize}{\mcSize}}
{
\pgfsetcolor{\tikz@pattern@color}
\pgfsetlinewidth{\mcThickness}
\pgfpathmoveto{\pgfqpoint{0pt}{0pt}}
\pgfpathlineto{\pgfpoint{\mcSize+\mcThickness}{\mcSize+\mcThickness}}
\pgfusepath{stroke}
}}
\makeatother

 
\tikzset{
pattern size/.store in=\mcSize, 
pattern size = 5pt,
pattern thickness/.store in=\mcThickness, 
pattern thickness = 0.3pt,
pattern radius/.store in=\mcRadius, 
pattern radius = 1pt}
\makeatletter
\pgfutil@ifundefined{pgf@pattern@name@_597zx4bjc}{
\pgfdeclarepatternformonly[\mcThickness,\mcSize]{_597zx4bjc}
{\pgfqpoint{0pt}{-\mcThickness}}
{\pgfpoint{\mcSize}{\mcSize}}
{\pgfpoint{\mcSize}{\mcSize}}
{
\pgfsetcolor{\tikz@pattern@color}
\pgfsetlinewidth{\mcThickness}
\pgfpathmoveto{\pgfqpoint{0pt}{\mcSize}}
\pgfpathlineto{\pgfpoint{\mcSize+\mcThickness}{-\mcThickness}}
\pgfusepath{stroke}
}}
\makeatother
\tikzset{every picture/.style={line width=0.75pt}} 

\begin{tikzpicture}[x=0.75pt,y=0.75pt,yscale=-0.85,xscale=0.85, font=\small]

\draw   (6.94,118.38) .. controls (6.94,55.36) and (58.02,4.28) .. (121.03,4.28) .. controls
(184.04,4.28) and (235.13,55.36) .. (235.13,118.38) .. controls (235.13,181.39) and (184.04,232.47)
.. (121.03,232.47) .. controls (58.02,232.47) and (6.94,181.39) .. (6.94,118.38) -- cycle ;
\draw   (32.58,118.38) .. controls (32.58,69.53) and (72.18,29.93) .. (121.03,29.93) .. controls
(169.88,29.93) and (209.48,69.53) .. (209.48,118.38) .. controls (209.48,167.22) and (169.88,206.82)
.. (121.03,206.82) .. controls (72.18,206.82) and (32.58,167.22) .. (32.58,118.38) -- cycle ;
\draw   (58.92,118.38) .. controls (58.92,84.07) and (86.73,56.27) .. (121.03,56.27) .. controls
(155.33,56.27) and (183.14,84.07) .. (183.14,118.38) .. controls (183.14,152.68) and (155.33,180.48)
.. (121.03,180.48) .. controls (86.73,180.48) and (58.92,152.68) .. (58.92,118.38) -- cycle ;
\draw   (83.42,118.38) .. controls (83.42,97.6) and (100.26,80.77) .. (121.03,80.77) .. controls
(141.8,80.77) and (158.64,97.6) .. (158.64,118.38) .. controls (158.64,139.15) and (141.8,155.98) ..
(121.03,155.98) .. controls (100.26,155.98) and (83.42,139.15) .. (83.42,118.38) -- cycle ;
\draw  [pattern=_bley8yjc2,pattern size=6pt,pattern thickness=0.75pt,pattern radius=0pt, pattern
color={rgb, 255:red, 0; green, 0; blue, 0}] (46.03,138.38) .. controls (46.03,127.33) and
(79.61,118.38) .. (121.03,118.38) .. controls (162.45,118.38) and (196.03,127.33) .. (196.03,138.38)
.. controls (196.03,149.42) and (162.45,158.38) .. (121.03,158.38) .. controls (79.61,158.38) and
(46.03,149.42) .. (46.03,138.38) -- cycle ;
\draw   (356.58,118.38) .. controls (356.58,69.53) and (396.18,29.93) .. (445.03,29.93) .. controls
(493.88,29.93) and (533.48,69.53) .. (533.48,118.38) .. controls (533.48,167.22) and (493.88,206.82)
.. (445.03,206.82) .. controls (396.18,206.82) and (356.58,167.22) .. (356.58,118.38) -- cycle ;
\draw   (382.92,118.38) .. controls (382.92,84.07) and (410.73,56.27) .. (445.03,56.27) .. controls
(479.33,56.27) and (507.14,84.07) .. (507.14,118.38) .. controls (507.14,152.68) and (479.33,180.48)
.. (445.03,180.48) .. controls (410.73,180.48) and (382.92,152.68) .. (382.92,118.38) -- cycle ;
\draw   (407.42,118.38) .. controls (407.42,97.6) and (424.26,80.77) .. (445.03,80.77) .. controls
(465.8,80.77) and (482.64,97.6) .. (482.64,118.38) .. controls (482.64,139.15) and (465.8,155.98) ..
(445.03,155.98) .. controls (424.26,155.98) and (407.42,139.15) .. (407.42,118.38) -- cycle ;
\draw  [pattern=_597zx4bjc,pattern size=6pt,pattern thickness=0.75pt,pattern radius=0pt, pattern
color={rgb, 255:red, 0; green, 0; blue, 0}] (370.03,138.38) .. controls (370.03,127.33) and
(403.61,118.38) .. (445.03,118.38) .. controls (486.45,118.38) and (520.03,127.33) ..
(520.03,138.38) .. controls (520.03,149.42) and (486.45,158.38) .. (445.03,158.38) .. controls
(403.61,158.38) and (370.03,149.42) .. (370.03,138.38) -- cycle ;
\draw    (204.25,137.75) -- (362.25,137.75) ;
\draw [shift={(364.25,137.75)}, rotate = 180] [color={rgb, 255:red, 0; green, 0; blue, 0 }  ][line
width=0.75]    (10.93,-3.29) .. controls (6.95,-1.4) and (3.31,-0.3) .. (0,0) .. controls (3.31,0.3)
and (6.95,1.4) .. (10.93,3.29)   ;
\draw [shift={(202.25,137.75)}, rotate = 0] [color={rgb, 255:red, 0; green, 0; blue, 0 }  ][line
width=0.75]    (10.93,-3.29) .. controls (6.95,-1.4) and (3.31,-0.3) .. (0,0) .. controls (3.31,0.3)
and (6.95,1.4) .. (10.93,3.29)   ;
\draw    (147.25,157.75) -- (117.25,244.75) ;
\draw    (418.25,156.75) -- (453.25,240.75) ;

\draw (276,117.4) node [anchor=north west][inner sep=0.75pt]    {$\simeq $};

\draw (93,249.4) node [anchor=north west][inner sep=0.75pt]    {$\sM^\sH_{Dol}(X)$};
\draw (110,31.4) node [anchor=north west][inner sep=0.75pt]    {$sst$};
\draw (108,58.4) node [anchor=north west][inner sep=0.75pt]    {$pst$};
\draw (113,91.4) node [anchor=north west][inner sep=0.75pt]    {$st$};
\draw (95,9.4) node [anchor=north west][inner sep=0.75pt]    {$\sM_{Dol}(X)$};
\draw (432,58.4) node [anchor=north west][inner sep=0.75pt]    {$ssi$};
\draw (437,91.4) node [anchor=north west][inner sep=0.75pt]    {$si$};
\draw (416,35.4) node [anchor=north west][inner sep=0.75pt]    {$\sM_{dR}(X)$};
\draw (426,248.4) node [anchor=north west][inner sep=0.75pt]    {$\sM^{\sH}_{dR}(X)$};

\end{tikzpicture}
\end{figure}
We have drawn $\sM^\sH_{Dol}(X)$ as intersecting the stacks of stable, polystable and semistable
families, because it is not yet clear to us exactly which stable, polystable or semistable families
are locally extended from harmonic bundles (\cref{rmk:harmonic-Dol-sst}).
However, recall also that every individual semistable Higgs bundle --- that is, a family
parametrized by the point --- is locally extended from harmonic bundles and is hence in
$\sM^\sH_{Dol}(X)$. Similarly, every individual flat bundle
is an object of $\sM^\sH_{dR}(X)$ but it is not yet clear exactly which simple or semisimple
families are included. Hence, we conclude that \cref{thm:NAH} is, in fact, a common extension
of the two original versions of the non-Abelian Hodge correspondece: one between coarse moduli
spaces and the other between the categories of semistable Higgs bundles (with the condition on Chern
classes, of course) and flat bundles.
\end{rmk}

\subsection{Questions for Further Study}
\label{subsec:questions}

\begin{qsn}
Suppose we have a smooth family $(E, D''_E)$ of Higgs bundles on $X$ parametrized by $U$ such that
$(E_u, D''_{E, u})$, for each $u \in U$, is polystable: that is, a polystable family.
Of course, we also assume the usual condition on Chern classes is satisfied.
We can choose any Hermitian metric $K$ on $E$ and consider operators $\prt_{K}$ and
$D'_{E, K}$ as in \cref{defn:harmonic-op}. Then, we can ask: is $D_{E, K} = D'_{E, K} + D''_E$
a smooth family of flat connections on $E$? For each $u \in U$, the pair $(E_u, D_{K, E, u})$
is a flat bundle precisely if the restriction $K_u$ of $K$ to $E_u$ is a harmonic metric.
Since we have a polystable family of Higgs bundles, there is such a harmonic metric $H_u$ on $E_u$
for each $u \in U$. However, are these metrics smooth in $u$? If we can find such a harmonic
metric for each polystable family, then we can conclude that that the moduli stack of polystable
Higgs bundles is completely contained in $\sM^\sH_{Dol}(X)$: that is, every polystable family is
accounted for. A similar statement applies for $\sM^\sH_{dR}(X)$ and families of simple flat
bundles.
\end{qsn}

\begin{qsn}
Suppose, now, we have a smooth family $(E, D''_E)$ as before but such that each
$(E_u, D''_{E, u})$ is semistable (polystable).
Each such slice is then a finite iterated extension of harmonic bundles.
However, does this imply, for each $u \in U$, we can find a neighbourhood $N_u$ over each the family
is a finite iterated extension harmonic bundle families?
If this is the case, then $\sM^{\sH}_{Dol}(X)$ contains all semistable (polystable) bundles.
A similar question can be asked about families of flat bundles with semistable or polystable
replaced by no conditions or semisimple respectively. In this case, $\sM^\sH_{dR}(X)$ would contain
all families of flat bundles. If both of these hold, then we have full extension of the usual
non-Abelian Hodge correspondence to the setting of diffeological moduli stacks.
\end{qsn}

\begin{qsn}
Recall the failure of fibrewise quasi-fullness of the forgetful map $\mcM_\sH(X) \to \mcM_{Dol}(X)$
of $\dg$--prestacks from \cref{rmk:harmonic-bun-incl-not-quasi-full}. Does this have anything to do
with which families of stable, polystable or semistable Higgs bundles are locally extended from
harmonic bundles?
\end{qsn}

\begin{qsn}
Instead of smooth families, we can consider $C^d$ families of Higgs bundles and flat connections for
$d = 0, 1, 2, \dots$. To make this precise, we could consider a $C^d$ manifold $U$ and a topological
vector bundle $E \to U \times X$, whose transition functions are smooth in the $X$ direction but
$C^d$ in the $U$ direction.
We then consider sheaves of continuous sections that are smooth in the $X$ direction
but $C^d$ in the $U$--direction. Denote such sheaves as $E^{sec, d}$.
Then, define $\sA^{k}_{X/U, d}$ to be $((\pi_X^*T^*X)^{\wedge k})^{sec, d}$:
that is, the sheaf of continuous differential $k$--forms with
components only in the $X$ direction, smooth in the $X$ direction and $C^d$ in the $U$ direction.
We can also define the sheaves $\sA^{p, q}_{X/U, d}$ of
relative $(p, q)$--forms smooth in the $X$ direction and $C^d$ in the $U$ direction similarly.
Then, $E^{sec, d}, \sA^{k}_{X/U, d}, \sA^{p, q}_{X/U, d}$ are finite rank
locally free modules over $\sA^0_{X/U, d}$. Then, we expect the theory developed in
\cref{sec:diff-conn} and \cref{sec:mod-st-conn} to apply without much modification,
and provide moduli stacks $\sM_{Dol, d}(X)$ and $\sM_{dR, d}(X)$ of $C^d$ families of Higgs bundles
and flat connections. Locally, these families are described by matrices of $1$--forms smooth in the
$X$ direction and $C^d$ in the $U$ direction, with the usual relation of flatness.
Not that since the partial exterior differentials and Dolbeault operators in this setting do not
derive in the $U$ direction, we do not have any issues with running out of derivatives.
The theory in \cref{sec:harmonic} can also be adapted without much difficulty to provide us with a
$\bC\dg$--prestack $\mcM_{\sH, d}(X)$ of $C^d$ families of harmonic bundles: recall that
harmonicity is a slicewise criterion. Similarly, the results of \cref{sec:NAH} also hold without
much modification to give us stacks $\sM^\sH_{Dol, d}(X)$ and $\sM^\sH_{dR, d}(X)$ of $C^d$ families
of Higgs bundles that, locally, are finite iterated extensions of $C^d$ families of harmonic
bundles. The same arguments also provide an equivalence of these stacks. In this context, it is
interesting to wonder: which semistable $C^d$ families of Higgs bundles and which semisimple $C^d$
families of flat bundles are contained in these respective stacks, and thus correspond under the
equivalence? At this stage, it is also interesting to consider families parametrized by
$\Cinf$--schemes.
\end{qsn}

\begin{qsn}
To what extent can the techniques of this paper be adapted to the context of topological stacks?
\end{qsn}

\printbibliography

\end{document}